\begin{document}

\baselineskip=15.5pt
\renewcommand{\arraystretch}{2}
\arraycolsep=1pt

\theoremstyle{plain}
\newtheorem{theorem}{Theorem}[section]
\newtheorem{prop}[theorem]{Proposition}
\newtheorem{lemma}[theorem]{Lemma}
\newtheorem{cor}[theorem]{Corollary}
\newtheorem{example}[theorem]{Example}
\newtheorem{remark}[theorem]{Remark}
\newcommand{\ra}{\rightarrow}
\renewcommand{\theequation}
{\thesection.\arabic{equation}}

\newtheorem*{TheoremA}{Theorem A}
\newtheorem*{TheoremB}{Theorem B}
\newtheorem*{TheoremC}{Theorem C}

\theoremstyle{definition}
\newtheorem{definition}[theorem]{Definition}

\newcommand{\XX}{\mathcal {X}}
\newcommand{\ZZ}{\mathbb{Z}}
\newcommand{\G}{\mathop G\limits^{\circ}}
\newcommand{\GG}{{\mathop G\limits^{\circ}}_{\vartheta}}
\newcommand{\GGp}{{\mathop G\limits^{\circ}}_{\vartheta_1,\vartheta_2}}

\title
{\bf\Large $T1$ theorem on product Carnot--Carath\'eodory spaces
\footnotetext{ The second author was
supported by NNSF of China (Grant No. 11001275), China Postdoctoral Science Foundation funded project (Grant No. 201104383) and the
Fundamental Research Funds for the Central Universities (No. 11lgpy56). The third author
was supported by NSC of Taiwan under Grant \#NSC 100-2115-M-008-002-MY3.}
\footnotetext{$\dag$ Corresponding author}
\footnotetext{ {\it Mathematics Subject Classification (2010)}: Primary 42B20
42B35; Secondary 32T25, 32W30}
\footnotetext{ {\it Key words:}
Kohn-Laplacian, heat equation, Shilov boundary, finite type domains,
multiparameter Hardy space, Carleson measure space,
almost orthogonality estimate, product singular integral operators. }
}

\author{Yongsheng Han, Ji Li$^{\dag}$ and Chin-Cheng Lin
}

\date{ }

\maketitle

\begin{center}
\begin{minipage}{14.5cm}
{\small{\bf Abstract:} Nagel and Stein established
$L^p$-boundedness for a class of singular
integrals of NIS type, that is, non-isotropic smoothing operators of
order 0, on spaces $\widetilde{M}=M_1\times\cdots\times M_n,$ where each
factor space $M_i, 1\leq i\leq n,$ is a smooth manifold on which the
basic geometry is given by a control, or Carnot--Carath\'eodory,
metric induced by a collection of vector fields of finite type. In this paper we
prove the product $T1$ theorem on $L^2,$ the Hardy space $H^p(\widetilde{M})$ and the space $CMO^p(\widetilde{M}),$
the dual of $H^p(\widetilde{M}),$ for a
class of product singular integral operators which covers Journ\'e's class and operators studied by Nagel and Stein.}
\end{minipage}
\end{center}

\bigskip\bigskip

\tableofcontents

\bigskip\bigskip

\section{Introduction}
\setcounter{equation}{0}
 In their remarkable theory, Calder\'{o}n and Zygmund generalized the Hilbert transform on $\mathbb{R}$ to certain convolution operators on $\mathbb{R}^n.$ These operators are of the form $T(f)=K\ast f$ and $K(x),$ the convolution kernel, is defined on $\mathbb{R}^n$ and satisfies the analogous conditions that $\frac{1}{x}$ satisfies on $\mathbb R,$ namely the regularity and cancellation conditions. This convolution operator theory was generalized in two directions. In the first extension, these convolution operators were extended to non-convolution operators associated with a kernel. To be precise, let $K(x,y)$ be a locally integrable function defined on $x\in \mathbb{R}^n$ and $y\in \mathbb{R}^n$ with $x\not= y.$ Let $T: C^\infty_0(\mathbb{R}^n)\rightarrow \big(C^\infty_0(\mathbb{R}^n)\big)^\prime$ be a linear operator associated with the kernel $K$ in the following sense: If for $f, g\in C^\infty_0(\mathbb{R}^n)$ with disjoint supports, $\langle Tf,g\rangle $ is given by $\iint g(x)K(x,y)f(y)dxdy.$ Suppose that $K$ satisfies some size and smoothness conditions analogous to those enjoined by the kernels of the Riesz transforms on $\mathbb{R}^n$. The $L^2$ boundedness of $T,$ in general, cannot conclude by using Plancherel's theorem if $T$ is not a convolution operator. Note that if $T$ is bounded on $L^2,$ then the program of Calder\'on--Zygmund can be carried out and the $L^p, 1<p<\infty,$ boundedness of $T$ follows. The $L^2$ boundedness for non-convolution operators was an open problem until David and Journ\'{e} [DJ] proved the remarkable $T1$ theorem. This theorem asserts that under some regularity conditions, $T$ is bounded on $L^2$ if and only if both $T1$ and $T^*1$, defined appropriately, lie on $BMO(\mathbb{R}^n).$

The second extension is due to R. Fefferman and Stein \cite{FS}.
They extended this theory to the multiparameter product convolution
operators. More precisely, Fefferman and Stein took the space
$\mathbb{R}^n\times \mathbb{R}^m$ along with the two parameter dilations instead of
the classical one-parameter dilations and consider convolution
operators $Tf=K\ast f$ where $K$ is defined on $\mathbb{R}^n\times \mathbb{R}^m$ and
satisfies all analogous conditions to those satisfied by
$\frac{1}{xy},$ the double Hilbert transform on $\mathbb{R}\times \mathbb{R}.$ Using
Plancherel's theorem, under some regularity and cancellation
conditions, Fefferman and Stein obtained the $L^2$ boundedness of
$T.$ However, the program of Calder\'on--Zygmund for one parameter
case doesn't work for multiparameter case. To prove the $L^p,
1<p<\infty,$ boundedness of $T,$ Fefferman and Stein developed the
multiparameter Littlewood--Paley theory on $L^p, 1<p<\infty.$
Finally, the $L^p, 1<p<\infty,$ boundedness of $T$ follows from such
a theory and the almost orthogonality argument. See [FS] for more
details.

Journ\'{e} [J] unified up these two extensions to multiparameter singular integral operators on a product of $n$ Euclidean spaces. Precisely, Journ\'{e} introduced a class of singular integral operators which coincides with one parameter non-convolution operators and coincides with the convolution case for the Fefferman and Stein class. To be more precise, let $T_1$ and $T_2$ be two classical singular integral operators on $\mathbb{R}$ and let $T=T_1\otimes T_2.$ This operator can be defined from $C^\infty_0(\mathbb{R})\otimes C^\infty_0(\mathbb{R})$ to its dual $[C^\infty_0(\mathbb{R})\otimes C^\infty_0(\mathbb{R})]^\prime$ by
$$\langle Tf_1\otimes f_2, g_1\otimes g_2\rangle=\langle T_1f_1, g_1\rangle\langle T_2f_2, g_2\rangle.$$
Let $K_1$ and $K_2$ be the kernels of $T_1$ and $T_2,$ respectively. If $f_1, g_1\in C^\infty_0(\mathbb{R})$ with disjoint supports, then
\begin{eqnarray*}
\langle Tf_1\otimes f_2, g_1\otimes g_2\rangle&=&\iint g_1(x)K_1(x,y)f_1(y)\langle T_2f_2, g_2\rangle dxdy\\
&=&\iint g_1(x)\langle{\widetilde K}_1(x,y)f_2, g_2\rangle f_1(y)dxdy,
\end{eqnarray*}
where ${\widetilde K}_1(x,y)=K_1(x,y)T_2.$ Similarly, If $f_2, g_2\in C^\infty_0(\mathbb{R})$ with disjoint supports, one can define ${\widetilde K}_2(x,y)=K_2(x,y)T_1$ and write
$$\langle Tf_1\otimes f_2, g_1\otimes g_2\rangle=\iint g_2(x)\langle{\widetilde K}_2(x,y)f_1, g_1\rangle f_2(y)dxdy.$$
The class of singular integral operators introduced by Journ\'{e} is the collection of operators $T$ which is a continuous linear mapping from  $C^\infty_0(\mathbb{R})\otimes C^\infty_0(\mathbb{R})$ to its dual $[C^\infty_0(\mathbb{R})\otimes C^\infty_0(\mathbb{R})]^\prime.$ Moreover, there exists a pair $(K_1, K_2)$ of classical kernels such that for all $f, g, h, k\in C^\infty_0(\mathbb{R}),$ with supp $f\cap$ supp $g=\emptyset,$
$$\langle Tf\otimes h, g\otimes k\rangle=\iint g(x)\langle K_1(x,y)h, k\rangle f(y)dxdy,$$
$$\langle Th\otimes f, k\otimes g\rangle=\iint g(x)\langle K_2(x,y)h, k\rangle f(y)dxdy.$$
Journ\'{e} found that the classical $T1$ theorem doesn't work for such a class of operators. Indeed, by constructing an operator, he shows that $\widetilde T1$ and  ${\widetilde T}^*1$ have to be taken into account in order to obtain the $L^2$ boundedness of $T,$ where $\widetilde T$ is called the partial adjoint operator of $T$ defined by
$$\langle Tf\otimes h, g\otimes k\rangle=\langle{\widetilde T}g\otimes h, f\otimes k\rangle.$$
Note that, in general, the $L^2$ boundedness of $T$ cannot imply the $L^2$ boundedness of $\widetilde T.$
Finally, Journ\'{e} proved the product $T1$ theorem which asserts that under some regularity conditions, the operator $T$ belonging to Journ\'{e} class and its partial adjoint $\widetilde T$ both are bounded on $L^2(\mathbb{R}\times \mathbb{R})$ if and only if $T1, T^*1, \widetilde T1, {\widetilde T}^*1$ lie on the product $BMO(\mathbb{R}\times \mathbb{R}),$ where $BMO(\mathbb{R}\times \mathbb{R})$ was introduced in [CF] in terms of the Carleson measure on $\mathbb{R}\times \mathbb{R}.$

To study fundamental solutions of $\Box_b$ on certain model domains
in several complex variables, Nagel and Stein established
$L^p$-boundedness for a class of product singular integral operators
on spaces $\widetilde{M}=M_1\times\cdots\times M_n,$ where each
factor space $M_i, 1\leq i\leq n,$ is a smooth manifold on which the
basic geometry is given by a control, or Carnot--Carath\'eodory,
metric induced by a collection of vector fields of finite type. It
was pointed out in [NS04] that any analysis of product singular
integrals on a product space $\widetilde{M}=M_1\times\cdots\times
M_n$ must be based on a formulation of standard singular integrals
on each factor $M_i, 1\leq i\leq n.$ There are two paths to do that.
One is to generalize the class of operators on each factor $M_i,
1\leq i\leq n,$ to the extended class of the $T1$ theorem of David
and Journ\'{e} [DJ] and then pass from this to a corresponding
product theory. This, as mentioned above, was carried out in [J] for
the setting where each factor is an Euclidean space. However, because
of the inherent complications, Nagel and Stein chose a simpler
approach. More precisely, they considered the class of singular
integrals of NIS type, that is, non-isotropic smoothing operators of
order 0. These operators may be viewed as Calder\'on--Zygmund
operators whose kernels are $C^\infty$ away from the diagonal and
its cancellation conditions are given by their action on smooth bump
functions. These cancellation conditions make the operators on each
$M_i, 1\leq i\leq n,$ easy to handle and then this carried out to
the product-type operators on $\widetilde{M}.$ The key to the proof
of the $L^p$ boundedness for these operators is the existence of a
Littlewood--Paley theory on $\widetilde{M},$ which itself is  a
consequence of the corresponding theory on each factor. We would
like to remark that the cancellation conditions used in [NS04] are
simple but less the generality in scope. More precisely, these
cancellation conditions imply $T1, T^*1\in BMO(M_i)$ on each $M_i,
1\leq i\leq n,$ and $T1, T^*1, {\widetilde T}1, {\widetilde T}^*1\in
BMO(\widetilde M)$ on $\widetilde M$, respectively. To see this,
recently in [HLL2] the Hardy space theory was established in the
setting of product spaces of homogeneous type in sense of Coifman
and Weiss [CW] which covers the product Carnot--Carath\'eodory
spaces. This theory includes the $H^p$ boundedness for operators
studied in [NS04] and the product $CMO^p(\widetilde M)$ space, which
is the dual space of $H^p(\widetilde M),$ particularly,
$CMO^1(\widetilde M)=BMO(\widetilde M)$ is the dual of
$H^1(\widetilde M).$ We point out that the $H^p(\widetilde M)$ boundedness of operators studied by Nagel and Stein
was proved in [HLL2] in terms of the cancellation conditions used in [NS04].
Moreover, a very general result proved in [HLL2] states that  both the
$L^2(\widetilde M)$ and $H^p(\widetilde M)$ boundedness imply the $H^p(\widetilde M)\rightarrow L^p(\widetilde M)$ boundedness without
using atomic decomposition and Journ\'{e}'s covering lemma. Thus, if
$T$ is the operator studied by Nagel and Stein then $T$ is
bounded on both $L^2(\widetilde M)$ and $H^p(\widetilde M),$ and hence $T$ is also bounded from
$H^1(\widetilde M)$ to $L^1(\widetilde M).$ From this together with the duality, $T$ is bounded
from $L^\infty(\widetilde M)$ to $BMO(\widetilde M).$

As mentioned, since the Hardy space $H^p(\widetilde M)$ and its dual space $CMO^p(\widetilde M)$ have been developed in [HLL2], particularly, the dual of $H^1(\widetilde M)$ is the space $CMO^1(\widetilde M)=BMO(\widetilde M),$ it is natural to consider the $T1$ theorem on the product Carnot--Carath\'eodory spaces $\widetilde M.$ The purpose of this paper is to prove such a product $T1$ theorem for a class of product singular integral operators whose kernels satisfy the weaker regularity properties. This class covers Journ\'{e}'s class when each factor is an Euclidean space and operators studied in [NS04]. The product $T1$ theorem proved in this paper asserts that an operator $T$ and its partial adjoint operator $\widetilde T$ are both bounded on $L^2$ if and only if $T1, T^*1, {\widetilde T}1, {\widetilde T}^*1$ lie on the product $BMO(\widetilde M),$ where $BMO(\widetilde M),$ as mentioned, was introduced in [HLL2].

To show the necessary conditions that the $L^2$ boundedness of $T$ implies that $T1$ and $T^*1$ lie on the product $BMO(\widetilde M),$ we will employ an approach which is different from one given by Journ\'{e} [J]. Journ\'e obtained this implication by showing that the $L^2(\widetilde M)$ boundedness implies the $L^\infty(\widetilde M)\rightarrow BMO(\widetilde M)$ boundedness. For this purpose, he established a fundamental geometric covering lemma. As a consequence of this implication, together with an interpolation theorem and the duality argument, Journ\'{e} proved that the $L^2(\widetilde M)$ boundedness implies the $L^p(\widetilde M), 1<p<\infty,$ boundedness. In this paper, we will prove this implication by use of the Hardy space theory developed in [HLL2]. More precisely, we will show that the $L^2(\widetilde M)$ boundedness implies the $H^1(\widetilde M)\rightarrow L^1(\widetilde M)$ boundedness. We would like to point out that under the cancellation conditions used by Nagel and Stein, the $H^1(\widetilde M)\rightarrow L^1(\widetilde M)$ boundedness was obtained in [HLL2]. However, the method used in [HLL2] does not work for the present situation. Indeed, to get the $H^1(\widetilde M)\rightarrow L^1(\widetilde M)$ boundedness in [HLL2], they show the $H^1(\widetilde M)$ boundedness first. This is why the cancellation conditions of Nagel and Stein were needed in [HLL2]. In this paper, to show that the $L^2(\widetilde M)$ boundedness implies the $H^1(\widetilde M)\rightarrow L^1(\widetilde M)$ boundedness without assuming any cancellation conditions, we will apply an atomic decomposition for $H^p(\widetilde M).$ For this purpose, we first establish Journ\'{e}-type covering lemma in our setting. Applying an atomic decomposition and a similar idea as in [F], we conclude that $L^2(\widetilde M)$ boundedness implies the $H^p(\widetilde M)\rightarrow L^p(\widetilde M)$ boundedness. And, particularly, $H^1(\widetilde M)\rightarrow L^1(\widetilde M)$ boundedness follows. From this together with the duality between $H^1(\widetilde M)$ and $BMO(\widetilde M)$ we obtain the $L^\infty(\widetilde M)\rightarrow BMO(\widetilde M)$ boundedness and hence the desired necessary conditions follow.
By an interpolation theorem proved in [HLL2], we also conclude that the $L^2(\widetilde M)$ boundedness implies the $L^p, 1<p<\infty,$ boundedness.

In [J] the proof of the sufficient conditions for the classical product $T1$ theorem was decomposed in three steps. In the first step, Journ\'e claimed that if $T$ satisfies $T_1(1)=T^*_1(1)=0,$ see definition for $T_1(1)=0$ and $T_2(1)=0$ in Subsection 3.1, and has the weak boundedness property, then it can be viewed as a classical vector valued singular integral operator, $\widetilde T$ acting on $C^\infty_0(\mathbb{R})\times H,$ where $H=L^2(\mathbb{R}, dx_2),$ and for which $\widetilde T(1)={\widetilde T}^*(1)=0.$ The proof of the $L^2$-boundedness of such an operator follows from the classical case.

The second step is the decomposition of an operator $T$ having the weak boundedness property, such that $T(1)= T^*(1)= {\widetilde T}(1)= {\widetilde T}^*(1)=0$ as the sum of two operators $S$ and $T-S$ having the weak boundedness property and such that $S_2(1)=S^*_2(1)=0$ and $(T-S)_1(1)=(T-S)^*_1(1)=0.$ The $L^2$ boundedness of $T$ is then a consequence of the first step. To construct the operator $S,$ let $\beta \in BMO(\mathbb{R})$ and let $U_\beta$ be defined by $\langle g, U_\beta f\rangle=\int\limits_0^\infty \langle (Q_tg), (Q_t\beta)(P_tf)\rangle\frac{dt}{t}.$ It is classical that this integral is absolutely convergent and that $U_\beta$ is a Carder\'on-Zygmund operator. Moreover, $U_\beta(1)=\beta$ and $U^*_\beta(1)=0.$ Now let $T(1)= T^*(1)= {\widetilde T}(1)= {\widetilde T}^*(1)=0.$ Journ\'e defined the operator $N$ as follows. For all $f_1,f_2,g_1,g_2\in C^\infty_0(\mathbb{R})$
$$\langle g_1\otimes g_2, Nf_1\otimes f_2\rangle=\langle g_1, U_{\lbrace \langle g_2, T_2f_2\rangle(1)\rbrace}f_1\rangle.$$
The operator $M,$ similar to $N,$ is defined by
$$\langle g_1\otimes g_2, Mf_1\otimes f_2\rangle=\langle g_1, U^*_{\lbrace \langle g_2, T_2f_2\rangle^*(1)\rbrace}f_1\rangle.$$
Now set $S=M+N$ so that $S_2(1)=S^*_2(1)=0$ and $(T-S)_1(1)=(T-S)^*_1(1)=0.$

The last step is, as in the classical case, to construct the para-product operators. To see this step, let $b\in BMO(\mathbb{R}\times \mathbb{R})$ and let the para-product operator $W_b: C^\infty_0(\mathbb{R})\otimes C^\infty_0(\mathbb{R})\rightarrow [C^\infty_0(\mathbb{R})\otimes C^\infty_0(\mathbb{R})]^\prime$ be defined by
$$\langle f_1\otimes f_2, W_bg_1\otimes g_2\rangle=\int_0^\infty\int_0^\infty \langle Q_{t_1}f_1\otimes Q_{t_2}f_2, (Q_{t_1}Q_{t_2}b)P_{t_1}g_1\otimes P_{t_2}g_2\rangle\frac{dt_1}{t_1}\frac{dt_2}{t_2}.$$
Then we have that $W_b1=b, W^*_b1={\widetilde W}_b1={\widetilde W}^*_b1=0.$ If set $S=T-W_{T1}-W^*_{T^*1}-{\widetilde W}_{{\widetilde T}1}-{\widetilde W}^*_{{\widetilde T}^*1},$ then $S(1)=S^*(1)={\widetilde S}(1)={\widetilde S}^*(1)=0.$ Moreover, all para-product operators $W_b, W^*_b, {\widetilde W}_b$ and ${\widetilde W}^*_b$ are in Journ\'e's class and bounded on $L^2(\mathbb{R}\times \mathbb{R}).$

We would like to point out that it seems that in the second step above, the construction and the proof of the $L^2(\mathbb{R}\times \mathbb{R})$ boundedness of $S$ both only work for functions having the form $f(x,y)=f_1(x)f_2(y),$ where $f_1,f_2\in C^\infty_0(\mathbb{R})$. See the details on the page 76-78 in [J]. Unfortunately, such a collection of functions with the form $f(x,y)=f_1(x)f_2(y)$ is not dense in $L^2(\mathbb{R}\times \mathbb{R}).$

In this paper, we will develop a new approach to prove the sufficient conditions for the $T1$ theorem on the product space $\widetilde M=M_1\times M_2.$ To describe the novelty of this approach more carefully, we first outline a new proof for the classical $T1$ theorem on $M_1.$ In the classical one parameter case, the $T1$ theorem was proved by two steps in [DJ]. In the first step, one observes that if $T$ satisfies $T(1)=T^*(1)=0$ and has the weak boundedness property, then the almost orthogonality argument together with the Littlewood--Paley estimate on $L^2$ gives the $L^2$ boundedness of $T.$ We emphasize that the conditions $T(1)=T^*(1)=0$ play a crucial role for applying the almost orthogonality argument. In the second step, one can write $T=[T-\Pi_{T1}-{\Pi}^*_{T^*1}]+\Pi_{T1}+{\Pi}^*_{T^*1},$ where for a BMO function $b,$ $\Pi_b$ is the para-product operator defined in [DJ]. It was known that the para-product is a Calder\'on--Zygmund singular integral operator and bounded on $L^2,$ and the operator $T-\Pi_{T1}-{\Pi}^*_{T^*1}$ is of the type studied in the first step. So $T$ is bounded on $L^2.$

Now we give a new approach for the $T1$ theorem on $M_1.$ Roughly speaking, we put these two steps together. More precisely, by the following Calder\'{o}n's identity on $M_1$
\begin{eqnarray*}
f(x)&=&\sum_{k=-\infty}^\infty D_{k}{\widetilde{\widetilde{D}}}_{k}(f)(x),
\end{eqnarray*}
where $D_{k}$ and ${\widetilde{\widetilde{D}}}_{k}$ were given in [HLL2, Theorem 2.7] on $M_1,$ for test functions $f, g\in\break \GG(\beta_{1},\gamma_{1})(M_1)$ with compact supports we consider the following bilinear form
\begin{eqnarray*}
\langle g, Tf\rangle&=&\langle\sum_{j=-\infty}^\infty D_{j}{\widetilde{\widetilde{D}}}_{j}(g), T\sum_{k=-\infty}^\infty D_{k}{\widetilde{\widetilde{D}}}_{k}(f)\rangle\\
&=&\sum_{j,k}\langle{\widetilde{\widetilde{D}}}_{j}(g), D_{j}T D_k {\widetilde{\widetilde{D}}}_{k}(f)\rangle,
\end{eqnarray*}
where, by the construction in [HLL2], we may assume that $D^*_j=D_j.$

As mentioned above, if $T$ is a singular integral operator defined on $M_1$ having the weak boundedness property and $T(1)=T^*(1)=0,$ then $D_{j}T D_k(x,y),$ the kernel of the operator $D_{j}T D_k,$ satisfies the following almost orthogonal estimate
\begin{eqnarray*}
|D_{j}T D_k(x,y)|&=&|\iint D_j(x,u)K(u,v)D_k(v,y)dudv|\\
&\leq& C 2^{-|j-k|\epsilon}\frac{1}{V_{2^{-(j\wedge k)}}(x)+V_{2^{-(j\wedge k)}}(y)+V(x,y)}\frac{2^{-(j\wedge k)\varepsilon}}{(2^{-(j\wedge k)}+d(x,y))^{\varepsilon}}.
\end{eqnarray*}
This almost orthogonal estimate together with the Littlewood--Paley estimate on $L^2$ implies that the bilinear form $\langle g, Tf\rangle$ is bounded by some constant times $\|f\|_2\|g\|_2$ and hence the $L^2$ boundedness of $T$ is concluded. However, without assuming $T(1)=T^*(1)=0,$ if $j\leq k$ one still has the following almost orthogonal estimate
\begin{eqnarray*}
&&|\iint [D_j(x,u)-D_j(x,y)]K(u,v)D_k(v,y)dudv|\\
&&\leq C 2^{(j-k)\epsilon}\frac{1}{V_{2^{-j}}(x)+V_{2^{-j}}(y)+V(x,y)}\frac{2^{-j\varepsilon}}{(2^{-j}+d(x,y))^{\varepsilon}}.
\end{eqnarray*}
Similarly, for $k\leq j,$
\begin{eqnarray*}
&&|\iint D_j(x,u)K(u,v)[D_k(v,y)-D_k(x,y)]dudv|\\
&&\leq C 2^{(k-j)\epsilon}\frac{1}{V_{2^{-k}}(x)+V_{2^{-k}}(y)+V(x,y)}\frac{2^{-k\varepsilon}}{(2^{-k}+d(x,y))^{\varepsilon}}.
\end{eqnarray*}
This leads to the following decomposition:
\begin{eqnarray*}
\langle g, Tf\rangle&=&\sum_{j\leq k}\int {\widetilde{\widetilde{D}}}_{j}(g)(x)  \iint [D_j(x,u)-D_j(x,y)]K(u,v)D_k(v,y)dudv {\widetilde{\widetilde{D}}}_{k}(f)(y) dydx\\
&&\hskip.5cm+\sum_{k<j}\int {\widetilde{\widetilde{D}}}_{j}(g)(x)\iint D_j(x,u)K(u,v)[D_k(v,y)-D_k(x,y)]dudv {\widetilde{\widetilde{D}}}_{k}(f)(y) dydx\\
&&\hskip.5cm+\sum_{j\leq k}\int {\widetilde{\widetilde{D}}}_{j}(g)(x)  \iint D_j(x,y)K(u,v)D_k(v,y)dudv {\widetilde{\widetilde{D}}}_{k}(f)(y) dydx\\
&&\hskip.5cm+\sum_{k<j}\int {\widetilde{\widetilde{D}}}_{j}(g)(x)\iint D_j(x,u)K(u,v)D_k(x,y)dudv {\widetilde{\widetilde{D}}}_{k}(f)(y) dydx.
\end{eqnarray*}
The almost orthogonal estimates, as mentioned above, together with the Littlewood--Paley estimate on $L^2$ imply that the first two series are bounded by some constant $C$ times $\|f\|_2\|g\|_2.$ The last two series are also bounded by $C\|f\|_2\|g\|_2.$ To see this, we only consider the third series and rewrite it as
\begin{eqnarray*}
&&\sum_{j\leq k}\int {\widetilde{\widetilde{D}}}_{j}(g)(x) \iint D_j(x,y)K(u,v)D_k(v,y)dudv \widetilde{D}_{k}(f)(y) dydx\\
&&=\int \sum_{k}{\widetilde S}_k(g)(y) D_k(T^*1)(y){{\widetilde{\widetilde D}}}_k(f)(y) dy,
\end{eqnarray*}
where ${\widetilde S}_k=\sum\limits_{j\leq k}D_j{\widetilde{\widetilde D}}_j.$
The Carleson measure estimate together Littlewood--Paley estimate yields
\begin{eqnarray*}
&&\Big|\int \sum_{k}{\widetilde S}_k(g)(y) D_k(T^*1)(y){{\widetilde{\widetilde D}}}_k(f)(y) dy\Big|\\
&&\leq \Big\lbrace \int\sum_{k} |{\widetilde S}_k(g)(y)|^2 |D_k(T^*1)(y)|^2 dy\Big\rbrace^{\frac{1}{2}} \Big\lbrace \int\sum_{k}|{{\widetilde{\widetilde D}}}_k(f)(y)|^2 dy\Big\rbrace^{\frac{1}{2}}\\
&&\leq C\|f\|_2\|g\|_2.
\end{eqnarray*}
This new approach can be carried out to the product case. Indeed, the following discrete Calder\'on's identity on the product $\widetilde M$ was proved in [HLL2, Theorem 2.9].
$$
f(x_1,x_2)=\sum_{k_1=-\infty}^\infty\sum_{k_2=-\infty}^\infty\sum_{I_1}\sum_{I_2 }\mu_1(I_1)\mu_2(I_2) D_{k_1}(x_1,x_{I_1})D_{k_2}(x_2,x_{I_2}) {\widetilde{\widetilde{D}}}_{k_1}{\widetilde{\widetilde{D}}}_{k_2}(f)(x_{I_1},x_{I_2}),$$
for test functions $f,g \in \GGp(\beta_{1},\beta_{2};\gamma_{1},\gamma_{2}).$

We consider the following bilinear form
\begin{eqnarray*}
\langle g,Tf\rangle &=& \sum_{k_1^{'}}  \sum_{I_1^{'}}\label{g
Tf}
\sum_{k_1}\sum_{I_1}\sum_{k_2^{'}}  \sum_{I_2^{'}} \sum_{k_2}\sum_{I_2}\mu_1(I_1^{'})\mu_1(I_1) \mu_2(I_2^{'})\mu_2(I_2)\\
&& \times {\widetilde{\widetilde{D}}}_{k_1^{'}}{\widetilde{\widetilde{D}}}_{k_2^{'}}(g)(x_{I_1^{'}},x_{I_2^{'}})
\big\langle D_{k_1^{'}}D_{k_2^{'}},
TD_{k_1}D_{k_2}\big\rangle(x_{I_1^{'}},x_{I_2^{'}},x_{I_1},x_{I_2})
{\widetilde{\widetilde{D}}}_{k_1}{\widetilde{\widetilde{D}}}_{k_2}(f)(x_{I_1},x_{I_2})\nonumber
\end{eqnarray*}
for test functions $f,g \in \GGp(\beta_{1},\beta_{2};\gamma_{1},\gamma_{2})
$ with compact supports.

Note that instead using continuous Calder\'on's identity as for the classical case we would like to use the discrete Calder\'on's identity because this will be convenient for us to deal with the $T1$ theorem on the Hardy space $H^p(\widetilde M)$ and space $CMO^p(\widetilde M).$  We would also like to point out that in this bilinear form the operator $T$ does not act on the function $f$ rather on the separate form $D_{k_1}D_{k_2}.$
Indeed, one can write
\begin{eqnarray*}
\big\langle D_{k_1^{'}}D_{k_2^{'}},
TD_{k_1}D_{k_2}\big\rangle&=&\big\langle D_{k_1^{'}}, \langle D_{k_2^{'}},
K_1(x_1,y_1)D_{k_2}\rangle D_{k_1}\big\rangle\\
&=&\big\langle D_{k_2^{'}}, \langle D_{k_1^{'}},
K_2(x_2,y_2)D_{k_1}\rangle D_{k_2}\big\rangle.
\end{eqnarray*}
This fact will be crucial for this new approach.

Similar to the decomposition as given above for one parameter case, if $k'_1>k_1$ and $k'_2>k_2,$ one can write
\begin{eqnarray*}
&& \langle D_{k'_1}D_{k'_2} T D_{k_1}D_{k_2}\rangle(x_{I_1^{'}},x_{I_2^{'}},x_{I_1},x_{I_2})\\
&&=\int  D_{k'_1}(x_{I_1^{'}},u_1)D_{k'_2}(x_{I_2^{'}},u_2)K(u_1,u_2,v_1,v_2)[D_{k_1}(v_1,x_{I_1})-D_{k_1}(x_{I_1^{'}},x_{I_1})]\\
&&\hskip1cm\times [D_{k_2}(v_2,x_{I_2})-D_{k_2}(x_{I_2^{'}},x_{I_2})] du_1du_2dv_1dv_2\\
&&\hskip.5cm+ \int  D_{k'_1}(x_{I_1^{'}},u_1)D_{k'_2}(x_{I_2^{'}},u_2)K(u_1,u_2,v_1,v_2)D_{k_1}(x_{I_1^{'}},x_{I_1}) D_{k_2}(v_2,x_{I_2}) du_1du_2dv_1dv_2\\
&&\hskip.5cm+ \int  D_{k'_1}(x_{I_1^{'}},u_1)D_{k'_2}(x_{I_2^{'}},u_2)K(u_1,u_2,v_1,v_2)D_{k_1}(v_1,x_{I_1})
D_{k_2}(x_{I_2^{'}},x_{I_2}) du_1du_2dv_1dv_2\\
&&\hskip.5cm- \int  D_{k'_1}(x_{I_1^{'}},u_1)D_{k'_2}(x_{I_2^{'}},u_2)K(u_1,u_2,v_1,v_2)D_{k_1}(x_{I_1^{'}},x_{I_1})D_{k_2}(x_{I_2^{'}},x_{I_2}) du_1du_2dv_1dv_2\\
&&=: I(x_{I_1^{'}},x_{I_2^{'}},x_{I_1},x_{I_2})+II(x_{I_1^{'}},x_{I_2^{'}},x_{I_1},x_{I_2})+III(x_{I_1^{'}},x_{I_2^{'}},x_{I_1},x_{I_2})
+IV(x_{I_1^{'}},x_{I_2^{'}},x_{I_1},x_{I_2}).
\end{eqnarray*}
Then the first term $I$ satisfies the following almost orthogonal estimate
\begin{eqnarray*}
|I(x_{I_1^{'}},x_{I_2^{'}},x_{I_1},x_{I_2})|
&\leq& C2^{(k_1-k'_1)\varepsilon}2^{(k_2-k'_2)\varepsilon}\\
&&\times\frac{1}{V_{2^{-k_1}}(x_{I_1^{'}})+V_{2^{-k_1}}(x_{I_1})+V(x_{I_1^{'}},x_{I_1})}\frac{2^{-k_1\varepsilon}}
{(2^{-k_1}+d_1(x_{I_1^{'}},x_{I_1}))^{\varepsilon}}\nonumber\\
&&\times \frac{1}{V_{2^{-k_2
}}(x_{I_2^{'}})+V_{2^{-k_2}}(x_{I_2})+V(x_{I_2^{'}},x_{I_2})}\frac{2^{-k_2
\varepsilon}}{(2^{-k_2
}+d_2(x_{I_2^{'}},x_{I_2}))^{\varepsilon}}.\nonumber
\end{eqnarray*}
To deal with term $II,$ we first rewrite it as
\begin{eqnarray*}
II&=&\int D_{k'_2}(x_{I_2^{'}},u_2),\langle D_{k'_1},K_2(u_2,v_2)(1)\rangle D_{k_2}(v_2,x_{I_2}) dv_2du_2
D_{k_1}(x_{I_1^{'}},x_{I_1})\\
&=&\int D_{k'_2}(x_{I_2^{'}},u_2),\langle D_{k'_1},K_2(u_2,v_2)(1)\rangle [D_{k_2}(v_2,x_{I_2})-D_{k_2}(x_{I_2^{'}},x_{I_2})] dv_2du_2\
D_{k_1}(x_{I_1^{'}},x_{I_1})\\[4pt]
&&\hskip.5cm-IV.
\end{eqnarray*}
Note that for each fixed $(u_2,v_2), K_2(u_2,v_2)(1)$ is a BMO function on $M_1$ since $K_2(u_2,v_2)$ is a Calder\'on--Zygmund operator on $M_1$ and thus, $|\langle D_{k'_1},K_2(u_2,v_2)(1)\rangle|^2$ is a Carleson measure on $M_1\times \lbrace k'_1\rbrace.$ Moreover, $\langle D_{k'_1},K_2(u_2,v_2)(1)\rangle$ is a singular integral kernel on $M_2.$ Therefore, applying the almost orthogonal estimate on $M_2$ yields
\begin{eqnarray*}
&&\Big\|\int D_{k'_2}(x_{I_2^{'}},u_2),\langle D_{k'_1},K_2(u_2,v_2)(1)\rangle [D_{k_2}(v_2,x_{I_2})-D_{k_2}(x_{I_2^{'}},x_{I_2})] dv_2du_2\Big\|_{CM(M_1\times \lbrace k'_1\rbrace }\\
&&\leq C 2^{(k_2-k'_2)\varepsilon}
\frac{1}{V_{2^{-k_2
}}(x_{I_2^{'}})+V_{2^{-k_2}}(x_{I_2})+V(x_{I_2^{'}},x_{I_2})}\frac{2^{-k_2
\varepsilon}}{(2^{-k_2
}+d_2(x_{I_2^{'}},x_{I_2}))^{\varepsilon}},\nonumber
\end{eqnarray*}
where, as mentioned, $\|\cdot\|_{CM(M_1\times \lbrace k'_1\rbrace}$ means the Carleson measure norm on $M_1\times \lbrace k'_1\rbrace.$

Term $III$ satisfies the same estimate with interchanging $k'_1, k'_2, x_{I_1^{'}}, x_{I_2^{'}} $ and $k_1, k_2, x_{I_1}, x_{I_2},$ respectively. It is not difficult to see that the last term $IV$ can be written as
\begin{eqnarray*}
IV=D_{k^{'}_1}D_{k^{'}_2}T(1)(x_{I_1^{'}},x_{I_2^{'}})D_{k_1}(x_{I_1^{'}},x_{I_1})D_{k_2}(x_{I_2^{'}},x_{I_2}).
\end{eqnarray*}
Note that $T(1)\in BMO(\widetilde M)$ and hence $\mu_1(I_1^{'})\mu_2(I_2^{'})|D_{k'_1}D_{k'_2}T(1)(x_{I_1^{'}},x_{I_2^{'}})|^2$ is a Carleson measure
on $\widetilde M\times \lbrace k'_1\times k'_2\rbrace.$

Inserting all these estimates for the terms $I-IV$ into the bilinear form with respect to the summation over $k'_1>k_1$ and $k'_2>k_2,$ one can show that it is bounded by $C\|f\|_2\|g\|_2.$ The bilinear forms with respect to the summations over other cases can be handled similarly. See more details in Subsection 3.3.

We remark that term $IV$ is similar to the para-product operator $W_{b}$ introduced by Journ\'e in [J], as mentioned above. However, the property that for a BMO function $b, W_{b}(1)=b$ in the last step and the operator $S$ constructed in the second step in Journ\'e's proof are not required in our approach.

Furthermore, in this paper, we will also show the $T1$ theorem on
$H^p(\widetilde M)$ and $CMO^p(\widetilde M),$ respectively. More
precisely, if $T$ is bounded on $L^2$ then $T$ is bounded on
$H^p(\widetilde M)$ and $CMO^p(\widetilde M)$ for $p\leq 1$ but $p$
is close to 1, if and only if $T^*_1(1)=T^*_2(1)=0$ and
$T_1(1)=T_2(1)=0,$ respectively. Note that in [J] Journ\'e proved
that if $T$ is a convolution operator and bounded on $L^2,$ then $T$
admits a bounded extension from $BMO(\mathbb{R}\times \mathbb{R})$ to itself. He
mentioned without the proof that if $T$ is a Calder\'on--Zygmund
operator and $T_1(1)=T_2(1)=0,$ then $TH_1, TH_2$ and $TH_1H_2$ are
Cadelr\'on-Zygmund operators, where $H_1, H_2$ and $H_1H_2$ are the
Hilbert transforms and double Hilbert transform. From this together
with the characterization of the product $BMO(\mathbb{R}\times \mathbb{R})$ in terms
of the bi-Hilbert transform, the boundedness of $T$ on $BMO(\mathbb{R}\times \mathbb{R})$ is obtained. In our setting, however, his method is not available.
Roughly speaking, the $L^2(\widetilde M)$ theory and the duality argument between $H^p(\widetilde M)$ and $CMO^p(\widetilde M)$
will play a crucial role in the present proofs. To be More
precise, it is known that $L^2(\widetilde M)\cap H^p(\widetilde M)$ is dense in $H^p(\widetilde M).$ Therefore, to show that $Tf$ is bounded on $H^p(\widetilde M)$ it suffices to consider $f\in L^2(\widetilde M)\cap H^p(\widetilde M).$ However, this argument for space $CMO^p(\widetilde M)$ is no long true. In this paper, we will show that $L^2(\widetilde M)\cap CMO^p(\widetilde M)$ is dense in the weak topology
$(H^p, CMO^p).$ Applying this result together with the duality argument implies that the boundedness of $T$ on $CMO^p(\widetilde M)$ will follow from the boundedness of $T$ on $H^p(\widetilde M).$ To see this, assume that the $T1$ theorem on $H^p$ holds and $T_1(1)=T_2(1)=0.$ Suppose that $f\in L^2\cap CMO^p$ and $g\in L^2\cap H^p.$ Then, by the duality argument, $|\langle Tf, g\rangle|=|\langle f, T^*g \rangle|\leq C \|f\|_{CMO^p}\|g\|_{H^p}$ since $(T^*)^*_1(1)=T_1(1)=0=T_2(1)=(T^*)^*_2(1)$ and thus $T^*$ is bounded on $H^p(\widetilde M)$ by the $T1$ theorem on $H^p(\widetilde M).$ This implies that $\langle Tf, g\rangle$ is a linear functional on the subspace $L^2(\widetilde M)\cap H^p(\widetilde M)$ with the norm less than $C\|f\|_{CMO^p}$ and hence, it can be extended to a linear functional on $H^p(\widetilde M)$ since $L^2(\widetilde M)\cap H^p(\widetilde M)$ is dense in $H^p(\widetilde M).$ Therefore, by the duality argument, $Tf\in CMO^p(\widetilde M).$ In order to estimate $\|Tf\|_{CMO^p},$ by the duality argument again, one can write
$\langle Tf, g\rangle=\langle h, g\rangle$ for all test functions $g$ and some $h\in CMO^p(\widetilde M)$ with $\|h\|_{CMO^p(\widetilde M)}\leq C\|f\|_{CMO^p(\widetilde M)}.$ See the details of the duality argument in [HLL2]. Choosing test functions $g$ as the functions in the definition of $CMO^p(\widetilde M),$ one can conclude that $\|Tf\|_{CMO^p(\widetilde M)}=\|h\|_{CMO^p(\widetilde M)}$ and thus $\|Tf\|_{CMO^p(\widetilde M)}\leq C\|f\|_{CMO^p(\widetilde M)}.$

In this paper, we prove the $T1$ theorem for $H^p(\widetilde M)$ and $CMO^p(\widetilde M)$ as follows. We first show that if $T$ is bounded on $L^2(\widetilde M)$ and $T^*_1(1)=T^*_2(1)=0$ then $T$ is bounded on $H^p(\widetilde M).$ This will be achieved by applying the almost orthogonal argument and atomic decomposition established in Subsection 3.2. Applying this result together with the duality argument as mentioned above, we prove that if $T$ is bounded on $L^2(\widetilde M)$ and $T_1(1)=T_2(1)=0$ then $T$ is bounded on $CMO^p(\widetilde M).$ To show the converse, by choosing special functions, we first prove that if $T$ is bounded on $CMO^p(\widetilde M)$ then $T_1(1)=T_2(1)=0.$ This result together with the duality argument will imply that if $T$ is bounded on $H^p(\widetilde M)$ then $T^*_1(1)=T^*_2(1)=0.$

The paper is organized as follows. In Section 2, we recall notation
and some preliminaries used in [NS04]. Particularly, we describe the
basic geometry of Carnot--Carath\'eodory space, singular integrals
studied by Nagel and Stein and the Littlewood--Paley theory and the
$L^p$ boundedness of singular integrals developed in [NS04]. We also
mention, in this section, the Hardy space theory on the product
Carnot--Carath\'eodory space established in [HLL2], which includes
the $H^p$ boundedness for operators studied by Nagel and Stein and
the duality between $H^p$ and $CMO^p,$ particularly, $CMO^1=BMO,$
the dual of $H^1.$ The product $T1$ and its proof are given in
Section 3. We first introduced singular integrals on the product
Carnot--Carath\'eodory space and state the $T1$ theorem in
Subsection 3.1. In Subsection 3.2, we prove the necessary
conditions. Journ\'e-type covering lemma and atomic decomposition
are provided in Subsections 3.2.1 and 3.2.2. We prove that if $T$ is
bounded on $L^2$ then $T$ extends to a bounded operator from $H^p$
to $L^p$, $L^\infty$ to $BMO,$ and from $L^p$ to itself in
Subsections 3.2.3, 3.2.4 and Subsection 3.2.5, respectively. The
sufficient conditions of the product $T1$ theorem are proved in the
Subsection 3.3. In
Section 4, we give the $T1$-type theorems for $H^p$ and $CMO^p.$ The
statements and the proofs are given in Subsection 4.1 and 4.2,
respectively. In the last section, we will point out that all
results and proofs in this paper can be carried out in arbitrarily
many parameters. We will only state these results and omit the
details of the proofs.

\section{Notation and preliminaries}
\setcounter{equation}{0}
In this section, we recall the basic geometry of the product Carnot--Carath\'eodory space and state the $L^p, 1<p<\infty,$ boundedness of product singular integral operators studied in [NS04]. The product Hardy space theory on the Carnot--Carath\'eodory space developed in [HLL2] will be described in the last subsection

\subsection{Basic geometry of Carnot--Carath\'eodory space}

In recent years, the optimal estimates were established for
solutions of the Kohn-Laplacian for decoupled boundaries in
$\mathbb{C}^{n+1}$ (See the series of papers \cite{NS01a},
\cite{NS01b}, \cite{NS04}, \cite{NS06}). They considered the
Kohn-Laplacian on $q-forms$,
$\Box_b^{(q)}=\Box_b=\bar{\partial}_b\bar{\partial}_b^*+\bar{\partial}_b^*\bar{\partial}_b$,
defined on the boundary $M=\partial \Omega$ of a smooth
pseudo-convex domain $\Omega\subset\mathbb{C}^{n+1}$. They studied
the relative inverse operator $\mathcal{K}$ and the corresponding
Szeg\"o projection $\mathcal{S}$, which satisfy
$\Box_b\mathcal{K}=\mathcal{K}\Box_b=I-\mathcal{S}$. By definition,
$\mathcal{S}$ is the orthogonal projection on the $L^2$ null-space
of $\Box_b$.


The model domains we recall here are the decoupled domain
$\Omega\subset \mathbb{C}^{n+1}$ and its boundary $M$, the related
product domain $\widetilde{\Omega}$ and the Shilov boundary
$\widetilde{M}$ in $\mathbb{C}^{2n}$, and the pseudoconvex domain in
$\mathbb{C}^2$, where $n\geq2$. Now we state them as follows.

A domain $\Omega\subset\mathbb{C}^{n+1}$ and its boundary $M$ are
said to be decoupled if there are sub-harmonic and non-harmonic
polynomials $P_j$ such that
\begin{eqnarray}
\Omega&=& \big\{(z_1,...,z_n,z_{n+1})\in\mathbb{C}^{n+1}:\ \Im [z_{n+1}]>\sum_{j=1}^n P_j(z_j) \big\}; \label{decoupled domain}\\
M&=& \big\{(z_1,...,z_n,z_{n+1})\in\mathbb{C}^{n+1}:\ \Im
[z_{n+1}]=\sum_{j=1}^n P_j(z_j) \big\}.\label{decoupled boundary}
\end{eqnarray}
For each $j$, the pseudoconvex domain in $\mathbb{C}^2$ we consider
is as follows.
\begin{eqnarray}
\Omega_j&=& \big\{(z_j,w_j)\in\mathbb{C}^{2}:\ \Im [w_{j}]> P_j(z_j) \big\}; \label{pseudoconvex domain}\\
M_j&=& \big\{(z_j,w_j)\in\mathbb{C}^{2}:\ \Im [w_{j}]= P_j(z_j)
\big\}.\label{pseudoconvex boundary}
\end{eqnarray}
The Cartesian products of these domains and boundaries are
\begin{eqnarray}
\widetilde{\Omega}&=& \Omega_1\times\cdots\times\Omega_n; \label{product domain}\\
\widetilde{M}&=& M_1\times\cdots\times M_n.\label{Shilov boundary}
\end{eqnarray}
$\widetilde{M}$ is the Shilov boundary of $\widetilde{\Omega}$.

One of the typical examples of $\Omega$ and $M$ is the Szeg\"o upper
half space $\mathcal{U}^n$ and its boundary Heisenberg group
$\mathbb{H}^n$ (to see this, we can take $P_j(z_j)=|z_j|^2$). As is
known to all, the Szeg\"o upper half space and its boundary are
biholomorphically equivalent to the unit ball $\mathbb{B}^n$ and its
boundary $\partial \mathbb{B}^n$. Hence we can see that the
decoupled domain and boundary are natural generalizations of the
basic model domains in several complex variables, on which the
properties of the inverse operator of Kohn-Laplacian and the
corresponding Szeg\"o projection have been studied by Christ,
Fefferman, Folland, Kohn, Stein and others, see for example \cite{Chr2},\ \cite{FoS},\
\cite{FK88},\ \cite{Kohn85},\ \cite{NRSW}, and the references therein.

Fix $1\leq j\leq n$, let $M_j$ be the hypersurface given in equation (\ref{pseudoconvex boundary}). And let $\widetilde{M}=M_1\times\cdots\times M_n$
be the Shilov boundary, i.e., the Cartesian product as in (\ref{Shilov boundary}).

We first recall the \emph{control metric on $M_j$}. Note that we write the complex (0,1) vector field $\overline{Z}_j=X_j+iX_{n+j}$, where $\{X_j, X_{n+j}\}$ are real vector fields on $M_j$. Define the metric $d_j$ on $M_j$ as follows. If $p,q\in M_j$ and $\delta>0$, let $AC(p,q,\delta)$ denote the set of absolutely continuous mapping $\gamma: [0,1]\rightarrow M_j$ such that $\gamma(0)=p$ and $\gamma(1)=q$, and such that for almost all $t\in[0,1]$ we have $\gamma'(t)=\alpha_j(t)X_j(\gamma(t))+ \alpha_{n+j}(t)X_{n+j}(\gamma(t))$ with $|\alpha_j(t)|^2+|\alpha_{n+j}(t)|^2<\delta^2$. Then we define
$$
  d_j(p,q)=\inf\{\delta>0:\ AC(p,q,\delta)\not=\emptyset\}.
$$
The corresponding nonisotropic ball is
$$
  B_j(p,\delta)=\{q\in M_j:\ d_j(p,q)<\delta\},
$$
and $|B_j(p,\delta)|$ denotes its volume. Set $$ V_j(p,q)=|B_j\big(p,d_j(p,q)\big)|. $$

The volume of the ball $B(p,\delta)$ is essentially a polynomial in $\delta$ with coefficients that depend on $p$. Let $T=\partial\slash \partial_t$ so that at each point of $M_j$ the tangent space is spanned by vectors $\{ X_j,X_{n+j},T \}$. Write the commutator
\begin{eqnarray}\label{commutator}
   [X_j,X_{n+j}]=\lambda_j T+ a_jX_j+ a_{n+j}X_{n+j},
\end{eqnarray}
where $\lambda_j, a_j, a_{n+j}\in C^\infty(M_j)$. If $\alpha=(\alpha_1,\ldots,\alpha_k)$ is a $k$-tuple with each $\alpha_j$ equal to $j$ or $n+j$, let $|\alpha|=k$ and let $X^\alpha=X_{\alpha_1}\cdots X_{\alpha_j}$ denote the corresponding $k^{\rm th}$ order differential operator. For $k\geq2$ set
$$
    \Lambda_j^k(p)=\sum_{|\alpha|\leq k-2} |X^\alpha\lambda_j(p)|,
$$
where $\lambda_j$ is defined as in (\ref{commutator}), and set
$$
    \Lambda_j(p,\delta)=\sum_{k=2}^{m_j} \Lambda_{j}^k(p)|\delta|^k.
$$
\begin{prop}[\cite{NS06}]
There are constants $C_1,C_2$ depending only on $m_j$ so that for $p\in M_j$ and $\delta>0$,
$$
    C_1\delta^2 \Lambda_j(p,\delta) \leq |B_j(p,\delta)|\leq     C_2\delta^2 \Lambda_j(p,\delta).
$$
Also, $V_j(p,q)\thickapprox V_j(q,p) \thickapprox d_j(p,q)^2 \Lambda_j(p,d_j(p,q))$, where $A\thickapprox B$ means that the ratio $A\slash B$ is bounded above and bounded away from zero.

\end{prop}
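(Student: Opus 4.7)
The strategy is to reduce the volume estimate to a Nagel--Stein--Wainger style ball-box theorem in exponential coordinates based at $p$. Introduce the coordinate map $\Phi_p(u_1,u_2,u_3) = \exp\bigl(u_3 T\bigr)\exp\bigl(u_1 X_j + u_2 X_{n+j}\bigr) \cdot p$ (canonical coordinates of the second kind in the frame $\{X_j, X_{n+j}, T\}$). Because $\{X_j, X_{n+j}, T\}$ is a basis for the tangent space at $p$, the Jacobian of $\Phi_p$ at the origin equals $1$, so up to a uniform constant, volumes in $M_j$ near $p$ match Euclidean volumes in $(u_1,u_2,u_3)$. The problem therefore becomes: which box in $(u_1,u_2,u_3)$-coordinates is squeezed between two constant multiples of $B_j(p,\delta)$?

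The horizontal sides are easy: pure horizontal displacement of parameter length $\delta$ along $X_j$ or $X_{n+j}$ is admissible in the definition of $d_j$, so the comparable box has $u_1$- and $u_2$-sides of order $\delta$. The substance of the argument concerns the $u_3$ direction, which is only reachable through iterated commutators. Expanding $[X_j,X_{n+j}]=\lambda_j T + a_j X_j + a_{n+j}X_{n+j}$ and iterating, a $k$-fold bracket of $X_j$ and $X_{n+j}$ has a $T$-component whose coefficient is a polynomial combination of $X^\alpha\lambda_j$ with $|\alpha|\le k-2$ and of $a_j, a_{n+j}$ and their derivatives, with leading size $\Lambda_j^k(p)$. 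Running the associated commutator flow for control time $\delta$ produces a $T$-displacement of order $\Lambda_j^k(p)\,\delta^k$; summing from $k=2$ to $k=m_j$ (the finite-type index at $p$) shows that the admissible range of $u_3$ is comparable to $\Lambda_j(p,\delta)=\sum_{k=2}^{m_j}\Lambda_j^k(p)\delta^k$. Multiplying the three side lengths gives $|B_j(p,\delta)|\asymp \delta\cdot\delta\cdot\Lambda_j(p,\delta)$, which is exactly the desired two-sided bound.

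The main obstacle is the lower bound: the combinatorial fact that commutators carry a $T$-component does not, by itself, produce absolutely continuous horizontal paths realizing that motion. One must turn the algebraic commutator identity into an analytic statement by a quantitative Chow-type construction, combining Baker--Campbell--Hausdorff expansions with a careful inverse function theorem applied to the map $\Phi_p$ restricted to a rescaled box. This is precisely the technical content of the ball-box theorem in the Nagel--Stein--Wainger framework and is where the smoothness of $\lambda_j$ and the finite-type assumption enter in an essential way. The upper bound is comparatively softer and follows by differentiating the path constraint and integrating the resulting Jacobian estimate.

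Finally, for the symmetry assertion $V_j(p,q)\asymp V_j(q,p)\asymp d_j(p,q)^2\,\Lambda_j(p,d_j(p,q))$, I would first observe that $d_j$ is symmetric by definition, so it suffices to show $\Lambda_j(q,\delta)\asymp\Lambda_j(p,\delta)$ whenever $d_j(p,q)\le \delta$. This follows from the smoothness of the iterated horizontal derivatives $X^\alpha\lambda_j$ and the already-established first part of the proposition, which gives doubling for the balls $B_j$; one then controls the oscillation of each $\Lambda_j^k$ on $B_j(p,\delta)$ by integrating its horizontal derivatives against a minimizing control path and absorbing lower-order terms into higher-order ones via the very definition of $\Lambda_j(p,\delta)$.
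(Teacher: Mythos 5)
The paper does not prove this proposition; it is quoted verbatim from [NS06] (see also [NSW]) and used as a black box, so there is no in-paper argument to compare yours against. Judged on its own terms, your outline correctly identifies the architecture of the Nagel--Stein--Wainger proof: exponential coordinates adapted to the frame $\{X_j,X_{n+j},T\}$, a box of dimensions $\delta\times\delta\times\Lambda_j(p,\delta)$, and the identification of the admissible $T$-displacement with $\sum_{k=2}^{m_j}\Lambda_j^k(p)\delta^k$ via iterated commutators of the horizontal fields.

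There are, however, two places where the proposal asserts rather than proves. First, the claim that the Jacobian of $\Phi_p$ equals $1$ at the origin and hence volumes match ``up to a uniform constant'' is not sufficient: the box is highly anisotropic (its $u_3$-side $\Lambda_j(p,\delta)$ can be much smaller than $\delta^2$ when $p$ is a degenerate point), so one needs the Jacobian comparison to hold uniformly over the entire rescaled box with constants independent of $p$ and $\delta$; this is exactly what the normalized coordinates and scaling maps of [NSW] are designed to deliver, and it cannot be waved through with a pointwise Jacobian computation. Second, and more seriously, the lower bound $|B_j(p,\delta)|\gtrsim\delta^2\Lambda_j(p,\delta)$ is deferred in your write-up to ``a quantitative Chow-type construction'' which ``is precisely the technical content of the ball-box theorem'' --- that is, you cite the result you are meant to establish. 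The upper bound and the symmetry statement ($\Lambda_j(q,\delta)\approx\Lambda_j(p,\delta)$ for $d_j(p,q)\le\delta$, obtained by integrating horizontal derivatives of $\Lambda_j^k$ along a control path and absorbing the error $\delta\,\Lambda_j^{k+1}(p)\delta^k$ into the next term of the sum, the top order closing because $\lambda_j$ is a polynomial so the scale of derivatives terminates at $k=m_j$) are handled correctly. If you intend this as a self-contained proof rather than a reduction to [NSW]/[NS06], the lower volume bound must be carried out; as a guide to why the cited result is true, the outline is accurate.
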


There is an alternate description of the balls $\{B_j(p,\delta)\}$ and metric $d_j$ given in terms of explicit inequalities. For $z,w\in\mathbb{C}$ let
$$
    T_j(w,z)=2 \Im\big[ \sum_{k=1}^{m_j} {\partial^k P_j\over \partial z^k}(w) {(z-w)^k\over k!} \big].
$$
Then, with $p=(w,s)\in M_j$, set
$$
   \widetilde{B}_j(p,\delta)=\{ (z,t)\in M_j \mid |z-w|<\delta \ \textup{and}\ |t-s+ T_j(w,z)|<\Lambda_j(w,\delta) \}.
$$
Note that there is a unique inverse function $\mu_j(p,\delta)$ such that for $\delta\geq0$ we have $\Lambda_j(p,\mu_j(p,\delta))=\mu_j(p,\Lambda_j(p,\delta))=\delta$. We have
$$
    \mu_j(p,\delta)^{-1}\thickapprox \sum_{k=2}^{m_j} \Lambda_j^k(p)^{1\over k} |\delta|^{-{1\over k}}.
$$
\begin{prop}[\cite{NS06}]
There are constants $C_1,C_2$ depending only on $m_j$ so that for $p\in M_j$ and $\delta>0$,
$$
     \widetilde{B}_j(p,C_1\delta)\subset B_j(p,\delta)\subset \widetilde{B}_j(p,C_2\delta).
$$
Moreover, if $(z,t), (w,s)\in M_j$,
$$
    d_j((z,t),(w,s))\thickapprox |z-w|+\mu_j(w,|t-s- T_j(w,z)|)
$$

\end{prop}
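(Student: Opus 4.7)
The strategy is to prove the two-sided ball inclusion directly, and then read off the metric equivalence by inverting in $\delta$ via the function $\mu_j$ just introduced. Identify $p=(w,s)$ and $q=(z,t)$ with coordinates in $\mathbb{C}\times\mathbb{R}$ using the defining equation of $M_j$, and work in a local frame in which $X_j,X_{n+j}$ act on the $z$-coordinate as $\Re\partial_z,\Im\partial_z$ and act on the $t$-coordinate through first derivatives of $P_j$.

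First I would prove $B_j(p,\delta)\subset\widetilde{B}_j(p,C_2\delta)$. Let $\gamma(u)=(z(u),t(u))$ be any admissible curve from $p$ to $q$ with $|\alpha_j(u)|^2+|\alpha_{n+j}(u)|^2<\delta^2$. The $z$-component satisfies $|z'(u)|\lesssim\delta$, hence $|z-w|\lesssim\delta$. For the $t$-component one Taylor-expands the $t$-coefficients of $X_j,X_{n+j}$ around $w$ to order $m_j$, isolates the holomorphic part that assembles to $T_j(w,z)$, and bounds each remainder term of order $k$ by $|X^\alpha\lambda_j(w)|\,\delta^k$; summing over $2\leq k\leq m_j$ yields $|t-s+T_j(w,z)|\lesssim\Lambda_j(w,\delta)$, which is exactly the condition defining $\widetilde{B}_j(p,C_2\delta)$.

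The reverse inclusion $\widetilde{B}_j(p,C_1\delta)\subset B_j(p,\delta)$ is the main step. Given $q$ satisfying the explicit inequalities, I would concatenate two sub-unit arcs. Arc~1 travels in the span of $X_j,X_{n+j}$ to move the $z$-coordinate from $w$ to $z$; this takes sub-unit time $\lesssim|z-w|\lesssim\delta$ and, by the ODE analysis above applied in reverse, leaves a residual $t$-defect of size at most $\Lambda_j(w,\delta)$. Arc~2 cancels this defect: for each order $2\leq k\leq m_j$, run a short commutator loop built from length-$k$ brackets of $X_j$ and $X_{n+j}$; by \eqref{commutator} and its iterated derivatives, such a loop produces net $T$-motion of size $\thickapprox|X^\alpha\lambda_j(w)|\,r_k^k$ for a sub-unit radius $r_k$. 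Calibrating $r_k$ so that $\Lambda_j^k(w)\,r_k^k$ matches the share of the defect allocated to order $k$, and checking that $\max_k r_k\lesssim\delta$, produces an admissible concatenated curve from $p$ to $q$ of sub-unit length $\lesssim\delta$.

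Having established the ball equivalence, the metric formula is immediate. By definition $d_j(p,q)=\inf\{\delta>0:q\in B_j(p,\delta)\}$, and this infimum is comparable to the smallest $\delta$ for which both $|z-w|\lesssim\delta$ and $\Lambda_j(w,\delta)\gtrsim|t-s-T_j(w,z)|$ hold. The second condition is equivalent to $\delta\gtrsim\mu_j(w,|t-s-T_j(w,z)|)$ by the very definition of the inverse function $\mu_j$; taking the maximum of the two lower bounds on $\delta$ gives $d_j(p,q)\thickapprox|z-w|+\mu_j(w,|t-s-T_j(w,z)|)$. The main obstacle throughout is the construction of Arc~2: one must organize the commutator brackets of different orders so that they cooperate, the radius $r_k$ for the $k^{\rm th}$-order loop being balanced against the weight $\Lambda_j^k(w)$, and one must verify that the Taylor remainders of each bracket are absorbed by its leading contribution so that no secondary $t$-defect is reintroduced.
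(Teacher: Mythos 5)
The paper does not prove this proposition at all: it is quoted verbatim from Nagel--Stein \cite{NS06} (building on Nagel--Stein--Wainger), and the authors use it as a black box to describe the geometry of $M_j$. So there is nothing in the paper to compare your argument against line by line; what you have written is a condensed outline of the actual proof in the cited literature. As such an outline it is essentially correct: the easy inclusion $B_j(p,\delta)\subset\widetilde B_j(p,C_2\delta)$ does follow from integrating an admissible curve and Taylor-expanding the $t$-components of $X_j,X_{n+j}$ around $w$ to order $m_j$, with the holomorphic part of the expansion assembling into $T_j(w,z)$ and the remainders controlled by $\Lambda_j(w,\delta)$; the metric equivalence is indeed an immediate consequence of the two-sided ball inclusion together with the defining property $\Lambda_j(p,\mu_j(p,\delta))=\delta$ of the inverse function. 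The one place where your sketch is genuinely thin is exactly where you say it is: the reverse inclusion via commutator loops. Two cautions there. First, $\Lambda_j^k(w)$ is a \emph{sum} over all $|\alpha|\le k-2$, and no single derivative $X^\alpha\lambda_j(w)$ need be comparable to it, so ``calibrating $r_k$ against $\Lambda_j^k(w)$'' requires selecting, for each $k$, a dominant multi-index and verifying that its loop is not cancelled by the lower-order Taylor remainders of the brackets --- this absorption step is the technical heart of the Nagel--Stein--Wainger argument. Second, in the decoupled setting of \cite{NS06} the proof is actually carried out more explicitly than your general-vector-fields scheme suggests: because $P_j$ is a polynomial, $T_j(w,z)$ and $\Lambda_j(w,\delta)$ are finite Taylor sums, and the sub-unit path can be constructed directly in the $(z,t)$ coordinates without the full exponential-coordinates machinery. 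Your plan would work, but it proves a harder, more general theorem than the cited one needs.
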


Now we turn to $\widetilde{M}=M_1\times\cdots\times M_n$. Each of the nonisotropic distance $d_j$ on $M_j$ can be regarded as a function on $\widetilde{M}$ which depends only on the variables $(z_j,t_j)$. In addition, there is a nonisotropic metric $d_{\sum}$ on $\widetilde{M}$ induced by all real vector fields $\{X_1,\ldots,X_{2n}\}$. If $p,q\in M_j$ and $\delta>0$, let $AC(p,q,\delta)$ denote the set of absolutely continuous mappings $\gamma:[0,1]\rightarrow \widetilde{M}$ such that $\gamma(0)=p$ and $\gamma(1)=q$, and such that for almost every $t\in [0,1]$ we have $\gamma'(t)=\sum_{j=1}^{2n}\alpha_j(t)X_j(\gamma(t))$ with $\sum_{j=1}^{2n}|\alpha_j(t)|^2<\delta^2$. Then
$$
    d_{\sum}(p,q)=\inf\{\delta>0 \mid AC(p,q,\delta)\not=\emptyset\}.
$$
This metric is appropriate for describing the fundamental solution of the operator $\mathcal{L}=\sum_{j=1}^{2n}X_j^2$, and it can be explicitly described as follows. Let $p=(z_1,t_1,\ldots,z_n,t_n)\in \widetilde{M}$. We can assume without loss of generality that each manifold $M_j$ is normalized at the origin. We denote the origin of $\widetilde{M}$ by $\overline{0}$. Then
$$
   d_{\sum}(\overline{0},p)\thickapprox \sum_{j=1}^n[|z_j|+\mu_j(0,|t_j|)].
$$
The ball centered at $\overline{0}$ of radius $\delta$ is, up to constants, given by
$$
   B_{\sum}(\overline{0},\delta)=\big\{ (z,t)\in \widetilde{M} \mid |z_j|<\delta\ \textup{and}\ |t_j|<\Lambda_j(0,\delta) \ \textup{for}\ 1\leq j\leq n \big\}.
$$
We have
$$
    |B_{\sum}(\overline{0},\delta)|\thickapprox \delta^{2n} \prod_{j=1}^n \Lambda_j(0,\delta),
$$
and
$$
    |B_{\sum}(\overline{0},d_{\sum}(z,t))|\thickapprox \big[ \sum_{j=1}^n|z_j|+\mu_j(0,|t_j|) \big]^{2n} \prod_{j=1}^n \Lambda_j(0,\big[ \sum_{j=1}^n|z_j|+\mu_j(0,|t_j|) \big]).
$$

When $M$ is compact then one can take any fixed smooth measure on $M$ with
strictly positive density. In the unbounded case one takes Lebesgue measure and denote the measure of a set $E$ by $|E|$.
The ball is defined by $B(x, \delta) = \lbrace y\in M , d(x, y)<\delta \rbrace$, with $0 <\delta\le 1$ in the compact
case, and $0 <\delta <\infty$ in the unbounded case and the volume
function is defined by $V(x,y)= |B(x,d(x,y))|$. The key geometric facts used in [NS04] is that the volumes of the balls $B(x,\delta)$ are essentially polynomials in $\delta$ with coefficients that depend on $x$  and satisfy the doubling property(see [49] for the details)
\begin{eqnarray}\label{doubling condition}
 |B(x,2\delta)|\leq C |B(x,\delta)| \ \ \ \ {\rm for\ all\ \ } \delta>0 {\rm \ and\ some\ constant\ } C
\end{eqnarray}
and, moreover, in the unbounded case, for $s\ge 1,$
\begin{eqnarray}\label{upper dim}
 |B(x,s\delta)|\approx s^{m+2} |B(x,\delta)|
\end{eqnarray}
and
\begin{eqnarray}\label{lower dim}
 |B(x,s\delta)|\ge s^{4} |B(x,\delta)|.
\end{eqnarray}
We point out that the doubling condition (\ref{doubling condition})
implies that there exist positive constants  $C$ and $Q$ such that
for all $x\in M$ and $\lambda\geq 1$,
\begin{eqnarray}\label{homogeneous dim}
|B(x, \lambda r)|\leq C\lambda^{Q} |B(x,r)|.
\end{eqnarray}

\subsection{Singular integrals on Carnot--Carath\'eodory space}

To state the singular integral operators on $M$ studied in [NS04], we first recall
that $\varphi$ is a bump function associated to a ball $B(x_0,r)$
if $\varphi$ is supported in this ball and satisfies the differential
inequalities $|\partial_X^a\varphi|\lesssim r^{-a}$ for all
monomials $\partial_X$ in $X_1,\cdots,X_k$ of degree $a$ and all
$a\geq 0$.

Singular integral operators $T$ considered in [NS04] are initially given as
mappings from $C_0^{\infty}(M)$ to $C^{\infty}(M)$ with a
distribution kernel $K(x,y)$ which is $C^{\infty}$ away from the
diagonal of $M\times M$, and the following
properties are satisfied:
\begin{itemize}
\item[(I-1)] If $\varphi,\psi\in C_0^{\infty}(M)$ have disjoint supports, then
$$\langle T\varphi,\psi\rangle=\int_{M\times M}K(x,y)\varphi(y)\psi(x)dydx.$$
\item[(I-2)] If $\varphi$ is a normalized bump function associated to a
ball of radius $r$, then $|\partial_X^aT\varphi|\lesssim r^{-a}$ for
each integer $a\geq 0$.
\item[(I-3)] If $x\neq y$, then for every integer $a\geq 0$,
$$|\partial_{X,Y}^aK(x,y)|\lesssim d(x,y)^{-a}V(x,y)^{-1}.$$
\item[(I-4)] Properties (I-1) through (I-3) also hold with $x$ and $y$
interchanged. That is, these properties also hold for the adjoint
operator $T^t$ defined by
$$\langle T^t\varphi,\psi\rangle=\langle T\psi,\varphi\rangle.$$
\end{itemize}

Now we turn to the product case with two factors.
 Here the operator $T$ is initially
defined from $C_0^{\infty}(\widetilde{M})$ to
$C^{\infty}(\widetilde{M}),$ where $\widetilde{M}=M_1\times M_2.$ $K(x_1,y_1,x_2,y_2),$ the distribution
kernel of $T,$ is an $C^{\infty}$ function away from the
``cross''$=\{(x,y): x_1=y_1\ {\rm and}\ x_2=y_2;\ x=(x_1,x_2),
y=(y_1,y_2)\}$ and satisfies the following additional properties:
\begin{itemize}
\item[(II-1)]
$\big<T(\varphi_1\otimes\varphi_2),\psi_1\otimes\psi_2\big>=\int
K(x_1,y_1,x_2,y_2)\varphi_1(y_1)\varphi_2(y_2)\psi_1(x_1)\psi_2(x_2)dydx$
\[{\rm whenever} \left\{\begin{array} { r@{\quad \quad}l }
\varphi_1,\psi_1\in C_0^{\infty}(M_1)\ \ {\rm and\ have\ disjoint\
support,}
\\ \varphi_2,\psi_2\in C_0^{\infty}(M_2)\ \ {\rm and\
have\ disjoint\ support.}
\end{array}\right.
\]
\item[(II-2)] For each bump function $\varphi_2$ on $M_2$ and each $x_2\in
M_2$, there exists a singular integral operator $T^{\varphi_2,x_2}$ (of one parameter) on $M_1$, so that
$$ \big<T(\varphi_1\otimes\varphi_2),\psi_1\otimes\psi_2\big>=
\int_{M_2}\big<T^{\varphi_2,x_2}\varphi_1,\psi_1\big>\psi_2(x_2)dx_2.
$$ Moreover, $x_2\mapsto T^{\varphi_2,x_2}$ is smooth and uniform in
the sense that $T^{\varphi_2,x_2}$, as well as
$\rho_2^L\partial_{X_2}^L(T^{\varphi_2,x_2})$ for each $L\geq 0$,
satisfy the conditions (I-1) to (I-4) uniformly.
\item[(II-3)] If $\varphi_i$ is a bump function on a ball $B^i(r_i)$ in
$M_i$, then for all integers $a_1, a_2\geq 0,$
$$ \big|\partial_{X_1}^{a_1}\partial_{X_2}^{a_2}T(\varphi_1\otimes\varphi_2)\big|\lesssim r_1^{-a_1}r_2^{-a_2}. $$
In (II-2) and (II-3), both inequalities are taken in the sense of
(I-2) whenever $\varphi_2$ is a bump function for $B^2(r_2)$ in $M_2$.
\item[(II-4)]
$\big|\partial_{X_1,Y_1}^{a_1}\partial_{X_2,Y_2}^{a_2}K(x_1,y_1;x_2,y_2)\big|\lesssim
\frac{\displaystyle
d_1(x_1,y_1)^{-a_1}d_2(x_2,y_2)^{-a_2}}{\displaystyle
V_1(x_1,y_1)V_2(x_2,y_2)}$ for all integers $a_1, a_2\geq 0.$
\item[(II-5)] The same conditions hold when the index 1 and 2 are
interchanged, that is, whenever the roles of $M_1$ and $M_2$ are
interchanged.
\item[(II-6)] The same properties are assumed to hold for the 3
``transposes'' of $T$, i.e. those operators which arise by
interchanging $x_1$ and $y_1$, or interchanging $x_2$ and $y_2$, or
doing  both interchanges.
\end{itemize}

As mentioned in Section 1, we would like to point out that in the
cancellation conditions (I-2) and (II-2), one can take $0\leq a,
a_1, a_2\leq 1.$ However, even for such choices, these cancellation
conditions are still little bit strong. See the remark after Theorem
2.18 in Subsection 2.4. To show the $L^p$ boundedness for such
operators, the key idea is to use the Littlewood--Paley theory
developed in [NS04].

\subsection{Littlewood--Paley theory and the $L^p$ boundedness of singular integrals}

To construct the Littlewood--Paley square function, in [NS04] the authors
considered the sub-Laplacian $\mathcal {L}$ on $M$ in self-adjoint
form, given by
\begin{eqnarray*}
\mathcal {L}=\sum_{j=1}^k \mathbb{X}_j^{*}\mathbb{X}_j.
\end{eqnarray*}
Here $(\mathbb{X}_j^{*}\varphi, \psi)=(\varphi, \mathbb{X}_j\psi)$,
where $(\varphi,\psi)=\int\limits_{M} \varphi(x)\bar{\psi}(x)d\mu(x)
$, and $\varphi, \psi\in C_0^{\infty}(M)$, the space of $C^{\infty}$
functions on $M$ with compact support. In general,
$\mathbb{X}_j^{*}= -\mathbb{X}_j+a_j$, where $a_j\in C^{\infty}(M)$.
The solution of the following initial value problem for the heat
equation,
\begin{eqnarray*}
{{\partial u} \over {\partial s}}(x,s)+      \mathcal {L}_x u(x,s)=0
\end{eqnarray*}
with $u(x,0)=f(x)$, is given by $u(x,s)=H_s(f)(x)$, where $H_s$ is
the operator given via the spectral theorem by $H_s=e^{-s\mathcal
{L}}$, and an appropriate self-adjoint extension of the non-negative
operator $\mathcal {L}$ initially defined on $C_0^{\infty}(M)$. And
they proved that for $f\in L^2(X)$,
\begin{eqnarray*}
H_s(f)(x)=\int_M H(s,x,y)f(y)d\mu(y).
\end{eqnarray*}
Moreover, $H(s,x,y)$ has some nice properties (see Proposition 2.3.1
in \cite{NS04} and Theorem 2.3.1 in \cite{NS01a}). We restate them
as follows:
\begin{itemize}
\item[(1)] $H(s,x,y)\in C^{\infty}\big([0,\infty)\times M\times M
\backslash \{s=0\ {\rm and}\ x=y \}\big).$
\item[(2)] For every integer $N\geq 0$,
\begin{eqnarray*}
\lefteqn{|\partial_s^j\partial_X^L\partial_Y^K H(s,x,y)|}\\
& \lesssim &\frac{\displaystyle 1 }{\displaystyle
(d(x,y)+\sqrt{s})^{2j+K+L} } \frac{\displaystyle 1 }{\displaystyle
V(x,y)+V_{\sqrt{s}}(x) +V_{\sqrt{s}}(y) } \bigg(\frac{\displaystyle
\sqrt{s} }{\displaystyle d(x,y)+\sqrt{s} } \bigg)^{N\over 2}
\end{eqnarray*}
\item[(3)] For each integer $L\geq 0$ there exist an integer $N_L$ and a
constant $C_L$ so that if $\varphi\in C_0^{\infty}(B(x_0,\delta))$,
then for all $s\in(0,\infty),$
$$ |\partial_X^L H_s[\varphi](x_0)|\leq C_L\delta^{-L}\sup_x\sum_{|J|\leq N_L}\delta^{|J|}|\partial_X^J\varphi(x)|. $$
\item[(4)] For all $(s,x,y)\in (0,\infty)\times M \times M$,
\begin{eqnarray*}
H(s,x,y)&=&H(s,y,x);\\
H(s,x,y)&\geq& 0.
\end{eqnarray*}
\item[(5)] For all $(s,x)\in (0,\infty)\times M$, $\int H(s,x,y)dy=1.$
\item[(6)]  For $1\leq p\leq \infty$, $\|H_s[f]\|_{L^p(M)}\leq
\|f\|_{L^p(M)}$.
\item[(7)]  For every $\varphi\in C_0^{\infty}(M)$ and every $t\geq 0$,
$\lim\limits_{s\rightarrow 0}\|H_s[\varphi]-\varphi\|_t=0$, where
$\|\cdot\|_t$ denotes the Sobolev norm.
\end{itemize}

To introduce the reproducing identity and the Littlewood--Paley
square function, they define a bounded operator
$Q_s=2s{\displaystyle{\partial H_s}\over\displaystyle \partial s}$,
$s>0$, on $L^2(M)$. Denote by $q_s(x,y)$ the kernel of $Q_s$. Then
from the estimates of $H(s,x,y)$, we have
\begin{itemize}
\item[(a)] $q_s(x,y)\in C^{\infty}\big( M\times M \backslash \{x=y \}\big).$
\item[(b)] For every integer $N\geq 0$,
\begin{eqnarray*}
|\partial_X^L\partial_Y^K q_s(x,y)|  \lesssim
\frac{\displaystyle 1 }{\displaystyle (d(x,y)+\sqrt{s})^{K+L} }
\frac{\displaystyle 1 }{\displaystyle V(x,y)+V_{\sqrt{s}}(x)
+V_{\sqrt{s}}(y) } \bigg(\frac{\displaystyle \sqrt{s}
}{\displaystyle d(x,y)+\sqrt{s} } \bigg)^{N\over 2}.
\end{eqnarray*}
\item[(c)] $\int q_s(x,y)dy=\int q_s(x,y)dx=0.$
\end{itemize}

The reproducing identity was established via the operators
$\{Q_s\}_{s>0}$, which plays an important role in Littlewood--Paley
theory and boundedness of singular integral operators. We state it
as follows.
\begin{prop}[\cite{NS04}]\label{prop-of-reproducing formula by nagel and stein}
Let $Q_s^2=Q_s\cdot Q_s$. For $f\in L^2(M)$,
\begin{eqnarray}\label{reproducing formula by nagel and stein}
 \int_0^\infty Q_s^2[f]{ds\over s} =f,
\end{eqnarray}
where the integral on the left is defined as
$\lim\limits_{\epsilon\rightarrow
0}\int_{\epsilon}^{1/\epsilon}Q_s^2[f]{ds\over s}$, with the limit
taken in the $L^2$ norm.
\end{prop}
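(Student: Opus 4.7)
The plan is to prove the reproducing identity via the spectral theorem applied to the self-adjoint extension of $\mathcal{L}$. Writing $\mathcal{L}=\int_0^\infty \lambda\, dE(\lambda)$, so that $H_s = e^{-s\mathcal{L}} = \int_0^\infty e^{-s\lambda}\,dE(\lambda)$, differentiation in $s$ yields
$$
Q_s = 2s\frac{\partial H_s}{\partial s} = -2s\,\mathcal{L}\, e^{-s\mathcal{L}} = \int_0^\infty \bigl(-2s\lambda\, e^{-s\lambda}\bigr)\,dE(\lambda).
$$
Since $Q_s$ is self-adjoint (because $\mathcal{L}$ is), the operator $Q_s^2$ has spectral multiplier $m_s(\lambda)=4s^2\lambda^2 e^{-2s\lambda}$. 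The task therefore reduces to verifying, on the level of spectral multipliers, that the cut-off integral
$$
M_\epsilon(\lambda) := \int_\epsilon^{1/\epsilon} m_s(\lambda)\,\frac{ds}{s}=\int_\epsilon^{1/\epsilon} 4s\lambda^2 e^{-2s\lambda}\,ds
$$
converges to $\mathbf{1}_{(0,\infty)}(\lambda)$ in a sense strong enough to yield convergence of $\int_\epsilon^{1/\epsilon} Q_s^2 f\,\frac{ds}{s}$ to $f$ in the $L^2(M)$ norm.

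The elementary calculation at the heart of the proof is the substitution $u=2s\lambda$ (for $\lambda>0$), under which
$$
M_\epsilon(\lambda)=\int_{2\epsilon\lambda}^{2\lambda/\epsilon} u\, e^{-u}\,du,
$$
so that for every $\lambda>0$ we have $M_\epsilon(\lambda)\to \int_0^\infty u e^{-u}\,du=1$ as $\epsilon\to 0$, while $M_\epsilon(0)=0$ identically. Moreover $0\le M_\epsilon(\lambda)\le 1$ uniformly in $\epsilon$ and $\lambda$. By dominated convergence against the finite scalar measure $d\|E(\lambda)f\|^2$ on $[0,\infty)$, I then obtain
$$
\Bigl\|\int_\epsilon^{1/\epsilon} Q_s^2 f\,\frac{ds}{s} - \bigl(f-E(\{0\})f\bigr)\Bigr\|_{L^2}^2 = \int_0^\infty \bigl|M_\epsilon(\lambda)-\mathbf{1}_{(0,\infty)}(\lambda)\bigr|^2\,d\|E(\lambda)f\|^2 \longrightarrow 0,
$$
with the exchange of the Bochner integral in $s$ and the spectral integral in $\lambda$ justified by the uniform bound above together with the integrability of $s\mapsto \|Q_s^2 f\|_{L^2}/s$ on $[\epsilon,1/\epsilon]$.

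The last step is to verify that the null-space projection $E(\{0\})$ acts as zero on $L^2(M)$, so that the limit above coincides with $f$. Any $h\in L^2(M)$ with $\mathcal{L}h=0$ satisfies $H_sh=h$ for every $s>0$, whence by the semigroup property $\|H(s,x,\cdot)\|_{L^2}^2=H(2s,x,x)\lesssim V_{\sqrt{s}}(x)^{-1}$ from property~(2) of the heat kernel; Cauchy--Schwarz then gives the pointwise bound $|h(x)|\lesssim V_{\sqrt{s}}(x)^{-1/2}\|h\|_{L^2}\to 0$ as $s\to\infty$, using the lower volume growth (\ref{lower dim}) in the unbounded setting, so $h\equiv 0$ (in the compact case the identity is understood modulo constants, which form the entire null space). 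This is the one subtle point: the spectral/dominated-convergence machinery is routine once one notices the clean substitution $u=2s\lambda$, so the main obstacle is not the $L^2$ limit itself but invoking heat-kernel decay from~\cite{NS04} carefully enough to exclude any $L^2$ null-space contribution in the Carnot--Carath\'eodory setting.
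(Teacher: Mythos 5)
Your spectral-theorem argument is correct and is essentially the standard proof of this identity: the paper itself quotes Proposition \ref{prop-of-reproducing formula by nagel and stein} from \cite{NS04} without proof, and the argument there is exactly this computation $\int_0^\infty 4s\lambda^2 e^{-2s\lambda}\,ds=1$ for $\lambda>0$ together with the absence of an $L^2$ null space for $\mathcal{L}$, which you correctly rule out via the on-diagonal heat-kernel decay and the volume lower bound (\ref{lower dim}) in the unbounded setting relevant here. No gaps.
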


\noindent The Littlewood--Paley square function $S(f)$ is defined by
$$\big(S[f](x)\big)^2=\int_0^\infty |Q_s[f](x)|^2{ds\over s},$$
and we have
\begin{prop}[\cite{NS04}]\label{prop-of-Nagel-Stein square function}
For $1<p<\infty$,
$\|S[f]\|_{L^p(M)}\thickapprox \|f\|_{L^p(M)}.$
\end{prop}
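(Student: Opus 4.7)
My strategy is to interpret $f \mapsto (Q_s f)_{s>0}$ as a vector-valued singular integral from $L^p(M)$ into $L^p\bigl(M; L^2((0,\infty), ds/s)\bigr)$ and invoke the vector-valued Calder\'on--Zygmund theory on the space of homogeneous type $(M,d,d\mu)$, whose doubling condition is (\ref{doubling condition}). The proof splits into three parts: the $L^2$ identity via the spectral theorem, the $L^p$ upper bound via vector-valued Calder\'on--Zygmund, and the reverse inequality via polarization of the reproducing formula (\ref{reproducing formula by nagel and stein}).

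For the $L^2$ endpoint, because $\mathcal{L}$ is non-negative and self-adjoint and $Q_s = -2s\mathcal{L}\, e^{-s\mathcal{L}}$, Fubini together with the elementary identity $\int_0^\infty (2s\lambda)^2 e^{-2s\lambda}\,ds/s = 1$ for every $\lambda>0$ yields $\|S[f]\|_2 \approx \|f\|_2$ (the null space of $\mathcal{L}$, if nontrivial, is handled by a standard limiting argument that is also built into (\ref{reproducing formula by nagel and stein})). For the $L^p$ upper bound, I treat the vector-valued kernel $\mathbf{K}(x,y) := (q_s(x,y))_{s>0}$ with values in $\mathcal{H} := L^2((0,\infty),ds/s)$. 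From the pointwise kernel estimates (a)--(c) for $q_s$, the plan is to verify (i) the size bound $\|\mathbf{K}(x,y)\|_{\mathcal{H}} \lesssim V(x,y)^{-1}$ by splitting the $s$-integral at $s = d(x,y)^2$ and using the rapid off-diagonal factor $(\sqrt{s}/(d(x,y)+\sqrt{s}))^{N/2}$ with $N$ large, and (ii) a H\"ormander-type regularity estimate
\begin{equation*}
\|\mathbf{K}(x,y) - \mathbf{K}(x',y)\|_{\mathcal{H}} \lesssim \Bigl(\frac{d(x,x')}{d(x,y)}\Bigr)^{\eta}\frac{1}{V(x,y)}\qquad \text{for } d(x,x') \leq d(x,y)/2,
\end{equation*}
and the symmetric estimate in $y$, obtained from the derivative bounds (b) with $L=1$, $K=0$. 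Vector-valued Calder\'on--Zygmund on $(M,d,d\mu)$ together with the $L^2$ identity then yields $\|S[f]\|_p \lesssim \|f\|_p$ for every $1<p<\infty$.

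For the reverse inequality, the self-adjointness of $Q_s$ and the reproducing identity (\ref{reproducing formula by nagel and stein}) polarize to give, for $f, g \in L^2 \cap L^p$ (resp.\ $L^{p'}$),
\begin{equation*}
\langle f,g\rangle = \int_0^\infty \langle Q_s f, Q_s g\rangle\,\frac{ds}{s},
\end{equation*}
which is bounded pointwise in $x$ by $S[f](x)\,S[g](x)$ after Cauchy--Schwarz in $s$, and then in $L^p\times L^{p'}$ by $\|S[f]\|_p\|S[g]\|_{p'} \lesssim \|S[f]\|_p\|g\|_{p'}$, applying the already proved upper bound at the dual exponent $p'$. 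A density argument and duality then give $\|f\|_p \lesssim \|S[f]\|_p$, completing the equivalence.

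The main technical obstacle will be the vector-valued regularity estimate (ii). The mean-zero condition (c) supplies no help, because (ii) is a pointwise H\"older estimate in the base point rather than a cancellation estimate; the gain must come entirely from the gradient bound (b) combined with integration against the Gaussian-type factor in $s$. The delicate point is that both $V(x,y)$ and $V_{\sqrt{s}}(x)$ are polynomials in the relevant scales with \emph{non-constant} coefficients depending on the base point, so the $s$-integral produces geometric series whose ratios vary with $x$; to sum them uniformly one has to invoke the two-sided polynomial growth (\ref{upper dim})--(\ref{lower dim}) rather than the bare doubling condition (\ref{doubling condition}). Once this uniform control is in place, the remaining reduction to the scalar Calder\'on--Zygmund theorem on the homogeneous space $(M,d,d\mu)$ is routine.
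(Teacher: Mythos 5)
The paper does not prove this proposition: it is quoted verbatim from [NS04], so there is no internal proof to compare against. Your argument --- realizing $f\mapsto (Q_sf)_{s>0}$ as an $L^2(ds/s)$-valued Calder\'on--Zygmund operator, verifying the vector-valued size and H\"ormander conditions from the kernel bounds (b) together with the growth estimates (\ref{upper dim})--(\ref{lower dim}), and obtaining the reverse inequality by polarizing the reproducing identity (\ref{reproducing formula by nagel and stein}) and dualizing --- is precisely the standard proof given in the cited source, and it is correct as outlined.
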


We now consider that $\widetilde{M}=M_1\times M_2$, where each $M_i$
is as in Subsection 2.1. For each $M_i$, we have a heat operator
$H^i_{s_i}$, and a corresponding $Q_{s_i}^i$. If $f$ is a function
on $\widetilde{M}$ we define $Q_{s_1}^1\cdot
Q_{s_2}^2(f)=Q_{s_1}^1\otimes Q_{s_2}^2(f)$, with $Q^1$ acting on the
$M_1$ variable and $Q^2$ acting on the $M_2$ variable, respectively. The product
square function $\widetilde{S}$ is then given by
$$ \big(\widetilde{S}(f)(x,y)\big)^2=\int_0^\infty\int_0^\infty |Q_{s_1}^1\cdot Q_{s_2}^2(f)(x,y)|^2{ds_1ds_2\over s_1s_2}, $$
and, as showed in \cite{NS04}, we have
\begin{prop}[\cite{NS04}]\label{Nagel-Stein product square function}
For $1<p<\infty$,
$\|\widetilde{S}(f)\|_{L^p(\widetilde{M})}\thickapprox\|f\|_{L^p(\widetilde{M})}$.
\end{prop}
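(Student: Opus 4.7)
The plan is to prove the two-parameter inequality by iterating the one-parameter result of Proposition 2.4, using vector-valued Littlewood--Paley theory, and then to recover the reverse inequality from the tensor-product reproducing formula combined with duality.

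First, I would promote Proposition 2.4 to its vector-valued analog: for any separable Hilbert space $\mathcal{H}$ and $\mathcal{H}$-valued $F \in L^p(M;\mathcal{H})$, the inequality $\|S(F)\|_{L^p(M;\mathcal{H})} \thickapprox \|F\|_{L^p(M;\mathcal{H})}$ holds for $1<p<\infty$, where the square function acts componentwise. This is a standard consequence of Banach-valued Calder\'on--Zygmund theory: the scalar kernel estimates (a)--(c) for $q_s(x,y)$ recalled from [NS04] show that the map $F \mapsto (Q_s F)_{s>0}$ fits into the framework of a CZ-operator with values in $L^2(\mathbb{R}_+,ds/s;\mathcal{H})$, so once one has the scalar $L^2$ bound (from the reproducing formula) the standard good-$\lambda$/interpolation argument applies uniformly in $\mathcal{H}$.

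For the upper bound $\|\widetilde{S}(f)\|_{L^p(\widetilde{M})} \lesssim \|f\|_{L^p(\widetilde{M})}$, take $\mathcal{H} = L^2(\mathbb{R}_+, ds_2/s_2)$ and view
$$G(x_1,x_2) = \bigl(Q^2_{s_2} f(x_1,x_2)\bigr)_{s_2>0}$$
as an $\mathcal{H}$-valued function on $\widetilde{M}$. Because $Q^1_{s_1}$ acts only on $x_1$, one has $\|Q^1_{s_1} G(x_1,x_2)\|_{\mathcal{H}}^2 = \int_0^\infty |Q^1_{s_1} Q^2_{s_2} f(x_1,x_2)|^2 \, ds_2/s_2$, so applying Step 1 in the variable $x_1$ with values in $\mathcal{H}$ yields
$$\|\widetilde{S}(f)\|_{L^p(\widetilde{M})} \;\lesssim\; \|G\|_{L^p(\widetilde{M};\mathcal{H})} \;=\; \bigl\| S_2(f) \bigr\|_{L^p(\widetilde{M})},$$
where $S_2$ is the one-parameter square function on $M_2$. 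Then Fubini's theorem together with Proposition 2.4 applied slice-by-slice to $f(x_1,\cdot)$ gives $\|S_2(f)\|_{L^p(\widetilde{M})} \thickapprox \|f\|_{L^p(\widetilde{M})}$, finishing the upper bound.

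For the lower bound, I would tensor the reproducing formula (\ref{reproducing formula by nagel and stein}) to obtain, for $f,g \in L^2(\widetilde{M})$,
$$\langle f, g\rangle \;=\; \int_0^\infty\!\!\int_0^\infty \bigl\langle Q^1_{s_1} Q^2_{s_2} f,\; Q^1_{s_1} Q^2_{s_2} g\bigr\rangle \frac{ds_1}{s_1}\frac{ds_2}{s_2},$$
using self-adjointness of $Q^i_{s_i}$. Applying Cauchy--Schwarz in $(s_1,s_2)$ pointwise and then H\"older in $x$ yields
$$|\langle f,g\rangle| \;\leq\; \int_{\widetilde{M}} \widetilde{S}(f)(x)\,\widetilde{S}(g)(x)\,dx \;\leq\; \|\widetilde{S}(f)\|_{L^p}\,\|\widetilde{S}(g)\|_{L^{p'}}.$$
Invoking the upper bound already proven, now for the exponent $p'$, gives $\|\widetilde{S}(g)\|_{L^{p'}} \lesssim \|g\|_{L^{p'}}$, and duality then produces $\|f\|_{L^p} \lesssim \|\widetilde{S}(f)\|_{L^p}$. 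The main technical obstacle is the vector-valued promotion of the single-parameter square-function inequality, but the CZ estimates listed in (a)--(c) make this routine; the remainder is bookkeeping via Fubini and duality.
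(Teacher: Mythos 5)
Your proposal is correct and is essentially the argument the paper relies on: Proposition 2.5 is quoted from [NS04] without proof, and the analogous discrete statement (Theorem 2.10) is explicitly attributed to ``iteration as given in [FS]'', which is precisely your route of promoting the one-parameter bound of Proposition 2.4 to Hilbert-space-valued functions, iterating in the second variable, and recovering the lower bound from the polarized tensor reproducing formula plus duality. No gaps.
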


The following $L^p, 1<p<\infty,$ boundedness for the product singular integral operator was obtained in \cite{NS04}.
\begin{theorem}[\cite{NS04}]\label{theorem-of-product T-bd-on-Lp}
For $1<p<\infty$, each product singular integral satisfying
conditions (II-1) to (II-6) extends to be a bounded operator on
$L^p(\widetilde{M})$.
\end{theorem}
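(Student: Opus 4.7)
The plan is to prove $L^p$ boundedness by combining the product Calder\'on reproducing identity with an almost-orthogonality estimate adapted to the two-parameter setting, then closing the argument via the vector-valued Littlewood--Paley characterization established in Proposition~\ref{Nagel-Stein product square function}. I would first apply the reproducing identity of Proposition~\ref{prop-of-reproducing formula by nagel and stein} in each factor to write, for $f,g\in C_0^\infty(\widetilde M)$,
\begin{equation*}
\langle Tf,g\rangle
=\int_0^\infty\!\!\int_0^\infty\!\!\int_0^\infty\!\!\int_0^\infty
\big\langle Q_{t_1}^{1}Q_{t_2}^{2}\,T\,Q_{s_1}^{1}Q_{s_2}^{2}\widetilde{F},\widetilde{G}\big\rangle
\frac{ds_1}{s_1}\frac{ds_2}{s_2}\frac{dt_1}{t_1}\frac{dt_2}{t_2},
\end{equation*}
where $\widetilde F,\widetilde G$ come from writing $f$ and $g$ via two extra factors of $Q$ in the $L^2$ pairing (possible since the kernel of $Q_s$ is essentially self-adjoint).

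The core step is to prove a product almost-orthogonality bound of the form
\begin{equation*}
\bigl|\bigl(Q_{t_1}^1Q_{t_2}^2\, T\, Q_{s_1}^1Q_{s_2}^2\bigr)(x,y)\bigr|
\lesssim \prod_{i=1}^{2}\Bigl(\tfrac{s_i\wedge t_i}{s_i\vee t_i}\Bigr)^{\varepsilon/2}\,
\Phi_i\bigl(x_i,y_i;\sqrt{s_i\vee t_i}\bigr),
\end{equation*}
where $\Phi_i(x_i,y_i;r)= \frac{1}{V(x_i,y_i)+V_r(x_i)+V_r(y_i)}\,\bigl(\frac{r}{r+d_i(x_i,y_i)}\bigr)^{N}$ for some large $N$. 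To obtain this, I would freeze one factor and use the partial-operator description (II-2): for fixed bump data in the $M_2$ variable, $T^{\varphi_2,x_2}$ is a one-parameter NIS operator on $M_1$ satisfying (I-1)--(I-4) uniformly. This reduces the desired bound in the first parameter to the one-parameter almost-orthogonality estimate between $Q_{t_1}^1$, the one-parameter operator, and $Q_{s_1}^1$, which is proved by using the cancellation $\int q_{s_1}\,dy=0$ together with the smoothness of $q_{t_1}$ (when $s_1\le t_1$) or vice versa, and the kernel bound (I-3). Performing the same argument in the second variable using condition (II-5) yields the product estimate.

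With almost-orthogonality in hand, Schur's test on the $(s,t)$ parameter space gives
\begin{equation*}
\bigl|\langle Tf,g\rangle\bigr|
\lesssim \Bigl\|\bigl(\textstyle\int\!\!\int|Q_{s_1}^1Q_{s_2}^2 f|^2 \tfrac{ds_1 ds_2}{s_1 s_2}\bigr)^{1/2}\Bigr\|_{p}
 \Bigl\|\bigl(\textstyle\int\!\!\int|Q_{t_1}^1Q_{t_2}^2 g|^2 \tfrac{dt_1 dt_2}{t_1 t_2}\bigr)^{1/2}\Bigr\|_{p'},
\end{equation*}
after applying the Cauchy--Schwarz inequality in $(s,t)$ and absorbing the geometric factor $\prod_i(s_i\wedge t_i/s_i\vee t_i)^{\varepsilon/2}$ into an integrable kernel on $(\log s,\log t)$. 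The right-hand side is exactly $\|\widetilde{S}f\|_p\,\|\widetilde{S}g\|_{p'}\lesssim \|f\|_p\|g\|_{p'}$ by Proposition~\ref{Nagel-Stein product square function}, and duality then yields the $L^p$ boundedness of $T$ for $1<p<\infty$.

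The main obstacle I expect is the product almost-orthogonality bound, because it requires exploiting cancellation of $Q_s^i$ in \emph{each} variable separately while the operator $T$ mixes them. The device for handling this is the iterated decomposition supplied by (II-2) and (II-5), which lets one first use cancellation in the $M_1$ variable (viewing $T$ as a one-parameter operator parameterized by a bump in $M_2$) and then use cancellation in the $M_2$ variable uniformly in the $M_1$ data; this is exactly why conditions (II-2), (II-5) and the ``transposes'' (II-6) are imposed rather than just the bare kernel bound (II-4).
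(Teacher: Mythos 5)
This theorem is quoted from [NS04] and the paper gives no proof of its own, but the strategy it attributes to Nagel--Stein (reduce to the product Littlewood--Paley square function of Proposition \ref{Nagel-Stein product square function} via almost-orthogonality estimates for $Q_{t_1}^1Q_{t_2}^2\,T\,Q_{s_1}^1Q_{s_2}^2$, obtained by iterating the one-parameter argument through the cancellation conditions (II-2), (II-5), (II-6)) is exactly what you propose. Your sketch is correct in outline and essentially the same approach; the only point to keep in mind when writing it out is that the near-diagonal part of the one-parameter almost-orthogonality estimate needs the bump-function condition (I-2)/(II-2), not just the kernel bound (I-3), and that passing from the kernel estimate to the $L^p$ pairing requires the vector-valued maximal function step before invoking the square-function equivalence.
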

We would like to point again that the cancellation conditions in (II-2) plays a key role in the proof of the above theorem.

\subsection{Hardy space theory on product Carnot--Carath\'eodory spaces}

In this subsection, we describe the product Hardy space theory on $\widetilde M,$ where $\widetilde M=M_1\times M_2$ is a product homogeneous type spaces in the sense of Coifman and Weiss [CW]. See [HLL2]  for more details.
This theory includes the $H^p$ boundedness for operators studied in [NS04] and the space $CMO^p(\widetilde M),$ the dual of $H^p(\widetilde M),$ in particular, $CMO^1(\widetilde M)=BMO(\widetilde M),$ the dual of $H^1(\widetilde M).$

We begin with recalling some necessary results on one-parameter setting. Here we denote by $M$ a homogeneous type spaces in the sense of Coifman and Weiss [CW].
We first recall the definition of an approximation to the
identity, which plays the same role as the heat kernel $H(s,x,y)$
does in [NS04].

\begin{definition}[\cite{HMY1}]\label{def-of-ATI compact}
Let $\vartheta$ be the regularity exponent of $M.$ A sequence
$\{S_k\}_{k\in\mathbb{Z}}$ of operators is said to be an
approximation to the identity if there exists constant $C_0>0$ such
that for all $k\in\mathbb{Z}$ and all $x,x',y$ and $y'\in M$,
$S_k(x,y)$, the kernel of $S_k$ satisfy the following conditions:
\begin{eqnarray}
&{(i)}&\ S_k(x,y)=0\ {\rm if}\ d(x,y)\geq C_02^{-k}\ {\rm
and}\ |S_k(x,y)| \leq C_0 \frac{1}{V_{2^{-k}}(x)+V_{2^{-k}}(y)};\label{size of Sk}\\[4pt]
&{(ii)}&\ |S_k(x,y)-S_k(x',y)| \leq
C_02^{k\vartheta}d(x,x')^{\vartheta}
\frac{1}{V_{2^{-k}}(x)+V_{2^{-k}}(y)};\label{smoothness of Sk}\\[4pt]
&{(iii)}&\ {\rm Property}\ (ii)\ {\rm also\ holds\ with\ } x\
{\rm and}\ y\ {\rm interchanged};\\[4pt]
&{(iv)}&\ |[S_k(x,y)-S_k(x,y')]-[S_k(x',y)-S_k(x',y')]|\\[4pt]
&&\qquad\leq
C_02^{2k\vartheta}d(x,x')^{\vartheta}d(y,y')^{\vartheta}
\frac{1}{V_{2^{-k}}(x)+V_{2^{-k}}(y)};\nonumber\\[4pt]
&{(v)}&\ \int\limits_M S_k(x,y) d\mu(y) =\int\limits_M S_k(x,y)
d\mu(x) = 1.
\end{eqnarray}
\end{definition}

We remark that the existence of such an approximation to the
identity follows from Coifman's construction which was first
appeared in \cite{DJS} on space of homogeneous type.
See also
\cite{HMY2} for more details on $M.$

To define the Littlewood--Paley square function, we also need to
recall the spaces of test functions and distributions on $M$.

\begin{definition}[\cite{HMY1}]\label{def-of-test-func-space}
Let $\vartheta$ be the regularity exponent of $M$ and let $0<\gamma, \beta\leq \vartheta$, $x_0\in M$ and $r>0.$
A function $f$ defined on $M$ is said to be a test function of type
$(x_0,r,\beta,\gamma)$ centered at $x_0$ if $f$ satisfies the
following conditions
\begin{itemize}
\item[(i)] $|f(x)|\leq C \frac{\displaystyle 1}{\displaystyle
V_r(x_0)+V(x,x_0)} \Big(\frac{\displaystyle r}{\displaystyle
r+d(x,x_0)}\Big)^{\gamma}$;
\item[(ii)] $|f(x)-f(y)|\leq C \Big(\frac{\displaystyle
d(x,y)}{\displaystyle r+d(x,x_0)}\Big)^{\beta} \frac{\displaystyle
1}{\displaystyle V_r(x_0)+V(x,x_0)} \Big(\frac{\displaystyle
r}{\displaystyle r+d(x,x_0)}\Big)^{\gamma}$
\item[] for all $x,y\in M$ with
$d(x,y)<{\frac{1}{2A}}(r+d(x,x_0)).$
\end{itemize}
\end{definition}

If $f$ is a test function of type $(x_0,r,\beta,\gamma)$, we write
$f\in G(x_0,r,\beta,\gamma)$ and the norm of $f\in
G(x_0,r,\beta,\gamma)$ is defined by
$$\|f\|_{G(x_0,r,\beta,\gamma)}=\inf\{C>0:\ (i)\ {\rm and }\ (ii)\ {\rm hold} \}.$$
Now fix $x_0\in M$ we denote $G(\beta,\gamma)=G(x_0,1,\beta,\gamma)$
and by $G_0(\beta,\gamma)$ the collection of all test functions in
$G(\beta,\gamma)$ with $\int_M f(x) dx=0.$ It is easy to
check that $G(x_1,r,\beta,\gamma)=G(\beta,\gamma)$ with equivalent
norms for all $x_1\in M$ and $r>0$. Furthermore, it is also easy to
see that $G(\beta,\gamma)$ is a Banach space with respect to the
norm in $G(\beta,\gamma)$.

Let $\GG(\beta,\gamma)$ be the completion of the space
$G_0(\vartheta,\vartheta)$ in the norm of $G(\beta,\gamma)$ when
$0<\beta,\gamma<\vartheta$. If $f\in \GG(\beta,\gamma)$, we then
define $\|f\|_{\GG(\beta,\gamma)}=\|f\|_{G(\beta,\gamma)}$.
$(\GG(\beta,\gamma))'$, the distribution space, is defined by the
set of all linear functionals $L$ from $\GG(\beta,\gamma)$ to
$\mathbb{C}$ with the property that there exists $C\geq0$ such that
for all $f\in \GG(\beta,\gamma)$,
$$|L(f)|\leq C\|f\|_{\GG(\beta,\gamma)}.$$

Let $D_k=S_k-S_{k-1},$ where $S_k$ is an approximation to the
identity on $M$ with the regularity exponent $\vartheta.$ The
Littlewood--Paley square function is defined as follows.

\begin{definition}[\cite{HMY1}]\label{def-of-square-func}
For each $f\in (\GG(\beta,\gamma))'$ with $0<\beta,
\gamma<\vartheta,$ $S(f),$ the Littlewood--Paley square function of
$f,$ is defined by
$$S(f)(x)=\big\lbrace \sum\limits_k |D_k(f)(x)|^2\big\rbrace^{\frac{1}{2}}.$$
\end{definition}

We pass the above one parameter case to the product case. We first introduce the space of test functions and distributions on
$\widetilde{M}=M_1\times M_2.$
\begin{definition}[\cite{HLL2}]\label{def-of-test-func-on-M times M}
Let $\vartheta_1$ and $\vartheta_2$ be the regularity exponents of $M_1$ and $M_2$, respectively. Let $(x_1^0,x_2^0)\in\widetilde{M}$,
$0<\gamma_1,\beta_1\leq \vartheta_1$, $0<\gamma_2,\beta_2\leq \vartheta_2$ and $r_1,
r_2>0.$ A function $f(x_1,x_2)$ defined on $\widetilde{M}$ is said to be
a test function of type
$(x_1^0,x_2^0;r_1,r_2;\beta_1,\beta_2;\gamma_1,\gamma_2)$ if for any
fixed $x_2\in M_2,$ $f(x_1,x_2),$ as a function of the variable
$x_1,$ is a test function in $G(x_1^0,r_1,\beta_1,\gamma_1)$ on $M_1.$
Similarly, for any fixed $x_1\in M_1,$ $f(x_1,x_2),$ as a function of
the variable of $x_2,$ is a test function in
$G(x_2^0,r_2,\beta_2,\gamma_2)$ on $M_2.$ Moreover, the following
conditions are satisfied:
\begin{itemize}
\item[(i)] $\Vert f(\cdot,x_2)\Vert_{G(x_1^0,r_1,\beta_1,\gamma_1)}\leq C
\frac{\displaystyle 1}{\displaystyle
V_{r_2}(x_2^0)+V(x_2^0,x_2)}\Big(\frac{\displaystyle r_2}{\displaystyle
r_2+d_2(x_2^0,x_2)}\Big)^{\gamma_2}$
\item[(ii)] $\Vert f(\cdot,x_2)-f(\cdot,x'_2)\Vert_{G(x_1^0,r_1,\beta_1,\gamma_1)}$
\item[] $\leq C \Big(\frac{\displaystyle d(x_2,x'_2)}{\displaystyle
r_2+d_2(x_2^0,x_2)}\Big)^{\beta_2} \frac{\displaystyle 1}{\displaystyle
V_{r_2}(x_2^0)+V(x_2^0,x_2)}\Big(\frac{\displaystyle r_2}{\displaystyle
r_2+d_2(x_2,x_2^0)}\Big)^{\gamma_2}$
\item[] for all $x_2,x'_2\in M_2$
with $d_2(x_2,x'_2)\leq (r_2+d(x_2,x_2^0))/2A$;
\item[(iii)] Properties $(i)-(ii)$ also hold with $x_1$ and $x_2$ interchanged.
\end{itemize}
\end{definition}

If $f$ is a test function of type
$(x_1^0,x_2^0;r_1,r_2;\beta_1,\beta_2;\gamma_1,\gamma_2)$, we write
$f\in G(x_1^0,x_2^0;r_{1},r_{2};\beta_{1},\beta_{2};$
$\gamma_{1},\gamma_{2})$ and the norm of $f$ is defined by
$$\|f\|_{G(x_1^0,x_2^0;r_{1},r_{2};\beta_{1},\beta_{2};\gamma_{1},\gamma_{2})}=\inf\{C:\ (i),(ii)\
{\rm and}\ (iii)\ \ {\rm hold}\}.$$ Similarly, we denote by
$G(\beta_{1},\beta_{2};\gamma_{1},\gamma_{2})$ the class of
$G(x_1^0,x_2^0;1,1;\beta_{1},\beta_{2};\gamma_{1},\gamma_{2})$ for
any fixed $(x_1^0,x_2^0)\in \widetilde{M}.$ We can
check that
$G(x_{0},y_{0};r_{1},r_{2};\beta_{1},\beta_{2};\gamma_{1},\gamma_{2})=
G(\beta_{1},\beta_{2};\gamma_{1},\gamma_{2})$ with equivalent norms
for all $(x_{0},y_{0})\in \widetilde{M}$ and $r_1,r_2>0$.
Furthermore, it is easy to see that
$G(\beta_{1},\beta_{2};\gamma_{1},\gamma_{2})$ is a Banach space
with respect to the norm in
$G(\beta_{1},\beta_{2};\gamma_{1},\gamma_{2})$.

Next we denote by $G_0(\beta_{1},\beta_{2};\gamma_{1},\gamma_{2})$ the set of all test functions in $G(\beta_{1},\beta_{2};\gamma_{1},\gamma_{2})$ satisfying the cancellation conditions on both variables $x$ and $y$, i.e.,  if $f(x,y)\in
G_0(\beta_{1},\beta_{2};\gamma_{1},\gamma_{2})$, then
$\int_{M_1}f(x,y)dx=\int_{M_2}f(x,y)dy=0.$ Let $\GGp(\beta_1,\beta_2;\gamma_1,\gamma_2)$ be the completion of
the space $G_0(\vartheta_1,\vartheta_2;\vartheta_1,\vartheta_2)$ in
$G(\beta_1,\beta_2;\gamma_1,\gamma_2)$ with
$0<\beta_i,\gamma_i<\vartheta_i,$ for $ i=1,2.$ If
$f\in\GGp(\beta_{1},\beta_{2};\gamma_{1},\gamma_{2}) $, we then
define
$\|f\|_{\GGp(\beta_{1},\beta_{2};\gamma_{1},\gamma_{2})}=\|f\|_{G(\beta_{1},\beta_{2};\gamma_{1},\gamma_{2})}$.

We define the distribution space
$\big(\GGp(\beta_{1},\beta_{2};\gamma_{1},\gamma_{2})\big)^{'}$ by
all linear functionals $L$ from
$\GGp(\beta_{1},\beta_{2};\gamma_{1},\gamma_{2})$ to $\mathbb{C}$
with the property that there exists $C\geq 0$ such that for all
$f\in \GGp(\beta_{1},\beta_{2};\gamma_{1},\gamma_{2})$,
$$|L(f)|\leq C
\|f\|_{\GGp(\beta_{1},\beta_{2};\gamma_{1},\gamma_{2})}.$$

Now the Littlewood--Paley square function on $\widetilde M$ is
defined by
\begin{definition}[\cite{HLL2}]\label{def-of-squa-func}
Let $\{S_{k_i}\}_{k_i\in\mathbb{Z}}$ be approximations to the
identity on $M_i$ and $D_{k_i}=S_{k_i}-S_{k_i-1}, i=1,2.$ For $f\in
\big(\GGp(\beta_1,\beta_2;\gamma_1,\gamma_2)\big)'$ with
$0<\beta_i,\gamma_i<\vartheta_i, i=1,2$, $\widetilde S(f),$ the
Littlewood--Paley square function of $f,$ is defined by
$$ \widetilde S(f)(x_1,x_2)=\Big\{ \sum_{k_1=-\infty}^\infty\sum_{k_2=-\infty}^\infty\big| D_{k_1}D_{k_2}(f)(x_1,x_2) \big|^2 \Big\}^{1/2}.$$
\end{definition}

By the results on each $M_i, i=1,2,$ and iteration as given in
\cite{FS}, we immediately obtain
\begin{theorem}[\cite{HLL2}]\label{theorem Lp bd of suqa func}
If $f\in L^p(\widetilde M), 1<p<\infty,$ then $\Vert \widetilde
S(f)\Vert_p\approx \Vert f\Vert_p.$
\end{theorem}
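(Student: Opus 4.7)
The plan is to iterate the one–parameter square function bound (Proposition~2.14) in the Fefferman–Stein style, using Fubini and a vector–valued extension of the scalar estimate; the reverse inequality is then handled by duality via the discrete Calder\'on reproducing formula.

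First I would establish a vector–valued version of Proposition~2.14 on each factor: namely, for $1<p<\infty$ and a sequence $\{f_j\}\subset L^p(M_i,\ell^2)$,
\begin{equation*}
\Bigl\|\Bigl(\sum_{k,j}|D_{k}f_j|^2\Bigr)^{1/2}\Bigr\|_{L^p(M_i)}
\approx
\Bigl\|\Bigl(\sum_{j}|f_j|^2\Bigr)^{1/2}\Bigr\|_{L^p(M_i)}.
\end{equation*}
The ``$\lesssim$'' direction follows from the scalar estimate together with the standard Calder\'on–Zygmund vector–valued extension of the operator $f\mapsto (D_k f)_k$, whose kernel $D_k(x,y)$ satisfies the size and H\"older conditions implied by (a)--(c). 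The ``$\gtrsim$'' direction comes by duality from the discrete Calder\'on identity on $M_i$ recalled earlier (from [HLL2, Theorem 2.7]) and the Cauchy–Schwarz inequality in $\ell^2$.

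Second, with the vector–valued estimate on $M_1$ in hand, I would freeze the $M_2$–variable and apply it to the $\ell^2$–valued sequence $\{D_{k_2}f(\cdot,x_2)\}_{k_2\in\mathbb{Z}}$ for a.e.\ $x_2\in M_2$:
\begin{equation*}
\int_{M_1}\Bigl(\sum_{k_1,k_2}|D_{k_1}D_{k_2}f(x_1,x_2)|^2\Bigr)^{p/2}dx_1
\lesssim
\int_{M_1}\Bigl(\sum_{k_2}|D_{k_2}f(x_1,x_2)|^2\Bigr)^{p/2}dx_1.
\end{equation*}
Integrating in $x_2$, interchanging the order of integration via Fubini, and applying the scalar Littlewood–Paley bound on $M_2$ for a.e.\ fixed $x_1$ gives
\begin{equation*}
\|\widetilde S(f)\|_{L^p(\widetilde M)}^p
\lesssim
\int_{M_1}\int_{M_2}\Bigl(\sum_{k_2}|D_{k_2}f(x_1,x_2)|^2\Bigr)^{p/2}dx_2\,dx_1
\lesssim
\|f\|_{L^p(\widetilde M)}^p.
\end{equation*}

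Third, for the reverse inequality, I would use the product discrete Calder\'on identity recalled in the introduction together with duality: for $g\in L^{p'}(\widetilde M)$ with $\|g\|_{p'}\le 1$,
\begin{equation*}
|\langle f,g\rangle|
=\Bigl|\sum_{k_1,k_2}\langle D_{k_1}D_{k_2}f,\widetilde{\widetilde D}_{k_1}\widetilde{\widetilde D}_{k_2}g\rangle\Bigr|
\leq
\int\widetilde S(f)\,\widetilde S^{\,\ast}(g)\,dx_1dx_2
\leq
\|\widetilde S(f)\|_p\,\|\widetilde S^{\,\ast}(g)\|_{p'},
\end{equation*}
where $\widetilde S^{\,\ast}$ denotes the analogous square function built from $\widetilde{\widetilde D}_{k_1}\widetilde{\widetilde D}_{k_2}$. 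The upper bound already proved (applied to $\widetilde S^{\,\ast}$, whose building blocks satisfy the same kernel estimates) yields $\|\widetilde S^{\,\ast}(g)\|_{p'}\lesssim\|g\|_{p'}$, and taking the supremum over $g$ gives $\|f\|_p\lesssim\|\widetilde S(f)\|_p$.

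The main technical obstacle is the first step: establishing the vector–valued Littlewood–Paley estimate on each factor $M_i$ in the general Carnot–Carath\'eodory setting. In the Euclidean case this is classical, but here one needs to check that the Calder\'on–Zygmund machinery for $\ell^2$–valued kernels applies to the family $\{D_k\}$ associated to the heat operator on $M_i$; the size/smoothness properties (a)--(b) of $q_s(x,y)$ recalled above are exactly what is needed, and the scalar bound of Proposition~2.14 supplies the $L^2$ endpoint. Once this ingredient is in place, the rest of the argument is a clean iteration.
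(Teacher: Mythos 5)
Your proposal is correct and follows essentially the same route the paper takes: the paper's own proof of this theorem is precisely the Fefferman--Stein iteration of the one-parameter estimate on each factor (it cites [HLL2] and [FS] for exactly this), with the vector-valued extension supplying the first iteration and duality via the discrete Calder\'on identity giving the converse inequality. The only small correction is that the relevant kernels $D_k=S_k-S_{k-1}$ come from the approximation to the identity of Definition 2.7 rather than from the heat semigroup kernel $q_s$, but the size, smoothness and cancellation conditions needed for the $\ell^2$-valued Calder\'on--Zygmund argument are exactly those listed there, so your argument goes through unchanged.
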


We would like to point out that the following discrete
Littlewood--Paley square function is more convenient for the study
of the Hardy space $H^p$ when $p\le 1.$ See [HLL2] for more details.

\begin{definition}\label{def-of-squa-func}
Let $\{S_{k_i}\}_{k_i\in\mathbb{Z}}$ be approximations to the
identity on $M_i$ and $D_{k_i}=S_{k_i}-S_{k_i-1}, i=1,2.$ For $f\in
\big(\GGp(\beta_1,\beta_2;\gamma_1,\gamma_2)\big)'$ with
$0<\beta_i,\gamma_i<\vartheta_i, i=1,2$, $\widetilde S_d(f),$ the
discrete Littlewood--Paley square function of $f,$ is defined by
$$
\widetilde S_d(f)(x_1,x_2)=\Big\{\sum_{k_1=-\infty}^\infty\sum_{k_2=-\infty}^\infty\sum_{I_1}\sum_{I_2 }
|D_{k_1}D_{k_2}(f)(x_1,x_2)|^2\chi_{I_{1}}(x_1)\chi_{I_{2}}(x_2) \Big\}^{1/2},
$$
where for each $k_1$ and $k_2$, $I_{1}$ and $I_{2}$ range over all
the dyadic cubes in $M_1$ and $M_2$ with length
$\ell(I_1)=2^{-k_1-N_1}$ and $\ell(I_2)=2^{-k_2-N_2}$, respectively and $N_1$ and $N_2$ are fixed positive large integers.
\end{definition}

By the Plancherel--P\^olya inequalities in [HLL2], it was shown that
the $L^p$ norm of these two kinds of square functions are
equivalent. More precisely, we have
\begin{prop}[\cite{HLL2}]\label{prop discrete Littlewood--Paley}
For all $f\in \big(\GGp(\beta_1,\beta_2;\gamma_1,\gamma_2)\big)'$
with
$0<\beta_i,\gamma_i<\vartheta_i$, and for $\max\big(\frac{{{Q}_1}}{{{Q}_1}+\vartheta_1},\frac{{{Q}_2}}{{{Q}_2}+\vartheta_2}\big)<p<\infty,
i=1,2,$ we have $\Vert \widetilde S(f)\Vert_p\approx \Vert \widetilde
S_d(f)\Vert_p$, where the implicit constants are independent of $f$.
\end{prop}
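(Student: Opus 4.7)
The plan is to establish both directions of $\|\widetilde S(f)\|_p\approx \|\widetilde S_d(f)\|_p$ by a multiparameter Plancherel--P\^olya type argument based on two ingredients: the discrete Calder\'on reproducing formula on $\widetilde M$ stated in the excerpt, and the almost orthogonality estimates for compositions $D_{j}D_{k}$ on each factor space $M_i$. The key observation is that both $\widetilde S(f)$ and $\widetilde S_d(f)$ are built out of the same operators $D_{k_1}D_{k_2}$; the only difference is whether one evaluates the kernel at an arbitrary point $(x_1,x_2)$ or at the distinguished center $(x_{I_1},x_{I_2})$ of a dyadic rectangle $I_1\times I_2$ of sidelength $2^{-k_1-N_1}\times 2^{-k_2-N_2}$. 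So both inequalities follow once one controls $\sup_{(u_1,u_2)\in I_1\times I_2}|D_{k_1}D_{k_2}f(u_1,u_2)|$ and $\inf_{(u_1,u_2)\in I_1\times I_2}|D_{k_1}D_{k_2}f(u_1,u_2)|$ by the same (vector-valued) quantity.

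For the direction $\|\widetilde S(f)\|_p\lesssim \|\widetilde S_d(f)\|_p$, I first plug the discrete Calder\'on identity
\[
f(x_1,x_2)=\sum_{k_1,k_2}\sum_{I_1,I_2}\mu_1(I_1)\mu_2(I_2)\, D_{k_1}(x_1,x_{I_1})D_{k_2}(x_2,x_{I_2})\, {\widetilde{\widetilde D}}_{k_1}{\widetilde{\widetilde D}}_{k_2}(f)(x_{I_1},x_{I_2})
\]
into $D_{j_1}D_{j_2}f(x_1,x_2)$. Applying the one-parameter almost orthogonality estimate on each $M_i$ to the compositions $D_{j_i}D_{k_i}$ yields a pointwise bound of the form
\[
|D_{j_1}D_{j_2}f(x_1,x_2)|\lesssim \sum_{k_1,k_2}\sum_{I_1,I_2}\prod_{i=1}^{2}2^{-|j_i-k_i|\varepsilon}\,\mu_i(I_i)\,\Phi_{j_i\wedge k_i}^\varepsilon(x_i,x_{I_i})\,|{\widetilde{\widetilde D}}_{k_1}{\widetilde{\widetilde D}}_{k_2}f(x_{I_1},x_{I_2})|,
\]
where $\Phi_m^\varepsilon(x,y)=\frac{1}{V_{2^{-m}}(x)+V(x,y)}\frac{2^{-m\varepsilon}}{(2^{-m}+d(x,y))^\varepsilon}$. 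For each fixed $(k_1,k_2)$, the sum over $(I_1,I_2)$ is controlled by a power of the strong (product) Hardy--Littlewood maximal function $\mathcal M_s$ applied to the discrete array $\bigl|{\widetilde{\widetilde D}}_{k_1}{\widetilde{\widetilde D}}_{k_2}f(x_{I_1},x_{I_2})\bigr|\chi_{I_1\times I_2}$, provided we raise the quantities to power $r$ with $\max_i Q_i/(Q_i+\varepsilon)<r<p$; this is precisely where the lower threshold $\max(Q_i/(Q_i+\vartheta_i))$ on $p$ enters. Summing the geometric series in $|j_i-k_i|$ and applying the Fefferman--Stein vector-valued maximal inequality for $\mathcal M_s$ in $L^{p/r}(\ell^{2/r})$ produces the desired bound.

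The reverse direction $\|\widetilde S_d(f)\|_p\lesssim \|\widetilde S(f)\|_p$ follows the same blueprint in reverse: starting again from the discrete Calder\'on identity, I bound $|D_{j_1}D_{j_2}f(x_{I_1},x_{I_2})|$ in the same way but now view the resulting expression as an average taking $\chi_{I_1\times I_2}(x_1,x_2)$, comparing $x_{I_i}$ with an arbitrary $x_i\in I_i$ via the regularity of $D_{k_i}$ to cost only an innocuous factor from the almost orthogonality. The Plancherel--P\^olya bound then reduces to the same maximal-function inequality, finishing the equivalence. The main obstacle, and the reason the range of $p$ is constrained, is that the almost orthogonality on each factor only yields decay $2^{-|j_i-k_i|\varepsilon}$ with $\varepsilon<\vartheta_i$, which forces the auxiliary exponent $r$ to satisfy $r>Q_i/(Q_i+\vartheta_i)$ in order to control the spatial sum over $I_i$ by $\mathcal M_s$; consequently Fefferman--Stein is only available for $p>\max_i Q_i/(Q_i+\vartheta_i)$. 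Handling both parameters simultaneously and iterating the one-variable maximal estimates in the product setting (so that the constants remain independent of $f$ and of the scales) is the subtlest technical point, but the product structure of the kernels $D_{k_1}D_{k_2}$ allows the iteration to proceed factor by factor.
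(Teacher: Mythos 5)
Your proposal is correct and follows essentially the route the paper itself invokes: the paper states this proposition without proof, citing the Plancherel--P\^olya inequalities of [HLL2], and your sketch is precisely that argument (discrete Calder\'on identity, almost orthogonality of the compositions $D_{j_i}D_{k_i}$, domination of the lattice sums by the strong maximal function $\mathcal{M}_s$ at an auxiliary exponent $r>\max_i Q_i/(Q_i+\vartheta_i)$, and the iterated Fefferman--Stein vector-valued maximal inequality). The only point worth recording is that $r$ must also satisfy $r<\min(p,2)$ so that the vector-valued maximal inequality applies in $L^{p/r}(\ell^{2/r})$; this is automatic here since the lower threshold is below $1$.
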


We are ready to introduce the Hardy spaces on $\widetilde M.$
\begin{definition}[\cite{HLL2}]\label{def-of-product-Hardy-space}
Let $\max\big(\frac{ Q_1}{ Q_1+\vartheta_1},\frac{ Q_2}{
Q_2+\vartheta_2} \big) <p\leq1$ and
$0<\beta_i,\gamma_i<\vartheta_i$ for $i=1,2$.
$$H^p(\widetilde M):=\big\lbrace f \in
\big(\GGp(\beta_1,\beta_2;\gamma_1,\gamma_2)\big)':\
\widetilde S_d(f)\in L^p(\widetilde M)\big\rbrace$$
and if $f\in H^p(\widetilde M),$ the norm of $f$ is defined by $\Vert f\Vert_{H^p(\widetilde M)}=\Vert \widetilde S_d(f)\Vert_p.$
\end{definition}

The space $CMO^p(\widetilde M)$ is defined as follows.

\begin{definition}[\cite{HLL2}]\label{def-of-product-CMO-space}
Let $\max\big(\frac{ 2Q_1}{ 2Q_1+\vartheta_1},\frac{ 2Q_2}{
2Q_2+\vartheta_2} \big) <p\leq1$ and
$0<\beta_i,\gamma_i<\vartheta_i$ for $i=1,2$. Let
$\{S_{k_i}\}_{k_i\in\mathbb{Z}}$ be approximations to the identity
on $M_i$ and for $k_i\in\mathbb{Z}$, set
$D_{k_i}=S_{k_i}-S_{k_i-1}$, $i=1,2$. The generalized Carleson
measure space $CMO^p(\widetilde{M})$ is defined, for
$f\in\big(\GGp(\beta_1,\beta_2;\gamma_1,\gamma_2)\big)',$ by
\begin{eqnarray}\label{product-CMOp-norm}
&&\|f\|_{CMO^p(\widetilde{M})}\\
&&\hskip.5cm=\sup_{\Omega} \bigg\{
\frac{\displaystyle 1}{\displaystyle \mu(\Omega)^{{2\over
p}-1}}\int_{\Omega} \sum_{k_1,k_2} \sum_{I_1\times I_2\subseteq \Omega}
\big|D_{k_1}D_{k_2}(f)(x_1,x_2)\big|^2 \chi_{I_1}(x_1)\chi_{I_2}(x_2)dx_1dx_2
\bigg\}^{1\over 2}<\infty,\nonumber
\end{eqnarray}
where $\Omega$ are taken over all open sets in $\widetilde{M}$ with
finite measures and for each $k_1$ and $k_2$, $I_1,I_2$ range over all
the dyadic cubes in $M_1$ and $M_2$ with length
$\ell(I_1)=2^{-k_1-N_1}$ and $\ell(I_2)=2^{-k_2-N_2}$, respectively.
\end{definition}

The main results in [HLL2] are the following

\begin{theorem}[\cite{HLL2}]\label{theorem T bd on Hp}
Each singular integral $T$ satisfying (II-1) through (II-6) extends
to a bounded operator on $H^p(\widetilde{M})$, and
from $H^p(\widetilde{M})$ to $L^p(\widetilde{M})$ for
$\max\big(\frac{ Q_1}{ Q_1+ \vartheta_1 },\frac{ Q_2}{ Q_2+
\vartheta_2 }\big)<p<\infty.$ Moreover, $T$ extends to a bounded
operator on $BMO(\widetilde{M})$.
\end{theorem}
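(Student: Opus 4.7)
The plan is to lift the classical Fefferman--Stein program to the product Carnot--Carath\'eodory setting by combining the product Littlewood--Paley theory of Subsection~2.3--2.4 with a bilinear almost orthogonality estimate. By the Plancherel--P\^olya type equivalence of the continuous and discrete square functions, the $H^p$ boundedness of $T$ reduces to showing $\|\widetilde S_d(Tf)\|_p\lesssim\|\widetilde S_d(f)\|_p$, and the $H^p\to L^p$ claim to $\|Tf\|_p\lesssim\|\widetilde S_d(f)\|_p$. Expanding $f$ via the discrete Calder\'on reproducing formula from [HLL2, Theorem~2.9] and then applying $D_{k_1'}D_{k_2'}$ (or an identity approximation) to $Tf$ converts the task into controlling a weighted sum of the matrix elements $(D_{k_1'}D_{k_2'}\,T\,D_{k_1}D_{k_2})(x_1',x_2',x_{I_1},x_{I_2})$ paired against the coefficients $\widetilde{\widetilde D}_{k_1}\widetilde{\widetilde D}_{k_2}(f)(x_{I_1},x_{I_2})$.

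The heart of the proof is the product almost-orthogonality estimate
\begin{align*}
&\big|D_{k_1'}D_{k_2'}\,T\,D_{k_1}D_{k_2}(x_1',x_2',x_1,x_2)\big|\le C\,2^{-|k_1'-k_1|\varepsilon}\,2^{-|k_2'-k_2|\varepsilon}\\
&\qquad\times\frac{1}{V_{2^{-(k_1\wedge k_1')}}(x_1')+V_{2^{-(k_1\wedge k_1')}}(x_1)+V(x_1',x_1)}\cdot\frac{2^{-(k_1\wedge k_1')\varepsilon}}{(2^{-(k_1\wedge k_1')}+d_1(x_1',x_1))^{\varepsilon}}\\
&\qquad\times\frac{1}{V_{2^{-(k_2\wedge k_2')}}(x_2')+V_{2^{-(k_2\wedge k_2')}}(x_2)+V(x_2',x_2)}\cdot\frac{2^{-(k_2\wedge k_2')\varepsilon}}{(2^{-(k_2\wedge k_2')}+d_2(x_2',x_2))^{\varepsilon}}.
\end{align*}
To obtain the decay factor $2^{-|k_1'-k_1|\varepsilon}$ one observes that each $D_{k_j}(\cdot,y_j)$ is a bump function adapted to the ball of radius $2^{-k_j}$ around $y_j$ in the first argument. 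Then (II-2) and (II-5), which realize $T$ as a vector-valued singular integral on $M_1$, allow one to argue: if $k_1\le k_1'$ one uses the smoothness of $D_{k_1'}$ against the cancellation of $D_{k_1}$; in the opposite range one uses the cancellation of $D_{k_1'}$ against the smoothness of $TD_{k_1}$ supplied by (II-6). Iterating the same argument in the $M_2$-variable with the $M_1$-side frozen yields the factor $2^{-|k_2'-k_2|\varepsilon}$, and the tensorized spatial decay follows from the size and regularity of $K$ provided by (II-4).

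Plugging this estimate into the bilinear expansion, performing a Schur-type summation in $(k_1',k_1,k_2',k_2)$, and invoking the Fefferman--Stein vector-valued product strong maximal inequality on $\widetilde M$ yields, for $\max\!\big(\frac{Q_1}{Q_1+\vartheta_1},\frac{Q_2}{Q_2+\vartheta_2}\big)<p<\infty$, the bound $\|\widetilde S_d(Tf)\|_p\lesssim\|f\|_{H^p}$, and likewise $\|Tf\|_p\lesssim\|f\|_{H^p}$. For the $BMO(\widetilde M)$ part, the class (II-1)--(II-6) is invariant under the transposes in (II-6), so $T^*$ also satisfies these hypotheses and is $H^1(\widetilde M)$-bounded by the previous step; since $BMO(\widetilde M)=CMO^1(\widetilde M)=(H^1(\widetilde M))^*$ in [HLL2], duality gives the $BMO(\widetilde M)$ boundedness of $T$. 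The principal obstacle is the almost-orthogonality estimate itself, since the cancellation in (II-2) is stated only for tensor products of bumps: obtaining simultaneous scale-decay in \emph{both} parameters requires carefully regarding each $D_{k_j}$ as a bump in its first argument while freezing the other factor, and combining (II-2), (II-5) and (II-6) in a way that genuinely exploits the product structure of the kernel.
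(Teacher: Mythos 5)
Your outline follows essentially the route of [HLL2], which is where this theorem is proved (the present paper only quotes it in Subsection~2.4): expand $f$ by the discrete Calder\'on identity, establish a two--sided almost orthogonality estimate for $D_{k_1'}D_{k_2'}TD_{k_1}D_{k_2}$ using the strong Nagel--Stein cancellation conditions (II-2), (II-5), (II-6) in each factor separately, sum via Schur and the vector-valued strong maximal inequality to get $\|\widetilde S_d(Tf)\|_p\lesssim\|f\|_{H^p}$, and obtain the $BMO$ statement by duality with the $H^1$ boundedness of $T^*$. The full decay $2^{-|k_1'-k_1|\varepsilon}2^{-|k_2'-k_2|\varepsilon}$ is indeed available here precisely because (II-2) encodes cancellation of both $T$ and its transposes on bump functions; this is the point the authors emphasize when they say the Nagel--Stein conditions are stronger than those of their Theorem~A.

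There is, however, one genuine gap: the clause ``and likewise $\|Tf\|_p\lesssim\|f\|_{H^p}$'' does not follow from the Schur summation when $p\le 1$. The almost orthogonality argument controls the square function of $Tf$, hence $\|Tf\|_{H^p}$, but for $p\le1$ the inequality $\|g\|_{L^p}\le C\|g\|_{H^p}$ for $g\in L^2\cap H^p(\widetilde M)$ is a separate nontrivial theorem in the product setting. In [HLL2] it is supplied by the general result (quoted in the introduction of this paper) that $L^2$ boundedness together with $H^p$ boundedness implies $H^p\to L^p$ boundedness, proved without atomic decomposition; in the present paper the analogous step for the weaker operator class is carried out in Subsections~3.2.1--3.2.3 via a Journ\'e-type covering lemma and an atomic decomposition converging simultaneously in $L^q$ and $H^p$. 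Your proposal needs one of these ingredients to close the $H^p\to L^p$ claim for $p\le1$; for $1<p<\infty$ there is no issue since $H^p=L^p$. The $BMO$ step is correct in spirit but should also acknowledge that $Tf$ must first be \emph{defined} for $f\in BMO(\widetilde M)$ (e.g.\ by truncation and weak-$*$ limits against $H^1\cap L^2$, as in the proof of Theorem~3.7 of this paper).
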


\begin{theorem}[\cite{HLL2}]\label{theorem-of-duality-on-product-case} For
 $\max\big(\frac{ 2Q_1}{ 2Q_1+
\vartheta_1 },\frac{ 2Q_2}{ 2Q_2+ \vartheta_2 }\big)<p\leq1$,
$ \big(H^p(\widetilde{M})\big)'=CMO^p(\widetilde{M}).$
More precisely, for $g\in CMO^p(\widetilde M)$ then $\ell_g(f)=\langle f,g\rangle,$ initially defined on
$\GGp(\beta_1,\beta_2;\gamma_1,\gamma_2)$ for $0<\beta_i,\gamma_i<\vartheta_i$ for $i=1,2,$ is a continuous linear functional on $H^p(\widetilde M)$ with the norm $\|\ell_g\|\leq C \|g\|_{CMO^p}.$ Conversely, if $\ell$ is a continuous linear functional on $H^p(\widetilde M)$ then there exists a $g\in CMO^p(\widetilde M),$ such that $\ell(f)=\langle f, g\rangle$ for $f\in \GGp(\beta_1,\beta_2;\gamma_1,\gamma_2)$ with $\|g\|_{CMO^p}\leq C\|\ell\|.$
In particular,
$ \big(H^1(\widetilde{M})\big)'=CMO^1(\widetilde{M})=BMO(\widetilde{M}).$
\end{theorem}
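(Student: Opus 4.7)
The plan is to prove both inclusions separately, using the discrete Calder\'on reproducing formula on $\widetilde M$ from [HLL2, Theorem 2.9] together with the Plancherel--P\^olya equivalence (Proposition \ref{prop discrete Littlewood--Paley}) to transfer the problem to a sequence-space pairing indexed by $(k_1,k_2,I_1,I_2)$, and then to read off the Carleson-measure characterization on the $g$-side. The underlying scheme is a product version of Fefferman's proof of $(H^1)^\prime = BMO$ on $\mathbb{R}^n$, adapted to the discrete product setting of [HLL2].

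For the direction $CMO^p(\widetilde M)\subset (H^p(\widetilde M))^\prime$, fix $g\in CMO^p(\widetilde M)$ and $f\in\GGp(\beta_1,\beta_2;\gamma_1,\gamma_2)$. Apply the discrete Calder\'on reproducing formula to $f$ and pair with $g$ to get
\begin{eqnarray*}
\langle f,g\rangle=\sum_{k_1,k_2}\sum_{I_1,I_2}\mu_1(I_1)\mu_2(I_2)\,\widetilde{\widetilde{D}}_{k_1}\widetilde{\widetilde{D}}_{k_2}(f)(x_{I_1},x_{I_2})\,D_{k_1}D_{k_2}(g)(x_{I_1},x_{I_2}).
\end{eqnarray*}
Now decompose the sum over dyadic rectangles $R=I_1\times I_2$ according to the level sets $\Omega_\ell=\{(x_1,x_2):\widetilde S_d(f)(x_1,x_2)>2^\ell\}$, grouping those $R$ with $\mu(R\cap\Omega_\ell)>\tfrac12\mu(R)$ and $\mu(R\cap\Omega_{\ell+1})\leq\tfrac12\mu(R)$. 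By the Journ\'e-type covering lemma, each such family sits inside an enlarged open set $\widetilde\Omega_\ell$ of comparable measure. Apply Cauchy--Schwarz on each family: the $f$-side contributes $\|\widetilde S_d(f)\,\chi_{\Omega_\ell\setminus\Omega_{\ell+1}}\|_2$, while the $g$-side is exactly the Carleson sum over $\widetilde\Omega_\ell$, bounded by $\mu(\widetilde\Omega_\ell)^{\frac{1}{p}-\frac12}\|g\|_{CMO^p(\widetilde M)}$. Summing over $\ell$ using the layer-cake inequality yields $|\langle f,g\rangle|\leq C\|g\|_{CMO^p(\widetilde M)}\|\widetilde S_d(f)\|_p=C\|g\|_{CMO^p(\widetilde M)}\|f\|_{H^p(\widetilde M)}$.

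For the converse $(H^p(\widetilde M))^\prime\subset CMO^p(\widetilde M)$, consider the map $\pi:f\mapsto\{\widetilde{\widetilde{D}}_{k_1}\widetilde{\widetilde{D}}_{k_2}(f)(x_{I_1},x_{I_2})\}$ which by Plancherel--P\^olya embeds $H^p(\widetilde M)$ isometrically (up to equivalence of norms) into a discrete product tent-type space $s^p$ whose norm is $\|\widetilde S_d(\cdot)\|_p$. Given $\ell\in(H^p(\widetilde M))^\prime$, the composition $\ell\circ\pi^{-1}$ is well defined on $\pi(H^p(\widetilde M))$ and Hahn--Banach extends it to all of $s^p$. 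A sequence-space duality argument (the dual of $s^p$ being a discrete analog of the Carleson-measure space) produces $\lambda=\{\lambda_{k_1,k_2,I_1,I_2}\}$ representing the extension. Define
\begin{eqnarray*}
g(x_1,x_2)=\sum_{k_1,k_2}\sum_{I_1,I_2}\mu_1(I_1)\mu_2(I_2)\,D_{k_1}(x_1,x_{I_1})D_{k_2}(x_2,x_{I_2})\,\lambda_{k_1,k_2,I_1,I_2},
\end{eqnarray*}
interpreted in the distributional sense on $\GGp$. Applying $D_{k_1}D_{k_2}$ to $g$ and using the almost orthogonality of $D_{k_i}\widetilde{\widetilde{D}}_{k_i}$ on each factor shows that $\mu_1(I_1)\mu_2(I_2)|D_{k_1}D_{k_2}(g)(x_{I_1},x_{I_2})|^2$ is a Carleson sequence with norm $\leq C\|\ell\|$, giving $g\in CMO^p(\widetilde M)$ with $\|g\|_{CMO^p(\widetilde M)}\leq C\|\ell\|$; the identity $\ell(f)=\langle f,g\rangle$ on test functions follows directly from the reproducing formula.

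The principal obstacle is the stopping-time/covering argument in the first direction: unlike the one-parameter case, arbitrary open sets $\Omega\subset\widetilde M$ in the $CMO^p$ norm are not unions of dyadic rectangles, so a Journ\'e-type covering lemma in the product Carnot--Carath\'eodory setting is indispensable to enlarge the dyadic families to sets of comparable measure. A secondary subtlety is verifying convergence of the discrete Calder\'on series in the pairing $(\GGp,(\GGp)^\prime)$, which relies on almost-orthogonality estimates combined with the Plancherel--P\^olya equivalence between the continuous and discrete square functions.
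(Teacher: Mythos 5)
This theorem is quoted from [HLL2] and not proved in the present paper, so there is no in-paper argument to compare against; your outline does, however, follow what is the standard (and, as far as the cited source goes, the actual) route: a Chang--Fefferman stopping-time argument on the level sets of $\widetilde S_d(f)$ for the inclusion $CMO^p(\widetilde M)\subset (H^p(\widetilde M))'$, and a Frazier--Jawerth lifting to discrete sequence spaces plus Hahn--Banach for the converse. One correction to your commentary: the enlargement of the families indexed by $\ell$ requires only the strong maximal function --- if $\mu(R\cap\Omega_\ell)>\frac12\mu(R)$ then $R\subset\widetilde\Omega_\ell=\{\mathcal{M}_s(\chi_{\Omega_\ell})>\frac12\}$ and $\mu(\widetilde\Omega_\ell)\leq C\mu(\Omega_\ell)$ --- so the Journ\'e-type covering lemma, which you call indispensable here, is in fact not needed for the duality at all; in this paper it is needed for the atomic decomposition of Subsection 3.2.2. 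Also note that the step you compress into ``a sequence-space duality argument,'' namely the identification of the dual of the discrete tent space $s^p$ with the discrete Carleson sequence space, is itself a nontrivial lemma that must be proved (it is where the exponent restriction $p>\max(2Q_i/(2Q_i+\vartheta_i))$ enters), though your synthesis estimate via the almost orthogonality of $D_{k}\widetilde{\widetilde{D}}_{k'}$ is the correct way to close the loop once that lemma is in hand.
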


We remark that the spaces $H^p(\widetilde M)$ and $CMO^p(\widetilde M)$ defined in Definitions \ref{def-of-product-Hardy-space} and \ref{def-of-product-CMO-space}, respectively, are independent of the choices of the approximations to the identity. Moreover, the cancellation conditions in (II-2) are crucial in the proof of Theorems \ref{theorem T bd on Hp} and \ref{theorem-of-duality-on-product-case}.

\section{$T1$ theorem  on product Carnot--Carath\'eodory spaces}
\setcounter{equation}{0}

In this section, we first introduce a class of singular integral operators on product Carnot--Carath\'eodory spaces. As mentioned, this class includes Journ\'{e}'s class on product Euclidean spaces and operators studied in [NS04]. We then prove the product $T1$ theorem on product Carnot--Carath\'eodory spaces, the main result of this paper.

\subsection{Singular integrals on product Carnot--Carath\'eodory spaces}

Suppose that $M_1$ and $M_2$ are Carnot--Carath\'eodory spaces and $\widetilde M=M_1\times M_2$ is the product Carnot--Carath\'eodory space. Let $C^\eta_0(M_1)$ denote the space of continuous functions
$f$ with compact support such that
$$\|f\|_{\eta(M_1)}:= \sup\limits_{x,y\in M_1, x\ne y} \frac{|f(x)-f(y)|}{d_1(x,y)^\eta}<\infty$$
and $C^\eta_0(M_2)$ is defined similarly.

Now let $C^\eta_0(\widetilde{M}), \eta>0,$ denote the space of continuous functions $f$ with compact support such that
$$\| f\|_{\eta} : = \sup_{x_1\ne y_1, x_2\ne y_2}
  \frac{|f(x_1,x_2)-f(y_1,x_2)-f(x_1,y_2)+f(y_1,y_2)|}{d_1(x_1,y_1)^\eta d_2(x_2,y_2)^\eta} <\infty.$$

We first consider one factor case. A continuous function $K(x_1,y_1)$
defined on  $M_1\backslash \{ (x_1, y_1): x_1=y_1\}$ is called a {\it
Calder\'on--Zygmund kernel} if there exist constant $C>0$ and a
regularity exponent $\varepsilon\in (0,1]$ such that
\begin{eqnarray*}
&(a)&\  |K(x_1,y_1)|\leq C V(x_1,y_1)^{-1};\\[4pt]
&(b)&\ \vert K(x_1,y_1)-K(x_1,y_1')\vert\leq C
\big(\frac{d_1(y_1,y_1')}{d_1(x_1,y_1)}\big)^{\varepsilon}V(x_1,y_1)^{-1}
             \qquad {\rm if}\ d_1(y_1,y_1')\le d_1(x_1,y_1)/2A;\\[4pt]
&(c)&\ \vert K(x_1,y_1)-K(x_1',y_1)\vert\leq C
\big(\frac{d_1(x_1,x_1')}{d_1(x_1,y_1)}\big)^{\varepsilon}V(x_1,y_1)^{-1}
             \qquad {\rm if}\ d_1(x_1,x_1')\le d_1(x_1,y_1)/2A.
\end{eqnarray*}

The smallest such constant $C$ is denoted by $|K|_{CZ}.$ We say
that an operator $T$ is a {\it Calder\'on--Zygmund singular
integral operator} associated with a Calder\'on--Zygmund kernel
$K$ if the operator $T$ is a continuous linear operator from
$C^\eta_0(M_1)$ into its dual such that
$$\langle Tf, g \rangle=\iint g(x_1)K(x_1,y_1)f(y_1)dy_1dx_1$$
for all functions $f, g\in C^\eta_0(M_1)$ with disjoint supports.
$T$ is said to be a {\it Calder\'on--Zygmund operator} if it
extends to be a bounded operator on $L^2(M_1).$ If $T$ is a
Calder\'on--Zygmund operator associated with a kernel $K$, its
operator norm is defined by $\|T\|_{CZ}=\|T\|_{L^2\rightarrow
L^2}+ | K|_{CZ}$.

Similarly, we can define the {\it Calder\'on--Zygmund operator}
$T$ on $M_2$ associated with a Calder\'on--Zygmund kernel
$K(x_2,y_2)$, whose operator norm is defined by
$\|T\|_{CZ}=\|T\|_{L^2\rightarrow L^2}+ | K|_{CZ}$.

Now we introduce a class of the {\it product Calder\'on--Zygmund
singular integral operators} on $\widetilde{M}$. Let $T:
C^\eta_0({\widetilde M})\rightarrow[C^\infty_0({\widetilde M})]'$ be
a linear operator defined in the weakest possible sense. $T$ is said
to be a Calder\'on--Zygmund singular integral operator if there
exists a pair $(K_1, K_2)$ of Calder\'on--Zygmund valued operators
on $M_2$ and $M_1,$ respectively, such that

$$\langle g\otimes k, Tf\otimes h\rangle
   =\iint g(x_1)\langle k, K_1(x_1,y_1)h\rangle f(y_1) dx_1dy_1$$
for all $f, g\in C^\eta_0(M_1)$ and $h, k\in C^\eta_0(M_2),$ with supp $f\ \cap$ supp
$g=\emptyset$
and
$$\langle k\otimes g, Th\otimes f\rangle =\iint g(x_2)\langle k, K_2(x_2,y_2)h\rangle f(y_2) dx_2dy_2$$
for all $f, g\in C^\eta_0(M_2)$ and $h, k\in C^\eta_0(M_1),$ with supp $f\ \cap$ supp
$g=\emptyset.$
Moreover, $\|K_i(x_i,y_i)\|_{CZ}$, $i=1,2$, as functions of $x_i, y_i\in M_i,$ satisfy the following conditions:
\begin{eqnarray*}
&(i)&\  \|K_i(x_i,y_i)\|_{CZ}\leq C V(x_i,y_i)^{-1};\\[4pt]
&(ii)&\ \|K_i(x_i,y_i)-K_i(x_i,y_i^{'})\|_{CZ}\leq C
\big(\frac{d_i(y_i,y_i^{'})}{d_i(x_i,y_i)}\big)^{\varepsilon}V(x_i,y_i)^{-1}
             \qquad {\rm if}\ d_i(y_i,y_i^{'})\le d_i(x_i,y_i)/2A;\\[4pt]
&(iii)&\ \|K_i(x_i,y_i)-K_i(x_i^{'},y_i)\|_{CZ}\leq C
\big(\frac{d_i(x_i,x_i^{'})}{d_i(x_i,y_i)}\big)^{\varepsilon}V(x_i,y_i)^{-1}
             \qquad {\rm if}\ d_i(x_i,x_i^{'})\le d_i(x_i,y_i)/2A.
\end{eqnarray*}
We remark, as mentioned, that the above class of the product
Calder\'on--Zygmund singular integral operators includes the class
of operators introduced by Journ\'e on the Euclidean spaces and
studied in [NS04].

Suppose that $T$ is such a {\it product Calder\'on--Zygmund singular
integral operator} on $\widetilde{M}.$ $T$ is said to be a {\it
product Calder\'on--Zygmund operator} on $\widetilde{M}$ if $T$
extends to be a bounded operator on $L^2.$

Before stating the $T1$ theorem on $\widetilde{M}$, we first
describe, for one factor case, how a Calder\'on--Zygmund singular
integral operator $T$ acts on bounded $C^\eta(M_1)$ functions
(denote by $C^\eta_b(M_1)$). Following [J], for $f\in
C^\eta_b(M_1)$, $Tf$ will be defined by a distribution acting on
$C^\eta_{00}(M_1)$, which is a subspace of $C^\eta_0(M_1)$ of
functions $g$ such that $\int g(x) dx =0$. To do this, let $g\in
C^\eta_{00}(M_1)$ and $h\in C^\eta_0(M_1)$ be equal to $f$ on a
neighborhood of supp $g$, so that $g$ and $f-h$ have disjoint
supports.

If $f$ has compact support, then
\begin{eqnarray*}
\langle g, Tf\rangle&=&\langle g, Th\rangle + \langle g, T(f-h)\rangle,
\end{eqnarray*}
and
\begin{eqnarray*}
\langle g, T(f-h)\rangle &=& \iint g(x)K(x_1,y_1)[f(y_1)-h(y_1)]dx_1 dy_1,
\end{eqnarray*}
because $g$ and $f-h$ have disjoint supports.
Since $g$ has cancellation, the second equality above is also equal to
\begin{eqnarray*}
\iint g(x_1)[K(x_1,y_1)-K(x_0,y_1)][f(y_1)-h(y_1)]dx_1 dy_1,
\end{eqnarray*}
where $x_0$ is any point in the support of $g$. Note that this integral is, by the regularity on the kernel $K,$ absolutely convergent even if $(f-h)$ has non-compact support, and is
independent of $x_0$. This integral can therefore serve as a definition of $\langle g, T(f-h)\rangle$. Obviously $\langle g, Th\rangle+
\langle g, T(f-h)\rangle$ does not depend on the choice of $h$. Hence we can set
\begin{eqnarray*}
\langle g, Tf\rangle=\langle g, Th\rangle + \langle g, T(f-h)\rangle
\end{eqnarray*}
for $f\in C^\eta_b(M_1)$ and this gives the desired extension.

In order to state an analogue in the product setting, that is, how a
product Calder\'on--Zygmund singular integral operator $T$ acts on
bounded $C^\eta({\widetilde M})$ functions (denote by
$C^\eta_b({\widetilde M})$), we can first define the operator $T_1$
by the following
$$\langle g_1\otimes g_2, Tf_1\otimes f_2\rangle =\langle g_2, \langle g_1, T_1f_1\rangle f_2\rangle $$
for $f_1, g_1\in C^\eta_0(M_1)$ and $f_2, g_2\in C^\eta_0(M_2).$

Note that when $g_1\in C^\eta_{00}(M_1)$ and $f_1\in C^\eta_b(M_1),
\langle g_1, T_1f_1\rangle $ is well defined. Moreover, $\langle g_1, T_1f_1\rangle $ is a
Calder\'on--Zygmund singular integral operator on $M_2$ with a
Calder\'on--Zygmund kernel
 $\langle g_1, T_1f_1\rangle (x_2,y_2)=\langle g_1, K_2(x_2,y_2)f_1\rangle .$ Therefore, for $g_2\in C^\eta_{00}(M_2)$ and $f_2\in C^\eta_b(M_2), \langle g_2, \langle g_1, T_1f_1\rangle f_2\rangle $
 is well defined. One defines $\langle g_1, T_2f_1\rangle $ similarly for $g_1\in C^\eta_{00}(M_1)$ and $f_1\in C^\eta_b(M_1).$ Using these definitions,
 we can give  a meaning of the notation $T1=0.$ More precisely, $T1=0$ means $\langle g_1\otimes g_2, T1\rangle =0$ for all $g_1\in C^\eta_{00}(M_1)$
 and $g_2\in C^\eta_{00}(M_2),$ that is,
 $$\iint g(x_1)g(x_2)K(x_1,x_2,y_1,y_2)dx_1dx_2dy_1dy_2=0.$$
 Similarly, $T_1(1)=0$ is equivalent to $\langle g_1, \langle g_2, T_2f_2\rangle 1\rangle =0$ for all $g_1\in C^\eta_{00}(M_1)$
 and $f_2, g_2\in C^\eta_0(M_2),$ that is, for $g_1\in C^\eta_{00}(M_1), g_2\in C^\eta_{00}(M_2)$ and almost everywhere $y_2\in M_2,$
 $$\iint g(x_1)g(x_2)K(x_1,x_2,y_1,y_2)dx_1dx_2dy_1=0.$$
 While ${T_1}^*(1)=0$ means ${\langle g_2, T_2f_2\rangle }^*1=0$ in the same conditions. Interchanging the role of indices one obtains the meaning of $T_2(1)=0$ and ${T_2}^*(1)=0.$

We also  need to introduce the definition of weak boundedness
property (denote by WBP). We begin with the one factor case. Let $T$
be a Calder\'on--Zygmund singular integral operator on $M_1$ and let
$A_{M_1}(\delta, x_1^0, r_1), \delta\in (0, \vartheta_1], x_1^0\in M_1$ and
$r_1>0,$ be a set of all $f\in C^\delta_0(M_1)$ supported in $B(x_1^0,
r_1)$ satisfying $\|f\|_\infty\leq 1$ and $\|f\|_{\delta}\leq
r_1^{-\delta}.$  We say that $T$ has the weak boundedness property
(denote by $T\in WBP$) if there exist $0<\delta\leq \vartheta_1$ and a
constant $C>0$ such that for all $x_1^0\in M_1, r_1>0,$ and all $\phi,
\psi\in A_{M_1}(\delta, x_1^0, r_1),$
$$|\langle T\phi, \psi\rangle |\leq C V_{r_1}(x_1^0).$$

Similarly we can define the set $A_{M_2}(\delta, x_2^0, r_2), \delta\in
(0, \vartheta_2], x_2^0\in M_2$ and the  weak boundedness property for a
Calder\'on--Zygmund singular integral operator on $M_2$.

In the following, we define the weak boundedness property in the product setting.

\begin{definition}\label{def-of-product-WBP}
Let $T$ be a product Calder\'on--Zygmund singular integral operator
on $\widetilde M.$ $T$ has the WBP if
\begin{eqnarray}\label{wbp1}
\|\langle T_2\phi^1, \psi^1\rangle \|_{CZ}\leq C V_{r_1}(x_1^0)\hskip1cm {\rm
for\ all\ } \phi^1, \psi^1\in A_{M_1}(\delta, x_1^0, r_1),
\end{eqnarray}
\begin{eqnarray}\label{wbp2}
\|\langle T_1\phi^2, \psi^2\rangle \|_{CZ}\leq C V_{r_2}(x_2^0)\hskip1cm {\rm
for\ all\ } \phi^2, \psi^2\in A_{M_2}(\delta, x_2^0, r_2).
\end{eqnarray}
\end{definition}
It is easy to see that if $T$ satisfies (\ref{wbp1}) and (\ref{wbp2}), then
\begin{eqnarray}\label{wbp3}
|\langle T\phi^1\otimes\phi^2, \psi^1\otimes\psi^2\rangle |\leq C V_{r_1}(x_1^0) V_{r_2}(x_2^0)
\end{eqnarray}
for all $\phi^1, \psi^1\in A_{M_1}(\delta, x_1^0, r_1)$ and $\phi^2,
\psi^2\in A_{M_2}(\delta, x_2^0, r_2)$.

It is easy to see that if $T$ is a product Calder\'on--Zygmund
operator on $\widetilde M,$ then $T$ has the weak boundedness
property.

We are ready to state the $T1$ theorem, the main result in this paper.

\vskip.3cm \noindent{\bf Theorem A}\ \ Let $T$ be a product
Calder\'on--Zygmund singular integral operator on $\widetilde M$.
Then $T$ and $\widetilde T$ are both bounded on $L^2(\widetilde M)$
if and only if $T1$, $T^*1$ $\widetilde T1$, and $(\widetilde T)^*1$
lie on $BMO(\widetilde M)$ and $T$ has the weak boundedness
property.

The proof of Theorem A will be given in Subsection 3.2 and 3.3, respectively.
\vskip.1cm

\subsection{Necessary conditions of $T1$  Theorem}
To show the necessary conditions in Theorem A, we will employ the
Hardy space theory on $\widetilde M$ developed in [HLL2]. As
mentioned in Section 1, we first show that if $T$ is a
Calder\'on--Zygmund operator on $\widetilde M$ then $T$ extends to a
bounded operator from $H^p(\widetilde M)$ to $L^p(\widetilde M)$ for
$p\leq 1$ and is close to 1. This, particularly for $p=1$, together
with the duality $(L^1, L^\infty)$ and $(H^1, BMO)$, implies that
$T$ is bounded from $L^\infty$ to $BMO.$ To achieve this goal, the
main tool we need is an atomic decomposition for $H^p(\widetilde
M).$ To this end, as in the classical case, we shall first provide
Journ\'{e}-type covering lemma on $\widetilde M,$ for which we turn
to next subsection.

\subsubsection{Journ\'e-type covering lemma }

We first need a result of Christ.

\begin{theorem}[\cite{Chr1}] \label{Theorem-dyadic-cubes}
Let $(M,\rho,\mu)$ be a space of homogeneous type, then, there
exists a collection $\{I_\alpha^k \subset M: k\in \mathbb{Z},
\alpha \in I^k \}$ of open subsets, where $I^k$ is some index set,
and $C_1, C_2>0$, such that
\begin{itemize}
\item[\rm(i)]  $\mu(M\setminus \bigcup_\alpha I_\alpha^k)=0$ for each fixed
$k$ and $I_\alpha^k\bigcap I_\beta^k = \empty$ if $\alpha\neq\beta$;
\item[\rm(ii)]  for any $\alpha,\beta,k,l$ with $l\geq k$, either
$I_\beta^l\subset I_\alpha^k$ or $I_\beta^l\bigcap
I_\alpha^k=\emptyset$;
\item[\rm(iii)]  for each $(k,\alpha)$ and each $l<k$ there is a unique
$\beta$ such that $ I_\alpha^k \subset I_\beta^l $;
\item[\rm(iv)]  ${\rm diam}(I_\alpha^k)\leq C_1 2^{-k} $;
\item[\rm(v)]  each $I_\alpha^k$ contains some ball $B(z_\alpha^k, C_2 2^{-k}
)$, where $z_\alpha^k \in M$.
\end{itemize}
\end{theorem}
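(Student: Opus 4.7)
The plan is to build the cubes by a maximal-net construction, then impose the tree structure via a parent-assignment and refine the resulting Voronoi-type cells so that nesting holds. First, for each $k\in\mathbb{Z}$, I would select (by Zorn's lemma or greedy induction) a maximal family $\{z_\alpha^k\}_{\alpha\in I^k}$ of points in $M$ satisfying $\rho(z_\alpha^k,z_\beta^k)\geq c_0 2^{-k}$ for all $\alpha\neq\beta$; maximality then forces the balls $B(z_\alpha^k,c_0 2^{-k})$ to cover $M$, already giving the candidates for the centers in (v). Here $c_0$ is a small absolute constant that I would fix at the end so that several comparisons are consistent.

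Next, to produce the tree structure, for each $k$ and each $\alpha\in I^{k+1}$ I would assign a unique parent $p(k+1,\alpha)=\beta\in I^k$ by choosing $z_\beta^k$ to be a closest center to $z_{\alpha}^{k+1}$, breaking ties by a fixed total order on $\bigcup_k I^k$. Iterating this assignment downward gives each $(k,\alpha)$ a unique ancestor at every level $l\leq k$, which is exactly what (iii) asks for. As initial cells I would use the Voronoi-type sets
\[
\widehat Q_\alpha^k=\big\{x\in M:\rho(x,z_\alpha^k)<\rho(x,z_\beta^k)\ \text{for all } \beta\neq\alpha,\ \text{with ties broken by the fixed order}\big\},
\]
which are pairwise disjoint, cover $M$ up to a set of boundary points of $\mu$-measure zero (using the doubling property to control the measure of $\{x:\rho(x,z_\alpha^k)=\rho(x,z_\beta^k)\}$-type sets via a standard Vitali argument), and each of which contains the ball $B(z_\alpha^k,c_0 2^{-k}/2)$ while being contained in $B(z_\alpha^k,C 2^{-k})$ by the net property.

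The main obstacle is that the cells $\widehat Q_\alpha^k$ across different scales need not nest, so they must be redefined. My plan is to produce the true cubes by induction downward on $k$: declare $x\in I_\alpha^k$ if and only if the ancestor at level $k$ of the unique $\beta$ with $x\in\widehat Q_\beta^l$ equals $\alpha$, for all sufficiently large $l\geq k$; equivalently, $I_\alpha^k=\bigcup_{\beta\text{ ancestor-}\alpha}\widehat Q_\beta^l$ and one checks this stabilizes as $l\to\infty$. This is essentially Christ's construction. Nesting (ii) is then built in by definition. The delicate point, and the part that I expect to be the real work, is to verify (iv) and (v) after the redefinition: one must show that each $I_\alpha^k$ still sits inside a ball of radius $\lesssim 2^{-k}$ and still contains a ball $B(z_\alpha^k,C_2 2^{-k})$. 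Both require estimating how far the descendants of $z_\alpha^k$ can drift from $z_\alpha^k$ itself, which is a geometric series of radii $\sum_{l\geq k} c_0 2^{-l}\lesssim 2^{-k}$, small compared to the inner ball as soon as $c_0$ is taken small enough. The doubling hypothesis is used to justify these comparisons and also to produce the null-set statement in (i) from the Voronoi boundaries; (i) itself then follows because the redefinition only moves points across boundaries of zero measure.
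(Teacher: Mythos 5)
The paper does not prove this statement at all: it is quoted directly from Christ [Chr1], so there is no internal argument to compare yours against. Judged on its own terms, your outline has the right overall shape (maximal nets at each scale, a parent map giving a tree, cells assembled along the tree, drift controlled by a geometric series), but it omits the step that is the real content of Christ's theorem and rests on two claims that are false as stated.

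The central gap is the ``stabilization'' assertion. For a fixed $x$, the level-$k$ ancestor of the Voronoi cell containing $x$ at level $l$ need not become constant as $l\to\infty$; it can oscillate forever among the finitely many level-$k$ centers within distance $\sim 2^{-k}$ of $x$. What is true, and what must be proved, is that the set of $x$ where stabilization fails has measure zero. That is exactly Christ's iterated small-boundary estimate: at each scale the set of points lying in a thin layer near a cell boundary (where the ancestor can switch on passing to the next scale) occupies at most a fixed fraction $<1$ of each ball, and iterating across scales forces the exceptional set to be null. Doubling enters there quantitatively; without it neither (i) nor the well-definedness of your $I_\alpha^k$ is available. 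Relatedly, the claim that bisectors $\{x:\rho(x,z_\alpha^k)=\rho(x,z_\beta^k)\}$ are null ``by a standard Vitali argument'' is false in a general space of homogeneous type (spheres and bisectors can carry positive measure, e.g.\ in discrete or ultrametric examples), so the construction cannot lean on it; this is one reason Christ defines the cubes as unions of \emph{open balls} $\bigcup_{(l,\beta)\preceq(k,\alpha)}B(z_\beta^l,a_0\delta^l)$ over descendants rather than as Voronoi cells, which also yields the openness required by the theorem and not addressed in your plan. Finally, with consecutive scales in ratio $1/2$ and separation $c_02^{-k}$, the total drift of a descendant chain, $\sum_{l>k}c_02^{-l}\approx c_02^{-k}$, is comparable to the separation at level $k$ (shrinking $c_0$ rescales both quantities, so it does not help); consequently a descendant of a different level-$k$ center can land arbitrarily close to $z_\alpha^k$ and the forced-ancestor argument behind (v) does not close. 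Christ avoids this by taking the ratio $\delta$ between consecutive scales small and by stipulating that a level-$(k+1)$ center within $\tfrac12\delta^k$ of $z_\alpha^k$ must be assigned the parent $\alpha$; the statement with $2^{-k}$ is then recovered by reindexing the scales.
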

Note that Carnot--Carath\'eodory spaces are spaces of homogeneous type. Therefore, we can think of $I_\alpha^k$ as being a dyadic cube with
diameter rough $2^{-k}$ centered at $z_\alpha^k$. As a result, we
consider $CI_\alpha^k$ to be the cube with the same center as
$I_\alpha^k$ and diameter $C{\rm diam}(I_\alpha^k)$. To simplify notations, we will call $I$ dyadic cubes and denote the side length of $I$ by $\ell(I).$

Let $\{I_{\tau_i}^{k_i} \subset M_i: k_i\in \mathbb{Z},
\tau_i \in I^{k_i} \}$ be the same as in Theorem 3.2.
We call $R=I_{\tau_1}^{k_1}\times
I_{\tau_2}^{k_2}$ a dyadic rectangle in $\widetilde{M}$.   Let
$\Omega\subset \widetilde{M}$ be an open set of finite measure and
$\mathcal{M}_i(\Omega)$ denote the family of dyadic rectangles
$R\subset\Omega$ which are maximal in the $i$th ``direction'', $i=1,2$. Also we denote by $\mathcal{M}(\Omega)$ the set of all maximal
dyadic rectangles contained in $\Omega$. For the sake of
simplicity, we denote by $R=I_1\times I_2$ any dyadic rectangles on
$M_1\times M_2$. Given $R=I_1\times I_2\in \mathcal{M}_1(\Omega)$,
let $\widehat{I}_2=\widehat{I}_2(I_1)$ be the biggest dyadic cube
containing $I_2$ such that
$$\mu\big(\big(I_1\times \widehat{I}_2\big)\cap \Omega\big)>{1\over 2}\mu(I_1\times \widehat{I}_2),$$
where $\mu=\mu_1\times\mu_2$ is the measure on $\widetilde{M}$.
Similarly, Given $R=I_1\times I_2\in \mathcal{M}_2(\Omega)$, let
$\widehat{I}_1=\widehat{I}_1(I_2)$ be the biggest dyadic cube
containing $I_1$ such that
$$\mu\big(\big(\widehat{I}_1\times I_2\big)\cap \Omega\big)>{1\over 2}\mu(\widehat{I}_1\times I_2).$$

For $I_i=I_{\tau_i}^{k_i}\subset {M}_i$, we denote by
$(I_i)_k$, $k\in\mathbb{N}$, any dyadic cube $I_{\beta_i}^{k_i-k}$
containing $I_{\tau_i}^{k_i}$, and $(I_i)_0=I_i$, where $i=1,2$.
Moreover, let $w(x)$ be any increasing function such that
$\sum_{j=0}^\infty jw(C_02^{-j})<\infty$, where $C_0$ is any given
positive constant. In applications, we may take $w(x)=x^\delta$ for
any $\delta>0$.

The Journ\'e-type covering lemma on $\widetilde M$ is the following

\begin{lemma}\label{theorem-cover lemma}
Let  $\Omega$ be any open subset in $\widetilde{M}$ with finite
measure. Then there
exists a positive constant $C$ such that
\begin{eqnarray}\label{1 direction}
\sum_{R=I_1\times I_2\in
\mathcal{M}_1(\Omega)}\mu(R)w\Big({\mu_2(I_2)\over\mu_2(\widehat{I}_2)}\Big)\leq
C\mu(\Omega)
\end{eqnarray}
and
\begin{eqnarray}\label{2 direction}
\sum_{R=I_1\times I_2\in
\mathcal{M}_2(\Omega)}\mu(R)w\Big({\mu_1(I_1)\over\mu_1(\widehat{I}_1)}\Big)\leq
C\mu(\Omega).
\end{eqnarray}

\end{lemma}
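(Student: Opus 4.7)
The plan is to adapt the Journé–Pipher covering lemma on $\mathbb R\times\mathbb R$ to the present product Carnot--Carath\'eodory setting. I prove \eqref{1 direction}; the inequality \eqref{2 direction} follows by symmetric reasoning after swapping the roles of $M_1$ and $M_2$. Throughout, dyadic cubes on $M_i$ are those supplied by Theorem \ref{Theorem-dyadic-cubes}, and the volume growth bounds \eqref{upper dim}--\eqref{lower dim} substitute for the elementary Lebesgue computations used in the Euclidean argument.

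First I would decompose according to the level of enlargement. For each $R=I_1\times I_2\in\mathcal M_1(\Omega)$, let $j(R)\geq 0$ be the unique integer such that $\widehat I_2=(I_2)_{j(R)}$, and write $\mathcal M_1^j(\Omega)=\{R\in\mathcal M_1(\Omega):j(R)=j\}$. From the lower volume estimate \eqref{lower dim} applied in $M_2$, one has $\mu_2(I_2)/\mu_2(\widehat I_2)\leq C\,2^{-4j}$, so by the monotonicity of $w$ the desired conclusion \eqref{1 direction} follows once the per-level estimate
$$\sum_{R\in\mathcal M_1^j(\Omega)}\mu(R)\leq C(j+1)\mu(\Omega),\qquad j\geq 0,\qquad (\ast)$$
is established: the summability hypothesis $\sum_{j=0}^\infty jw(C_0 2^{-j})<\infty$ on $w$ (noting $w(C_0 2^{-4j})\leq w(C_0 2^{-j})$ since $w$ is increasing) then gives
$$\sum_{R\in\mathcal M_1(\Omega)}\mu(R)\,w\!\left(\frac{\mu_2(I_2)}{\mu_2(\widehat I_2)}\right)\leq C\mu(\Omega)\sum_{j=0}^\infty(j+1)w(C_0 2^{-4j})<\infty.$$

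To prove $(\ast)$, let $M^{(i)}$ denote the Hardy--Littlewood maximal operator acting only in the $i$-th variable on $\widetilde M$; each is of weak type $(1,1)$ because every $M_i$ is of homogeneous type. Define the iterated enlargements
$$U=\{M^{(2)}\chi_\Omega>1/4\},\qquad V=\{M^{(1)}\chi_U>1/3\},$$
so that $\mu(V)\leq C\mu(U)\leq C'\mu(\Omega)$. The defining inequality for $\widehat I_2$, combined with Fubini and Chebyshev, produces a subset $E\subset I_1$ with $\mu_1(E)\geq \tfrac13\mu_1(I_1)$ such that, for $x_1\in E$, the $x_2$-average of $\chi_\Omega(x_1,\cdot)$ over $\widehat I_2$ exceeds $1/4$; hence $E\times\widehat I_2\subset U$. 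Applying the same reasoning with $M^{(1)}$ in the $x_1$-direction upgrades this to
$$I_1\times\widehat I_2\subset V\quad\text{for every }R\in\mathcal M_1(\Omega).$$
Since $R\subset I_1\times\widehat I_2\subset V$, we may write $\sum_{R\in\mathcal M_1^j(\Omega)}\mu(R)=\int_V\sum_{R\in\mathcal M_1^j(\Omega)}\chi_R\,d\mu$, and the proof of $(\ast)$ reduces to the pointwise overlap bound
$$\sum_{R\in\mathcal M_1^j(\Omega)}\chi_R(x_1,x_2)\leq C(j+1)\qquad\text{for every }(x_1,x_2)\in V.$$
Here one uses that, for each admissible level $k_2$ of $I_2$, the cube $I_2\ni x_2$ is uniquely determined, and then $I_1\ni x_1$ is pinned down as the largest dyadic cube with $I_1\times I_2\subset\Omega$; the two conditions $\widehat I_2=(I_2)_j$ and the half-measure density failure at the parent $(I_2)_{j+1}$, together with $(x_1,x_2)\in V$, confine the admissible values of $k_2$ to an interval of length $O(j+1)$.

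The principal obstacle is extracting the sharp factor $(j+1)$ in the overlap bound above. In the classical $\mathbb R\times\mathbb R$ setting this is essentially transparent once the iterated enlargement $V$ is in place, but in the Carnot--Carath\'eodory context one has to contend with the fact that the Christ dyadic cubes are only quasi-cubes, so that consecutive levels correspond to a \emph{range} of volume ratios rather than a fixed factor of $2$, and with the fact that the lower dimension in \eqref{lower dim} is only $4$, so the geometric decay of $\mu_2(I_2)/\mu_2(\widehat I_2)$ in $j$ is comparatively weak. Both issues require careful bookkeeping of the constants arising from \eqref{upper dim}--\eqref{lower dim}. Once $(\ast)$ is in hand, the preceding display closes the argument for \eqref{1 direction}, and \eqref{2 direction} follows by an identical argument with the roles of the two factors reversed.
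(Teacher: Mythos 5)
Your overall skeleton is sound and agrees with the paper's (Pipher-style) strategy: decompose $\mathcal{M}_1(\Omega)$ according to the number $j$ of generations between $I_2$ and $\widehat{I}_2$, prove a per-level bound $\sum_{R\in\mathcal{M}_1^j(\Omega)}\mu(R)\leq C(j+1)\mu(\Omega)$, and then sum against $w$ using $\sum_j j\,w(C_02^{-j})<\infty$ together with $\mu_2(I_2)/\mu_2(\widehat{I}_2)\lesssim 2^{-\kappa j}$. The per-level bound is indeed true. The gap is in your proposed proof of it: the pointwise overlap bound $\sum_{R\in\mathcal{M}_1^j(\Omega)}\chi_R(x_1,x_2)\leq C(j+1)$ on $V$ is \emph{false}. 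Already in the Euclidean model, take the staircase $\Omega=\bigcup_{m=0}^{N}[0,2^{m-N})\times[0,2^{-m})$. For each $m$ the rectangle $R_m=[0,2^{m-N})\times[0,2^{-m})$ is maximal in the first direction, and a direct computation of the densities $\mu\big((I_1\times (I_2)_i)\cap\Omega\big)=2^{-N}(1+i/2)$ versus $\tfrac12\mu\big(I_1\times(I_2)_i\big)=2^{i-1-N}$ shows that $\widehat{I}_2(R_m)=(I_2)_1$, i.e.\ $j(R_m)=1$, for every $1\leq m\leq N$. Yet all of these $N$ rectangles contain the point $(2^{-N-1},2^{-N-1})\in\Omega\subset V$, so the overlap at the single level $j=1$ is of order $N$, not $O(1)$. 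The underlying reason your heuristic fails is that $I_1^{(k_2)}$ changes with $k_2$, so the two density conditions defining $\widehat{I}_2$ refer to \emph{different} rectangles $I_1^{(k_2)}\times(I_2^{(k_2)})_j$ for different $k_2$ and do not confine $k_2$ to a short interval; the per-level bound holds only on average, because the set where the overlap is large is correspondingly small.

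The correct mechanism (the one the paper uses, following Pipher) replaces the pointwise count by an averaged one. Fix the component that is \emph{not} being enlarged and the generation level $k$: in your orientation, for fixed $I_2$ the cubes $I_1$ with $I_1\times I_2\in\mathcal{M}_1(\Omega)$ are pairwise disjoint; collecting those with $\widehat{I}_2=(I_2)_{k-1}$ into a set $A_{I_2,k}$, the failure of the density condition at $(I_2)_k$ forces $M_{HL,1}\big(\chi_{E_{I_2}\setminus E_{(I_2)_k}}\big)>\tfrac12$ on each such $I_1$, where $E_{I_2}=\cup\{I_1:I_1\times I_2\subset\Omega\}$, so $\mu_1(A_{I_2,k})\leq C\mu_1\big(E_{I_2}\setminus E_{(I_2)_k}\big)$ by the weak $(1,1)$ bound for the one-variable maximal function. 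One then telescopes $E_{I_2}\setminus E_{(I_2)_k}$ through the $k$ intermediate ancestors and exploits that the sets $\widetilde{I}\times\big(E_{\widetilde{I}}\setminus E_{(\widetilde{I})_1}\big)$ are pairwise disjoint subsets of $\Omega$; the factor $k$ emerges from the generation count in this telescoping, not from a pointwise overlap. Your reduction and final summation can then be kept as written.
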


\begin{proof}
It suffices to prove (\ref{2 direction}) since (\ref{1
direction}) follows similarly.

Following \cite{P}, let $R=I_1\times I_2\in \mathcal{M}_2(\Omega)$ and for $k\in\mathbb{N}$ let
$$A_{I_1,k}=\cup\big\{I_2: I_1\times I_2\in \mathcal{M}_2(\Omega)\ {\rm and}\ \widehat{I}_1=(I_1)_{k-1} \big\}$$
where we use $(I_1)_{1}$ the denote the father of $I_1$ in the setting of dyadic cubes in $M_1$. Hence, $(I_1)_{k-1}$ means
the ancestor of $I_1$ at $(k-1)$-level.
We also denote the set
$$
  A(\Omega)=\{I_1 \subset M_1:\ {\rm dyadic,\ and}\ \exists\ {\rm a\ dyadic}\ I_2\in M_2, {\rm\ s.t.}\ I_1\times I_2\in \mathcal{M}_2(\Omega) \}.
$$
We rewrite the left side in (3.5) as
\begin{eqnarray*}
\sum_{R=I_1\times I_2\in
\mathcal{M}_2(\Omega)}\mu(R)w\Big({\mu_1(I_1)\over\mu_1(\widehat{I}_1)}\Big)
=\sum_{I_1\in A(\Omega)}\mu_1(I_1)\sum_{k=1}^\infty\sum_{I_2:\ I_2\in
A_{I_1,k}}\mu_2(I_2)w\Big({\mu_1(I_1)\over\mu_1(\widehat{I}_1)}\Big).
\end{eqnarray*}
Note that for $i=1,2, x\in M_i$ and $\lambda\geq 1$, by (2.10) and (2.11),
\begin{eqnarray*}
\lambda^{\kappa_i} \mu_i(B(x,r)) \leq\mu_i(B(x, \lambda r))\leq
\lambda^{Q_i} \mu_i(B(x,r))
\end{eqnarray*}
which implies that ${\mu_i(B(x,r))\over \mu_i(B(x, \lambda r))}\leq
\lambda^{-\kappa_i}$ for $i=1,2$. Thus, for $k\in\mathbb{N}$ and $\widehat{I}_1=(I_1)_{k-1}$, we have ${\mu_1(I_1)\over\mu_1(\widehat{I}_1)}\leq C2^{-\kappa_1k}.$
This yields
\begin{eqnarray}\label{e1 covering lemma}
\sum_{R=I_1\times I_2\in
\mathcal{M}_2(\Omega)}\mu(R)w\Big({\mu_1(I_1)\over\mu_1(\widehat{I}_1)}\Big)
&\leq&\sum_{I_1\in A(\Omega)}\mu_1(I_1)\sum_{k=1}^\infty
w(C2^{-\kappa_1k})\sum_{I_2:\ I_2\in A_{I_1,k}}\mu_2(I_2)\nonumber\\
&\leq& \sum_{I_1\in A(\Omega)}\mu_1(I_1)\sum_{k=1}^\infty w(C2^{-\kappa_1k})
\mu_2(A_{I_1,k}),
\end{eqnarray}
where we use the fact that all $I_2$ in $A_{I_1,k}$ are disjoint
since $I_2$ are the maximal dyadic cubes  and $\widehat{I}_1=(I_1)_{k-1}$ for each fixed $k\in\mathbb{N}$. We now estimate $\mu_2(A_{I_1,k})$. For any $x_2\in A_{I_1,k},$ by the definition of $A_{I_1,k},$ there exists some
dyadic cube $I_2$ such that $I_1\times I_2\in \mathcal{M}_2(\Omega),$
$x_2\in I_2,$ and $\widehat{I}_1=(I_1)_{k-1}$ for some $k\in\mathbb{N}.$ Thus, by the definition of $\widehat{I}_1,$
$\mu\big((I_1)_{k-1}\times I_2\cap \Omega \big)>{1\over2}\mu\big((I_1)_{k-1}\times I_2\big)$ and
$\mu\big((I_1)_{k}\times I_2\cap \Omega \big)\leq{1\over2}\mu\big((I_1)_{k}\times I_2\big).$
Now set $E_{I_1}(\Omega)=\cup\{I_2: I_1\times I_2\subset\Omega\}$,
then from the last inequality above, we have
$$ \mu\big((I_1)_{k}\times (I_2\cap E_{(I_1)_k}) \big)\leq{1\over2}\mu\big((I_1)_{k}\times I_2\big), $$
which implies that $\mu_2(I_2\cap E_{(I_1)_k})\leq {1\over2}\mu_2(I_2)$
and hence $\mu_2(I_2\cap (E_{(I_1)_k})^c)> {1\over2}\mu_2(I_2)$, where
we denote $(E_{(I_1)_k})^c=E_{I_1}\backslash E_{(I_1)_k}$. This gives
$$ M_{HL,2}\big(\chi_{E_{I_1}\backslash E_{(I_1)_k}}\big)(x_2)>{1\over2}, $$
and hence $A_{I_1,k}\subset \big\{ x_2\in M_2:
M_{HL,2}\big(\chi_{E_{I_1}\backslash E_{(I_1)_k}}\big)(x_2)>{1\over
2} \big\}$, which implies that
\begin{eqnarray}\label{e2 covering lemma}
\mu_2(A_{I_1,k})\leq \mu_2\big(\big\{ x_2\in M_2:
M_{HL,2}\big(\chi_{E_{I_1}\backslash E_{(I_1)_k}}\big)(x_2)>{1\over
2} \big\}\big)\leq C\mu_2(E_{I_1}\backslash E_{(I_1)_k}),
\end{eqnarray}
where we use $M_{HL,2}$ to denote the Hardy--Littlewood maximal
function on $M_2$.

 Thus, combining the estimates of (\ref{e1 covering lemma}) and (\ref{e2 covering lemma}), we obtain
\begin{eqnarray*}
&&\sum_{R=I_1\times I_2\in
\mathcal{M}_2(\Omega)}\mu(R)w\Big({\mu_1(I_1)\over\mu_1(\widehat{I}_1)}\Big)
\leq C\sum_{I_1\in A(\Omega)}\mu_1(I_1)\sum_{k=1}^\infty w(C2^{-\kappa_1k})
\mu_2(E_{I_1}\backslash E_{(I_1)_k}).
\end{eqnarray*}
Next, we point out that for each $k\in\mathbb{N}$,
\begin{eqnarray*}
\mu_2(E_{I_1}\backslash E_{(I_1)_k})&\leq& \mu_2(E_{I_1}\backslash
E_{(I_1)_1})+\cdots+\mu_2(E_{(I_1)_{k-1}}\backslash
E_{(I_1)_k})\\[4pt]
&\leq& C\sum_{\widetilde{I}:\ dyadic,\ I_1\subseteq
\widetilde{I}\varsubsetneqq (I_1)_k,\
\widetilde{I}\times(E_{\widetilde{I}}\backslash
E_{(\widetilde{I})_1})\subset\Omega}
\mu_2(E_{\widetilde{I}}\backslash
E_{(\widetilde{I})_1}),
\end{eqnarray*}
where the last inequality follows from the definition of $(I_1)_k$.
As a consequence,
\begin{eqnarray*}
&&\sum_{R=I_1\times I_2\in
\mathcal{M}_2(\Omega)}\mu(R)w\Big({\mu_1(I_1)\over\mu_1(\widehat{I}_1)}\Big)\\[4pt]
&&\hskip.5cm\leq C\sum_{I_1\in
A(\Omega)}\mu_1(I_1)\sum_{k=1}^\infty
w(C2^{-\kappa_1k})\sum_{\widetilde{I}:\ dyadic,\ I_1\subseteq
\widetilde{I}\varsubsetneqq (I_1)_k,\
\widetilde{I}\times(E_{\widetilde{I}}\backslash
E_{(\widetilde{I})_1})\subset\Omega}
\mu_2(E_{\widetilde{I}}\backslash
E_{(\widetilde{I})_1}).
\end{eqnarray*}
Now interchanging the order of the sums we can obtain that the above inequality is bounded by
\begin{eqnarray*}
&& C\sum_{k=1}^\infty
w(C2^{-\kappa_1k})\sum_{\widetilde{I}:\ dyadic,\
\widetilde{I}\times(E_{\widetilde{I}}\backslash
E_{(\widetilde{I})_1})\subset\Omega}\mu_1(\widetilde{I})
\mu_2(E_{\widetilde{I}}\backslash E_{(\widetilde{I})_1}) \sum_{I_1:\
dyadic,\ I_1\subseteq
\widetilde{I}\varsubsetneqq (I_1)_k,}{\mu_1(I_1)\over \mu_1(\widetilde{I})}\\[5pt]
&&\leq C\sum_{k=1}^\infty
w(C2^{-\kappa_1k})\sum_{\widetilde{I}:\ dyadic,\
\widetilde{I}\times(E_{\widetilde{I}}\backslash
E_{(\widetilde{I})_1})\subset\Omega}\mu_1(\widetilde{I})
\mu_2(E_{\widetilde{I}}\backslash E_{(\widetilde{I})_1})
\sum_{j=1}^k\sum_{I_1:\ dyadic,\ I_1\subseteq
\widetilde{I}\varsubsetneqq (I_1)_j,}{\mu_1(I_1)\over
\mu_1(\widetilde{I})}.
\end{eqnarray*}
Note that in the last inequality above, we have ${\mu_1(I_1)\over
\mu_1(\widetilde{I})}\leq 2^{-j\kappa_1}$. Hence
\begin{eqnarray*}
&&\sum_{R=I_1\times I_2\in
\mathcal{M}_2(\Omega)}\mu(R)w\Big({\mu_1(I_1)\over\mu_1(\widehat{I}_1)}\Big)\\[4pt]
&&\hskip.5cm\leq C\sum_{k=1}^\infty
kw(C2^{-\kappa_1k})\sum_{\widetilde{I}:\ dyadic,\
\widetilde{I}\times(E_{\widetilde{I}}\backslash
E_{(\widetilde{I})_1})\subset\Omega}\mu_1(\widetilde{I})
\mu_2(E_{\widetilde{I}}\backslash E_{(\widetilde{I})_1})\\[4pt]
&&\hskip.5cm\leq C \sum_{k=1}^\infty
kw(C2^{-\kappa_1k})\mu(\Omega)\\[4pt]
&&\hskip.5cm\leq C \mu(\Omega),
\end{eqnarray*}
since $\widetilde{I}\times(E_{\widetilde{I}}\backslash
E_{(\widetilde{I})_1})$ are contained in $\{\widetilde{I}\ dyadic,\
\widetilde{I}\times(E_{\widetilde{I}}\backslash
E_{(\widetilde{I})_1})\subset\Omega\}$ and are disjoint.
\end{proof}

 The proof of Lemma \ref{theorem-cover lemma} is concluded. This covering lemma will be a key tool to obtain an atomic decomposition for $H^p(\widetilde{M}),$ which will be given in next subsection.

\subsubsection{Atomic decomposition } In this subsection, we will apply Journ\'{e}-type covering lemma to provide an atomic decomposition for $H^p(\widetilde{M}).$ We point out that the atomic decomposition provided in this subsection is different from the classical ones. More precisely, we will prove an atomic decomposition for $L^q(\widetilde{M})\cap H^p(\widetilde{M}), 1<q<\infty,$ where the decomposition converges in both $L^q(\widetilde{M})$ and $H^p(\widetilde{M})$ norms. The convergence in both $L^q(\widetilde{M})$ and $H^p(\widetilde M)$ norms will be crucial for proving the boundedness for operators from $H^p(\widetilde{M})$ to $L^p(\widetilde{M}).$

Suppose that $\max\big(\frac{ {Q}_1}{ {Q}_1+ \vartheta_1 },\frac{ {Q}_2}{
{Q}_2+ \vartheta_2 }\big)<p\leq1$ and $1<q<\infty$. We first define an $(p,q)$-atom for  the Hardy space $H^p(\widetilde{M})$ as follows.

\begin{definition}\label{def-of-p q atom}
A function $a(x_1,x_2)$ defined on $\widetilde{M}$ is called an $(p,q)$-atom of $H^p(\widetilde{M})$ if $a(x_1,x_2)$ satisfies:
\begin{itemize}
\item[(1)] supp $a\subset\Omega$, where $\Omega$ is an open set of $\widetilde{M}$ with finite measure;
\item[(2)] $\|a\|_{L^q}\leq \mu(\Omega)^{1/q-1/p}$;
\item[(3)]  $a$ can be further decomposed into rectangle $(p,q)$-atoms $a_R$ associated to dyadic rectangle $R=I_1\times I_2$, satisfying the following

\smallskip
(i) there exist two constants $C_1$ and $C_2$ such that supp $a_R\subset C_1I_1\times C_2I_2$;

\smallskip
(ii) $\int_{M_1}a_R(x_1,x_2)dx_1=0$ for a.e. $x_2\in M_2$ and $\int_{M_2}a_R(x_1,x_2)dx_2=0$ for a.e.

\hskip.8cm $x_1\in M_1$;

\smallskip
(iii-a) for $2\leq q<\infty$, $a=\sum\limits_{R\in \mathcal{M}(\Omega)}a_R$ and $ \Big(\sum\limits_{R\in\mathcal{M}(\Omega)}\|a_R\|_{L^q}^q\Big)^{1/q} \leq \mu(\Omega)^{1/q-1/p}$.

\smallskip
(iii-b) for $1< q < 2$, $a=\sum\limits_{R\in \mathcal{M}_1(\Omega)}a_R+\sum\limits_{R\in \mathcal{M}_2(\Omega)}a_R$ and for some $\delta>0$, there exists a
constant $C_{q,\delta}$ such that
$$
    \bigg(\sum_{R\in\mathcal{M}_1(\Omega)} \Big({\mu_2(I_2)\over\mu_2(\widehat{I}_2)}\Big)^\delta \|a_R\|_{L^q}^q
   + \sum_{R\in\mathcal{M}_2(\Omega)} \Big({\mu_1(I_1)\over\mu_1(\widehat{I}_1)}\Big)^\delta  \|a_R\|_{L^q}^q
    \bigg)^{1/q} \leq C_{q,\delta}\mu(\Omega)^{1/q-1/p}.
$$
\end{itemize}
\end{definition}

We remark that when $\widetilde{M}=\mathbb{R}^n\times \mathbb{R}^m$ an $(p,2)$-atom with the conditions (i), (ii) and (iii-a)($q=2$) was introduced by R. Fefferman [F]. Note that the condition in (iii-b) is new, which was appeared in the classical case if the $(p,q)$-atom is defined. See [HLZ] for more details.

The main result in this subsection is the following

\begin{theorem}\label{theorem-Hp atom decomp}
Suppose that $\max\big(\frac{ {Q}_1}{ {Q}_1+ \vartheta_1 },\frac{ {Q}_2}{
{Q}_2+ \vartheta_2 }\big)<p\leq 1<q<\infty$. Then $f\in L^q(\widetilde{M})\cap H^p(\widetilde{M})$ if
and only if $f$ has an atomic decomposition, that is,
\begin{eqnarray}\label{atom decom}
f=\sum_{i=-\infty}^\infty\lambda_ia_i,
\end{eqnarray}
where $a_i$ are $(p, q)$ atoms, $\sum_i|\lambda_i|^p<\infty,$ and  the series converges in both $H^p(\widetilde{M})$ and $L^q(\widetilde{M})$. Moreover,
\begin{eqnarray*}
\|f\|_{H^p(\widetilde{M})}\approx \inf \big\{ \lbrace\sum_i|\lambda_i |^p \rbrace^{\frac{1}{p}}, f=\sum_{i}\lambda_ia_i\big\},
\end{eqnarray*}
where the infimum is taken over all decompositions as above and the implicit constants are independent of the $L^q(\widetilde{M})$ and $H^p(\widetilde{M})$ norms of $f.$
\end{theorem}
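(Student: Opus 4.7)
The plan is to prove the two implications separately. For the \emph{sufficiency} (atomic decomposition implies membership in $H^p$), it suffices to show that every $(p,q)$-atom $a$ supported in an open set $\Omega$ satisfies $\|a\|_{H^p(\widetilde M)}\le C$ with a constant independent of $a$. Fix such an atom. Set $\widetilde\Omega=\{M_s(\chi_\Omega)>1/2\}$, where $M_s$ is the strong maximal operator, so that $\mu(\widetilde\Omega)\le C\mu(\Omega)$. I would split
\begin{equation*}
\|\widetilde S_d(a)\|_p^p=\int_{\widetilde\Omega}\widetilde S_d(a)(x)^p\,d\mu(x)+\int_{(\widetilde\Omega)^c}\widetilde S_d(a)(x)^p\,d\mu(x).
\end{equation*}
On $\widetilde\Omega$ use H\"older with exponents $q/p$ and $(q/p)'$ together with Theorem~\ref{theorem Lp bd of suqa func} and Proposition~\ref{prop discrete Littlewood--Paley} to bound the first integral by $C\mu(\widetilde\Omega)^{1-p/q}\|a\|_q^p\le C$. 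For the integral over $(\widetilde\Omega)^c$, write $a=\sum_R a_R$ using the rectangle decomposition and estimate $\widetilde S_d(a_R)(x)$ at a point $x$ outside a large dilate of $R=I_1\times I_2$ by exploiting the double cancellation of $a_R$: each application of the mean-value type argument using the smoothness of $D_{k_1}D_{k_2}$ together with Proposition~\ref{prop discrete Littlewood--Paley} gives decay $\bigl(\ell(I_1)/d_1\bigr)^{\varepsilon}\bigl(\ell(I_2)/d_2\bigr)^{\varepsilon}$ in the almost-orthogonal kernel. Summing over $R$ requires exactly Journ\'e's covering Lemma~\ref{theorem-cover lemma} together with condition (iii-b) (or (iii-a) combined with H\"older) to absorb the geometric series; this is where the weight $(\mu_2(I_2)/\mu_2(\widehat I_2))^\delta$ in (iii-b) is used.

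For the \emph{necessity}, suppose $f\in L^q(\widetilde M)\cap H^p(\widetilde M)$. I would invoke the discrete Calder\'on identity from \cite{HLL2},
\begin{equation*}
f(x_1,x_2)=\sum_{k_1,k_2}\sum_{I_1,I_2}\mu_1(I_1)\mu_2(I_2)D_{k_1}(x_1,x_{I_1})D_{k_2}(x_2,x_{I_2})\widetilde{\widetilde D}_{k_1}\widetilde{\widetilde D}_{k_2}(f)(x_{I_1},x_{I_2}),
\end{equation*}
and then perform a Whitney decomposition of the square function level sets. For each $j\in\mathbb Z$ let $\Omega_j=\{(x_1,x_2):\widetilde S_d(f)(x_1,x_2)>2^j\}$ and $\widetilde\Omega_j=\{M_s(\chi_{\Omega_j})>1/2\}$, and let $\mathcal B_j$ be the collection of dyadic rectangles $R=I_1\times I_2$ with $\mu(R\cap\widetilde\Omega_j)>\mu(R)/2\ge\mu(R\cap\widetilde\Omega_{j+1})$. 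The key point is that every dyadic rectangle appearing in the discrete Calder\'on sum is contained in a unique $\mathcal B_j$. Group the terms of the identity accordingly, writing $f=\sum_j\lambda_j a_j$ where
\begin{equation*}
a_j=\lambda_j^{-1}\sum_{R\in\mathcal B_j}\mu_1(I_1)\mu_2(I_2)D_{k_1}(\cdot,x_{I_1})D_{k_2}(\cdot,x_{I_2})\widetilde{\widetilde D}_{k_1}\widetilde{\widetilde D}_{k_2}(f)(x_{I_1},x_{I_2}),
\end{equation*}
and $\lambda_j=C\mu(\widetilde\Omega_j)^{1/p-1/q}\bigl\|\bigl(\sum_{R\in\mathcal B_j}|\widetilde{\widetilde D}_{k_1}\widetilde{\widetilde D}_{k_2}(f)(x_{I_1},x_{I_2})|^2\chi_R\bigr)^{1/2}\bigr\|_q$ is chosen so that $a_j$ will satisfy the size condition. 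Each $a_j$ is naturally supported in $\widetilde\Omega_j$, and the rectangle pieces $a_{j,R}$ inherit supports in $C_1 I_1\times C_2 I_2$ and the double cancellation from the kernels $D_{k_1}D_{k_2}$.

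The main obstacle, and the most technical step, is verifying the size conditions (2) and (iii-a)/(iii-b) for each $a_j$. By duality in $L^q$ this reduces to estimating a bilinear form $\langle a_j,g\rangle$ against arbitrary $g\in L^{q'}$; using the orthogonality of the $\widetilde{\widetilde D}$ kernels together with the Plancherel--P\^olya inequality one bounds this by $\|g\|_{q'}$ times the $L^q$ norm of the discrete square function restricted to $\mathcal B_j$. That square function in turn is controlled, via the definition of $\mathcal B_j$ and the estimate $\mu(\widetilde\Omega_j\setminus\widetilde\Omega_{j+1})\le C\mu(\Omega_j)$, by $2^j\mu(\widetilde\Omega_j)^{1/q}$, which yields $\|a_j\|_q\le\mu(\widetilde\Omega_j)^{1/q-1/p}$ and hence also the desired control $\sum_j|\lambda_j|^p\le C\|\widetilde S_d(f)\|_p^p=C\|f\|_{H^p}^p$. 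The refinement (iii-b) for $1<q<2$ follows by splitting $\mathcal B_j$ into $\mathcal M_1$- and $\mathcal M_2$-maximal rectangles and using the Journ\'e weight $(\mu_2(I_2)/\mu_2(\widehat I_2))^\delta$ to compensate for the loss in the $\ell^q$-to-$\ell^2$ embedding. Finally, convergence in $L^q(\widetilde M)$ follows from the $L^q$-boundedness of the discrete Littlewood--Paley operator applied to the tails $\sum_{|j|>N}\lambda_j a_j$, while $H^p$-convergence follows from the sufficiency direction already proved. The reverse norm inequality $\|f\|_{H^p}\le C\inf(\sum|\lambda_i|^p)^{1/p}$ is immediate from the sufficiency step.
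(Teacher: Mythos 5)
Your proposal follows essentially the same route as the paper: the necessity direction groups the discrete Calder\'on identity by the stopping-time families $B_k$ attached to level sets of the (modified) discrete square function and verifies the atom conditions by $L^q$-duality, the Plancherel--P\^olya inequalities and the Journ\'e covering lemma, while the sufficiency direction splits the square function of an atom over a dilate of its support and its complement exactly as in the paper. The only substantive deviation is that for $1<q<2$ the paper normalizes $\lambda_k$ by the $L^2$ (not $L^q$) norm of the truncated square function together with the factor $\mu(\widetilde\Omega_k)^{1/p-1/2}$, which is what makes the H\"older step in the verification of condition (iii-b) (with Journ\'e weight exponent $\delta'=2\delta/(2-q)$) close; you should adopt that normalization in your argument.
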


\begin{proof}[Proof of Theorem \ref{theorem-Hp atom decomp}]
Let $f\in L^q(\widetilde{M})\cap H^p(\widetilde{M})$. We prove that $f$ has an atomic
decomposition. The key tool to do this is the following discrete Calder\'on's identity in [HLL2, Theorem 2.9].
\begin{eqnarray}\label{identity for atom}
f(x_1,x_2)&=&\sum_{k_1=-\infty}^\infty\sum_{k_2=-\infty}^\infty\sum_{I_1}\sum_{I_2 }\mu_1(I_{1})\mu_2(I_{2})\\
&&\times D_{k_1}(x_1,x_{I_1})D_{k_2}(x_2,x_{I_2}) \widetilde{\widetilde{D}}_{k_1}\widetilde{\widetilde{D}}_{k_2}(f)(x_{I_1},x_{I_2})\nonumber
\end{eqnarray}
where the series converges in the norm of $L^q(\widetilde{M})$, $1<q<\infty$ and $H^p(\widetilde M).$ See [HLL2] for more details.

Note that as a function of $x_1$, $D_{k_1}(x_1,x_{I_1})$ is supported
in $\{x_1: d_1(x_1,x_{I_1})\leq C2^{-k_1+N_1}\}$ and similarly
for $D_{k_2}(x_2,x_{I_2})$.
For each $k\in\mathbb{Z}$, let
$$\Omega_k=\{ (x_1,x_2)\in M_1\times M_2:
{{\widetilde {\widetilde S}}_d}(f)(x_1,x_2)>2^k \},$$
where ${{\widetilde {\widetilde S}}_d}(f)$ is similar to ${{\widetilde S}_d}(f)$ but with $D_{k_1}D_{k_2}$ replaced by $\widetilde{{\widetilde D}}_{k_1}\widetilde{{\widetilde D}}_{k_2}.$ More precisely,
$${{\widetilde {\widetilde S}}_d}(f)(x_1,x_2)=\Big\{\sum_{k_1=-\infty}^\infty\sum_{k_2=-\infty}^\infty\sum_{I_1}\sum_{I_2}
|{{\widetilde{\widetilde D}}}_{k_1}{{\widetilde{\widetilde
D}}}_{k_2}(f)(x_1,x_2)|^2\chi_{I_1}(x_1)\chi_{I_2}(x_2)
\Big\}^{1/2}.$$ By the Plancherel--P\^olya inequality in [HLL2], it
follows that
$$\|{{\widetilde S}_d}(f)\|_p\approx \|{{\widetilde {\widetilde S}}_d}(f)\|_p$$
for $\max\big(\frac{ {Q}_1}{ {Q}_1+ \vartheta_1 },\frac{ {Q}_2}{
{Q}_2+ \vartheta_2 }\big)<p<\infty.$ Therefore,
$$\|f\|_{H^p(\widetilde M)}\approx \|{{\widetilde {\widetilde S}}_d}(f)\|_p.$$
Set
$$ \widetilde{\Omega}_k =\{ (x_1,x_2)\in M_1\times M_2:
\mathcal{M}_s(\chi_{\Omega_k})(x_1,x_2)>\widetilde{C} \},$$
where $\mathcal{M}_s$ is the strong maximal function on $\widetilde M$ and $\widetilde{C}$ is a constant to be decided later.
Let
$$B_k=\big\{ R=I_1\times I_2: \mu(\Omega_k\cap R)>{1\over2}\mu(R),\ {\rm
and}\ \mu(\Omega_{k+1}\cap R)\leq {1\over2}\mu(R)\big\}.$$

Rewrite (\ref{identity for atom}) as
\begin{eqnarray*}
f(x_1,x_2)
&=&\sum_{k=-\infty}^\infty\sum_{R=I_1\times
I_2\in B_k}
\mu_1(I_1)\mu_2(I_2) D_{k_1}(x_1,x_{I_1})D_{k_2}(x_2,x_{I_2}) \widetilde{\widetilde{D}}_{k_1}\widetilde{\widetilde{D}}_{k_2}(f)(x_{I_1},x_{I_2})\\
&=&\sum_{k=-\infty}^\infty \lambda_k a_k(x_1,x_2),
\end{eqnarray*}
where
\begin{eqnarray}\label{atom ak}
a_k(x_1,x_2)={1\over
\lambda_k }\sum_{R=I_1\times
I_2\in B_k}
\mu_1(I_1)\mu_2(I_2) D_{k_1}(x_1,x_{I_1})D_{k_2}(x_2,x_{I_2}) \widetilde{\widetilde{D}}_{k_1}\widetilde{\widetilde{D}}_{k_2}(f)(x_{I_1},x_{I_2})\ \ \ \
\end{eqnarray}
and
\begin{eqnarray}\label{atom lambda k q big}
    \lambda_k=C\bigg\|\bigg\{ \sum_{R=I_1\times
I_2\in B_k} \Big|\widetilde{\widetilde{D}}_{k_1}\widetilde{\widetilde{D}}_{k_2}(f)(x_{I_1},x_{I_2})\chi_R(\cdot,\cdot)\Big|^2   \bigg\}^{1/2}\bigg\|_{q}\ \big|\widetilde{\Omega}_k\big|^{1/p-1/q}\hskip.7cm
\end{eqnarray}
when $2\leq q<\infty$, and for $1< q<2 $,
\begin{eqnarray}\label{atom lambda k q small}
    \lambda_k=C\bigg\|\bigg\{ \sum_{R=I_1\times
I_2\in B_k} \Big|\widetilde{\widetilde{D}}_{k_1}\widetilde{\widetilde{D}}_{k_2}(f)(x_{I_1},x_{I_2})\chi_R(\cdot,\cdot)\Big|^2   \bigg\}^{1/2}\bigg\|_{2}\ \big|\widetilde{\Omega}_k\big|^{1/p-1/2}.\hskip.7cm
\end{eqnarray}
To see that the atomic decomposition $\sum_{k=-\infty}^\infty \lambda_k a_k(x_1,x_2)$ converges to $f$ in the $L^q$ norm, we only need to show that
$\|\sum_{|k|>\ell}\lambda_k a_k(x_1,x_2)\|_q\rightarrow 0$ as $\ell\rightarrow \infty.$ This follows from the following duality argument: Let $h\in L^{q'}$ with $\|h\|_{q'}=1,$ then
$$\big\|\sum_{|k|>\ell}\lambda_k a_k(x_1,x_2)\big\|_q=\sup_{\|h\|_{q'}=1}\big|\langle\sum_{|k|>\ell}\lambda_k a_k(x_1,x_2), h\rangle\big|.$$
Note that
\begin{eqnarray*}
\big\langle\sum_{|k|>\ell}\lambda_k a_k(x_1,x_2), h\big\rangle&=&\sum_{|k|>\ell}\sum_{R=I_1\times
I_2\in B_k}
\mu_1(I_1)\mu_2(I_2)
D^*_{k_1}D^*_{k_2}(h)(x_{I_1},x_{I_2}) \widetilde{\widetilde{D}}_{k_1}\widetilde{\widetilde{D}}_{k_2}(f)(x_{I_1},x_{I_2})\\[4pt]
&=&\int\sum_{|k|>\ell}\sum_{R=I_1\times
I_2\in
B_k}D^*_{k_1}D^*_{k_2}(h)(x_{I_1},x_{I_2})\\[4pt]
&&\hskip.3cm\times\widetilde{\widetilde{D}}_{k_1}\widetilde{\widetilde{D}}_{k_2}(f)(x_{I_1},x_{I_2})\chi_R(x_1,x_2)d\mu(x_1)d\mu(x_2).
\end{eqnarray*}
Applying H\"older's inequality gives
\begin{eqnarray*}
\big|\big\langle\sum_{|k|>\ell}\lambda_k a_k(x_1,x_2), h\big\rangle\big|
&\leq &\Big\| \Big\{\sum_{|k|>\ell}\ \sum_{R=I_1\times
I_2\in B_k}
 \big|D^*_{k_1}D^*_{k_2}(h)(x_{I_1},x_{I_2})\big|^2\chi_R(\cdot,\cdot)\Big\}^{1/2} \Big\|_{q'}\\[4pt]
 &&\hskip.5cm\times \Big\| \Big\{\sum_{|k|>\ell}\ \sum_{R=I_1\times
I_2\in B_k}
 \big|\widetilde{\widetilde{D}}_{k_1}\widetilde{\widetilde{D}}_{k_2}(f)(x_{I_1},x_{I_2})\big|^2\chi_R(\cdot,\cdot)\Big\}^{1/2} \Big\|_{q}.
\end{eqnarray*}
Note again that
$$\Big\| \Big\{\sum_{|k|>\ell}\sum_{R=I_1\times
I_2\in B_k}
 \big|D^*_{k_1}D^*_{k_2}(h)(x_{I_1},x_{I_2})\big|^2\chi_R(\cdot,\cdot)\Big\}^{1/2} \Big\|_{q'}
 \leq C\|h\|_{q'}$$
and
\begin{eqnarray*}
\Big\| \Big\{\sum_{|k|>\ell}\sum_{R=I_1\times
I_2\in B_k}
 \big|\widetilde{\widetilde{D}}_{k_1}\widetilde{\widetilde{D}}_{k_2}(f)(x_{I_1},x_{I_2})\big|^2\chi_R(\cdot,\cdot)\Big\}^{1/2} \Big\|_{q}
\end{eqnarray*}
tends to zero as $\ell$ tends to infinity. This implies that $\|\sum_{|k|>\ell}\lambda_k a_k(x_1,x_2)\|_q\rightarrow 0$ as $\ell\rightarrow \infty$
and hence, the atomic decomposition $\sum_{k=-\infty}^\infty \lambda_k a_k(x_1,x_2)$ converges to $f$ in the $L^q$ norm.

To see that $a_k$ has the compact support, by choosing $\widetilde{C}$ sufficiently small, we can conclude
that $ {\rm supp}a_k\subset \widetilde{\Omega}_k $ since
$D_{k_1}(x_1,x_{I_1})$ and $D_{k_2}(x_2,x_{I_2}),$ as
functions of $x_1$ and $x_2,$ respectively, have compact supports with diameters being equivalent to $2^{-k_1}$ and $2^{-k_2},$ respectively.
This implies that $a_k$ satisfies the condition (1) of Definition \ref{def-of-p q atom}.

We now verify that $a_k$ satisfies (2) of Definition \ref{def-of-p q atom}. To this end, let
$h\in L^{q'}(\widetilde{M})$ with
$\|h\|_{L^{q'}}=1$, where $q'$ is the conjugate index of $q$. By the duality argument,
\begin{eqnarray*}
&&\Big\|   \sum_{R=I_1\times
I_2\in B_k}
\mu(R) D_{k_1}(\cdot,x_{I_1})D_{k_2}(\cdot,x_{I_2}) \widetilde{\widetilde{D}}_{k_1}\widetilde{\widetilde{D}}_{k_2}(f)(x_{I_1},x_{I_2})  \Big\|_q\\
&=& \sup_{\|h\|_{L^{q'}}=1}\Big|\Big\langle  \sum_{R=I_1\times
I_2\in B_k}
\mu(R) D_{k_1}(\cdot,x_{I_1})D_{k_2}(\cdot,x_{I_2}) \widetilde{\widetilde{D}}_{k_1}\widetilde{\widetilde{D}}_{k_2}(f)(x_{I_1},x_{I_2}),h   \Big\rangle\Big|.
\end{eqnarray*}
Applying H\"older's inequality and the discrete Littlewood--Paley square function estimates on $L^q$ for $1<q<\infty$, the last term above is dominated by
\begin{eqnarray*}
&& \sup_{\|h\|_{L^{q'}}=1}
\Big\| \Big\{\sum_{R=I_1\times
I_2\in B_k}
 \big|D_{k_1}D_{k_2}(h)(x_{I_1},x_{I_2})\big|^2\chi_R(\cdot,\cdot)\Big\}^{1/2} \Big\|_{q'}\\[4pt]
 &&\hskip1cm\times \Big\| \Big\{\sum_{R=I_1\times
I_2\in B_k}
 \big|\widetilde{\widetilde{D}}_{k_1}\widetilde{\widetilde{D}}_{k_2}(f)(x_{I_1},x_{I_2})\big|^2\chi_R(\cdot,\cdot)\Big\}^{1/2} \Big\|_{q}\\[4pt]
&&\leq C\Big\| \Big\{\sum_{R=I_1\times
I_2\in B_k}
 \big|\widetilde{\widetilde{D}}_{k_1}\widetilde{\widetilde{D}}_{k_2}(f)(x_{I_1},x_{I_2})\big|^2\chi_R(\cdot,\cdot)\Big\}^{1/2} \Big\|_{q}.
\end{eqnarray*}
This yields that when $2\leq q<\infty$,
\begin{eqnarray*}
\|a_k\|_{q}&=& \bigg(C\bigg\|\bigg\{ \sum_{R=I_1\times
I_2\in B_k} \Big|\widetilde{\widetilde{D}}_{k_1}\widetilde{\widetilde{D}}_{k_2}(f)(x_{I_1},x_{I_2})\chi_R(\cdot,\cdot)\Big|^2   \bigg\}^{1/2}\bigg\|_{q}\ \mu\big(\widetilde{\Omega}_k\big)^{1/p-1/q}\bigg)^{-1}\\[4pt]
&&\times \Big\|   \sum_{R=I_1\times
I_2\in B_k}
\mu(R) D_{k_1}(\cdot,x_{I_1})D_{k_2}(\cdot,x_{I_2}) \widetilde{\widetilde{D}}_{k_1}\widetilde{\widetilde{D}}_{k_2}(f)(x_{I_1},x_{I_2})  \Big\|_q\\[4pt]
&\leq& \mu\big(\widetilde{\Omega}_k\big)^{1/q-1/p}.
\end{eqnarray*}
For $1<q<2$, since $a_k$ is supported in $\widetilde{\Omega}_k$, applying H\"older's inequality yields
\begin{eqnarray*}
\|a_k\|_{q}
&=& \bigg(C\bigg\|\bigg\{ \sum_{R=I_1\times
I_2\in B_k} \Big|\widetilde{\widetilde{D}}_{k_1}\widetilde{\widetilde{D}}_{k_2}(f)(x_{I_1},x_{I_2})\chi_R(\cdot,\cdot)\Big|^2   \bigg\}^{1/2}\bigg\|_{2}\ \mu\big(\widetilde{\Omega}_k\big)^{1/p-1/2}\bigg)^{-1}\\[4pt]
&&\times \Big\|   \sum_{R=I_1\times
I_2\in B_k}
\mu(R) D_{k_1}(\cdot,x_{I_1})D_{k_2}(\cdot,x_{I_2}) \widetilde{\widetilde{D}}_{k_1}\widetilde{\widetilde{D}}_{k_2}(f)(x_{I_1},x_{I_2})  \Big\|_q\\[4pt]
&\leq& \bigg(C\bigg\|\bigg\{ \sum_{R=I_1\times
I_2\in B_k} \Big|\widetilde{\widetilde{D}}_{k_1}\widetilde{\widetilde{D}}_{k_2}(f)(x_{I_1},x_{I_2})\chi_R(\cdot,\cdot)\Big|^2   \bigg\}^{1/2}\bigg\|_{2}\ \mu\big(\widetilde{\Omega}_k\big)^{1/p-1/2}\bigg)^{-1}\\[4pt]
&&\times \mu\big(\widetilde{\Omega}_k\big)^{1/q-1/2} \Big\|   \sum_{R=I_1\times
I_2\in B_k}
\mu(R) D_{k_1}(\cdot,x_{I_1})D_{k_2}(\cdot,x_{I_2}) \widetilde{\widetilde{D}}_{k_1}\widetilde{\widetilde{D}}_{k_2}(f)(x_{I_1},x_{I_2})  \Big\|_2\\[4pt]
&\leq& \mu\big(\widetilde{\Omega}_k\big)^{1/q-1/p},
\end{eqnarray*}
where we use the fact that
$$\Big\|   \sum_{R=I_1\times
I_2\in B_k}
\mu(R) D_{k_1}(\cdot,x_{I_1})D_{k_2}(\cdot,x_{I_2}) \widetilde{\widetilde{D}}_{k_1}\widetilde{\widetilde{D}}_{k_2}(f)(x_{I_1},x_{I_2})  \Big\|_2$$
$$\leq C\bigg\|\bigg\{ \sum_{R=I_1\times
I_2\in B_k} \Big|\widetilde{\widetilde{D}}_{k_1}\widetilde{\widetilde{D}}_{k_2}(f)(x_{I_1},x_{I_2})\chi_R(\cdot,\cdot)\Big|^2   \bigg\}^{1/2}\bigg\|_{2}.$$
As a consequence, we get that $a_k$ satisfies (2) of Definition \ref{def-of-p q atom}.
It remains to check that $a_k$ satisfies the condition (3) of Definition \ref{def-of-p q atom}. To see this, we can further decompose $a_k$
as
$$
   a_k=\sum_{\overline{R}\in \mathcal{M}(\widetilde{\Omega}_k)} a_{k, \overline{R}},
$$
where
\begin{eqnarray*}
a_{k, \overline{R}}(x_1,x_2)&=&{1\over
\lambda_k }\ \ \sum_{R=I_1\times
I_2\in B_k,\ \ R\subset \overline{R}}
\mu_1(I_1)\mu_2(I_2)\\[5pt]
&&\times D_{k_1}(x_1,x_{I_1})D_{k_2}(x_2,x_{I_2}) \widetilde{\widetilde{D}}_{k_1}\widetilde{\widetilde{D}}_{k_2}(f)(x_{I_1},x_{I_2}).
\end{eqnarray*}

Similar to $a_k$, we can verify that
$$
    {\rm supp}a_{k,\overline{R}} \subset C\overline{R}
$$
and by the facts that $\int D_{k_1}(x_1,x_{I_1})dx_1=\int D_{k_2}(x_2,x_{I_2})dx_2=0,$ for a.e. $x_2\in M_2$,
$$ \int_{M_1} a_{k,\overline{R}}(x_1,x_2)dx_1=0 $$
and for a.e. $x_1\in M_1$,
$$ \int_{M_2} a_{k,\overline{R}}(x_1,x_2)dx_2=0, $$
which yield that the conditions (i) and (ii) of (3) in Definition \ref{def-of-p q atom} hold. Now it's left to show that $a_k$ satisfies the conditions (iii-a) and (iii-b) of (3).

For $2\leq q<\infty$, we verify  that  $a_k$ satisfies (iii-a). To do this, by the definition of $\lambda_k$, we have
\begin{eqnarray*}
\|a_{k,\overline{R}}\|_{q}&=& \bigg(C\bigg\|\bigg\{ \sum_{R=I_1\times
I_2\in B_k} \Big|\widetilde{\widetilde{D}}_{k_1}\widetilde{\widetilde{D}}_{k_2}(f)(x_{I_1},x_{I_2})\chi_R(\cdot,\cdot)\Big|^2   \bigg\}^{1/2}\bigg\|_{q}\ \mu\big(\widetilde{\Omega}_k\big)^{1/p-1/q}\bigg)^{-1}\\
&&\times \Big\|   \sum_{R=I_1\times
I_2\in B_k,\ R\subset \overline{R} }
\mu(R) D_{k_1}(\cdot,x_{I_1})D_{k_2}(\cdot,x_{I_2}) \widetilde{\widetilde{D}}_{k_1}\widetilde{\widetilde{D}}_{k_2}(f)(x_{I_1},x_{I_2})  \Big\|_q.
\end{eqnarray*}
Applying the same argument for the estimates of $\|a_k\|_{q}$ with $2\leq q<\infty $ yields
$$
\Big\{ \sum_{\overline{R}\in \mathcal{M}(\widetilde{\Omega}_k)}\big\|a_{k,\overline{R}}\big\|^q_{L^q} \Big\}^{1/q}
\leq \mu(\widetilde{\Omega}_k)^{1/q-1/p},
$$
which concludes that the condition (iii-a) holds.

For $1<q<2$, we first write
$$\sum_{\overline{R}=I_1\times I_2\in \mathcal{M}_1(\widetilde{\Omega}_k)} \Big({\mu_2(I_2)\over\mu_2(\widehat{I}_2)}\Big)^\delta \big\|a_{k,\overline{R}}\big\|^q_{L^q}
\leq {C\over \lambda_k^q}\sum_{\overline{R}=I_1\times I_2\in
\mathcal{M}_1(\widetilde{\Omega}_k)}\Big({\mu_2(I_2)\over\mu_2(\widehat{I}_2)}\Big)^\delta
\hskip3cm$$
$$\times\Big\| \Big\{\sum_{R=I_1\times
I_2\in B_k,\ R\subset \overline{R}}
 \big|\widetilde{\widetilde{D}}_{k_1}\widetilde{\widetilde{D}}_{k_2}(f)(x_{I_1},x_{I_2})\big|^2\chi_R(\cdot,\cdot)\Big\}^{1/2} \Big\|_{q}^q.$$
Applying H\"older's inequality and the definition of $\lambda_k$, the last term above then is less or equal to
\begin{eqnarray*}
&&{C\over \lambda_k^q}\sum_{\overline{R}=I_1\times I_2\in
\mathcal{M}_1(\widetilde{\Omega}_k)}\Big({\mu_2(I_2)\over\mu_2(\widehat{I}_2)}\Big)^\delta
\mu(R)^{1-q/2}\\[5pt]
&&\hskip.5cm\times \Big\{\int \sum_{R=I_1\times
I_2\in B_k,\ R\subset \overline{R}}
 \big|\widetilde{\widetilde{D}}_{k_1}\widetilde{\widetilde{D}}_{k_2}(f)(x_{I_1},x_{I_2})\big|^2\chi_R(x_1,x_2) dx_1dx_2 \Big\}^{q/2}\\[5pt]
&&\leq {C\over \lambda_k^q}  \Big\{ \sum_{\overline{R}=I_1\times
I_2\in
\mathcal{M}_1(\widetilde{\Omega}_k)}\Big({\mu_2(I_2)\over\mu_2(\widehat{I}_2)}\Big)^{\delta'}
\mu(R)\Big\}^{1-q/2} \\[5pt]
&&\hskip.5cm\times \Big\{\int
\sum_{R=I_1\times I_2\in B_k,\
R\subset \overline{R}}
 \big|\widetilde{\widetilde{D}}_{k_1}\widetilde{\widetilde{D}}_{k_2}(f)(x_{I_1},x_{I_2})\big|^2\chi_R(x_1,x_2) dx_1dx_2
 \Big\}^{q/2}\\[5pt]
&&\leq C_{q,\delta} \mu(\widetilde{\Omega}_k)^{1-q/2}
\mu(\widetilde{\Omega}_k)^{q/2-q/p}\\
&& =
C_{q,\delta}\mu(\widetilde{\Omega}_k)^{1-q/p},
\end{eqnarray*}
where the last inequality follows from Journ\'e-type covering lemma with $\delta'=\frac{2\delta}{2-q}.$

Similarly,
 \begin{eqnarray*}
\sum_{\overline{R}=I_1\times I_2\in \mathcal{M}_2(\widetilde{\Omega}_k)} \Big({\mu_1(I_1)\over\mu_1(\widehat{I}_1)}\Big)^\delta \big\|a_{k,\overline{R}}\big\|^q_{L^q}\leq C_{q,\delta}\mu(\widetilde{\Omega}_k)^{1-q/p}.
\end{eqnarray*}
 This implies that the condition (iii-b) holds and hence, we obtain
a desired atomic decomposition for $f$.


To prove the converse, it suffices to verify that
there exists a positive constant $C$ such that
\begin{eqnarray}
\|\widetilde{S}(a)\|_{L^p(\widetilde{M})}\leq C
\end{eqnarray}
for each $(p,q)$-atom $a$ of $H^p(\widetilde{M})$ with $1<q<\infty$.
This is because if $f$ has an atomic decomposition $f=\sum_i \lambda_i a_i,$ where the series converges in both $L^q$ and $H^p(\widetilde M)$ norms, then
$$\|{\widetilde S}(f)\|_p^p\leq \sum_i |\lambda_i|^p \|{\widetilde S}(a_i)\|_p^p,$$
where the fact that the series in the atomic decomposition of $f$ converges in the norm of $L^q$ is used. This together with (3.13) gives
\begin{eqnarray*}
\|f\|^p_{H^p}\leq C\|{\widetilde S}(f)\|_p^p\leq C\sum_i |\lambda_i|^p \|{\widetilde S}(a_i)\|_p^p\leq C\sum_i |\lambda_i|^p<\infty.
\end{eqnarray*}
Finally, it remains to show (3.13). Fix an $(p,q)$-atom $a$ with ${\rm supp}a\subset \Omega$
and $a=\sum_{R\in\mathcal{M}(\Omega)}a_R$. Set
\begin{eqnarray*}
\widetilde{\Omega}=\{(x_1,x_2)\in \widetilde{M}:\ \mathcal{M}_s(\chi_\Omega)(x_1,x_2)>1/2 \}
\end{eqnarray*}
and
\begin{eqnarray*}
\widetilde{\widetilde{\Omega}}=\{(x_1,x_2)\in \widetilde{M}:\ \mathcal{M}_s(\chi_{\widetilde{\Omega}})(x_1,x_2)>1/2 \}.
\end{eqnarray*}
Moreover, for any $R=I_1\times I_2 \in\mathcal{M}_1(\Omega)$, set
$\widehat{R}=\widehat{I}_1\times I_2 \subset
\mathcal{M}_1(\widetilde{\Omega}).$ Then $ \mu(\widehat{R}\cap\Omega)>{\mu(\widehat{R})\over2}.$ Similarly,
set $\widehat{\widehat{R}}=\widehat{I}_1\times \widehat{I}_2 \subset
\mathcal{M}_2(\widetilde{\widetilde{\Omega}}).$ Then $ \mu(\widehat{\widehat{R}}\cap\widetilde{\Omega})>{\mu(\widehat{\widehat{R}})\over2}.$

Now let $\overline{C}$ be a constant to be chosen later. We write
\begin{eqnarray*}
&&\|\widetilde{S}(a)\|_{L^p(\widetilde{M})}^p\\
&&=
\int_{\cup_{R\in\mathcal{M}(\Omega)}100\overline{C}\widehat{\widehat{R}}}
\widetilde{S}(a)(x_1,x_2)^pdx_1dx_2+
\int_{(\cup_{R\in\mathcal{M}(\Omega)}100\overline{C}\widehat{\widehat{R}})^c}
\widetilde{S}(a)(x_1,x_2)^pdx_1dx_2\\
&&=: A+B.
\end{eqnarray*}

For $A$, applying the H\"older inequality and Theorem \ref{theorem
Lp bd of suqa func} and using the $L^q$ boundedness of ${\widetilde S}$, we have
\begin{eqnarray*}
A&\leq&
\mu\big(\cup_{R\in\mathcal{M}(\Omega)}100\overline{C}\widehat{\widehat{R}}\big)^{1-p/q}
\left(\int_{\widetilde{M}}
|\widetilde{S}(a)(x_1,x_2)|^q dx_1dx_2\right)^{p/q}\\
&\leq& C\mu(\Omega)^{1-p/q}\|a\|_{L^q(M)}^{p}\\
&\leq& C.
\end{eqnarray*}

To estimate $B$, we write
\begin{eqnarray*}
B&\leq&
\sum_{R\in\mathcal{M}(\Omega)}\int_{(100\overline{C}\widehat{\widehat{R}})^c}
\widetilde{S}(a_R)(x_1,x_2)^p dx_1dx_2\\
&\leq&
\sum_{R\in\mathcal{M}(\Omega)}\int_{x_1\not\in100\overline{C}\widehat{I}_1}\int_{M_2}
\widetilde{S}(a_R)(x_1,x_2)^p dx_1dx_2\\
&&+\sum_{R\in\mathcal{M}(\Omega)}\int_{M_1}\int_{x_2\not\in100\overline{C}\widehat{I}_2}
\widetilde{S}(a_R)(x_1,x_2)^p dx_1dx_2\\
&=:&B_{1}+B_{2}.
\end{eqnarray*}

It suffices to estimate $B_1$ since the estimate for $B_2$ is similar. We further decompose $B_1$ as follows.
\begin{eqnarray*}
B_{1}&=&\sum_{R\in\mathcal{M}(\Omega)}\int_{x_1\not\in100\overline{C}\widehat{I}_1}\int_{x_2\in 100\overline{C}I_2}
\widetilde{S}(a_R)(x_1,x_2)^p dx_1dx_2\\[5pt]
&&+ \sum_{R\in\mathcal{M}(\Omega)}\int_{x_1\not\in100\overline{C}\widehat{I}_1}\int_{x_2\not\in 100\overline{C}I_2}
\widetilde{S}(a_R)(x_1,x_2)^p dx_1dx_2\\[5pt]
&=:&B_{11}+B_{12}.
\end{eqnarray*}

Applying H\"older's inequality for $B_{11}$ implies

$$\int_{x_1\not\in100\overline{C}\widehat{I}_1}\int_{x_2\in 100\overline{C}I_2}
\widetilde{S}(a_R)(x_1,x_2)^p dx_1dx_2\hskip3cm$$
$$\leq C\mu_2(I_2)^{1-p/q}\int_{x_1\not\in100\overline{C}\widehat{I}_1}\left[\int_{M_2}
\widetilde{S}(a_R)(x_1,x_2)^q dx_2\right]^{p/q}  dx_1.$$
To estimate the last term above, write
\begin{eqnarray*}
&&\int_{M_2}
\widetilde{S}(a_R)(x_1,x_2)^q dx_2\\[5pt]
&&=\int_{M_2}
[\sum_{k_1=-\infty}^\infty\sum_{k_2=-\infty}^\infty\big|D_{k_1}D_{k_2}(a_R)(x_1,x_2)\big|^2]^{\frac{q}{2}} dx_2.
\end{eqnarray*}
Consider the Hilbert space $H=\big\{ F_{k_1}(x_1): \|F_{k_1}(x_1)\|_{H}=\lbrace\sum\limits_{k_1}|F_{k_1}(x_1)|^2\rbrace^{\frac{1}{2}}\big\}.$
Then the last term above can be written as
$$\int_{M_2}\big[\sum_{k_2}\|D_{k_2}(D_{k_1}a_R)(x_1,\cdot)(x_2)\|^2_H\big]^{\frac{q}{2}}dx_2.$$
Applying the vector-valued Littlewood--Paley estimate, we have
\begin{eqnarray}\label{e1 S on atom}
\int_{M_2}
\widetilde{S}(a_R)(x_1,x_2)^q dx_2&\leq& C\int_{M_2}\big\{\|(D_{k_1}a_R)(x_1,x_2)\|^2_H\big\}^{\frac{q}{2}}dx_2\nonumber\\[5pt]
&=&C\int_{M_2}\Big[\sum_{k_1=-\infty}^\infty
\big|\int_{M_1}D_{k_1}(x_1,y_1)a_R(y_1,x_2)dy_1\big|^2\Big]^{\frac{q}{2}} dx_2.
\end{eqnarray}
We first consider the term $\int_{M_1}D_{k_1}(x_1,y_1)a_R(y_1,x_2)dy_1$ in (\ref{e1 S on atom}).
Using the cancellation condition of the atom $a_R$ and the smoothness conditions on $D_{k_1}$ yields
\begin{eqnarray*}
&&\Big|\int_{M_1}D_{k_1}(x_1,y_1)a_R(y_1,x_2)dy_1\big |\\[5pt]
&&=\Big|\int_{M_1}[D_{k_1}(x_1,y_1)-D_{k_1}(x_1,z_1)]a_R(y_1,x_2)dy_1\Big|\\[5pt]
&&\leq C2^{k_1\vartheta_1}\ell(I_1)^{\vartheta_1} \Big(\frac{1}{V_{2^{-k_1}}(x_1)+V_{2^{-k_1}}(z_1)+V(x_1,z_1)}\Big) \int_{M_1}|a_R(y_1,x_2)|dy_1,
\end{eqnarray*}
where we use $z_1$ to denote the center of $I_1.$

Putting the above estimate into (\ref{e1 S on atom}) implies
\begin{eqnarray*}
&&\int_{M_2}
\widetilde{S}(a_R)(x_1,x_2)^q dx_2\\[5pt]
&&\leq C\int_{M_2}\Big[\sum_{k_1=-\infty}^\infty
\big(2^{k_1\vartheta_1}\ell(I_1)^{\vartheta_1} \frac{1}{V_{2^{-k_1}}(x_1)+V_{2^{-k_1}}(z_1)+V(x_1,z_1)}\big)^2\Big]^{\frac{q}{2}} dx_2
\mu_1(I_1)^{q-1}\|a_R\|_{L^q(\widetilde{M})}^q.
\end{eqnarray*}
Note that supp$a_R\subset C_1I_1\times C_2I_2.$ So $y_1\in C_1I_1.$ Since $D_{k_1}(x_1,y_1)$ is supported in $\{y_1: d_1(x_1,y_1)<C2^{-k_1}\},$ if $x_1\not\in100\overline{C}\widehat{I}_1,$ then, by choosing $\overline{C}$ large enough, $k_1\leq \widetilde{k}_1,$ where $\widetilde{k}_1$ is chosen such that $2^{-\widetilde{k}_1}\approx 100\overline{C}\ell(\widetilde{I}_1)$. Applying the above estimate gives
\begin{eqnarray*}
&&\int_{x_1\not\in100\overline{C}\widehat{I}_1}\int_{x_2\in 100\overline{C}I_2}
\widetilde{S}(a_R)(x_1,x_2)^p dx_1dx_2\\[5pt]
&&\leq C\mu_2(I_2)^{1-p/q}\int_{x_1\not\in100\overline{C}\widehat{I}_1}\Big[\sum_{k_1=-\infty}^{\widetilde{k}_1}C2^{qk\vartheta_1}\ell(I_1)^{q\vartheta_1} \big(\frac{1}{V_{2^{-k_1}}(x_1)+V_{2^{-k_1}}(z_1)+V(x_1,z_1)}\Big)^q\\[5pt]
&&\hskip1cm\times \Big({2^{-\widehat{k}_1}\over d(x_1,z_1)}\Big)^{q\vartheta_1} \mu_1(I_1)^{q-1}\|a_R\|_{L^q(\widetilde{M})}^q\Big]^{p/q}  dx_1\\[5pt]
&&\leq C\mu_2(I_2)^{1-p/q}\mu_1(I_1)^{p-p/q}\ell(I_1)^{p\vartheta_1}\|a_R\|_{L^q(\widetilde{M})}^p
 \int_{x_1\not\in100\overline{C}\widehat{I}_1} \big(\frac{1}{V(x_1,z_1)d_1(x_1,z_1)^{\vartheta_1}}\Big)^p
  dx_1.
\end{eqnarray*}
By decomposing the set $\{x_1\not\in100\overline{C}\widehat{I}_1\}$ into annuli according to $\ell(\widehat{I}_1)$, we can verify that
\begin{eqnarray*}
 \int_{x_1\not\in100\overline{C}\widehat{I}_1} \big(\frac{1}{V(x_1,z_1)d_1(x_1,z_1)^{\vartheta_1}}\Big)^p
  dx_1\leq C{1\over \ell(\widehat{I}_1)^{p\vartheta_1}}V(z_1,\ell(\widehat{I}_1))^{1-p}.
\end{eqnarray*}
As a consequence, we obtain
\begin{eqnarray}
&&\int_{x_1\not\in100\overline{C}\widehat{I}_1}\int_{x_2\in 100\overline{C}I_2}
\widetilde{S}(a_R)(x_1,x_2)^pdx_1dx_2\nonumber\\[5pt]
&&\leq C\mu_2(I_2)^{1-p/q}\mu_1(I_1)^{p-p/q}\ell(I_1)^{p\vartheta_1}\|a_R\|_{L^q(\widetilde{M})}^p {1\over \ell(\widehat{I}_1)^{p\vartheta_1}}V(z_1,\ell(\widehat{I}_1))^{1-p}\nonumber\\
&&\leq C\mu(R)^{1-p/q}\|a_R\|_{L^q(\widetilde{M})}^p \Big({\ell(I_1)\over \ell(\widehat{I}_1)}\Big)^{p\vartheta_1}\Big({V(z_1,\ell(\widehat{I}_1))\over \mu_1(I_1)}\Big)^{1-p}. \label{estimate B11}
\end{eqnarray}
Next, since
$$ {\mu_1({\widehat{I}_1})\over \mu_1(I_1)}\leq \Big( {\ell({\widehat{I}_1})\over \ell(I_1)} \Big)^{Q_1},  $$
where $Q_1$ is the upper dimension of $M_1$, we have that
$$ {\ell(I_1)\over \ell({\widehat{I}_1})} \leq \Big( {\mu_1(I_1)\over \mu_1({\widehat{I}_1})} \Big)^{1\over Q_1},  $$
which yields that
\begin{eqnarray*}
 \Big({\ell(I_1)\over \ell(\widehat{I}_1)}\Big)^{p\vartheta_1}\Big({V(z_1,\ell(\widehat{I}_1))\over \mu_1(I_1)}\Big)^{1-p}&\leq&
 C\Big({\mu_1(I_1)\over \mu_1(\widehat{I}_1)}\Big)^{{p\vartheta_1\over Q_1}+p-1}
 =:w\Big({\mu_1(I_1)\over \mu_1(\widehat{I}_1)}\Big),
\end{eqnarray*}
where $w(x)=x^\alpha$ with $\alpha={p\vartheta_1\over Q_1}+p-1>0$ since $p>\frac{ {Q}_1}{ {Q}_1+ \vartheta_1 }.$

When $2\leq q<\infty$, applying H\" older's inequality yields
\begin{eqnarray*}
B_{11}&\leq& C\sum_{R\in\mathcal{M}(\Omega)}\|a_R\|_{L^q(\widetilde{M})}^p\mu(R)^{1-p/q} w\Big({\mu_1(I_1)\over \mu_1(\widehat{I}_1)}\Big)\\
&\leq& C\Big( \sum_{R\in\mathcal{M}(\Omega)} \|a_R\|_{L^q(\widetilde{M})}^q\ \Big)^{p/q}\Big( \sum_{R\in\mathcal{M}(\Omega)} \mu(R) \widetilde{w}\Big({\mu_1(I_1)\over \mu_1(\widehat{I}_1)}\Big) \Big)^{1-p/q}\\
&\leq& C\mu(\Omega)^{p/q-1}\mu(\Omega)^{1-p/q}\\
&\leq& C,
\end{eqnarray*}
where the last inequality follows from Journ\'e's covering Lemma 3.3 with $\widetilde{w}=w^{q\over q-p}$.

If $1<q<2$, we have
\begin{eqnarray*}
B_{11}&\leq& C\sum_{R\in\mathcal{M}(\Omega)}\|a_R\|_{L^q(\widetilde{M})}^p\mu(R)^{1-p/q} w\Big({\mu_1(I_1)\over \mu_1(\widehat{I}_1)}\Big)\\
&\leq& C\sum_{R\in\mathcal{M}(\Omega)}\|a_R\|_{L^q(\widetilde{M})}^p \overline{w}\Big({\mu_1(I_1)\over \mu_1(\widehat{I}_1)}\Big) \mu(R)^{1-p/q} \overline{w}\Big({\mu_1(I_1)\over \mu_1(\widehat{I}_1)}\Big).
\end{eqnarray*}
Applying H\"older's inequality implies that the last term above is bounded by
\begin{eqnarray*}
&& C\Big( \sum_{R\in\mathcal{M}(\Omega)} \|a_R\|_{L^q(\widetilde{M})}^q\ \widetilde{\widetilde{w}}\Big({\mu_1(I_1)\over \mu_1(\widehat{I}_1)}\Big)\Big)^{p/q}\Big( \sum_{R\in\mathcal{M}(\Omega)} \mu(R) \widetilde{w}\Big({\mu_1(I_1)\over \mu_1(\widehat{I}_1)}\Big) \Big)^{1-p/q}\\
&&\leq C\mu(\Omega)^{p/q-1}\mu(\Omega)^{1-p/q}
\leq C,
\end{eqnarray*}
where $\overline{w}=w^{1\over 2}$, $\widetilde{w}=\overline{w}^{q\over q-p}$ and $\widetilde{\widetilde{w}}=\overline{w}^{q\over p}$.

Now we consider $B_{12}$. Note that in this case, we have $x_1\not\in100\overline{C}\widehat{I}_1$ and $x_2\not\in 100\overline{C}I_2$. Thus, similar to the arguments in the case of $B_{11}$, by choosing $\overline{C}$ large enough, we have two constants $\widehat{k}_1$ and $\widehat{k}_2$ such that $2^{-\widehat{k}_1}\approx \ell(\widehat{I}_1)$, $2^{-\widehat{k}_2}\approx \ell(I_2)$ and
$k_1\leq \widehat{k}_1$ and $k_2\leq \widehat{k}_2$. Hence, we can rewrite
\begin{eqnarray*}
B_{12}&=&\sum_{R\in\mathcal{M}(\Omega)}\int_{x_1\not\in100\overline{C}\widehat{I}_1}\int_{x_2\not\in 100\overline{C}I_2}\\
&&\left| \sum_{k_1=-\infty}^{\widehat{k}_1}\sum_{k_2=-\infty}^{\widehat{k}_2}\Big|\int_{\widetilde{M}}D_{k_1}(x_1,y_1)D_{k_2}(x_2,y_2)
a_{R}(y_1,y_2)dy_1dy_2\Big|^q \right|^{p/q}dx_1dx_2\\
&=&\sum_{R\in\mathcal{M}(\Omega)}\int_{x_1\not\in100\overline{C}\widehat{I}_1}\int_{x_2\not\in 100\overline{C}I_2}\bigg| \sum_{k_1=-\infty}^{\widehat{k}_1}\sum_{k_2=-\infty}^{\widehat{k}_2}\Big|\int_{\widetilde{M}}[D_{k_1}(x_1,y_1)-D_{k_1}(x_1,z_1)]\\
&&\times [D_{k_2}(x_2,y_2)-D_{k_2}(x_2,z_2)]
a_{R}(y_1,y_2)dy_1dy_2\Big|^q \bigg|^{p/q}dx_1dx_2,
\end{eqnarray*}
where the second equality follows from the cancellation condition of the atoms $a_{R}(y_1,y_2)$.
Then, by applying smoothness properties of $D_{k_1}(x_1,y_1)$ and $D_{k_2}(x_2,y_2)$, we have
\begin{eqnarray*}
B_{12}
&\leq&\sum_{R\in\mathcal{M}(\Omega)}\int_{x_1\not\in100\overline{C}\widehat{I}_1}\int_{x_2\not\in 100\overline{C}I_2}\Bigg[ \sum_{k_1=-\infty}^{\widehat{k}_1}\sum_{k_2=-\infty}^{\widehat{k}_2}\\
&&\times\Big|\int_{\widetilde{M}}2^{qk_1\vartheta_1}\ell(I_1)^{q\vartheta_1} \big(\frac{1}{V_{2^{-k_1}}(x_1)+V_{2^{-k_1}}(z_1)+V(x_1,z_1)}\Big)^q \Big({2^{-\widehat{k}_1}\over d(x_1,z_1)}\Big)^{q\vartheta_1} \\[4pt]
&&\hskip.3cm\times 2^{qk_2\vartheta_2}\ell(I_2)^{q\vartheta_2} \big(\frac{1}{V_{2^{-k_2}}(x_2)+V_{2^{-k_2}}(z_2)+V(x_2,z_2)}\Big)^q \Big({2^{-\widehat{k}_2}\over d(x_2,z_2)}\Big)^{q\vartheta_2}\\
&&\hskip.5cm\times|a_{R}(y_1,y_2)|dy_1dy_2\Big|^q \Bigg]^{p/q}dx_1dx_2\\
&\leq&C\mu(R)^{1-p/q}\|a_R\|_{L^q(\widetilde{M})}^p \Big({\ell(I_1)\over \ell(\widehat{I}_1)}\Big)^{p\vartheta_1}\Big({V(z_1,\ell(\widehat{I}_1))\over \mu_1(I_1)}\Big)^{1-p}.
\end{eqnarray*}

Similar to estimates as those in $B_{11}$, we obtain
\begin{eqnarray*}
B_{12}&\leq& C\mu(\Omega)^{p/q-1}\mu(\Omega)^{1-p/q}\leq C.
\end{eqnarray*}
Combining the estimates of $B_{11}$ and $B_{12}$ yields $B_1\leq C$, which in turn gives $B_2\leq C$. The proof of Theorem 3.5 is concluded.
\end{proof}

\subsubsection{$H^p\rightarrow L^p$ boundedness }

In this subsection, applying the atomic decomposition provided in the previous subsection, we show the following
\begin{theorem}\label{theorem-bd from Hp to Lp}
Suppose that $T$ is a product Calder\'on--Zygmund operator defined
in Subsection 3.1.
Then  for $\max\big(\frac{ {Q}_1}{ {Q}_1+
\vartheta_1 },\frac{ {Q}_2}{ {Q}_2+ \vartheta_2 }\big)<p\leq1$, $T$
extends to a bounded operator from $H^p(\widetilde{M})$ to
$L^p(\widetilde{M})$. Moreover, there exists a constant $C$ such
that
$$\|Tf\|_{L^p(\widetilde{M})}\leq C\|f\|_{H^p(\widetilde{M})}.$$
\end{theorem}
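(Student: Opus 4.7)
The plan is to reduce the problem, by density and atomic decomposition, to a uniform $L^p$ estimate on $(p,2)$-atoms, and then to mirror the rectangle-by-rectangle analysis already carried out in the proof of Theorem \ref{theorem-Hp atom decomp}, with the Littlewood--Paley building blocks $D_{k_1}D_{k_2}$ replaced by the operator-valued kernels $K_1,K_2$ of $T$. Since $L^2(\widetilde M)\cap H^p(\widetilde M)$ is dense in $H^p(\widetilde M)$, it suffices to work with $f\in L^2(\widetilde M)\cap H^p(\widetilde M)$. Invoking Theorem \ref{theorem-Hp atom decomp} with $q=2$, write $f=\sum_i\lambda_i a_i$ with $(p,2)$-atoms $a_i$, $\sum_i|\lambda_i|^p\approx\|f\|_{H^p}^p$, and convergence in both $L^2$ and $H^p$. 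The $L^2$-boundedness hypothesis then gives $Tf=\sum_i\lambda_i Ta_i$ in $L^2$; since $p\leq 1$, subadditivity of $\|\cdot\|_p^p$ reduces everything to the uniform bound $\|Ta\|_p\leq C$ for every $(p,2)$-atom $a$.

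Fix such an atom with supp $a\subset\Omega$ and rectangular decomposition $a=\sum_{R\in\mathcal M(\Omega)}a_R$. Introduce the iterated strong-maximal enlargements $\Omega\subset\widetilde\Omega\subset\widetilde{\widetilde\Omega}$ and, for each $R=I_1\times I_2$, the rectangle enlargement $\widehat{\widehat R}=\widehat I_1\times\widehat I_2$; with a large constant $\overline C$ to be chosen I split
\[
\|Ta\|_p^p=\int_{\bigcup_R 100\overline C\widehat{\widehat R}}|Ta|^p\,d\mu+\int_{\widetilde M\setminus\bigcup_R 100\overline C\widehat{\widehat R}}|Ta|^p\,d\mu=:A+B.
\]
For the near term $A$, H\"older's inequality, the strong maximal inequality $\mu\bigl(\bigcup_R 100\overline C\widehat{\widehat R}\bigr)\lesssim\mu(\Omega)$, and the $L^2$-boundedness of $T$ combine to give
\[
A\leq \mu\Bigl(\bigcup_R 100\overline C\widehat{\widehat R}\Bigr)^{1-p/2}\|Ta\|_2^p\lesssim\mu(\Omega)^{1-p/2}\|a\|_2^p\leq C.
\]

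The far term $B$ is the main work. I would split $B=B_1+B_2$ according to $x_1\notin 100\overline C\widehat I_1$ or $x_2\notin 100\overline C\widehat I_2$, and further decompose each $B_i$ into a piece where the other variable lies in $100\overline C I_j$ and one where it lies outside, exactly as $B_{11}+B_{12}$ in Theorem \ref{theorem-Hp atom decomp}. For each rectangle atom $a_R$ one writes
\[
Ta_R(x_1,x_2)=\int_{M_1}K_1(x_1,y_1)\bigl[a_R(y_1,\cdot)\bigr](x_2)\,dy_1,
\]
then uses the cancellation $\int a_R(y_1,\cdot)\,dy_1=0$ (for a.e.\ $x_2$) to replace $K_1(x_1,y_1)$ by $K_1(x_1,y_1)-K_1(x_1,z_1)$, where $z_1$ is the center of $I_1$. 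Condition (iii) in Subsection 3.1 supplies the operator-norm regularity
\[
\|K_1(x_1,y_1)-K_1(x_1,z_1)\|_{CZ}\lesssim\Bigl(\frac{d_1(y_1,z_1)}{d_1(x_1,y_1)}\Bigr)^{\varepsilon}V(x_1,y_1)^{-1},
\]
which after taking $L^2$-norms in $x_2$ produces the gain $(\ell(I_1)/\ell(\widehat I_1))^{\varepsilon}$ driving the $B_{11}$-estimate. For $B_{12}$ one additionally exploits the $y_2$-cancellation of $a_R$ against the scalar Calder\'on--Zygmund kernel on $M_2$ of $K_1(x_1,y_1)-K_1(x_1,z_1)$, gaining an extra factor $(\ell(I_2)/\ell(\widehat I_2))^{\varepsilon}$. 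These are the direct analogues of the estimates for $D_{k_1}D_{k_2}(a_R)$; summing over $R$ via H\"older together with the Journ\'e-type covering Lemma \ref{theorem-cover lemma}, just as in the $B_{11}$/$B_{12}$ analysis for $\widetilde S$, yields $B_1\leq C$. The symmetric argument using $K_2$ gives $B_2\leq C$.

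The main obstacle I anticipate is the simultaneous use of both $y_1$- and $y_2$-cancellations of $a_R$ in the region where $(x_1,x_2)$ is far in both factors. Unlike the case of the product square function, a product Calder\'on--Zygmund operator in the sense of Subsection 3.1 does not come equipped with a jointly smooth scalar kernel, so one cannot simply difference a four-variable kernel in two variables. Instead the double-cancellation estimate must be obtained by iterating the CZ-operator-valued regularity of $K_1$ in the $y_1$-variable with the scalar kernel regularity of $K_1(x_1,y_1)$, viewed as a Calder\'on--Zygmund operator on $M_2$, in the $y_2$-variable; tracking the resulting composite bound through H\"older and the covering lemma is the delicate technical point. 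Once this double-cancellation estimate is in place, the remainder of the argument is combinatorial and closely parallels the atomic estimate of $\widetilde S$ already carried out.
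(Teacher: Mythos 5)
Your proposal is correct and follows essentially the same route as the paper: density of $L^2\cap H^p$, reduction to a uniform bound on $(p,2)$-atoms via Theorem \ref{theorem-Hp atom decomp}, the splitting $A+B$, $B=B_1+B_2$, $B_{11}+B_{12}$, the single-cancellation estimate through $\|K_1(x_1,y_1)-K_1(x_1,y_{I_1})\|_{CZ}$, the iterated double-cancellation estimate for $B_{12}$, and summation via H\"older and the Journ\'e-type covering lemma. The only slip is cosmetic: in $B_{12}$ the second-variable cancellation yields integrability of $(\ell(I_2)/d_2(x_2,y_{I_2}))^{\varepsilon}V(x_2,y_{I_2})^{-1}$ over $x_2\notin 100\overline{C}I_2$ rather than an extra gain $(\ell(I_2)/\ell(\widehat I_2))^{\varepsilon}$, but the covering lemma only needs the gain in the first factor, so the argument closes as in the paper.
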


\begin{proof}
Fix $\max\big(\frac{ {Q}_1}{ {Q}_1+ \vartheta_1 },\frac{ {Q}_2}{
{Q}_2+ \vartheta_2 }\big)<p\leq1$. Since $H^p(\widetilde{M})\cap L^2$ is dense in $H^p(\widetilde{M}),$ it suffices to prove that there exists a positive constant $C$ such that for every $f\in H^p(\widetilde{M})\cap L^2$,
\begin{eqnarray}\label{T bd from Hp to Lp}
   \|Tf\|_{L^p(\widetilde{M})}\leq    C\|f\|_{H^p(\widetilde{M})}.
\end{eqnarray}

To prove (\ref{T bd from Hp to Lp}), similar to the proof of Theorem 3.5, we only need to show that for any $(p,2)$-atom $a$ of $H^p(\widetilde{M})$, $\|Ta\|_{L^p(\widetilde{M})}$ is uniformly bounded. To do this, suppose that $a$ is an $(p,2)$-atom with ${\rm supp}a\subset \Omega$
and $a=\sum_{R\in\mathcal{M}(\Omega)}a_R$. Set $\widetilde{\Omega}, \widetilde{\widetilde{\Omega}}, R, \widehat{R}$ and $\widehat{\widehat{R}}$ as in the proof of Theorem 3.5.

To prove that $\|T(a)\|_{L^p(\widetilde{M})}^p\leq C$, where $C$ is a positive constant independent of $a$, we decompose $\|T(a)\|_{L^p(\widetilde{M})}^p$ as follows.
\begin{eqnarray*}
&&\|T(a)\|_{L^p(\widetilde{M})}^p\\
&&=
\int_{\cup_{R\in\mathcal{M}(\Omega)}100\overline{C}\widehat{\widehat{R}}}
T(a)(x_1,x_2)^pdx_1dx_2+
\int_{(\cup_{R\in\mathcal{M}(\Omega)}100\overline{C}\widehat{\widehat{R}})^c}
T(a)(x_1,x_2)^pdx_1dx_2\\[4pt]
&&=: A+B.
\end{eqnarray*}

Applying the H\"older inequality and the $L^2$ boundedness of $T$ implies
\begin{eqnarray*}
A&\leq&
\mu\big(\bigcup_{R\in\mathcal{M}(\Omega)}100\overline{C}\widehat{\widehat{R}}\big)^{1-p/2}
\left(\int_{\widetilde{M}}
|T(a)(x_1,x_2)|^2dx_1dx_2\right)^{p/2}\\
&\leq& C\mu(\Omega)^{1-p/2}\|a\|_{L^2(\widetilde{M})}^{p}\\
&\leq& C.
\end{eqnarray*}

To estimate $B$, we write
\begin{eqnarray*}
B&\leq&
\sum_{R\in\mathcal{M}(\Omega)}\int_{(100\overline{C}\widehat{\widehat{R}})^c}
T(a_R)(x_1,x_2)^pdx_1dx_2\\
&\leq&
\sum_{R\in\mathcal{M}(\Omega)}\int_{x_1\not\in100\overline{C}\widehat{I}_1}\int_{M_2}
T(a_R)(x_1,x_2)^pdx_1dx_2\\
&&+\sum_{R\in\mathcal{M}(\Omega)}\int_{M_1}\int_{x_2\not\in100\overline{C}\widehat{I}_2}
T(a_R)(x_1,x_2)^pdx_1dx_2\\[4pt]
&=:&B_{1}+B_{2}.
\end{eqnarray*}

We only need to estimate $B_1$ since the proof of estimate for $B_2$ is similar. To do this, we write
\begin{eqnarray*}
B_1&=&
\sum_{R\in\mathcal{M}(\Omega)}\Big(\int_{x_1\not\in100\overline{C}\widehat{I}_1}\int_{x_2\in 10I_2} + \int_{x_1\not\in100\overline{C}\widehat{I}_1}\int_{x_2\not\in 10I_2} \Big)
T(a_R)(x_1,x_2)^pdx_1dx_2\\[4pt]
&=:&B_{11}+B_{12}.
\end{eqnarray*}
By H\"older's inequality we obtain
$$
B_{11}\leq
C\sum_{R\in\mathcal{M}(\Omega)}\mu_2(I_2)^{1-p/2}\int_{x_1\not\in100\overline{C}\widehat{I}_1}\Big(
\int_{x_2\in 10I_2} T(a_R)(x_1,x_2)^2dx_2 \Big)^{p/2}
dx_1.$$ To estimate the inside integral above, using the
cancellation condition on $a_R$, we write
$$T(a_R)(x_1,x_2)=\iint_{3R} \big[K(x_1,x_2,y_1,y_2)-K(x_1,x_2,y_{I_1},y_2)\big] a_R(y_1,y_2) dy_1dy_2.$$
Applying the smoothness condition on $K$ yields
\begin{eqnarray}\label{CZ norm on the first variable}
&&\int_{x_2\in 10I_2} |T(a_R)(x_1,x_2)|^2dx_2\nonumber\\[4pt]
&&\hskip1cm\leq C\mu_1(I_1)
  \iint_{3I_1} \|{K_1}(x_1,y_1)-{K_1}(x_1,y_{I_1})\|_{CZ}^2\ \|a_R(y_1,\cdot)\|_{L^2(M_2)}^2
  dy_1\\[4pt]
&&\hskip1cm\leq
C\Big(\frac{d_1(y_1,y_{I_1})}{d_1(x_1,y_{I_1})}\Big)^{2\epsilon}V(x_1,y_{I_1})^{-2}\
\mu_1(I_1)\|a_R\|_{L^2(\widetilde{M})}^2.\nonumber
\end{eqnarray}
Inserting this estimate into the right side of the estimate for
$B_{11}$ implies
\begin{eqnarray*}
B_{11}&\leq&
C\sum_{R\in\mathcal{M}(\Omega)}\mu_2(I_2)^{1-p/2}\\[4pt]
&&\hskip1cm\times\int_{x_1\not\in 100\overline{C}\widehat{I}_1} \bigg(  \Big(\frac{d_1(y_1,y_{I_1})}{d_1(x_1,y_{I_1})}\Big)^{2\epsilon}V(x_1,y_{I_1})^{-2}\ \mu_1(I_1)\|a_R\|_{L^2(\widetilde{M})}^2\bigg)^{p/2}  dx_1\\[5pt]
&\leq& C\sum_{R\in\mathcal{M}(\Omega)}\mu_2(I_2)^{1-p/2}
\mu_1(I_1)^{p/2} \ell(I_1)^{p\epsilon}
\|a_R\|_{L^2(\widetilde{M})}^p\\
&&\hskip1cm\times \int_{x_1\not\in100\overline{C}\widehat{I}_1}
d_1(x_1,y_{I_1})^{-p\epsilon}V(x_1,y_{I_1})^{-p} dx_1,
\end{eqnarray*}
where $y_{I_1}$ is the center of the cube $I_1$ and the fact that $d_1(y_1,y_{I_1})\leq {\frac{1}{2A}}d_1(x_1,y_{I_1})$ is used.

We now estimate the last integral above. To this end, we
decompose the set $\{ x_1\not\in 100\overline{C}\widehat{I}_1 \}$ into annuli and then get
\begin{eqnarray}\label{estimate B11 integral}
&&\int_{x_1\not\in100\overline{C}\widehat{I}_1}  d_1(x_1,y_{I_1})^{-p\epsilon}V(x_1,y_{I_1})^{-p}  dx_1\\[4pt]
&& \leq C \sum_{k=0}^\infty \big(2^k\ell(\widehat{I}_1)\big)^{-p\epsilon} V\big(y_{I_1}, 2^k\ell(\widehat{I}_1)\big)^{1-p}\nonumber\\[4pt]
&& \leq C \sum_{k=0}^\infty 2^{-kp\epsilon} \ell(\widehat{I}_1)^{-p\epsilon} 2^{kQ_1(1-p)} V\big(y_{I_1}, \ell(\widehat{I}_1)\big)^{1-p}\nonumber\\[4pt]
&& \leq C  \ell(\widehat{I}_1)^{-p\epsilon}  V\big(y_{I_1}, \ell(\widehat{I}_1)\big)^{1-p},\nonumber
\end{eqnarray}
where the last inequality follows from the condition that $\max\big(\frac{ {Q}_1}{ {Q}_1+ \vartheta_1 },\frac{ {Q}_2}{
{Q}_2+ \vartheta_2 }\big)<p\leq1$.

Putting all estimates together implies
\begin{eqnarray*}
B_{11}
\leq_
C\sum_{R\in\mathcal{M}(\Omega)}\mu(R)^{1-p/2} \Big(\frac{ \ell(I_1)}{\ell(\widehat{I}_1)}\Big)^{p\epsilon} \Big(\frac{ V\big(y_{I_1}, \ell(\widehat{I}_1)\big)}{ \mu_1(I_1) } \Big)^{1-p} \|a_R\|_{L^2(\widetilde{M})}^p.
\end{eqnarray*}
Repeating the same argument as in (3.12) gives
$$
   B_{11}\leq C,
$$
where $C$ is a positive constant independent of the atom $a $.

We now turn to estimate $B_{12}$. To do this, again using the cancellation conditions on $a_R$ yields

\begin{eqnarray*}
&&\hskip-.5cmTa_R(x_1,x_2)\\
&&=\iint_{3R}
\big[K(x_1,x_2,y_1,y_2)-K(x_1,x_2,y_{I_1},y_2)-K(x_1,x_2,y_1,y_{I_2})
+K(x_1,x_2,y_{I_1},y_{I_2})\big]\\[4pt]
&&\hskip1cm\times a_R(y_1,y_2) dy_1dy_2.
\end{eqnarray*}
By the smoothness condition on $K$ and  we obtain
\begin{eqnarray*}
&&|Ta_R(x_1,x_2)|\\[4pt]
&&\leq C\Big(\frac{d_1(y_1,y_{I_1})}{d_1(x_1,y_{I_1})}\Big)^{\epsilon}V(x_1,y_{I_1})^{-1} \Big(\frac{d_2(y_2,y_{I_2})}{d_2(x_2,y_{I_2})}\Big)^{\epsilon}V(x_2,y_{I_2})^{-1}
\iint_{3R}|a_R(y_1,y_2)| dy_1dy_2
\end{eqnarray*}
and hence
\begin{eqnarray*}
B_{12}
&\leq&  C\sum_{R\in\mathcal{M}(\Omega)} \int_{x_1\not\in100\overline{C}\widehat{I}_1}\int_{x_2\not\in 10I_2}
\Big(\iint_{3R}     \Big(\frac{d_1(y_1,y_{I_1})}{d_1(x_1,y_{I_1})}\Big)^{\epsilon}V(x_1,y_{I_1})^{-1}
\\[4pt]
&&\times\Big(\frac{d_2(y_2,y_{I_2})}{d_2(x_2,y_{I_2})}\Big)^{\epsilon}V(x_2,y_{I_2})^{-1}|a_R(y_1,y_2)|
dy_1dy_2\Big)^pdx_1dx_2,
\end{eqnarray*}
where $y_{I_1}$ and $y_{I_2}$ are the centers of the cubes $I_1$ and $I_2$, respectively and the fact that $d_1(y_1,y_{I_1})\leq {\frac{1}{2A}}d_1(x_1,y_{I_1})$ and $d_2(y_2,y_{I_2})\leq {\frac{1}{2A}}d_2(x_2,y_{I_2})$ is used.

Applying H\"older's inequality implies
\begin{eqnarray*}
B_{12}
&\leq&  C \sum_{R\in\mathcal{M}(\Omega)} \ell(I_1)^{p\epsilon}\ell(I_2)^{p\epsilon} \mu(R)^{p/2} \|a_R\|_{L^2(\widetilde{M})}^p\\[4pt]
&&\times\int_{x_1\not\in100\overline{C}\widehat{I}_1}\int_{x_2\not\in 10I_2}  d_1(x_1,y_{I_1})^{-p\epsilon}V(x_1,y_{I_1})^{-p}
d_2(x_2,y_{I_2})^{-p\epsilon}V(x_2,y_{I_2})^{-p}
 dx_1dx_2\\[4pt]
&\leq&  C \sum_{R\in\mathcal{M}(\Omega)} \ell(I_1)^{p\epsilon}\ell(I_2)^{p\epsilon} \mu(R)^{p/2} \|a_R\|_{L^2(\widetilde{M})}^p\ \ell(\widehat{I}_1)^{-p\epsilon}  V\big(y_{I_1}, \ell(\widehat{I}_1)\big)^{1-p}
\ell(I_2)^{-p\epsilon}  V\big(y_{I_2}, \ell(I_2)\big)^{1-p}\\[4pt]
&\leq&  C \sum_{R\in\mathcal{M}(\Omega)}\Big(\frac{
\ell(I_1)}{\ell(\widehat{I}_1)}\Big)^{p\epsilon} \Big(\frac{
V\big(y_{I_1}, \ell(\widehat{I}_1)\big)}{ \mu_1(I_1) } \Big)^{1-p}
\mu(R)^{1-p/2} \|a_R\|_{L^2(\widetilde{M})}^p\\[4pt]
&\leq& C,
\end{eqnarray*}
where the last inequality follows from the same estimate for $B_{11}.$

As a consequence, we obtain that $B_{1}\leq C$ and similarly $B_2\leq C$. The proof of Theorem 3.6 is concluded.
\end{proof}

\subsubsection{$L^\infty\rightarrow {BMO}$ boundedness }

As a consequence of Theorem 3.6 with $p=1,$ together with the duality that $(H^1(\widetilde{M}))^*=BMO(\widetilde{M}),$ we obtain the following
\begin{theorem}\label{theorem-bd from Linfty to Lp}
Suppose that $T$ is a Calder\'on--Zygmund operator defined in
Subsection 3.1. Then $T$ extends to a bounded operator from
$L^\infty(\widetilde{M})$ to ${BMO}(\widetilde{M})$. Moreover, there
exists a constant $C$ such that
$$\|Tf\|_{BMO(\widetilde M)}\leq C \|f\|_\infty.$$
\end{theorem}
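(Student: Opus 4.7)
The plan is to derive Theorem \ref{theorem-bd from Linfty to Lp} as a direct duality consequence of Theorem \ref{theorem-bd from Hp to Lp} applied with $p=1$ to the adjoint operator $T^*$. The first observation is that the class of product Calder\'on--Zygmund singular integral operators introduced in Subsection 3.1 is closed under taking adjoints: the regularity and size conditions (i)--(iii) on the operator-valued kernels $K_1(x_1,y_1)$ and $K_2(x_2,y_2)$ are symmetric in $(x_i,y_i)$, and $L^2$ boundedness passes to the adjoint. Hence $T^*$ is itself a product Calder\'on--Zygmund operator on $\widetilde M$, and Theorem \ref{theorem-bd from Hp to Lp} with $p=1$ applied to $T^*$ yields
\[
\|T^*g\|_{L^1(\widetilde M)}\leq C\|g\|_{H^1(\widetilde M)}
\]
for every $g\in L^2(\widetilde M)\cap H^1(\widetilde M)$.

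Next, for any $f\in L^\infty(\widetilde M)$ I will define a linear functional on the dense subspace $L^2(\widetilde M)\cap H^1(\widetilde M)\subset H^1(\widetilde M)$ by $\ell_f(g):=\langle f,T^*g\rangle$. This pairing is well defined since $T^*g\in L^1(\widetilde M)$ and $f\in L^\infty(\widetilde M)$, and the bound above immediately gives
\[
|\ell_f(g)|\leq \|f\|_{L^\infty(\widetilde M)}\,\|T^*g\|_{L^1(\widetilde M)}\leq C\|f\|_{L^\infty(\widetilde M)}\,\|g\|_{H^1(\widetilde M)}.
\]
Thus $\ell_f$ extends by density to a continuous linear functional on $H^1(\widetilde M)$ of norm at most $C\|f\|_{L^\infty(\widetilde M)}$. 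By the duality $(H^1(\widetilde M))^*=CMO^1(\widetilde M)=BMO(\widetilde M)$ established in Theorem \ref{theorem-of-duality-on-product-case}, this functional is represented by a unique element $h\in BMO(\widetilde M)$ with $\|h\|_{BMO(\widetilde M)}\leq C\|f\|_{L^\infty(\widetilde M)}$, and I would set $Tf:=h$.

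The only step that requires real care is to check that this $h$ actually coincides with the distributional $Tf$ whose meaning was spelled out at the end of Subsection 3.1 (defined on $C^\eta_{00}$-type test functions via a cutoff decomposition $f=h_0+(f-h_0)$ that exploits the cancellation of the test function against the regularized kernel). Specializing to test functions $g\in\GGp(\beta_1,\beta_2;\gamma_1,\gamma_2)$ with compact support, which sit inside $L^2(\widetilde M)\cap H^1(\widetilde M)$, the identification of $\ell_f(g)=\langle f,T^*g\rangle$ with the distributional $\langle Tf,g\rangle$ reduces to Fubini together with the absolute convergence supplied by the kernel estimates (II-1)--(II-6) and by the cancellation of $g$ in each variable. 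This identification is the one non-mechanical point; with it in hand, Theorem \ref{theorem-bd from Linfty to Lp} is an immediate corollary of Theorem \ref{theorem-bd from Hp to Lp} and the $H^1$--$BMO$ duality.
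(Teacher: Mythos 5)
Your proposal is correct and follows essentially the same route as the paper: the paper likewise deduces Theorem 3.7 from the $H^1(\widetilde M)\to L^1(\widetilde M)$ boundedness of $T^*$ (Theorem 3.6 with $p=1$) together with the $(H^1,BMO)$ duality of Theorem 2.18, obtaining a representing element $h\in CMO^1(\widetilde M)$ with $\|h\|_{CMO^1}\le C\|f\|_{L^\infty}$. The only cosmetic difference is in the bookkeeping of the identification step: the paper first treats $f\in L^2\cap L^\infty$ (where $\langle Tf,g\rangle=\langle f,T^*g\rangle$ is automatic from $L^2$ boundedness, and the norm identification is done by testing against $g=D_{k_2}D_{k_1}(x_1,x_2)\in\GGp$), and then defines $Tf$ for general $f\in L^\infty$ as a weak limit of truncations $Tf_j$ via dominated convergence, rather than invoking the cutoff/Fubini identification you sketch.
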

Theorem 3.7 gives the necessary conditions of Theorem A as follows.

\begin{cor}\label{corollary of theorem-bd from Linfty to Lp}
Suppose that $T$ and $\widetilde T$ are Calder\'on--Zygmund
operators defined in Subsection 3.1. Then $T(1), T^*(1), \widetilde
T(1)$ and $(\widetilde T)^*(1)$ lie on $BMO(\widetilde M).$
\end{cor}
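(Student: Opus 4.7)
The plan is to deduce Corollary \ref{corollary of theorem-bd from Linfty to Lp} by applying Theorem \ref{theorem-bd from Linfty to Lp} four times, once to each of the operators $T$, $T^*$, $\widetilde T$, and $(\widetilde T)^*$, paired with the constant function $1 \in L^\infty(\widetilde M)$. Since $T$ is assumed to be a product Calder\'on--Zygmund operator on $\widetilde M$, it is in particular bounded on $L^2(\widetilde M)$ and its kernel satisfies the size, smoothness, and partial-adjoint conditions (II)-type detailed in Subsection 3.1. Thus Theorem \ref{theorem-bd from Linfty to Lp} applies directly and yields $T(1) \in BMO(\widetilde M)$, where $T(1)$ is interpreted via its pairing with test functions $g_1\otimes g_2$ with $g_1 \in C^\eta_{00}(M_1)$ and $g_2 \in C^\eta_{00}(M_2)$, as explained in Subsection 3.1.

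For $T^*(1)$, I would note that the adjoint $T^*$ is itself a product Calder\'on--Zygmund operator: its kernel is simply $K(y_1, x_1, y_2, x_2)$, so size and regularity properties are preserved upon interchanging the variables (which is exactly permitted by the symmetry built into the definition in Subsection 3.1), and $T^*$ inherits $L^2$-boundedness from $T$. Therefore Theorem \ref{theorem-bd from Linfty to Lp} again gives $T^*(1) \in BMO(\widetilde M)$. The treatment of $\widetilde T$ and $(\widetilde T)^*$ is analogous, once one checks that the partial-adjoint operation preserves the product Calder\'on--Zygmund structure: the partial-adjoint kernel is $K(y_1,x_2,x_1,y_2)$, so the size and smoothness estimates on each factor and the Calder\'on--Zygmund-valued nature of $K_1$ and $K_2$ remain intact (with the roles of $K_1$ and $\widetilde{K}_1$ interchanged in the obvious sense). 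Since we are given that $\widetilde T$ is bounded on $L^2(\widetilde M)$, Theorem \ref{theorem-bd from Linfty to Lp} applies and produces $\widetilde T(1), (\widetilde T)^*(1) \in BMO(\widetilde M)$.

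I expect the only delicate step to be the verification that the distributional object $T(1)$ defined in Subsection 3.1 (via a pairing with mean-zero test functions using a cutoff $h$ equal to $1$ near the support of $g$) genuinely agrees, as an element of $\bigl(\GGp(\beta_1,\beta_2;\gamma_1,\gamma_2)\bigr)'/(\text{constants})$, with the $BMO$ element produced by the $L^\infty\to BMO$ extension given by Theorem \ref{theorem-bd from Linfty to Lp}. This is essentially a matter of unwinding the two definitions on test functions $g_1\otimes g_2$ with $\int g_1 = \int g_2 = 0$ and using the kernel representation together with the absolute convergence guaranteed by the Calder\'on--Zygmund kernel bounds in Subsection 3.1. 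Once this identification is in place, the corollary follows immediately by combining the four applications of Theorem \ref{theorem-bd from Linfty to Lp} above.
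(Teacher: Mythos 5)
Your proposal is correct and follows essentially the same route as the paper: Corollary 3.8 is stated there as an immediate consequence of Theorem 3.7, obtained by applying the $L^\infty\rightarrow BMO$ bound to each of $T$, $T^*$, $\widetilde T$, $(\widetilde T)^*$ with $f=1$, exactly as you do (the hypothesis that $\widetilde T$ is also a Calder\'on--Zygmund operator supplies the $L^2$ boundedness needed for the partial adjoints). Your extra remarks on the adjoint/partial-adjoint kernels preserving the Calder\'on--Zygmund structure and on identifying the distributional $T(1)$ with the $BMO$ extension are the right implicit checks, and the argument is fine.
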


\begin{proof}[Proof of Theorem 3.7]
Suppose that $T$ is a Calder\'on--Zygmund operator defined in
Subsection 3.1. We have to define $Tf$ for $f\in L^\infty(\widetilde M).$ To this
end, we first observe that if $f\in L^\infty(\widetilde M)\cap L^2(\widetilde M)$ then $Tf$ is
well defined, and moreover, for $g\in H^1(\widetilde M)\cap L^2(\widetilde M),$ we
have
$$\langle Tf, g\rangle=\langle f, T^*g\rangle,$$
which together with the fact that, by Theorem 3.6, $T^*$ is bounded from $H^1(\widetilde M)$ to $L^1(\widetilde M)$ and the duality arguments $(L^1, L^\infty)$ and $(H^1, BMO)$  gives $Tf\in BMO(\widetilde M)$ since $T^*g\in L^1(\widetilde M)$ and $H^1(\widetilde M)\cap L^2(\widetilde M)$ is dense in $H^1(\widetilde M).$ To define $Tf$ for $f\in L^\infty,$ we define functions $f_j(x,y)$ by $f_j(x,y)=f(x,y),$ when $d(x,x_0)\leq j, d(y, y_0)\leq j$ and $f_j(x,y)=0,$ otherwise, where $x_0\in M_1$ and $y_0\in M_2$ are any fixed points. Then $f_j\in L^\infty(\widetilde M)\cap L^2(\widetilde M)$ and thus for $g\in H^1(\widetilde M)\cap L^2(\widetilde M),$
$$\langle Tf_j, g\rangle =\langle f_j, T^*g\rangle \rightarrow \langle f, T^*g\rangle .$$
Indeed, $\|f_j\|_{L^\infty(\widetilde M)}\leq \|f\|_{L^\infty(\widetilde M)},$ $f_j\rightarrow f$ almost everywhere, and $T^*g\in L^1(\widetilde M),$ so that we can apply Lebesgue's dominated convergence theorem. This implies that functions $Tf_j$ form a bounded sequence in $BMO(\widetilde M)$ and this sequence converges to $Tf$ in the topology $(H^1, BMO).$ It remains to show the estimate in Theorem 3.7. To do this, we first consider $f\in L^2(\widetilde M)\cap L^\infty(\widetilde M).$ Then for $g\in H^1(\widetilde M)\cap L^2(\widetilde M),$ as mentioned,
$$|\langle Tf, g\rangle|\leq C\|f\|_{L^\infty(\widetilde M)} \|g\|_{H^1(\widetilde M)}.$$
This together with the fact that $H^1(\widetilde M)\cap L^2(\widetilde M)$ is dense in $H^1(\widetilde M)$ implies that $\langle Tf, g\rangle$ defines a continuous linear functional on $H^1(\widetilde M)$ and its norm is dominated by $C\|f\|_{L^\infty(\widetilde M)}.$ By Theorem 2.18, these exists $h\in CMO^1(\widetilde M)$ such that
$$\langle Tf, g\rangle=\langle h, g\rangle$$
for all $g\in \GGp(\beta_1,\beta_2;\gamma_1,\gamma_2)$ and
$\|h\|_{CMO^1(\widetilde M)}\leq C\|f\|_{L^\infty(\widetilde M)}.$ Now we point out that
$D_{k_2}D_{k_1}(x_1,x_2)\in \GGp(\beta_1,\beta_2;\gamma_1,\gamma_2)$
since $D_{k_1}$ and $D_{k_2}$ satisfy the size and smoothness
conditions in (\ref{size of Sk}) and (\ref{smoothness of Sk}).
Taking $g(x_1,x_2)=D_{k_2}D_{k_1}(x_1,x_2)$ in the above equality yields
that $D_{k_2}D_{k_1}(Tf)(x_1,x_2)=D_{k_2}D_{k_1}(h)(x_1,x_2)$ and hence for
$f\in L^2(\widetilde M)\cap L^\infty(\widetilde M),$
$$\|Tf\|_{CMO^1(\widetilde M)}=\|h\|_{CMO^1(\widetilde M)}\leq C\|f\|_{L^\infty(\widetilde M)}.$$
For $f\in L^\infty,$ by the definition for $Tf,$ we have $D_{k_2}D_{k_1}(Tf)(x_1,x_2)=D_{k_2}D_{k_1}(\lim\limits_jTf_j)(x_1,x_2)$ since $D_{k_2}D_{k_1}(x_1,x_2)\in \GGp(\beta_1,\beta_2;\gamma_1,\gamma_2)$ so $D_{k_2}D_{k_1}(x_1,x_2)\in H^1(\widetilde M)\cap L^2(\widetilde M).$  Thus
$$\|Tf\|_{CMO^1(\widetilde M)}=\|\lim_jTf_j\|_{CMO^1(\widetilde M)}\leq \liminf_j \|Tf_j\|_{CMO^1(\widetilde M)}$$
$$\leq C\liminf_j\|f_j\|_{L^\infty(\widetilde M)}\leq C\|f\|_{L^\infty(\widetilde M)}.$$
Note that $CMO^1(\widetilde M)=BMO(\widetilde M).$ The proof of Theorem 3.7 is concluded.
\end{proof}

\subsubsection{$L^{p}, 1<p<\infty,$ boundedness }

In this subsection we prove the $L^p, 1<p<\infty,$ boundedness, namely the following
\begin{theorem}\label{theorem-bd on Lp}
Suppose $T$ is a Calder\'on--Zygmund operator defined in Section
3.1. Then  $T$ extends to a bounded operator from $L^p, 1<p<\infty,$
to itself. Moreover, there exists a constant $C$ such that
$$\|Tf\|_{p}\leq C \|f\|_p.$$
\end{theorem}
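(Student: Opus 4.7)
The plan is to derive Theorem 3.8 by interpolation from the three boundedness results already established in this section and the preceding paper [HLL2]. Specifically, I have in hand: the $L^2(\widetilde{M})$ boundedness of $T$ (which is part of the hypothesis of being a \emph{product Calder\'on--Zygmund operator} as opposed to merely a singular integral operator); the $H^1(\widetilde{M})\to L^1(\widetilde{M})$ boundedness, obtained by specializing Theorem 3.6 to $p=1$; and the $L^{\infty}(\widetilde{M})\to BMO(\widetilde{M})$ boundedness established in Theorem 3.7. The claim for $1<p<\infty$ will then follow in two symmetric halves.

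First I would treat the range $1<p\leq 2$. Combining the $H^1\to L^1$ bound with the $L^2\to L^2$ bound and invoking the interpolation theorem between $H^p(\widetilde{M})$ spaces and $L^p(\widetilde{M})$ spaces proved in [HLL2] (the one alluded to in Section~1 as giving the $L^p$ boundedness from the $L^2$ boundedness) yields a bounded extension $T\colon L^p(\widetilde{M})\to L^p(\widetilde{M})$ for every $1<p<2$, since $L^p(\widetilde{M})$ coincides with the complex interpolation intermediate between $H^1(\widetilde{M})$ and $L^2(\widetilde{M})$ for such $p$, up to $H^p=L^p$ in that open range.

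Next, for $2\leq p<\infty$, I would combine the $L^2\to L^2$ bound with the $L^\infty\to BMO(\widetilde{M})$ bound from Theorem 3.7 and apply the corresponding interpolation result between $L^p(\widetilde{M})$ and $BMO(\widetilde{M})$ from [HLL2]. Alternatively, and more cleanly in this setting, I would use a duality shortcut: the transpose $T^{t}$ is again a product Calder\'on--Zygmund singular integral operator on $\widetilde{M}$ of the same type (all the kernel estimates on $K_1,K_2$ are symmetric in the two variables), and it is bounded on $L^2(\widetilde{M})$ because $T$ is. Hence the first half applies to $T^t$ and gives $T^t\colon L^{p'}\to L^{p'}$ for $1<p'<2$; dualizing yields $T\colon L^p\to L^p$ for $2<p<\infty$. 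Together with the already known $p=2$ case this covers the whole range $1<p<\infty$, and the operator norm bound $\|Tf\|_p\leq C\|f\|_p$ tracks through the interpolation/duality with a constant depending only on the $L^2$ norm of $T$, the Calder\'on--Zygmund constants of the kernels $K_1,K_2$, and the geometric constants $Q_i,\vartheta_i$.

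The only real point of care is to make sure the interpolation theorem from [HLL2] that is being invoked is genuinely applicable to the \emph{abstract operator} $T$ (not only to the specific operators of [NS04] whose kernels are smooth away from the cross), and that the $H^1$--$L^1$ endpoint from Theorem 3.6 can be coupled with the $L^2$ endpoint in the complex method. Once one verifies that $T$ is defined consistently on $H^1\cap L^2$, $L^2$, and $L^\infty\cap L^2$ with compatible actions (which is automatic from the weakest definition of $T$ on $C_0^{\eta}(\widetilde{M})$ as a continuous linear map into the dual, together with the extensions carried out in Subsections 3.2.3 and 3.2.4), the interpolation is routine and no further cancellation assumptions on $T$ beyond $L^2$ boundedness are needed. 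This is the main expected obstacle, but it is essentially bookkeeping since density of $\GGp(\beta_1,\beta_2;\gamma_1,\gamma_2)\cap L^2$ in each of $H^1$, $L^p$, and (in the weak-$*$ sense) $L^\infty$ is already built into the framework of [HLL2] used throughout this section.
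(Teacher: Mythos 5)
Your proposal is correct and follows essentially the same route as the paper: for $1<p<2$ the paper interpolates between the $H^1(\widetilde M)\to L^1(\widetilde M)$ bound of Theorem 3.6 and the $L^2$ bound via the interpolation theorem of [HLL2] (Theorem 3.11, taking $p_2=1$, $p_1=2$, and using $H^p=L^p$ for $1<p<\infty$), and for $2<p<\infty$ it uses duality exactly as in your preferred shortcut. The only cosmetic discrepancy is that the interpolation invoked is the real-variable one built on the Calder\'on--Zygmund decomposition of Theorem 3.10, not the complex method.
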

Indeed, in [HLL2] the following Calder\'on--Zygmund decomposition
was obtained.
\begin{theorem}
Let $\max\big(\frac{ Q_1}{ Q_1+\vartheta_1},\frac{ Q_2}{
Q_2+\vartheta_2} \big)<p_2<p<p_1<\infty,$ $\alpha>0$
be given and $f\in H^p(\widetilde{M})$. Then we may write $f=g+b$ where $g\in
H^{p_1}(\widetilde{M})$ and $b\in H^{p_2}(\widetilde{M})$ such
 that $\|g\|^{p_1}_{H^{p_1}(\widetilde M)}\le C\alpha^{p_1-p}\|f\|^p_{H^p(\widetilde M)}$ and
 $\|b\|^{p_2}_{H^{p_2}(\widetilde M)}\le C\alpha^{p_2-p}\|f\|^p_{H^p(\widetilde M)}$, where $C$ is an
absolute constant.
\end{theorem}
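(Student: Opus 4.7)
The plan is to use the discrete Calder\'on reproducing formula from [HLL2, Theorem 2.9] together with the Plancherel--P\^olya inequalities to split $f$ at the level of its discrete Littlewood--Paley square function. Given $f\in H^p(\widetilde{M})$, for each $k\in\mathbb{Z}$ set $\Omega_k=\{\widetilde{\widetilde{S}}_d(f)>2^k\}$ and
$$B_k=\big\{R=I_1\times I_2\ \text{dyadic}:\ \mu(R\cap\Omega_k)>\mu(R)/2,\ \mu(R\cap\Omega_{k+1})\leq\mu(R)/2\big\}.$$
Every dyadic rectangle appearing in the discrete Calder\'on expansion of $f$ belongs to exactly one $B_k$ (compare the proof of Theorem 3.5 above). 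Choose $k_0\in\mathbb{Z}$ with $2^{k_0}\leq\alpha<2^{k_0+1}$, and define $g$ and $b$ by restricting the discrete Calder\'on expansion of $f$ to the rectangles $R\in\bigcup_{k\leq k_0}B_k$ and $R\in\bigcup_{k>k_0}B_k$, respectively, so that $f=g+b$.

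To estimate the Hardy norms of these pieces, I would invoke the Plancherel--P\^olya equivalence from [HLL2]: for any $h$ of the form $h=\sum_R \mu(R)\,D_{k_1}(\cdot,x_{I_1})D_{k_2}(\cdot,x_{I_2})\,c_R$,
$$\|h\|^{r}_{H^r(\widetilde{M})}\ \approx\ \int_{\widetilde{M}}\Big(\sum_R |c_R|^2\chi_R(x)\Big)^{r/2}dx,$$
valid for every $r\in(\max(Q_1/(Q_1+\vartheta_1),Q_2/(Q_2+\vartheta_2)),\infty)$. Apply this with $c_R=\widetilde{\widetilde{D}}_{k_1}\widetilde{\widetilde{D}}_{k_2}(f)(x_{I_1},x_{I_2})$ (summed over the appropriate union of $B_k$'s). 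Since each $R\in B_k$ satisfies $\mu(R\cap(\Omega_k\setminus\Omega_{k+1}))\geq\mu(R)/2$, one has $\widetilde{\widetilde{S}}_d(f)\in(2^k,2^{k+1}]$ on at least half of $R$; averaging shows that the integrands on the right are controlled almost everywhere by $C\,\widetilde{\widetilde{S}}_d(f)^r$ restricted to $\Omega_{k_0+1}^c$ in the case of $g$, and to $\Omega_{k_0}$ in the case of $b$.

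Combining these bounds with the weak-type inequality $\mu(\{\widetilde{\widetilde{S}}_d(f)>t\})\leq t^{-p}\|f\|^p_{H^p(\widetilde{M})}$ and layer-cake integration gives
$$\|g\|^{p_1}_{H^{p_1}(\widetilde{M})}\ \lesssim\ \int_0^{C\alpha}p_1\,t^{p_1-1-p}\,dt\cdot\|f\|^p_{H^p(\widetilde{M})}\ \lesssim\ \alpha^{p_1-p}\|f\|^p_{H^p(\widetilde{M})},$$
where finiteness at $t=0$ uses $p_1>p$, and
$$\|b\|^{p_2}_{H^{p_2}(\widetilde{M})}\ \lesssim\ \int_{\alpha}^\infty p_2\,t^{p_2-1-p}\,dt\cdot\|f\|^p_{H^p(\widetilde{M})}\ \lesssim\ \alpha^{p_2-p}\|f\|^p_{H^p(\widetilde{M})},$$
where convergence at $\infty$ uses $p_2<p$. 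Both of these constraints are built into the hypothesis $p_2<p<p_1$.

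The main obstacle is making the Plancherel--P\^olya step quantitatively precise in this two-parameter product setting: the $D_{k_i}$ are not orthogonal projections, so $\widetilde{\widetilde{D}}_{k_1}\widetilde{\widetilde{D}}_{k_2}(g)(x_{I_1},x_{I_2})$ is not literally $\widetilde{\widetilde{D}}_{k_1}\widetilde{\widetilde{D}}_{k_2}(f)(x_{I_1},x_{I_2})$ times the indicator of the admissible index set. The gap is bridged by the almost-orthogonality estimates and the discrete-to-continuous square-function equivalence developed in [HLL2], which together allow one to replace the $H^r$-norm of any such partial sum by the square-function expression over the same index set. Once that machinery is invoked, the remainder of the proof is the layer-cake bookkeeping written above.
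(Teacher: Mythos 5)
Note first that the paper does not actually prove this statement: it is Theorem 3.10, quoted from [HLL2] with the single remark ``the following Calder\'on--Zygmund decomposition was obtained'' and no argument. Your construction --- splitting the discrete Calder\'on expansion according to the stopping-time families $B_k$ at the threshold $2^{k_0}\approx\alpha$, and then converting Hardy norms of the two pieces into square-function integrals via Plancherel--P\^olya and almost-orthogonality --- is the standard Chang--Fefferman/Han--Lu route and is the natural way to prove the result in this setting; the layer-cake bookkeeping at the end, with $p_1>p$ giving convergence at $0$ and $p_2<p$ at $\infty$, is correct.

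One step is misstated, though the intended argument is recoverable. You claim that ``the integrands on the right are controlled almost everywhere by $C\,\widetilde{\widetilde{S}}_d(f)^r$ restricted to $\Omega_{k_0+1}^c$'' (resp.\ to $\Omega_{k_0}$). This pointwise domination is false: a rectangle $R\in B_k$ with $k\le k_0$ is only required to have at most half its measure inside $\Omega_{k+1}$, so $\chi_R$ is not supported in $\Omega_{k_0+1}^c$, and the partial square function of $g$ is not a.e.\ bounded by $\widetilde{\widetilde{S}}_d(f)\chi_{\Omega_{k_0+1}^c}$. What is true, and what the proof of Theorem 3.5 actually uses, is the integral (not pointwise) estimate obtained level by level: since $\mu\big(R\cap(\Omega_k\setminus\Omega_{k+1})\big)\ge \mu(R)/2$ for $R\in B_k$, one has
\begin{eqnarray*}
\sum_{R\in B_k}|c_R|^2\mu(R)\le 2\int_{\Omega_k\setminus\Omega_{k+1}}\sum_{R}|c_R|^2\chi_R\,d\mu
\le 2\int_{\Omega_k\setminus\Omega_{k+1}}\widetilde{\widetilde{S}}_d(f)^2\,d\mu\le C\,2^{2k}\mu(\Omega_k),
\end{eqnarray*}
and then H\"older's inequality on the support $\widetilde{\Omega}_k$ (whose measure is comparable to $\mu(\Omega_k)$ by the maximal function) gives $\big\|(\sum_{R\in B_k}|c_R|^2\chi_R)^{1/2}\big\|_{r}^{r}\le C\,2^{kr}\mu(\Omega_k)$ for $r\ge 2$, with the reverse H\"older step handled as in the $1<q<2$ case of Theorem 3.5 when $r<2$. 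Summing the resulting geometric series over $k\le k_0$ (using $p_1>p$) and $k>k_0$ (using $p_2<p$), via the $r$-triangle inequality for $r\le1$ or the $L^r(\ell^2)$ triangle inequality for $r>1$, is exactly your layer-cake computation. With that substitution, and with the ``$\lesssim$'' half of the Plancherel--P\^olya equivalence justified by Lemma 4.2 as you indicate, the proof is complete.
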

As a consequence of Theorem 3.10, the following interpolation theorem was proved in [HLL2].
\begin{theorem}\label{theorem interpolation}
Let $\max\big(\frac{ Q_1}{ Q_1+\vartheta_1},\frac{ Q_2}{
Q_2+\vartheta_2} \big)<p_2<p_1<\infty$
and $T$ be a linear operator which is bounded from $H^{p_2}(\widetilde M)$ to $L^{p_2}(\widetilde{M})$
and from $H^{p_1}(\widetilde{M})$ to  $L^{p_1}(\widetilde{M})$, then $T$ is bounded on $H^p(\widetilde M)$ for $p_2<p<p_1.$
\end{theorem}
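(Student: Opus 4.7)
The plan is to prove Theorem 3.11 by a real-interpolation argument of Marcinkiewicz type, using the product Calder\'on--Zygmund decomposition of Theorem 3.10 as the $\alpha$-dependent splitting device. This is the analogue of the classical Fefferman--Riviere--Sagher interpolation for Hardy spaces, adapted to the product Carnot--Carath\'eodory setting.

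First, fix $f \in H^p(\widetilde M)$ with $p_2 < p < p_1$. For each $\alpha > 0$ invoke Theorem 3.10 at level $\alpha$ to produce a decomposition $f = g_\alpha + b_\alpha$, with $g_\alpha \in H^{p_1}(\widetilde M)$ and $b_\alpha \in H^{p_2}(\widetilde M)$, satisfying
$$\|g_\alpha\|_{H^{p_1}}^{p_1} \leq C\,\alpha^{p_1 - p}\,\|f\|_{H^p}^p, \qquad \|b_\alpha\|_{H^{p_2}}^{p_2} \leq C\,\alpha^{p_2 - p}\,\|f\|_{H^p}^p.$$
Second, apply the hypothesized endpoint boundedness $T:H^{p_i}\to L^{p_i}$ to each piece. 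Chebyshev's inequality together with the subadditivity estimate $\mu(\{|Tf|>\alpha\}) \leq \mu(\{|Tg_\alpha|>\alpha/2\}) + \mu(\{|Tb_\alpha|>\alpha/2\})$ converts these into control of the distribution function of $Tf$ at each level.

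Third, pass from the level-set estimates to the strong-type $L^p$ bound via the distribution identity $\|Tf\|_p^p = p\int_0^\infty \alpha^{p-1}\mu(\{|Tf|>\alpha\})\,d\alpha$. Inserting the $\alpha$-dependent splitting and exchanging the order of integration by Fubini produces two integrals of the form $\int_0^\infty \alpha^{p-1-p_i}\,\|\cdot\|^{p_i}\,d\alpha$ whose convergence (near $\alpha=\infty$ for $i=1$, near $\alpha=0$ for $i=2$) is ensured by the strict inequalities $p_2<p<p_1$.

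The main obstacle is the adequate quantitative control of the splitting in $\alpha$: the bounds displayed in Theorem 3.10, taken in isolation, reproduce only the weak-type $(p,p)$ inequality for $T$. To upgrade to strong type one must exploit the underlying atomic structure (from the atomic decomposition in Theorem 3.5) so that $g_\alpha$ is built from atoms of ``height'' at most a constant multiple of $\alpha$ while $b_\alpha$ captures those of larger height; the necessary bookkeeping---which is the genuinely multiparameter part of the argument and draws on the Journ\'e-type covering Lemma 3.3---is carried out in detail in [HLL2], and makes the two $\alpha$-integrals absolutely convergent with a total bound $C\|f\|_{H^p}^p$.
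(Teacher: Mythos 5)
The paper itself offers no proof of this theorem: it states only that the result is a consequence of Theorem 3.10 and ``was proved in [HLL2]'', so there is no internal argument to compare yours against. Your outline is the standard Fefferman--Rivi\`ere--Sagher-type real interpolation via the $\alpha$-dependent Calder\'on--Zygmund splitting, and you correctly diagnose the one genuine subtlety: plugging the integrated bounds $\|g_\alpha\|_{H^{p_1}}^{p_1}\le C\alpha^{p_1-p}\|f\|_{H^p}^p$ and $\|b_\alpha\|_{H^{p_2}}^{p_2}\le C\alpha^{p_2-p}\|f\|_{H^p}^p$ of Theorem 3.10 into Chebyshev and the layer-cake formula yields the exponent $\alpha^{p-1-p_i}\cdot\alpha^{p_i-p}=\alpha^{-1}$ in both integrals, hence a divergent $\int_0^\infty \alpha^{-1}\,d\alpha$ and only the weak-type $(p,p)$ estimate --- so your third paragraph's claim that $p_2<p<p_1$ makes the two integrals converge is not correct as written, and your fourth paragraph rightly retracts it. The upgrade to strong type needs the level-set refinement of the decomposition (roughly $\|g_\alpha\|_{H^{p_1}}^{p_1}\lesssim\sum_{2^k\le\alpha}2^{kp_1}\mu(\Omega_k)$ with $\Omega_k=\{\widetilde S_d(f)>2^k\}$, and the complementary sum for $b_\alpha$), after which Fubini in $k$ and $\alpha$ closes the argument; you defer exactly this bookkeeping to [HLL2], which is precisely where the paper places the whole proof, so your proposal is as complete as the paper's own treatment. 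One small mismatch worth flagging: your argument establishes $\|Tf\|_{L^p}\le C\|f\|_{H^p}$, whereas the theorem as stated concludes boundedness on $H^p(\widetilde M)$; in the only range where the paper invokes the result ($p_2=1$, $p_1=2$, $1<p<2$) the two coincide because $H^p(\widetilde M)=L^p(\widetilde M)$ there, but for $p\le1$ your outline proves the $H^p\to L^p$ conclusion rather than the $H^p\to H^p$ one.
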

Note that $H^{p}(\widetilde{M})=L^p(\widetilde M)$ for $1<p<\infty.$
Now the proof of Theorem 3.9 with $1<p<2$ follows from Theorem 3.6 and 3.11 directly by taking $p_2=1$ and $p_1=2.$ The duality argument gives the proof of Theorem 3.9 for $2<p<\infty.$

\subsection{Sufficient conditions of $T1$  Theorem}

In this section, we prove the sufficient conditions of Theorem A. To show that $T$ is bounded on $L^2$ it suffices to prove that for $f, g\in \GGp(\beta_1,\beta_2;\gamma_1,\gamma_2)$ with compact supports, there exists a constant $C$ such that
$$|\langle g, Tf\rangle|\leq C\|f\|_2\|g\|_2.$$
This is because, by Calder\'on's identity established in [HLL2], the collection of functions in $\GGp(\beta_1,\beta_2;\gamma_1,\gamma_2)$ having compact supports is dense in $L^2.$

As described in Section 1, we write
\begin{eqnarray}\label{bilinear form}
\langle g,Tf\rangle &=& \sum_{k_1^{'}}  \sum_{I_1^{'}}
\sum_{k_1}\sum_{I_1}\sum_{k_2^{'}}  \sum_{I_2^{'}} \sum_{k_2}\sum_{I_2}\mu_1(I_1^{'})\mu_1(I_1) \mu_2(I_2^{'})\mu_2(I_2)\\
&&\times{\widetilde{\widetilde{D}}}_{k_1^{'}}{\widetilde{\widetilde{D}}}_{k_2^{'}}(g)(x_{I_1^{'}},x_{I_2^{'}})
\Big\langle D_{k_1^{'}}D_{k_2^{'}},
TD_{k_1}D_{k_2}\Big\rangle(x_{I_1^{'}},x_{I_2^{'}},x_{I_1},x_{I_2})
{\widetilde{\widetilde{D}}}_{k_1}{\widetilde{\widetilde{D}}}_{k_2}(f)(x_{I_1},x_{I_2}).\nonumber
\end{eqnarray}
To see the above equality, we first consider one parameter case. Let $f_1, g_1\in \GG(\beta,\gamma)(M_1)$ with compact supports and $T_1$ be a singular integral operator on $M_1.$  Then by the discrete Carlder\'on identity on $M_1,$
\begin{eqnarray}
\langle g_1,T_1f_1\rangle &=& \sum_{k_1^{'}}  \sum_{I_1^{'}}\mu_1(I_1^{'}){\widetilde{\widetilde{D}}}_{k_1^{'}}(g)(x_{I_1^{'}})
\Big\langle D_{k_1^{'}}(\cdot,x_{I_1^{'}}), T_1f_1\Big\rangle \label{one parameter bilinear form}\\
&=&\sum_{k_1^{'}}  \sum_{I_1^{'}}
\sum_{k_1}\sum_{I_1} \mu_1(I_1^{'})\mu_1(I_1){\widetilde{\widetilde{D}}}_{k_1^{'}}(g)(x_{I_1^{'}})
\Big\langle D_{k_1^{'}},
T_1 D_{k_1}\Big\rangle(x_{I_1^{'}},x_{I_1})
\widetilde{D}_{k_1}(f_1)(x_{I_1}).\nonumber
\end{eqnarray}
For the equality (\ref{one parameter bilinear form}), we use the fact that $\sum\limits_{k_1^{'}>0} \sum\limits_{I_1^{'}}\mu_1(I_1^{'}){\widetilde{\widetilde{D}}}_{k_1^{'}}(g)(x_{I_1^{'}})
D_{k_1^{'}}(x_1,x_{I_1^{'}})$ converges in the test function space $\GG(\beta,\gamma)(M_1)$ with compact support, so that
$$\big\langle \sum_{k_1^{'}>0} \sum_{I_1^{'}}\mu_1(I_1^{'}){\widetilde{\widetilde{D}}}_{k_1^{'}}(g)(x_{I_1^{'}})
D_{k_1^{'}}(\cdot,x_{I_1^{'}}), T_1f_1\big\rangle$$
$$\hskip.7cm=\sum_{k_1^{'}>0} \sum_{I_1^{'}}\mu_1(I_1^{'}){\widetilde{\widetilde{D}}}_{k_1^{'}}(g)(x_{I_1^{'}})
\langle D_{k_1^{'}}(\cdot,x_{I_1^{'}}), T_1f_1\rangle.$$
This, however, is not true for $\sum\limits_{k_1^{'}\leq 0} \sum\limits_{I_1^{'}}\mu_1(I_1^{'}){\widetilde{\widetilde{D}}}_{k_1^{'}}(g)(x_{I_1^{'}})
D_{k_1^{'}}(x_1,x_{I_1^{'}}),$ because the support of $D_{k_1^{'}}(x_1,x_{I_1^{'}})$ gets big as $k_1^{'}$ tends to $-\infty,$ even though
$\sum\limits_{k_1^{'}\leq 0} \sum_{I_1^{'}}\mu_1(I_1^{'}){\widetilde{\widetilde{D}}}_{k_1^{'}}(g)(x_{I_1^{'}})
D_{k_1^{'}}(x_1,x_{I_1^{'}})\in \GG(\beta,\gamma)(M_1)$ having compact support. Now if $\theta \in \GG(\beta,\gamma)(M_1)$ and has compact support, then $\theta(x_1)\sum\limits_{k_1^{'}\leq 0} \sum_{I_1^{'}}\mu_1(I_1^{'}){\widetilde{\widetilde{D}}}_{k_1^{'}}(g)(x_{I_1^{'}})
D_{k_1^{'}}(x_1,x_{I_1^{'}})$ converges in the topology of $C^\beta_0(M_1).$ If we choose $\theta=1$ on a large enough set which contains the support of $f_1,$ then, by the standard estimate on the kernel of $T_1,$
$$\big\langle (1-\theta)\sum\limits_{k_1^{'}\leq 0} \sum_{I_1^{'}}\mu_1(I_1^{'}){\widetilde{\widetilde{D}}}_{k_1^{'}}(g)(x_{I_1^{'}})
D_{k_1^{'}}(\cdot,x_{I_1^{'}}), T_1f_1\big\rangle$$
$$\hskip.7cm=\sum\limits_{k_1^{'}\leq 0} \sum_{I_1^{'}}\mu_1(I_1^{'}){\widetilde{\widetilde{D}}}_{k_1^{'}}(g)(x_{I_1^{'}})
\langle (1-\theta)D_{k_1^{'}}(\cdot,x_{I_1^{'}}), T_1f_1\rangle.$$
This implies the equality (\ref{one parameter bilinear form}). For fixed $k_1^{'}$ we can do the same thing to $f_1$ to obtain the second equality.
Repeating the same things above twice, first on $M_1$ and then on $M_2,$ gives (\ref{bilinear form}).

As described in Section 1, we consider the following four cases:

\smallskip
Case 1. $k_1^{'}\geq k_1 $ and $k_2^{'}\geq k_2 $;

\smallskip
Case 2. $k_1^{'}\geq k_1 $ and $k_2^{'}< k_2 $;

\smallskip
Case 3. $k_1^{'}< k_1 $ and $k_2^{'}\geq k_2 $;

\smallskip
Case 4. $k_1^{'}< k_1 $ and $k_2^{'}< k_2. $

Now we decompose the bilinear form $\langle g,Tf\rangle$ as
$$\langle g,Tf\rangle=\langle g,Tf\rangle_{\rm Case\ 1}+\langle g,Tf\rangle_{\rm Case\ 2}+\langle g,Tf\rangle_{\rm Case\ 3}+\langle g,Tf\rangle_{\rm Case\ 4},$$
where
\begin{eqnarray}\label{g Tf case1}
\langle g,Tf\rangle_{\rm Case\ 1}&=&\sum_{k_1\leq k_1^{'} }\sum_{k_2\leq k_2^{'}}\sum_{I_1^{'}}\sum_{I_2^{'}}
\sum_{I_1}\sum_{I_2}\mu_1(I_1^{'})\mu_1(I_1) \mu_2(I_2^{'})\mu_2(I_2) {\widetilde{\widetilde{D}}}_{k_1^{'}}{\widetilde{\widetilde{D}}}_{k_2^{'}}(g)(x_{I_1^{'}},x_{I_2^{'}})   \nonumber\\
&&\times {\widetilde{\widetilde{D}}}_{k_1}{\widetilde{\widetilde{D}}}_{k_2}(f)(x_{I_1},x_{I_2})
\Big\langle D_{k_1^{'}}D_{k_2^{'}}, TD_{k_1}D_{k_2}\Big\rangle(x_{I_1^{'}},x_{I_2^{'}},x_{I_1},x_{I_2})
\end{eqnarray}
and similarly for other three terms.

Since the estimates for $\langle g,Tf\rangle_{\rm Case\ 4}$ and $\langle g,Tf\rangle_{\rm Case\ 3}$ are similar to $\langle g,Tf\rangle_{\rm Case\ 1}$ and $\langle g,Tf\rangle_{\rm Case\ 2},$ respectively, so we only prove that under the sufficient conditions the first two terms are bounded by some constant times $\|f\|_2\|g\|_2.$ This will conclude the proof of the sufficient conditions of Theorem A.

To deal with the first term $\langle g,Tf\rangle_{\rm Case\ 1},$ as mentioned in Section 1, for $k_1\leq k_1^{'}$ and $k_2\leq k_2^{'}$ we first decompose
\begin{eqnarray*}
&&\Big\langle D_{k_1^{'}}D_{k_2^{'}}, TD_{k_1}D_{k_2}\Big\rangle(x_{I_1^{'}},x_{I_2^{'}},x_{I_1},x_{I_2})\\[4pt]
&&=\int  D_{k'_1}(x_{I_1^{'}},u_1)D_{k'_2}(x_{I_2^{'}},u_2)K(u_1,u_2,v_1,v_2)[D_{k_1}(v_1,x_{I_1})-D_{k_1}(x_{I_1^{'}},x_{I_1})]\\[4pt]
&&\hskip1cm\times [D_{k_2}(v_2,x_{I_2})-D_{k_2}(x_{I_2^{'}},x_{I_2})] du_1du_2dv_1dv_2\\[4pt]
&&\hskip.5cm+ \int  D_{k'_1}(x_{I_1^{'}},u_1)D_{k'_2}(x_{I_2^{'}},u_2)K(u_1,u_2,v_1,v_2)D_{k_1}(x_{I_1^{'}},x_{I_1}) D_{k_2}(v_2,x_{I_2}) du_1du_2dv_1dv_2\\[4pt]
&&\hskip.5cm+ \int  D_{k'_1}(x_{I_1^{'}},u_1)D_{k'_2}(x_{I_2^{'}},u_2)K(u_1,u_2,v_1,v_2)D_{k_1}(v_1,x_{I_1})
D_{k_2}(x_{I_2^{'}},x_{I_2}) du_1du_2dv_1dv_2\\[4pt]
&&\hskip.5cm- \int  D_{k'_1}(x_{I_1^{'}},u_1)D_{k'_2}(x_{I_2^{'}},u_2)K(u_1,u_2,v_1,v_2)D_{k_1}(x_{I_1^{'}},x_{I_1})D_{k_2}(x_{I_2^{'}},x_{I_2}) du_1du_2dv_1dv_2\\[4pt]
&&=: I(x_{I_1^{'}},x_{I_2^{'}},x_{I_1},x_{I_2})+II(x_{I_1^{'}},x_{I_2^{'}},x_{I_1},x_{I_2})+III(x_{I_1^{'}},x_{I_2^{'}},x_{I_1},x_{I_2})
+IV(x_{I_1^{'}},x_{I_2^{'}},x_{I_1},x_{I_2})
\end{eqnarray*}
and then write
$$\langle g,Tf\rangle_{\rm Case\ 1}=\langle g,Tf\rangle_{\rm Case\ 1.1}+\langle g,Tf\rangle_{\rm Case\ 1.2}+\langle g,Tf\rangle_{\rm Case\ 1.3}+\langle g,Tf\rangle_{\rm Case\ 1.4},$$
where
\begin{eqnarray*}
\langle g,Tf\rangle_{\rm Case\ 1.1}&=& \sum_{k_1\leq k_1^{'} }\sum_{k_2\leq k_2^{'}}\sum_{I_1^{'}}\sum_{I_2^{'}}
\sum_{I_1}\sum_{I_2}\mu_1(I_1^{'})\mu_1(I_1) \mu_2(I_2^{'})\mu_2(I_2) {\widetilde{\widetilde{D}}}_{k_1^{'}}{\widetilde{\widetilde{D}}}_{k_2^{'}}(g)(x_{I_1^{'}},x_{I_2^{'}})   \nonumber\\[4pt]
&&\times {\widetilde{\widetilde{D}}}_{k_1}{\widetilde{\widetilde{D}}}_{k_2}(f)(x_{I_1},x_{I_2})
I(x_{I_1^{'}},x_{I_2^{'}},x_{I_1},x_{I_2}).
\end{eqnarray*}
The other terms $\langle g,Tf\rangle_{\rm Case\ 1.i}, i=2,3,4,$ are defined similarly.

Corresponding the case 2, that is, $k_1^{'}\geq k_1 $ and $k_2^{'}< k_2,$ we give the decomposition of term $\langle g,Tf\rangle_{\rm Case\ 2}.$ Similarly, we first write
\begin{eqnarray*}
&&\Big\langle D_{k_1^{'}}D_{k_2^{'}}, TD_{k_1}D_{k_2}\Big\rangle(x_{I_1^{'}},x_{I_2^{'}},x_{I_1},x_{I_2})\\[4pt]
&&=\int  D_{k'_1}(x_{I_1^{'}},u_1)[D_{k'_2}(x_{I_2^{'}},u_2)-D_{k'_2}(x_{I_2^{'}},x_{I_2})]K(u_1,u_2,v_1,v_2)[D_{k_1}(v_1,x_{I_1})-D_{k_1}(x_{I_1^{'}},x_{I_1})]\\[4pt]
&&\hskip1cm\times D_{k_2}(v_2,x_{I_2})du_1du_2dv_1dv_2\\
&&\hskip.5cm+ \int  D_{k'_1}(x_{I_1^{'}},u_1)D_{k'_2}(x_{I_2^{'}},u_2)K(u_1,u_2,v_1,v_2)D_{k_1}(x_{I_1^{'}},x_{I_1}) D_{k_2}(v_2,x_{I_2}) du_1du_2dv_1dv_2\\[4pt]
&&\hskip.5cm+ \int  D_{k'_1}(x_{I_1^{'}},u_1)D_{k'_2}(x_{I_2^{'}},x_{I_2})K(u_1,u_2,v_1,v_2)D_{k_1}(v_1,x_{I_1})
D_{k_2}(v_2,x_{I_2}) du_1du_2dv_1dv_2\\[4pt]
&&\hskip.5cm- \int  D_{k'_1}(x_{I_1^{'}},u_1)D_{k'_2}(x_{I_2^{'}},x_{I_2})K(u_1,u_2,v_1,v_2)D_{k_1}(x_{I_1^{'}},x_{I_1})D_{k_2}(v_2,x_{I_2}) du_1du_2dv_1dv_2\\[4pt]
&&=: V(x_{I_1^{'}},x_{I_2^{'}},x_{I_1},x_{I_2})+VI(x_{I_1^{'}},x_{I_2^{'}},x_{I_1},x_{I_2})+VII(x_{I_1^{'}},x_{I_2^{'}},x_{I_1},x_{I_2})\\[4pt]
&&\hskip1cm+VIII(x_{I_1^{'}},x_{I_2^{'}},x_{I_1},x_{I_2}),
\end{eqnarray*}
and then decompose
$$\langle g,Tf\rangle_{\rm Case\ 2}=\langle g,Tf\rangle_{\rm Case\ 2.1}+\langle g,Tf\rangle_{\rm Case\ 2.2}+\langle g,Tf\rangle_{\rm Case\ 2.3}+\langle g,Tf\rangle_{\rm Case\ 2.4},$$
where
\begin{eqnarray*}
\langle g,Tf\rangle_{\rm Case\ 2.1}&=& \sum_{k_1\leq k_1^{'} }\sum_{k_2> k_2^{'}}\sum_{I_1^{'}}\sum_{I_2^{'}}
\sum_{I_1}\sum_{I_2}\mu_1(I_1^{'})\mu_1(I_1) \mu_2(I_2^{'})\mu_2(I_2) \widetilde{\widetilde{D}}_{k_1^{'}}\widetilde{\widetilde{D}}_{k_2^{'}}(g)(x_{I_1^{'}},x_{I_2^{'}})
\\[4pt]
&&\times \widetilde{\widetilde{D}}_{k_1}\widetilde{\widetilde{D}}_{k_2}(f)(x_{I_1},x_{I_2})
V(x_{I_1^{'}},x_{I_2^{'}},x_{I_1},x_{I_2}).
\end{eqnarray*}
Similarly for other terms $\langle g,Tf\rangle_{\rm Case\ 2.i}, i=2,3,4.$

Before we get into the details of estimates for $\langle g,Tf\rangle_{\rm Case\ 1}$ and $\langle g,Tf\rangle_{\rm Case\ 2},$ we would like to point out the main methods for doing this. Roughly speaking, in the classical one parameter case, the main methods are the almost orthogonality argument and Carleson measure estimate. In our setting with two parameter case, besides the almost orthogonality argument and Carleson measure estimate on $\widetilde M=M_1\times M_2,$ there are two more situations, that are, the almost orthogonality argument on one factor, say $M_1$ and Carleson measure estimate on other factor, say $M_2$, and the Littlewood--Paley estimate on one factor, say $M_1$ and Carleson measure estimate on other factor, say $M_2$.  These details will be given in next subsections.

\subsubsection{Almost orthogonality argument on $\widetilde M=M_1\times M_2$ }
In this subsection, we deal with $\langle g,Tf\rangle_{\rm Case\ 1.1}$ and $\langle g,Tf\rangle_{\rm Case\ 2.1}.$ The main method is the almost orthogonality argument on $\widetilde M=M_1\times M_2.$ Indeed, we will show the following estimate, that is, there exists a constant $C$ such that for $k'_1>k_1$ and $k'_2>k_2,$
\begin{eqnarray}
&&|I(x_{I_1^{'}},x_{I_2^{'}},x_{I_1},x_{I_2})|\nonumber\\
&&=\Big|\int  D_{k'_1}(x_{I_1^{'}},u_1)D_{k'_2}(x_{I_2^{'}},u_2)K(u_1,u_2,v_1,v_2)[D_{k_1}(v_1,x_{I_1})-D_{k_1}(x_{I_1^{'}},x_{I_1})]\nonumber\\
&&\hskip.6cm\times [D_{k_2}(v_2,x_{I_2})-D_{k_2}(x_{I_2^{'}},x_{I_2})] du_1du_2dv_1dv_2\Big|\nonumber\\
&&\leq C2^{(k_1-k'_1)\varepsilon}2^{-(k_2-k'_2)\varepsilon}
\frac{1}{V_{2^{-k_1}}(x_{I_1^{'}})+V_{2^{-k_1}}(x_{I_1})+V(x_{I_1^{'}},x_{I_1})}\frac{2^{-k_1\varepsilon}}
{(2^{-k_1}+d_1(x_{I_1^{'}},x_{I_1}))^{\varepsilon}}\nonumber\\
&&\hskip.6cm\times \frac{1}{V_{2^{-k_2}}(x_{I_2^{'}})+V_{2^{-k_2}}(x_{I_2})+V(x_{I_2^{'}},x_{I_2})}\frac{2^{-k_2\varepsilon}}
{(2^{-k_2}+d_2(x_{I_2^{'}},x_{I_2}))^{\varepsilon}}.\label{almost orth case 1.1}
\end{eqnarray}
We would like to remark that the cancellation condition on the kernel $K$ is not required in the above almost orthogonality estimate and only side, smoothness on $K$ and the weak boundedness property on $T$ are needed. To show the above estimate, we first consider the one parameter case. The estimate for two parameter case will follow from the iterative methods.
As mentioned in Section 1, let $T_1$ be a singular integral operator associated with the kernel $K_1$ defined on $M_1$ having the weak boundedness property. Then for $k_1<k'_1$
there exists a constant $C$ such that the following orthogonal estimate holds
\begin{eqnarray*}
&&\Big|\iint D_{k'_1}(x_1,u_1)K_1(u_1,v_1)[D_{k_1}(v_1,y_1)-D_{k_1}(x_1,y_1)]du_1dv_1\Big|\\
&&\leq C|K^1|_{CZ} 2^{(k_1-k'_1)\epsilon}\frac{1}{V_{2^{-k_1}}(x_1)+V_{2^{-k_1}}(y_1)+V(x_1,y_1)}\frac{2^{-k_1\varepsilon}}{(2^{-k_1}+d_1(x_1,y_1))^{\varepsilon}}.
\end{eqnarray*}
To see the above estimate, we first consider the case where $d_1(x_1,y_1)\geq C_1 2^{-k}.$
Note that if choosing $C_1$ sufficiently
large (depending on $C_0$) then $D_{k_1}(x_1,y_1)=0.$ Thus,
$$\iint D_{k'_1}(x_1,u_1)K_1(u_1,v_1)[D_{k_1}(v_1,y_1)-D_{k_1}(x_1,y_1)]du_1dv_1$$
$$=\iint D_{k'_1}(x_1,u_1)K_1(u_1,v_1)D_{k_1}(v_1,y_1)du_1dv_1.$$
Furthermore, $d_1(x_1,y_1)\geq C_1 2^{-k_1}$ implies $d_1(u_1,v_1)\geq
C_1^{'} d_1(x_1,y_1)$, where $C_1^{'}$ is a constant depending on
$C_0$ and $C_1$ since the support of $D_{k'_1}(x_1,u_1)$ is contained in $\{u_1: d_1(x_1,u_1)\leq C_02^{-k'_1}\}.$  Here $C_0$ is the constant given in  Definition
\ref{def-of-ATI compact}. Therefore, we can use the smoothness condition on the kernel $K_1(u_1,v_1).$ By the fact that
$\int D_{k'_1}(x_1,u_1)du_1=0,$ we write
\begin{eqnarray*}
&&\iint D_{k'_1}(x_1,u_1)K_1(u_1,v_1)D_{k_1}(v_1,y_1)du_1dv_1\\
&&=\int  D_{k'_1}(x_1,u_1)[K_1(u_1,v_1)-K_1(x_1,v_1)]D_{k_1}(v_1,y_1)du_1dv_1.
\end{eqnarray*}
Now applying the smoothness condition on the kernel $K_1$ yields
\begin{eqnarray*}
&&|\iint D_{k'_1}(x_1,u_1)K_1(u_1,v_1)D_{k_1}(v_1,y_1)du_1dv_1|\\
&&\leq C|K^1|_{CZ}\int\big(\frac{d_1(x_1,u_1)}{d_1(u_1,v_1)}\big)^{\varepsilon}V(u_1,v_1)^{-1}
|D_{k'_1}(x_1,u_1)||D_{k_1}(v_1,y_1)| du_1dv_1.
\end{eqnarray*}
Note that $d_1(u_1,v_1)\geq
C_1^{'} d_1(x_1,y_1)$ and $d_1(x_1,u_1)\leq C_02^{-k'_1}.$ The last integral is bounded by some constant times
\begin{eqnarray*}
 \Big(\frac{2^{-k'_1}}{d_1(x_1,y_1)}\Big)^{\varepsilon}V(x_1,y_1)^{-1}=2^{-(k'_1-k_1)\varepsilon}
\Big(\frac{2^{-k_1}}{d_1(x_1,y_1)}\Big)^{\varepsilon}V(x_1,y_1)^{-1},
\end{eqnarray*}
which gives the desired estimate when $k_1<k^{'}_1$ because $d_1(x_1,y_1)\geq C_1 2^{-k_1}$ implies
$V_{2^{-k_1}}(x_1)+V_{2^{-k_1}}(y_1)\leq CV(x_1,y_1).$

Now we consider $d_1(x_1,y_1)< C_1 2^{-k_1}.$ Note that for this case one can not apply the smoothness condition on the kernel $K_1$ to get the desired estimate as in the case $d_1(x_1,y_1)\geq C_1 2^{-k_1}$ because the variables $u_1$ and $v_1$ in the kernel $K_1(u_1,v_1)$ could be close. The weak boundedness property of $T_1$ can not be applied either since $D_{k_1}(v_1,y_1)-D_{k_1}(x_1,y_1),$ as the function of $v_1,$ has no compact support. Thus, we need to introduce a smooth cutoff function $\eta_1(x)\in C^1(\mathbb{R})$ so that
$\eta_1(x)=1$ when $|x|\leq 1$ and $\eta_1(x)=0$ when $|x|> 2$. And
set $\eta_2=1-\eta_1$. then
\begin{eqnarray*}
&&\big|\iint D_{k'_1}(x_1,u_1)K_1(u_1,v_1)[D_{k_1}(v_1,y_1)-D_{k_1}(x_1,y_1)]du_1dv_1\big|\\
&&=\int  D_{k'_1}(x_1,u_1)K_1(u_1,v_1)[D_{k_1}(v_1,y_1)-D_{k_1}(x_1,y_1)]\eta_1\Big({d_1(v_1,x_1)\over C_12^{-k'_1}}\Big)du_1dv_1\\
&&\hskip1cm+\int  D_{k'_1}(x_1,u_1)K_1(u_1,v_1)[D_{k_1}(v_1,y_1)-D_{k_1}(x_1,y_1)]\eta_2\Big({d_1(v_1,x_1)\over C_12^{-k'_1}}\Big)du_1dv_1\\
&&=: I+II.
\end{eqnarray*}

We will apply the weak boundedness property for term $I.$ For this purpose, setting
$\psi_{k_1}(v_1)=[D_{k_1}(v_1,y_1)-D_{k_1}(x_1,y_1)]\eta_1\Big({d_1(v_1,x_1)\over
C_12^{-k'_1}}\Big)$ we write term $I$ as
\begin{eqnarray*}
I=\langle D_{k'_1}(x_1,\cdot),
T_1\psi_{k_1}(\cdot)\rangle.
\end{eqnarray*}
Then the weak boundedness property of $T_1$ yields
\begin{eqnarray*}
|I|&\leq&   |\langle D_{k'_1}(x_1,\cdot), T_1\psi_{k_1}(\cdot)\rangle|\\[4pt]
&\leq&
C|K^1|_{CZ}V_{2^{-k'_1}}(x_1)2^{-k'_1\delta}\|D_{k'_1}(x_1,\cdot)\|_{\delta}\|\psi_{k_1}(\cdot)\|_{\delta}.
\end{eqnarray*}
It is easy to verify that $\|D_{k'_1}(x_1,\cdot)\|_\delta \leq
C2^{k'_1\delta}V_{2^{-k'_1}}(x_1)^{-1}$. We claim that
$\|\psi_{k_1}(\cdot)\|_{\delta}$ is bounded by $C 2^{k_1\delta}
2^{-(k'_1-k_1)\vartheta}V_{2^{-k_1}}(y_1)^{-1}$. In fact, using the
smoothness property of $D_{k_1}(v_1,y_1)$, we obtain
$$ \|\psi_{k_1}(\cdot)\|_{\infty}\leq C 2^{-(k'_1-k_1)\vartheta}V_{2^{-k_1}}(y_1)^{-1}. $$
Moreover,
\begin{eqnarray*}
&&|\psi_{k_1}(v)-\psi_{k_1}(v')|=
[D_{k_1}(v,y_1)-D_{k_1}(v',y_1)]\eta_1\Big({d_1(v,x_1)\over C_12^{-k'_1}}\Big)\\
&&\hskip2cm +          \Big[D_{k_1}(v',y_1)-D_{k_1}(x_1,y_1) \Big]
\Big[\eta_1\Big({d_1(v,x_1)\over
C_12^{-k'_1}}\Big)-\eta_1\Big({d_1(v',x_1)\over
C_12^{-k'_1}}\Big)\Big].
\end{eqnarray*}
Thus, using the smoothness property of  the kernel
$D_{k_1}(v_1,y_1)$ and smoothness property of the function $\eta_1$,
we can obtain that
$$\|\psi_{k_1}(\cdot)\|_{\delta}\leq C 2^{k_1\delta} 2^{-(k'_1-k_1)\vartheta}V_{2^{-k_1}}(y_1)^{-1}.$$
As a consequence of these estimates, we have
\begin{eqnarray*}
|I| &\leq&
C |K^1|_{CZ}V_{2^{-k'_1}}(x_1)2^{-k'_1\delta}2^{-k_1\delta}{2^{k'_1\delta}\over V_{2^{-k'_1}}(x_1)}2^{k_1\delta} 2^{-(k'_1-k_1)\vartheta}V_{2^{-k_1}}(y_1)^{-1}\\
&\leq& C |K^1|_{CZ}2^{-(k'_1-k_1)\vartheta}V_{2^{-k_1}}(y_1)^{-1},
\end{eqnarray*}
which is a desired estimate in this case since $\vartheta\geq
\varepsilon$.

We now deal with term $II$. Note that $d_1(x_1,u_1)\leq C_0 2^{-k'_1}$ and that
by the support of $\eta_2,$ $d_1(v_1,x_1)>C_12^{-k'_1}$, where $C_1$ is sufficiently large so
that $d_1(x_1,u_1)\leq C d_1(u_1,v_1)$. Therefore, we can apply the smoothness condition on the kernel $K_1.$ To this end, using the fact that  $\int
D_{k'_1}(x_1,u_1)du_1=0$, we write
\begin{eqnarray*}
II=\int  D_{k'_1}(x_1,u_1)\big[K_1(u_1,v_1)-K_1(x_1,v_1)\big][D_{k_1}(v_1,y_1)-D_{k_1}(x_1,y_1)]\eta_2\Big({d_1(v_1,x_1)\over C_12^{-k'_1}}\Big)du_1dv_1.
\end{eqnarray*}
Applying the smoothness condition on $K^1$ we obtain
\begin{eqnarray*}
|II|&\leq&
C|K^1|_{CZ}\int_{u_1: d_1(u_1,x_1)\leq C_02^{-k'_1}}\int_{v_1: d_1(v_1,x_1)> C_12^{-k'_1}}  \big(\frac{d_1(x_1,u_1)}{d_1(u_1,v_1)}\big)^{\varepsilon}\\[4pt]
&&\times V(u_1,v_1)^{-1}|D_{k'_1}(x_1,u_1)||D_{k_1}(v_1,y_1)-D_{k_1}(x_1,y_1)| du_1dv_1.
\end{eqnarray*}
Note that
$$ |D_{k_1}(v_1,y_1)-D_{k_1}(x_1,y_1)|\leq C V_{2^{-k_1}}(y_1)^{-1} $$
and
$$ |D_{k_1}(v_1,y_1)-D_{k_1}(x_1,y_1)|\leq C \Big( { d_1(x_1,v_1)\over 2^{-k_1}+d_1(x_1,y_1) } \Big)^{\varepsilon}V_{2^{-k_1}}(y_1)^{-1}$$
when $ d_1(x_1,v_1)\leq C_12^{-k_1}$.

Splitting the above last integral into
\begin{eqnarray*}
&&\int_{u_1:\ d_1(u_1,x_1)\leq C_02^{-k'_1}}\int_{v_1:\ d_1(v_1,x_1)> C_12^{-k_1}}  \big(\frac{d_1(x_1,u_1)}{d_1(u_1,v_1)}\big)^{\varepsilon}V(u_1,v_1)^{-1}\\[6pt]
&&\hskip1cm\times |D_{k'_1}(x_1,u_1)||D_{k_1}(v_1,y_1)-D_{k_1}(x_1,y_1)| du_1dv_1\\[4pt]
&&+\int_{u_1:\ d_1(u_1,x_1)\leq C_02^{-k'_1}}\int_{v_1:\ C_12^{-k_1} \geq d_1(v_1,x_1)> C_12^{-k'_1}} \big(\frac{d_1(x_1,u_1)}{d_1(u_1,v_1)}\big)^{\varepsilon}V(u_1,v_1)^{-1}\\[6pt]
&&\hskip1cm\times |D_{k'_1}(x_1,u_1)||D_{k_1}(v_1,y_1)-D_{k_1}(x_1,y_1)| du_1dv_1
\end{eqnarray*}
and applying the above two estimates for $|D_{k_1}(v_1,y_1)-D_{k_1}(x_1,y_1)|$ to above two integrals, respectively, yield
\begin{eqnarray*}
|II|&\leq&
C|K^1|_{CZ}V_{2^{-k_1}}(y_1)^{-1} 2^{-k'_1\varepsilon}2^{k_1\varepsilon}\\[4pt]
&&+C|K^1|_{CZ}V_{2^{-k_1}}(y_1)^{-1}2^{-k'_1\varepsilon}2^{k_1\varepsilon}\int_{v_1: C_12^{-k_1} \geq d_1(v_1,x_1)> C_12^{-k'_1}} V(x_1,v_1)^{-1}dv_1\\[4pt]
&\leq&C|K^1|_{CZ}V_{2^{-k_1}}(y_1)^{-1}
2^{-(k'_1-k_1)\varepsilon}\big(1+(k'_1-k_1)\big),
\end{eqnarray*}
which again is a desired estimate.

Now we turn to the present case, that is, the proof of the estimate in (\ref{almost orth case 1.1}). To see that this can be done by the iteration, we write
\begin{eqnarray*}
&&\int  D_{k'_1}(x_{I_1^{'}},u_1)D_{k'_2}(x_{I_2^{'}},u_2)K(u_1,u_2,v_1,v_2)[D_{k_1}(v_1,x_{I_1})-D_{k_1}(x_{I_1^{'}},x_{I_1})]\nonumber\\[4pt]
&&\hskip1cm\times [D_{k_2}(v_2,x_{I_2})-D_{k_2}(x_{I_2^{'}},x_{I_2})] du_1du_2dv_1dv_2\\[4pt]
&&=\big\langle D_{k'_2}(x_{I_2^{'}},u_2), \langle D_{k'_2}(x_{I_1^{'}},\cdot), K_2(u_2,v_2)[D_{k_1}(\cdot,x_{I_1})-D_{k_1}(x_{I_1^{'}},x_{I_1})]\rangle\\[4pt]
&&\hskip1cm\times[D_{k_2}(v_2,x_{I_2})-D_{k_2}(x_{I_2^{'}},x_{I_2})]\big\rangle,
\end{eqnarray*}
where, by definition of the product singular integral operator given in Subsection 3.1, for fixed points $u_2, v_2\in M_2, K_2(u_2,v_2)$ is a Calder\'on--Zygmund operator on $M_1$ with the operator norm $\|K_2(u_2,v_2)\|_{CZ(M_1)}$ which is a singular integral operator on $M_2.$ By the estimate for one parameter case provided above, for $k_1^{'}>k_1,$
\begin{eqnarray*}
&&|\langle D_{k'_1}(x_{I_1^{'}},\cdot), K_2(u_2,v_2)[D_{k_1}(\cdot,x_{I_1})-D_{k_1}(x_{I_1^{'}},x_{I_1})]\rangle|\\[4pt]
&&\leq C \|K_2(u_2,v_2)\|_{CZ(M_1)}2^{(k_1-k'_1)\epsilon}\frac{1}{V_{2^{-k_1}}(x_{I_1^{'}})+V_{2^{-k_1}}(x_{I_1})+V(x_{I_1^{'}},x_{I_1})}
\frac{2^{-k_1\varepsilon}}{(2^{-k_1}+d_1(x_{I_1^{'}},x_{I_1}))^{\varepsilon}}.
\end{eqnarray*}
Similarly,
\begin{eqnarray*}
&&|\langle D_{k'_1}(x_{I_1^{'}},\cdot), [K_2(u_2,v_2)-K_2(u_2,v_2^{'}][D_{k_1}(\cdot,x_{I_1})-D_{k_1}(x_{I_1^{'}},x_{I_1})]\rangle|\\[4pt]
&&\leq C \|K_2(u_2,v_2)-K_2(u_2,v_2^{'}]\|_{CZ(M_1)}2^{(k_1-k'_1)\epsilon}\\
&&\hskip1cm\times\frac{1}{V_{2^{-k_1}}(x_{I_1^{'}})+V_{2^{-k_1}}(x_{I_1})+V(x_{I_1^{'}},y_1)}
\frac{2^{-k_1\varepsilon}}{(2^{-k_1}+d_1(x_{I_1^{'}},x_{I_1}))^{\varepsilon}}
\end{eqnarray*}
and the same estimate holds with interchanging $u_2$ and $v_2.$

This together with the fact that $\|K_2(u_2,v_2)\|_{CZ(M_1)}$ is a singular integral operator on $M_2$ having the weak boundedness property implies that $\langle D_{k'_1}(x_{I_1^{'}},\cdot), K_2(u_2,v_2)[D_{k_1}(\cdot,x_{I_1})-D_{k_1}(x_{I_1^{'}},x_{I_1})]\rangle$ is a Calder\'on--Zygmund singular integral on $M_2$ having the weak boundedness property. Moreover,
\begin{eqnarray*}
&&|\langle D_{k'_1}(x_{I_1^{'}},\cdot), K_2(u_2,v_2)[D_{k_1}(\cdot,x_{I_1})-D_{k_1}(x_{I_1^{'}},x_{I_1})]\rangle|_{CZ}\\[4pt]
&&\leq C 2^{(k_1-k'_1)\epsilon}\frac{1}{V_{2^{-k_1}}(x_{I_1^{'}})+V_{2^{-k_1}}(x_{I_1})+V(x_{I_1^{'}},x_{I_1})}
\frac{2^{-k_1\varepsilon}}{(2^{-k_1}+d_1(x_{I_1^{'}},x_{I_1}))^{\varepsilon}}.
\end{eqnarray*}
Applying the estimate for one parameter case again yields that for $k_2^{'}>k_2,$
\begin{eqnarray*}
&&|\langle D_{k'_2}(x_{I_2^{'}},u_2), \langle D_{k'_1}(x_{I_1^{'}},\cdot), K_2(u_2,v_2)[D_{k_1}(\cdot,x_{I_1})-D_{k_1}(x_{I_1^{'}},x_{I_1})]\rangle
[D_{k_2}(v_2,x_{I_2})-D_{k_2}(x_{I_2^{'}},x_{I_2})]\rangle\\[4pt]
&&\leq C|\langle D_{k'_1}(x_{I_1^{'}},\cdot), K_2(u_2,v_2)[D_{k_1}(\cdot,x_{I_1})-D_{k_1}(x_{I_1^{'}},x_{I_1})]\rangle|_{CZ}\\[4pt]
&&\hskip1cm\times 2^{(k_2-k'_2)\varepsilon}
\frac{1}{V_{2^{-k_2}}(x_{I_2^{'}})+V_{2^{-k_2}}(y_2)+V(x_{I_2^{'}},y_2)}\frac{2^{-k_2\varepsilon}}{(2^{-k_2}+d_2(x_{I_2^{'}},y_2))^{\varepsilon}}\\[4pt]
&&\leq C2^{(k_1-k'_1)\varepsilon}2^{(k_2-k'_2)\varepsilon}
\frac{1}{V_{2^{-k_1}}(x_{I_1^{'}})+V_{2^{-k_1}}(x_{I_1})+V(x_{I_1^{'}},y_1)}\frac{2^{-k_1\varepsilon}}
{(2^{-k_1}+d_1(x_{I_1^{'}},x_{I_1}))^{\varepsilon}}\nonumber\\[4pt]
&&\hskip1cm\times \frac{1}{V_{2^{-k_2}}(x_{I_2^{'}})+V_{2^{-k_2}}(x_{I_2})+V(x_{I_2^{'}},x_{I_2})}\frac{2^{-k_2\varepsilon}}{(2^{-k_2}+d_2(x_{I_2^{'}},x_{I_2}))^{\varepsilon}},
\end{eqnarray*}
which concludes the proof of (\ref{almost orth case 1.1}).

Applying the Cauchy-Schwartz inequality implies that $|\langle g,Tf\rangle_{\rm Case\ 1.1}|$ is bounded by
\begin{eqnarray*}
&&\Big\lbrace\sum_{k_1\leq k_1^{'} }\sum_{k_2\leq k_2^{'}}\sum_{I_1^{'}}\sum_{I_2^{'}}
\sum_{I_1}\sum_{I_2}\mu_1(I_1^{'})\mu_1(I_1) \mu_2(I_2^{'})\mu_2(I_2) |{\widetilde{\widetilde{D}}}_{k_1^{'}}{\widetilde{\widetilde{D}}}_{k_2^{'}}(g)(x_{I_1^{'}},x_{I_2^{'}})|^2\\
&&\hskip1.5cm|I(x_{I_1^{'}},x_{I_2^{'}},x_{I_1},x_{I_2})|\Big\rbrace^{\frac{1}{2}}\\
&&\times \Big\lbrace\sum_{k_1\leq k_1^{'} }\sum_{k_2\leq k_2^{'}}\sum_{I_1^{'}}\sum_{I_2^{'}}
\sum_{I_1}\sum_{I_2}\mu_1(I_1^{'})\mu_1(I_1) \mu_2(I_2^{'})\mu_2(I_2) |{\widetilde{\widetilde{D}}}_{k_1}{\widetilde{\widetilde{D}}}_{k_2}(f)(x_{I_1},x_{I_2})|^2\\
&&\hskip1.5cm|I(x_{I_1^{'}},x_{I_2^{'}},x_{I_1},x_{I_2})|\Big\rbrace^{\frac{1}{2}}.
\end{eqnarray*}
Note that by the estimates for $|I(x_{I_1^{'}},x_{I_2^{'}},x_{I_1},x_{I_2})|$ in (\ref{almost orth case 1.1}) we have
$$\sum_{I_1^{'}}\sum_{I_2^{'}}\mu_1(I_1^{'})\mu_2(I_2^{'}) |I(x_{I_1^{'}},x_{I_2^{'}},x_{I_1},x_{I_2})|\leq C 2^{(k_1-k'_1)\epsilon}
2^{(k_2-k'_2)\epsilon}$$
and similarly
$$\sum_{I_1}\sum_{I_2}\mu_1(I_1)\mu_2(I_2) |I(x_{I_1^{'}},x_{I_2^{'}},x_{I_1},x_{I_2})|\leq C 2^{(k_1-k'_1)\epsilon}
2^{(k_2-k'_2)\epsilon}.$$
Therefore,
\begin{eqnarray*}
&&\sum_{k_1\leq k_1^{'} }\sum_{k_2\leq k_2^{'}}\sum_{I_1^{'}}\sum_{I_2^{'}}
\sum_{I_1}\sum_{I_2}\mu_1(I_1^{'})\mu_1(I_1) \mu_2(I_2^{'})\mu_2(I_2) |{\widetilde{\widetilde{D}}}_{k_1^{'}}{\widetilde{\widetilde{D}}}_{k_2^{'}}(g)(x_{I_1^{'}},x_{I_2^{'}})|^2
|I(x_{I_1^{'}},x_{I_2^{'}},x_{I_1},x_{I_2})|\\[4pt]
&&\leq C\sum_{k_1\leq k_1^{'} }\sum_{k_2\leq k_2^{'}}2^{(k_1-k'_1)\epsilon}
2^{(k_2-k'_2)\epsilon}\sum_{I_1^{'}}\sum_{I_2^{'}}
\mu_1(I_1^{'})\mu_2(I_2^{'})|{\widetilde{\widetilde{D}}}_{k_1^{'}}{\widetilde{\widetilde{D}}}_{k_2^{'}}(g)(x_{I_1^{'}},x_{I_2^{'}})|^2\\[4pt]
&&\leq C\sum_{k_1^{'} }\sum_{k_2^{'}}\sum_{I_1^{'}}\sum_{I_2^{'}}
\mu_1(I_1^{'})\mu_2(I_2^{'})|{\widetilde{\widetilde{D}}}_{k_1^{'}}{\widetilde{\widetilde{D}}}_{k_2^{'}}(g)(x_{I_1^{'}},x_{I_2^{'}})|^2.
\end{eqnarray*}
The last series above, by the discrete Littlewood--Paley $L^2$ estimate established in [HLL2], is dominated by the constant times $\|g\|_2^2.$ Similarly,
\begin{eqnarray*}
&&\sum_{k_1\leq k_1^{'} }\sum_{k_2\leq k_2^{'}}\sum_{I_1^{'}}\sum_{I_2^{'}}
\sum_{I_1}\sum_{I_2}\mu_1(I_1^{'})\mu_1(I_1) \mu_2(I_2^{'})\mu_2(I_2) |{\widetilde{\widetilde{D}}}_{k_1}{\widetilde{\widetilde{D}}}_{k_2}(f)(x_{I_1},x_{I_2})|^2
|I(x_{I_1^{'}},x_{I_2^{'}},x_{I_1},x_{I_2})|\\[4pt]
&&\leq C \|f\|_2^2.
\end{eqnarray*}
We thus conclude that $|\langle g,Tf\rangle_{\rm Case\ 1.1}|\leq C\|f\|_2\|g\|_2.$ The estimate for $|\langle g,Tf\rangle_{\rm Case\ 2.1}|$ is the same. Indeed, if we write
\begin{eqnarray*}
&&V(x_{I_1^{'}},x_{I_2^{'}},x_{I_1},x_{I_2})\\[4pt]
&&=\int  D_{k'_1}(x_{I_1^{'}},u_1)[D_{k'_2}(x_{I_2^{'}},u_2)-D_{k'_2}(x_{I_2^{'}},x_{I_2})]K(u_1,u_2,v_1,v_2)[D_{k_1}(v_1,x_{I_1})-D_{k_1}(x_{I_1^{'}},x_{I_1})]\\[4pt]
&&\hskip1cm\times D_{k_2}(v_2,x_{I_2})du_1du_2dv_1dv_2\\[4pt]
&&=\big\langle [D_{k'_2}(x_{I_2^{'}},u_2)-D_{k'_2}(x_{I_2^{'}},x_{I_2})],\\[4pt]
&&\hskip2cm
\langle D_{k'_1}(x_{I_1^{'}},u_1), K_2(u_2,,v_2)[D_{k_1}(v_1,x_{I_1})-D_{k_1}(x_{I_1^{'}},x_{I_1})]\rangle D_{k_2}(v_2,x_{I_2})\big\rangle
\end{eqnarray*}
and repeat the same proof, it is not difficult to see that $V(x_{I_1^{'}},x_{I_2^{'}},x_{I_1},x_{I_2})$ satisfies the same estimate in (\ref{almost orth case 1.1}) as for $I(x_{I_1^{'}},x_{I_2^{'}},x_{I_1},x_{I_2})$ with interchanging $k_2$ and $k'_2.$ As a result,
$$|\langle g,Tf\rangle_{\rm Case\ 2.1}|\leq C\|f\|_2\|g\|_2.$$

\subsubsection{Carleson measure on $\widetilde M=M_1\times M_2$ }
In this subsection, we handle bilinear form $\langle g, Tf\rangle_{Case 1.4}.$ The estimate of this term will be achieved by applying the Carleson measure estimate on $\widetilde M=M_1\times M_2.$ To see this, we first write
\begin{eqnarray*}
&&IV(x_{I_1^{'}},x_{I_2^{'}},x_{I_1},x_{I_2})\\
&&=\int  D_{k'_1}(x_{I_1^{'}},u_1)D_{k'_2}(x_{I_2^{'}},u_2)K(u_1,u_2,v_1,v_2)D_{k_1}(x_{I_1^{'}},x_{I_1})D_{k_2}(x_{I_2^{'}},x_{I_2}) du_1du_2dv_1dv_2\\
&&=D_{k'_1}D_{k'_2}(T1)(x_{I_1^{'}},x_{I_2^{'}})D_{k_1}(x_{I_1^{'}},x_{I_1})D_{k_2}(x_{I_2^{'}},x_{I_2}).
\end{eqnarray*}
And then we rewrite $\langle g,Tf\rangle_{\rm Case\ 1.4}$ by
$$ \sum_{k_1^{'} }\sum_{k_2^{'}}\sum_{I_1^{'}}\sum_{I_2^{'}}
\mu_1(I_1^{'}) \mu_2(I_2^{'}){\widetilde{\widetilde{D}}}_{k_1^{'}}{\widetilde{\widetilde{D}}}_{k_2^{'}}(g)(x_{I_1^{'}},x_{I_2^{'}})
D_{k'_1}D_{k'_2}(T1)(x_{I_1^{'}},x_{I_2^{'}}){S}_{k_1^{'}}{S}_{k_2^{'}}(f)(x_{I_1^{'}},x_{I_2^{'}}),$$
where for $x_1, y_1\in M_1,$
$${S}_{k_1^{'}}(x_1,y_1)=\sum_{k_1\leq k_1^{'} }\sum\limits_{I_1}\mu(I_{1})
D_{k_1}(x_1,x_{I_{1}}){\widetilde{\widetilde{D}}}_{k_1}(x_{I_{1}},y_1)$$
and similarly for ${S}_{k_2^{'}}(x_2,y_2)$ on $M_2.$

In order to apply the Carleson measure estimate to $\langle g,Tf\rangle_{\rm Case\ 1.4},$ we claim that ${S}_{k_1^{'}}(x_1,y_1),$ the kernel of ${S}_{k_1^{'}},$ satisfies the following estimate
\begin{eqnarray*}
|{S}_{k_1^{'}}(x_1,y_1)|\leq C {1\over V_{2^{-k_1^{'}}}(x_1)+V_{2^{-k_1^{'}}}(y_1)+V(x_1,y_1)}
\Big({2^{-k_1^{'}}\over 2^{-k_1^{'}}+d_1(x_1,y_1)}\Big)^{\vartheta'}.
\end{eqnarray*}
Similarly, ${S}_{k_2^{'}}(x_2,y_2),$ the kernel of ${S}_{k_2^{'}},$ satisfies the same estimate above with interchanging $k_1^{'}, k_2^{'}; x_1, x_2$ and $y_1, y_2,$ respectively.

Assuming the claim for the moment, then applying the Cauchy-Schwartz inequality yields
\begin{eqnarray}
&&|\langle g,Tf\rangle_{\rm Case\ 1.4}|\nonumber\\[4pt]
&&\leq \big\lbrace \sum_{k_1^{'} }\sum_{k_2^{'}}\sum_{I_1^{'}}\sum_{I_2^{'}}
\mu_1(I_1^{'}) \mu_2(I_2^{'})|{\widetilde{\widetilde{D}}}_{k_1^{'}}{\widetilde{\widetilde{D}}}_{k_2^{'}}(g)(x_{I_1^{'}},x_{I_2^{'}})|^2\big\rbrace^{\frac{1}{2}}\\[4pt]
&&\hskip1cm\times \big\lbrace \sum_{k_1^{'} }\sum_{k_2^{'}}\sum_{I_1^{'}}\sum_{I_2^{'}}
\mu_1(I_1^{'}) \mu_2(I_2^{'})|D_{k'_1}D_{k'_2}(T1)(x_{I_1^{'}},x_{I_2^{'}})|^2|{S}_{k_1^{'}}{S}_{k_2^{'}}(f)(x_{I_1^{'}},x_{I_2^{'}})|^2\big\rbrace^{\frac{1}{2}}.\nonumber
\end{eqnarray}

Thus, the first series above, by the discrete Littlewood--Paley $L^2,$ is bounded by a constant times $\|g\|_2.$ And the second series is bounded by $C\|f\|_2$ by applying the Carleson measure estimate on $\widetilde M$ since $T1\in BMO(\widetilde M)$ and hance $\mu_1(I_1^{'}) \mu_2(I_2^{'})|D_{k'_1}D_{k'_2}(T1)(x_1,x_2)|^2$ is a Carleson measure on $\widetilde M\times \lbrace \mathbb Z\times \mathbb Z\rbrace.$

We now show the claim. To do this, we first consider the case when $d_1(x_1,y_1)< 2^{-k_1^{'}}.$ Then
\begin{eqnarray}
&&\big|\sum_{k_1\leq k_1^{'},\ d_1(x_1,y_1)< 2^{-k_1^{'}} }\sum_{I_1} \mu_1(I_{1})D_{k_1}(x_1,x_{I_{1}})
 {\widetilde{\widetilde{D}}}_{k_1}(x_{I_{1}},y_1) \big|\label{key
claim e1}\\[4pt]
 &&\leq C\sum_{k_1\leq k_1^{'},\ d_1(x_1,y_1)< 2^{-k_1^{'}} } {1\over V_{2^{-k_1}}(x_1)+V_{2^{-k_1}}(y_1)+V(x_1,y_1)} \Big({2^{-k_1}\over 2^{-k_1}+d_1(x_1,y_1)}\Big)^{\vartheta'}\nonumber\\[4pt]
&&\leq C {1\over V_{2^{-k_1^{'}}}(x_1)+V_{2^{-k_1^{'}}}(y_1)+V(x_1,y_1)}
\Big({2^{-k_1^{'}}\over 2^{-k_1^{'}}+d_1(x_1,y_1)}\Big)^{\vartheta'},\nonumber
\end{eqnarray}
where $\vartheta'$ is the order of ${\widetilde{\widetilde{D}}}_{k_1}(x_1,y_1)$. Next,
we consider the case when $d_1(x_1,y_1)\geq 2^{-k_1^{'}}$. Note first that by the
discrete Calder\'on's identity in [HLL2],
\begin{eqnarray*}
\sum_{k_1\leq k_1^{'} }\sum_{I_1} \mu_1(I_{1})D_{k_1}(x_1,x_{I_{1}})
 {\widetilde{\widetilde{D}}}_{k_1}(f)(x_{I_{1}}) + \sum_{k_1> k_1^{'} }\sum_{I_1} \mu_1(I_{1})D_{k_1}(x_1,x_{I_{1}})
 {\widetilde{\widetilde{D}}}_{k_1}(f)(x_{I_{1}}) =f(x_1)
\end{eqnarray*}
for all test functions $f\in\GG(\beta,\gamma)(M_1)$ and the series converge in the norm of $\GG(\beta,\gamma).$ This implies that
\begin{eqnarray}
&&\sum_{k_1\leq k_1^{'} }\sum_{I_1} \mu_1(I_{1})D_{k_1}(x_1,x_{I_{1}})
 {\widetilde{\widetilde{D}}}_{k_1}(x_{I_{1}},y_1) \nonumber\\[4pt]
&& + \sum_{k_1> k_1^{'} }\sum_{I_1} \mu_1(I_{1})D_{k_1}(x_1,x_{I_{1}})
 {\widetilde{\widetilde{D}}}_{k_1}(x_{I_{1}},y_1) = \delta(x_1,y_1), \label{dirac
 function}
\end{eqnarray}
where we use $\delta$ to denote the Dirac function. Consequently, when $d_1(x_1,y_1)\geq 2^{-k_1^{'}}$,
\begin{eqnarray*}
&&\big|\sum_{k_1\leq k_1^{'}, d_1(x_1,y_1)\geq 2^{-k_1^{'}} }\sum_{I_1} \mu_1(I_{1})D_{k_1}(x_1,x_{I_{1}})
 {\widetilde{\widetilde{D}}}_{k_1}(x_{I_{1}},y_1)\big|\label{key claim e2} \\
&&=\big|\sum_{k_1> k_1^{'}, d_1(x_1,y_1)\geq 2^{-k_1^{'}} }\sum_{I_1}
\mu_1(I_{1})D_{k_1}(x_1,x_{I_{1}})
 {\widetilde{\widetilde{D}}}_{k_1}(x_{I_{1}},y_1)\big|\nonumber\\
 &&\leq C {1\over V_{2^{-k_1^{'}}}(x_1)+V_{2^{-k_1^{'}}}(y_1)+V(x_1,y_1)} \Big({2^{-k_1^{'}}\over
 2^{-k_1^{'}}+d_1(x_1,y_1)}\Big)^{\vartheta'},\nonumber
\end{eqnarray*}
where the last inequality follows from similar estimates in 3.24 and hence the claim is proved.

\subsubsection{Almost orthogonality argument on $M_1$ and Carleson measure estimate on $M_2$ }
In this subsection, we estimate $\langle g,Tf\rangle_{\rm Case\ 1.2}, \langle g,Tf\rangle_{\rm Case\ 1.3}, \langle g,Tf\rangle_{\rm Case\ 2.2}$ and $\langle g,Tf\rangle_{\rm Case\ 2.3}.$ Since all proofs for $\langle g,Tf\rangle_{\rm Case\ 1.3}, \langle g,Tf\rangle_{\rm Case\ 2.2}$ and $\langle g,Tf\rangle_{\rm Case\ 2.3}$ are similar to the proof of $\langle g,Tf\rangle_{\rm Case\ 1.2},$ so we only give the proof for $\langle g,Tf\rangle_{\rm Case\ 1.2}.$ We first write
\begin{eqnarray*}
&&II(x_{I_1^{'}},x_{I_2^{'}},x_{I_1},x_{I_2})\\[4pt]
&&=\int  D_{k'_1}(x_{I_1^{'}},u_1)D_{k'_2}(x_{I_2^{'}},u_2)K(u_1,u_2,v_1,v_2)\\[4pt]
&&\hskip1cm\times[D_{k_2}(v_2,x_{I_2})-D_{k_2}(x_{I_2^{'}},x_{I_2})] du_1du_2dv_1dv_2D_{k_1}(x_{I_1^{'}},x_{I_1}) \ +IV(x_{I_1^{'}},x_{I_2^{'}},x_{I_1},x_{I_2})\\[4pt]
&&=\langle D_{k'_2}(x_{I_2^{'}},u_2),\langle D_{k'_1}(x_{I_1^{'}},\cdot), K_2(u_2,v_2)(1)\rangle[D_{k_2}(v_2,x_{I_2})-D_{k_2}(x_{I_2^{'}},x_{I_2})]\rangle D_{k_1}(x_{I_1^{'}},x_{I_1})\\[4pt]
&&\hskip.5cm+IV(x_{I_1^{'}},x_{I_2^{'}},x_{I_1},x_{I_2}).
\end{eqnarray*}
Set
\begin{eqnarray*}
&&J_{k'_2,k_2}(u_2,v_2)\\
&&=\sum_{k'_1}\sum_{I_1^{'}}\mu_1(I_1^{'}){\widetilde{\widetilde{D}}}_{k_1^{'}}\big({\widetilde{\widetilde{D}}}_{k_2^{'}}(g)(\cdot,x_{I_2^{'}})\big)(x_{I_1^{'}}) \langle D_{k'_1}(x_{I_1^{'}},\cdot), K_2(u_2,v_2)(1)\rangle
S_{k'_1}\big({\widetilde{\widetilde{D}}}_{k_2}(f)(\cdot,x_{I_2})\big)(x_{I_1^{'}}),
\end{eqnarray*}
where $S_{k'_1}$ is defined as in Subsection 3.3.2.

Then, as in Subsection 3.3.2, summing up for $k'_1$ and $I_1^{'}$ and using the notation $J_{k'_2,k_2}(u_2,v_2),$ we can rewrite $\langle g,Tf\rangle_{\rm Case\ 1.2}$ as
\begin{eqnarray*}
&&\langle g,Tf\rangle_{\rm Case\ 1.2}\\[4pt]
&&= \sum_{k_2\leq k_2^{'}}\sum_{I_2^{'}}
\sum_{I_2}\mu_2(I_2^{'})\mu_2(I_2)\int {\widetilde{\widetilde{D}}}_{k_2^{'}}(x_{I_2^{'}},u_2)J_{k'_2,k_2}(u_2,v_2)[D_{k_2}(v_2,x_{I_2})-D_{k_2}(x_{I_2^{'}},x_{I_2})]
du_2dv_2 \nonumber\\[4pt]
&&\hskip.5cm+\langle g,Tf\rangle_{\rm Case\ 1.4}.
\end{eqnarray*}
Therefore, it suffices to estimate the above series since the estimate $|\langle g,Tf\rangle_{\rm Case\ 1.4}|\leq C\|f\|_2\|g\|_2$ has been proved in Subsection 3.3.1. To this end, we claim that for fixed $k'_2$ and $k_2,$ $J_{k'_2,k_2}(u_2,v_2)$ is a Calder\'on--Zygmund singular integral kernel on $M_2$ and the corresponding operator has the weak boundedness property. Moreover,
\begin{eqnarray}\label{CZ operator norm of J}
|J_{k'_2,k_2}(u_2,v_2)|_{CZ}\leq C\|{\widetilde{\widetilde{D}}}_{k_2^{'}}(g)(\cdot,x_{I_2^{'}})\|_2\|{\widetilde{\widetilde{D}}}_{k_2}(f)(\cdot,x_{I_2})\|_2.
\end{eqnarray}
Assuming the claim for the moment, by the almost orthogonality argument as in Subsection 3.3.1 we obtain

\begin{eqnarray*}
&&|\sum_{k_2\leq k_2^{'}}\sum_{I_2^{'}}
\sum_{I_2}\mu_2(I_2^{'})\mu_2(I_2)\int {\widetilde{\widetilde{D}}}_{k_2^{'}}(x_{I_2^{'}},u_2)J_{k'_2,k_2}(u_2,v_2)[D_{k_2}(v_2,x_{I_2})-D_{k_2}(x_{I_2^{'}},x_{I_2})]
du_2dv_2| \nonumber\\[4pt]
&&\leq C\sum_{k_2\leq k_2^{'}}\sum_{I_2^{'}}
\sum_{I_2}\mu_2(I_2^{'})\mu_2(I_2)|J_{k'_2,k_2}(u_2,v_2)|_{CZ}\\[4pt]
&&\hskip.5cm\times 2^{-(k_2-k'_2)\varepsilon}\frac{1}{V_{2^{-k_2}}(x_{I_2^{'}})+V_{2^{-k_2}}(x_{I_2})+V(x_{I_2^{'}},x_{I_2})}\frac{2^{-k_2\varepsilon}}
{(2^{-k_2}+d_2(x_{I_2^{'}},x_{I_2}))^{\varepsilon}}
\end{eqnarray*}
which, by a similar estimate as in Subsection 3.3.1, implies that the above series is dominated by a constant times
\begin{eqnarray*}
&&\sum_{k_2\leq k_2^{'}}\sum_{I_2^{'}}
\sum_{I_2}\mu_2(I_2^{'})\mu_2(I_2)2^{-(k_2-k'_2)\varepsilon}
\frac{1}{V_{2^{-k_2}}(x_{I_2^{'}})+V_{2^{-k_2}}(x_{I_2})+V(x_{I_2^{'}},x_{I_2})}\\[4pt]
&&\hskip.5cm\times\frac{2^{-k_2\varepsilon}}
{(2^{-k_2}+d_2(x_{I_2^{'}},x_{I_2}))^{\varepsilon}}\|{\widetilde{\widetilde{D}}}_{k_2^{'}}(g)(\cdot,x_{I_2^{'}})\|_2\|{\widetilde{\widetilde{D}}}_{k_2}(f)(\cdot,x_{I_2})\|_2\\[4pt]
&&\leq C\|f\|_2\|g\|_2.
\end{eqnarray*}

Now we prove the claim for $J_{k'_2,k_2}(u_2,v_2)$. We first denote by $ J_{k'_2,k_2} $ the operator on $M_2$ associated with the kernel $J_{k'_2,k_2}(u_2,v_2)$. We verify that $ J_{k'_2,k_2} $ satisfies the weak boundedness property. In fact, using the weak boundedness property of $T$ on $M_2$, that is, (\ref{wbp2}) and the one-parameter discrete Carleson measure estimate,
we have
\begin{eqnarray*}
|\langle J_{k_2^{'},k_2}\phi^2, \psi^2\rangle|&\leq& CV_{r_2}(x_2^0)\|{\widetilde{\widetilde{D}}}_{k_2^{'}}(g)(\cdot,x_{I_2^{'}})\|_{L^2(M_1)}\|{\widetilde{\widetilde{D}}}_{k_2^{'}}(f)(\cdot,x_{I_2^{'}})\|_{L^2(M_1)}
\end{eqnarray*}
for all $\phi^2,
\psi^2\in A_{M_2}(\delta, x_2^0, r_2)$, where the set $A_{M_2}(\delta, x_2^0, r_2)$ is defined in Subsection 3.1. Next we verify that $J_{k'_2,k_2}(u_2,v_2)$ satisfies the size and smoothness properties as defined in Subsection 3.1.
Using the one-parameter discrete Carleson measure estimate again we can obtain  that
\begin{eqnarray*}
|J_{k_2^{'},k_2}(u_2,v_2)|&\leq& C\|K_2( u_2,v_2)(1)\|_{BMO(M_1)}\|{\widetilde{\widetilde{D}}}_{k_2^{'}}(g)(\cdot,x_{I_2^{'}})\|_{L^2(M_1)}\|{\widetilde{\widetilde{D}}}_{k_2^{'}}(f)(\cdot,x_{I_2^{'}})\|_{L^2(M_1)}\\[4pt]
&\leq&C\|K_2(u_2,v_2)(1)\|_{CZ}\|{\widetilde{\widetilde{D}}}_{k_2^{'}}(g)(\cdot,x_{I_2^{'}})\|_{L^2(M_1)}\|{\widetilde{\widetilde{D}}}_{k_2^{'}}(f)(\cdot,x_{I_2^{'}})\|_{L^2(M_1)}\\[4pt]
&\leq&C{1\over V(u_2,v_2)}\|{\widetilde{\widetilde{D}}}_{k_2^{'}}(g)(\cdot,x_{I_2^{'}})\|_{L^2(M_1)}\|{\widetilde{\widetilde{D}}}_{k_2^{'}}(f)(\cdot,x_{I_2^{'}})\|_{L^2(M_1)}.
\end{eqnarray*}
Similarly,
\begin{eqnarray*}
&&|J_{k_2^{'},k_2}(u_2,v_2)-h_{k_2^{'},k_2}(u_2^{'},v_2)|\\
&\leq& C\|K_2( u_2,v_2)(1)-K_2( u_2^{'},v_2)(1)\|_{CZ)}\|{\widetilde{\widetilde{D}}}_{k_2^{'}}(g)(\cdot,x_{I_2^{'}})\|_{L^2(M_1)}\|{\widetilde{\widetilde{D}}}_{k_2^{'}}(f)(\cdot,x_{I_2^{'}})\|_{L^2(M_1)}\\
&\leq&C\Big({d_2(u_2,u_2^{'})\over d_2(u_2,v_2)}\Big)^{\varepsilon}{1\over V(u_2,v_2)}\|{\widetilde{\widetilde{D}}}_{k_2^{'}}(g)(\cdot,x_{I_2^{'}})\|_{L^2(M_1)}\|{\widetilde{\widetilde{D}}}_{k_2^{'}}(f)(\cdot,x_{I_2^{'}})\|_{L^2(M_1)}
\end{eqnarray*}
for $d_2(u_2,u_2^{'})\leq {1\over 2A} d_2(u_2,v_2)$. The same estimate holds with $u_2$ and $v_2$ interchanged. Combining the estimates above, we get that $J_{k'_2,k_2}(u_2,v_2)$ is a Calder\'on--Zygmund singular integral kernel on $M_2$ and hence (\ref{CZ operator norm of J}) holds. The claim is concluded.

\subsubsection{The Littlewood--Paley estimate on $M_1$ and Carleson measure estimate on $M_2$}

In this subsection, we deal with $\langle g,Tf\rangle_{\rm Case\ 2.4}$.
We first write
\begin{eqnarray*}
&&VIII(x_{I_1^{'}},x_{I_2^{'}},x_{I_1},x_{I_2})\\[4pt]
&&\hskip.4cm =- \int  D_{k'_1}(x_{I_1^{'}},u_1)D_{k'_2}(x_{I_2^{'}},x_{I_2})K(u_1,u_2,v_1,v_2)D_{k_1}(x_{I_1^{'}},x_{I_1})D_{k_2}(v_2,x_{I_2}) du_1du_2dv_1dv_2\\[4pt]
&&\hskip.4cm =-D_{k'_1}D_{k_2}\big((\widetilde{T})^*1\big)(x_{I_1^{'}},x_{I_2}) D_{k_1}(x_{I_1^{'}},x_{I_1})D_{k'_2}(x_{I_2^{'}},x_{I_2}).
\end{eqnarray*}
We would like to point out that the partial adjoint operator $\widetilde{T}$ appears and will play a crucial role in the estimate for $\langle g,Tf\rangle_{\rm Case\ 2.4}.$ This is why $\widetilde{T}$ and $\widetilde{T}^*$ have to be taken into account in the proof of the sufficient conditions of the product $T1$ theorem.

To estimate $\langle g,Tf\rangle_{\rm Case\ 2.4}$ we rewrite
\begin{eqnarray*}
&&\langle g,Tf\rangle_{\rm Case\ 2.4}\\[4pt]
&=& -\sum_{k_1\leq k_1^{'} }\sum_{k_2> k_2^{'}}\sum_{I_1^{'}}\sum_{I_2^{'}}
\sum_{I_1}\sum_{I_2}\mu_1(I_1^{'})\mu_1(I_1) \mu_2(I_2^{'})\mu_2(I_2) D_{k'_2}(x_{I_2^{'}},x_{I_2}){\widetilde{\widetilde{D}}}_{k_1^{'}}{\widetilde{\widetilde{D}}}_{k_2^{'}}(g)(x_{I_1^{'}},x_{I_2^{'}})
\\[4pt]
&&\times D_{k_1}(x_{I_1^{'}},x_{I_1}){\widetilde{\widetilde{D}}}_{k_1}{\widetilde{\widetilde{D}}}_{k_2}(f)(x_{I_1},x_{I_2})
D_{k'_1}D_{k_2}\big((\widetilde{T})^*1\big)(x_{I_1^{'}},x_{I_2}) \\[4pt]
&=&-\sum_{ k_1^{'} }\sum_{k_2}\sum_{I_1^{'}}
\sum_{I_2}\mu_1(I_1^{'})\mu_2(I_2){\widetilde{\widetilde{D}}}_{k'_1}S_{k_2}(g)(x_{I_1^{'}},x_{I_2})S_{k'_1}{\widetilde{\widetilde{D}}}_{k_2}(f)(x_{I'_1},x_{I_2})D_{k'_1}D_{k_2}\big((\widetilde{T})^*1\big)(x_{I_1^{'}},x_{I_2}),
\end{eqnarray*}
where the operators $S_{k'_1}$ and $S_{k_2}$ are defined as in Subsection 3.3.2.

In order to estimate the last series above, for a $BMO(\widetilde M)$ function $b$ we introduce an operator $W_{b}$ by the bilinear form $\langle g, W_{b}f\rangle$ which equals
\begin{eqnarray*}
 \sum_{ k_1^{'} }\sum_{k_2}\sum_{I_1^{'}}
\sum_{I_2}\mu_1(I_1^{'})\mu_2(I_2){\widetilde{\widetilde{D}}}_{k'_1}S_{k_2}(g)(x_{I_1^{'}},x_{I_2})S_{k'_1}
{\widetilde{\widetilde{D}}}_{k_2}(f)(x_{I'_1},x_{I_2})D_{k'_1}D_{k_2}\big(b\big)(x_{I_1^{'}},x_{I_2}).
\end{eqnarray*}
It is easy to see that when $b=(\widetilde{T})^*1 \in BMO(\widetilde{M})$, then $\langle g, W_{b}f\rangle=-\langle g,Tf\rangle_{\rm Case\ 2.4}$. Thus, we only need to show that for each $b\in BMO(\widetilde M)$ the operator $W_b$ is bounded on $L^2,$ which would imply that $|\langle g,Tf\rangle_{\rm Case\ 2.4}|\leq C\|f\|_2\|g\|_2.$ For this purpose, following an idea in [J] and interchanging the positions of functions $f$ and $b$ we define the operator $V_f(b)=W_b(f)$ and will prove that for each fixed $f\in L^\infty$ the operator $V_f$ is a Calder\'on--Zygmund singular integral operator and bounded on $L^2.$ Moreover, there exists a constant $C$ independent of $f$ such that for all $b\in L^2,$
$$\|V_f(b)\|_2\leq C\|f\|_\infty \|b\|_2.$$
Furthermore, we will show that $V_f$ satisfies the conditions in Theorem C below in Section 4 and thus, $V_f$ is also bounded on $BMO(\widetilde M)$ satisfying
$$\|V_f(b)\|_{BMO}\leq C\|f\|_\infty \|b\|_{BMO}.$$
We can rewrite the above estimate by
$$\|W_b(f)\|_{BMO}\leq C\|f\|_\infty \|b\|_{BMO}$$
for each $b\in BMO(\widetilde M)$ and all $f\in L^\infty.$

This means that for each $b\in BMO(\widetilde M)$ the operator $W_b$ is a bounded operator from $L^\infty$ to $BMO(\widetilde M).$
Similarly, the operator $W_b^*,$ the adjoint operator of $W_b,$ is a bounded operator from $L^\infty$ to $BMO(\widetilde M)$ since $W_b$ and $W_b^*$ satisfy the same conditions. Finally, by the duality argument and interpolation, $W_b$ is bounded on $L^2$ and hence, as mentioned, the bilinear form $\langle g,Tf\rangle_{\rm Case\ 2.4}$ is bounded by the constant times $\|f\|_2\|g\|_2.$

To achieve this goal, we will show that for each fixed $f\in L^\infty, V_f$ is a Calder\'on--Zygmund singular integral operator as defined in Subsection 3.1 and moreover, there exists a constant $C$ independent of $f$ and $b\in L^2$ such that
$$\|V_f(b)\|_2\leq C \|f\|_\infty\|b\|_2.$$
We first prove that $V_f$ is bounded on $L^2.$ To this end, for $g\in L^2,$ we write
\begin{eqnarray*}
&&\langle g, V_f(b)\rangle\\[4pt]
&=&\sum_{ k_1^{'} }\sum_{k_2}\sum_{I_1^{'}}
\sum_{I_2}\mu_1(I_1^{'})\mu_2(I_2){\widetilde{\widetilde{D}}}_{k'_1}S_{k_2}(g)(x_{I_1^{'}},x_{I_2})S_{k'_1}
{\widetilde{\widetilde{D}}}_{k_2}(f)(x_{I'_1},x_{I_2})D_{k'_1}D_{k_2}\big(b\big)(x_{I_1^{'}},x_{I_2}).
\end{eqnarray*}
Note that if $f\in L^\infty$ then $S_{k_1^{'}}(f)(x_{I_1^{'}},\cdot)$ is also a bounded function on $M_2$ for fixed $k_1^{'}$ and $I_1^{'}$ with
$$\|S_{k_1^{'}}(f)(x_{I_1^{'}},\cdot)\|_\infty\leq C\|f\|_\infty.$$
Thus, $\mu_2(I_2)|{\widetilde{\widetilde{D}}}_{k_2}\big(S_{k_1^{'}}(f)(x_{I_1^{'}},\cdot)\big)(x_{I_2})|^2$ is a Carleson measure on $M_2\times k_2$ uniformly for
all $k_1^{'}$ and $x_{I_1^{'}}
\in M_1.$ Therefore,
\begin{eqnarray*}
\big|\langle g,
V_f(b)\rangle\big|&=&\bigg|\sum_{k_1^{'}}\sum_{I_1^{'}}
\mu_1(I_{1}^{'})\bigg[\sum_{k_2}\sum_{\tau_2
}\mu_2(I_2)
S_{k_2}\big({\widetilde{\widetilde{D}}}_{k_1}(g)(x_{I_1^{'}},\cdot)\big)(x_{I_2})D_{k_2}\big(D_{k_1^{'}}(b)(x_{I_1^{'}},\cdot)\big)(x_{I_2})\\[4pt]
&&\times
{\widetilde{\widetilde{D}}}_{k_2}\big(S_{k_1^{'}}(f)(x_{I_1^{'}},\cdot)\big)(x_{I_2})\bigg]\bigg|\\[4pt]
&\leq& \sum_{k_1^{'}}\sum_{I_1^{'}}\mu_1(I_{1}^{'})
\|{\widetilde{\widetilde{D}}}_{k_1}(g)(x_{I_1^{'}},\cdot)\|_{L^2(M_2)}
\|D_{k_1^{'}}(b)(x_{I_1^{'}},\cdot)\|_{L^2(M_2)}\|S_{k_1^{'}}(f)(x_{I_1^{'}},\cdot)\|_{L^\infty(M_2)}\\[4pt]
&\leq& C\|f\|_{L^\infty(\widetilde{M})}\Big(\sum_{k_1^{'}}\sum_{I_1^{'}}\mu_1(I_{1}^{'})
\|{\widetilde{\widetilde{D}}}_{k_1}(g)(x_{I_1^{'}},\cdot)\|_{L^2(M_2)}^2\Big)^{1/2}\\[4pt]
&&\hskip.5cm\times
\Big(\sum_{k_1^{'}}\sum_{I_1^{'}}\mu_1(I_{1}^{'})
\|D_{k_1^{'}}(b)(x_{I_1^{'}},\cdot)\|_{L^2(M_2)}^2\Big)^{1/2}\\[4pt]
&\leq& C\|f\|_{L^\infty(\widetilde{M})}\|g\|_{L^2(\widetilde{M})}\|b\|_{L^2(\widetilde{M})},
\end{eqnarray*}
which, by taking the supremum for all $\|g\|_2\leq 1,$ implies that $V_f$ is bounded on $L^2(\widetilde{M})$ with $\|V_f\|_{L^2\rightarrow L^2}\leq C\|f\|_{L^\infty}.$

To verify that $V_f$ is a Carlder\'on-Zygmund singular integral operator as defined in Subsection 3.1, we can consider $V_f$ as a pair $\big((V_f)_1,(V_f)_2\big)$ of operators on $M_2$ and $M_1$, respectively, such that
$$\langle g_1\otimes g_2, V_f h_1\otimes h_2\rangle = \iint g_1(x_1)\langle g_2, (V_f)_1(x_1,y_1)h_2 \rangle h_1(y_1) dx_1dy_1$$
for all $g_1,h_1 \in C_0^\eta(M_1)$ and $g_2,h_2 \in C_0^\eta(M_2)$ with supp$g_1\cap$supp$h_1=\emptyset$ and
$$\langle g_1\otimes g_2, V_f h_1\otimes h_2\rangle = \iint g_2(x_2)\langle g_1, (V_f)_2(x_2,y_2)h_1 \rangle h_2(y_2) dx_2dy_2$$
for all $g_1,h_1 \in C_0^\eta(M_1)$ and $g_2,h_2 \in C_0^\eta(M_2)$ with supp$g_2\cap$supp$h_2=\emptyset$.

It suffices to show that $(V_f)_i(x_i,y_i), i=1,2,$ satisfies the properties $(i)$, $(ii)$ and $(iii)$ in Subsection 3.1.
We need only to verify $(V_f)_1(x_1,y_1)$ since the estimates for $(V_f)_2(x_2,y_2)$ are similar.

Note that for any fixed $x_1,y_1$ on $M_1$, $(V_f)_1(x_1,y_1)$ is an operator on $M_2$ associated with the kernel $(V_f)_1(x_1,y_1)(x_2,y_2) $ which is equal to $V_f(x_1,x_2,y_1,y_2)$. We recall that
$\|(V_f)_1(x_1,y_1)\|_{CZ}\break=\|(V_f)_1(x_1,y_1)\|_{L^2(M_2)\rightarrow L^2(M_2)}+ |(V_f)_1(x_1,y_1)|_{CZ(M_2)}$, where $|(V_f)_1(x_1,y_1)|_{CZ(M_2)}$ is the smallest constant that the inequalities $(a)$, $(b)$ and $(c)$ in Subsection 3.1 holds for the kernel $(V_f)_1(x_1,y_1)(x_2,y_2)$ when $x_1,y_1$ are fixed and $x_2,y_2\in M_2.$ Therefore, to verify that
$(V_f)_1(x_1,y_1)$ satisfies the properties $(i)$, $(ii)$ and $(iii)$ in Subsection 3.1, all we need to do is to show the following estimates:
\begin{itemize}
\item[(I)] $\displaystyle\|(V_f)_1(x_1,y_1)\|_{L^2\rightarrow L^2}\leq C \|f\|_{L^\infty}{1\over V(x_1,y_1)}$;
\item[(II)] $\|(V_f)_1(x_1,y_1)-(V_f)_1(x_1,y_1^{'})\|_{L^2\rightarrow L^2}$
\item[]\qquad $\displaystyle\leq C\|f\|_{L^\infty}\Big({d_1(y_1,y_1^{'})\over d_1(x_1,y_1)}\Big)^{\varepsilon} {1\over V(x_1,y_1)}$
    \qquad if\ \ $d_1(y_1,y_1')\le d_1(x_1,y_1)/2A$.
\item[] Similarly for interchanging $x_1$ and $y_1$;
\item[(III)] $\displaystyle\|(V_f)_1(x_1,y_1)(x_2,y_2)|\leq C\|f\|_{L^\infty(\widetilde{M})}{1\over V(x_1,y_1)}{1\over V(x_2,y_2)}$;
\item[(IV)] $\|(V_f)_1(x_1,y_1)(x_2,y_2)-(V_f)_1(x_1,y_1)(x_2^{'},y_2)|$
\item[] \qquad $\displaystyle\leq C\|f\|_{L^\infty(\widetilde{M})}{1\over V(x_1,y_1)} \Big(\frac{d_2(x_2,x_2^{'})}{d_2(x_2,y_2)}\Big)^{\varepsilon}  {1\over V(x_2,y_2)}$
    \qquad if\ \ $d_2(x_2,x_2')\le d_2(x_2,y_2)/2A$.
\item[] Similarly for interchanging $x_2$ and $y_2$;
\item[(V)] $\|(V_f)_1(x_1,y_1)(x_2,y_2)-(V_f)_1(x_1^{'},y_1)(x_2,y_2)|$
\item[] \qquad $\displaystyle\leq C\|f\|_{L^\infty(\widetilde{M})}\Big(\frac{d_1(x_1,x_1^{'})}{d_1(x_1,y_1)}\Big)^{\varepsilon}
    {1\over V(x_1,y_1)}{1\over V(x_2,y_2)}$
    \qquad if\ \ $d_1(y_1,y_1')\le d_1(x_1,y_1)/2A$.
\item[] Similarly for interchanging $x_1$ and $y_1$;
\item[(VI)] $\big|\big[(V_f)_1(x_1,y_1)(x_2,y_2)-(V_f)_1(x_1^{'},y_1)(x_2,y_2)\big]-
\big[(V_f)_1(x_1,y_1)(x_2^{'},y_2)-(V_f)_1(x_1^{'},y_1)(x_2^{'},y_2)\big]\big|$
\item[] \qquad $\displaystyle\leq C\|f\|_{L^\infty(\widetilde{M})}\Big(\frac{d_1(x_1,x_1^{'})}{d_1(x_1,y_1)}\Big)^{\varepsilon}
    {1\over V(x_1,y_1)} \Big(\frac{d_2(x_2,x_2^{'})}{d_2(x_2,y_2)}\Big)^{\varepsilon}{1\over V(x_2,y_2)}$.
\item[] Similarly for interchanging $x_2$ and $y_2$, or interchanging $x_1$ and $y_1$.
\end{itemize}

To see (I), for fixed $x_1,y_1\in M_1$ we have
\begin{eqnarray}
&&\|(V_f)_1(x_1,y_1)\|_{L^2\rightarrow L^2}=\sup_{g_2:\ \|g_2\|_{L^2(M_2)}\leq1}\ \ \sup_{h_2:\ \|h_2\|_{L^2(M_2)}\leq1}|\langle h_2, (V_f)_1(x_1,y_1)g_2 \rangle|\nonumber\\[4pt]
&&= \sup_{g_2:\ \|g_2\|_{L^2(M_2)}\leq1}\ \ \sup_{h_2:\ \|h_2\|_{L^2(M_2)}\leq1} \bigg|\sum_{ k_1^{'} }\sum_{I_1^{'}}\mu_1(I_1^{'}){\widetilde{\widetilde{D}}}_{k'_1}(x_1,x_{I_1^{'}})D_{k'_1}(x_{I_1^{'}},y_1)\nonumber\\
&&\hskip2cm\times \Big[
\sum_{k_2}\sum_{I_2}\mu_2(I_2)S_{k_2}(h_2)(x_{I_2}) D_{k_2}(g_2)(x_{I_2})  S_{k'_1}
{\widetilde{\widetilde{D}}}_{k_2}(f)(x_{I'_1},x_{I_2})\Big]\bigg|\nonumber\\[4pt]
&&\leq C \|f\|_{L^\infty} \sup_{g_2:\ \|g_2\|_{L^2(M_2)}\leq1}\ \ \sup_{h_2:\ \|h_2\|_{L^2(M_2)}\leq1}\|h_2\|_{L^2(M_2)}\|g_2\|_{L^2(M_2)}\nonumber\\
&&\hskip2cm\times\sum_{ k_1^{'} }\sum_{I_1^{'}}\mu_1(I_1^{'})|{\widetilde{\widetilde{D}}}_{k'_1}(x_1,x_{I_1^{'}})||D_{k'_1}(x_{I_1^{'}},y_1)|\nonumber\\
&&\leq C \|f\|_{L^\infty}{1\over V(x_1,y_1)}, \label{Vf e1}
\end{eqnarray}
where in the first inequality we first apply Schwartz's inequality and then use the Littlewood--Paley estimate on $L^2$ for $g_2$ and the fact that if $f\in L^\infty$ then $\mu_2(I_2)| D_{k_2}(S_{k'_1}f)(x_{I_1^{'}}, x_{I_2})|^2 $ is a Carleson measure on $M_2\times k_2$ uniformly for all $k'_1$ and all $x_{I_1^{'}}\in M_1.$ Moreover, The Carleson measure norm of $\mu_2(I_2)| D_{k_2}(S_{k'_1}f)(x_{I_1^{'}}, x_{I_2})|^2 $ is bounded by some constant times $\|f\|_{L^\infty}.$ The last inequality follows from the standard estimate.

To verify (II), for $d_1(y_1,y_1^{'})\leq d_1(x_1,y_1)/2$ and $\|g_2\|_{L^2(M_2)},\|h_2\|_{L^2(M_2)}\leq 1,$
\begin{eqnarray*}
&&|\langle h_2, [(V_f)_1(x_1,y_1)-(V_f)_1(x_1,y_1^{'})]g_2 \rangle|\\[4pt]
&&= \bigg|\sum_{ k_1^{'} }\sum_{I_1^{'}}\mu_1(I_1^{'}){\widetilde{\widetilde{D}}}_{k'_1}(x_1,x_{I_1^{'}})[D_{k'_1}(x_{I_1^{'}},y_1)-D_{k'_1}(x_{I_1^{'}},y_1^{'})]\\[4pt]
&&\hskip1cm\times \Big[
\sum_{k_2}\sum_{I_2}\mu_2(I_2)S_{k_2}(h_2)(x_{I_2}) D_{k_2}(g_2)(x_{I_2})  S_{k'_1}
{\widetilde{\widetilde{D}}}_{k_2}(f)(x_{I'_1},x_{I_2})\Big]\bigg|.
\end{eqnarray*}
Applying the smoothness property of $D_{k'_1}(x_{I_1^{'}},y_1)$ and the same proof above for the second series yields
\begin{eqnarray*}
|\langle h_2, [(V_f)_1(x_1,y_1)-(V_f)_1(x_1,y_1^{'})]g_2 \rangle|&&\leq C\|f\|_{L^\infty}\Big({d_1(y_1,y_1^{'})\over d_1(x_1,y_1)}\Big)^{\varepsilon} {1\over V(x_1,y_1)},
\end{eqnarray*}
which, by taking the supremum over all $\|g_2\|_{L^2(M_2)},\|h_2\|_{L^2(M_2)}\leq 1$ implies
\begin{eqnarray}
\|(V_f)_1(x_1,y_1)-(V_f)_1(x_1,y_1^{'})\|_{L^2\rightarrow L^2}\leq C\|f\|_{L^\infty}\Big({d_1(y_1,y_1^{'})\over d_1(x_1,y_1)}\Big)^{\varepsilon} {1\over V(x_1,y_1)} . \label{Vf e2}
\end{eqnarray}
Similarly, (\ref{Vf e2}) holds with interchanging $x_1$ and $y_1.$

We now turn to estimate (III). This follows directly from the following standard estimate.
\begin{eqnarray}
&&|(V_f)_1(x_1,y_1)(x_2,y_2)|\nonumber\\[4pt]
&&\leq
\sum_{ k_1^{'} }\sum_{k_2}\sum_{I_1^{'}}
\sum_{I_2}\mu_1(I_1^{'})\mu_2(I_2)
|{\widetilde{\widetilde{D}}}_{k'_1}(x_1,x_{I_1^{'}})S_{k_2}(x_2,x_{I_2})|
|S_{k'_1}{\widetilde{\widetilde{D}}}_{k_2}(f)(x_{I'_1},x_{I_2})|\nonumber\\
&&\hskip1cm\times|D_{k'_1}(x_{I_1^{'}},y_1)D_{k_2}(x_{I_2},y_2)|\nonumber\\[4pt]
&&\leq
\sum_{ k_1^{'} }\sum_{k_2}\sum_{I_1^{'}}
\sum_{I_2}\mu_1(I_1^{'})\mu_2(I_2)
|{\widetilde{\widetilde{D}}}_{k'_1}(x_1,x_{I_1^{'}})D_{k'_1}(x_{I_1^{'}},y_1)|
|S_{k_2}(x_2,x_{I_2})D_{k_2}(x_{I_2},y_2)|\nonumber\\[4pt]
&&\leq
C\|f\|_{L^\infty(\widetilde{M})}{1\over V(x_1,y_1)}{1\over V(x_2,y_2)}.\label{Vf e3}
\end{eqnarray}
To estimate (IV), for $d_2(x_2,x_2^{'})\leq d_2(x_2,y_2)/2A$ we write
\begin{eqnarray*}
&&|(V_f)_1(x_1,y_1)(x_2,y_2)-(V_f)_1(x_1,y_1)(x_2^{'},y_2)|\\[4pt]
&&\leq
\sum_{ k_1^{'} }\sum_{k_2}\sum_{I_1^{'}}
\sum_{I_2}\mu_1(I_1^{'})\mu_2(I_2)
|{\widetilde{\widetilde{D}}}_{k'_1}(x_1,x_{I_1^{'}})[S_{k_2}(x_2,x_{I_2})-S_{k_2}(x_2^{'},x_{I_2})]|\\[4pt]
&&\hskip1cm\times|S_{k'_1}{\widetilde{\widetilde{D}}}_{k_2}(f)(x_{I'_1},x_{I_2})||D_{k'_1}(x_{I_1^{'}},y_1)D_{k_2}(x_{I_2},y_2)|.
\end{eqnarray*}
We {\bf claim} that $S_{k_2}(x_2,x_{I_2})$, which is defined in Subsection 3.3.2, satisfies the following smoothness estimate.
\begin{eqnarray}\label{smooth of Sk}
&&|{S}_{k_2}(x_2,x_{I_2})-{S}_{k_2}(x_2^{'},x_{I_2})|\\[4pt]
&&\leq C \Big({d_2(x_2,x^{'}_2)\over 2^{-k_2}+d_2(x_2,x_{I_2})}\Big)^{\varepsilon}{1\over V_{2^{-k_2}}(x_2)+V(x_2,x_{I_2})} \Big({2^{-k_2}\over
 2^{-k_2}+d_2(x_2,x_{I_2})}\Big)^{\varepsilon}\nonumber
 \end{eqnarray}
for $\varepsilon<\vartheta$ and $d_2(x_2,x'_2)<(2^{-k_2}+d_2(x_2,x_{I_2}))/2$. We assume (\ref{smooth of Sk}) first and then obtain
\begin{eqnarray}
&&|(V_f)_1(x_1,y_1)(x_2,y_2)-(V_f)_1(x_1,y_1)(x_2^{'},y_2)| \label{Vf e4}\\[4pt]
&&\hskip1cm \leq C\|f\|_{L^\infty(\widetilde{M})}{1\over V(x_1,y_1)} \Big(\frac{d_2(x_2,x_2^{'})}{d_2(x_2,y_2)}\Big)^{\varepsilon}  {1\over V(x_2,y_2)}.\nonumber
\end{eqnarray}
Similarly, (\ref{Vf e4}) holds with interchanging $x_2$ and $y_2.$ The estimates in (\ref{Vf e3}) and (\ref{Vf e4}) imply
\begin{eqnarray}\label{Vf e5}
    |(V_f)_1(x_1,y_1)|_{CZ}\leq C\|f\|_{L^\infty(\widetilde{M})}{1\over V(x_1,y_1)}.
\end{eqnarray}
Next, we turn to verify the estimate in (V). For $d_1(x_1,x_1^{'})\le d_1(x_1,y_1)/2A$ We write
\begin{eqnarray*}
&&(V_f)_1(x_1,y_1)(x_2,y_2)-(V_f)_1(x_1^{'},y_1)(x_2,y_2)\\[4pt]
&&=\sum_{ k_1^{'} }\sum_{k_2}\sum_{I_1^{'}}
\sum_{I_2}\mu_1(I_1^{'})\mu_2(I_2)
[{\widetilde{\widetilde{D}}}_{k'_1}(x_1,x_{I_1^{'}})-{\widetilde{\widetilde{D}}}_{k'_1}(x_1^{'},x_{I_1^{'}})]S_{k_2}(x_2,x_{I_2})
S_{k'_1}{\widetilde{\widetilde{D}}}_{k_2}(f)(x_{I'_1},x_{I_2})\\[4pt]
&&\hskip1cm\times
D_{k'_1}(x_{I_1^{'}},y_1)D_{k_2}(x_{I_2},y_2).
\end{eqnarray*}
As in the proof of (3.32), instead of using the smoothness estimate for $S_{k_2}(x_2,x_{I_2}),$ applying the smoothness condition of $\widetilde{D}_{k'_1}$, we get
\begin{eqnarray}\label{Vf e6}
&&|(V_f)_1(x_1,y_1)(x_2,y_2)-(V_f)_1(x_1^{'},y_1)(x_2,y_2)|\\[4pt]
&&\leq
C\|f\|_{L^\infty(\widetilde{M})}\Big(\frac{d_1(x_1,x_1^{'})}{d_1(x_1,y_1)}\Big)^{\varepsilon}
{1\over V(x_1,y_1)}{1\over V(x_2,y_2)}.\nonumber
\end{eqnarray}
Similarly, (\ref{Vf e6}) holds with interchanging $x_1$ and $y_1.$
Finally, to see (VI), for $d_2(x_2,x_2^{'})\leq d_2(x_2,y_2)/2A$ we have
\begin{eqnarray}
&&\Big|\big[(V_f)_1(x_1,y_1)(x_2,y_2)-(V_f)_1(x_1^{'},y_1)(x_2,y_2)\big]-\big[(V_f)_1(x_1,y_1)(x_2^{'},y_2)-(V_f)_1(x_1^{'},y_1)(x_2^{'},y_2)\big]\Big|\nonumber\\[4pt]
&&=\Big|\sum_{ k_1^{'} }\sum_{k_2}\sum_{I_1^{'}}
\sum_{I_2}\mu_1(I_1^{'})\mu_2(I_2)
[{\widetilde{\widetilde{D}}}_{k'_1}(x_1,x_{I_1^{'}})-{\widetilde{\widetilde{D}}}_{k'_1}(x_1^{'},x_{I_1^{'}})][S_{k_2}(x_2,x_{I_2})-S_{k_2}(x_2^{'},x_{I_2})]
\nonumber\\[4pt]
&&\hskip.5cm\times
S_{k'_1}{\widetilde{\widetilde{D}}}_{k_2}(f)(x_{I'_1},x_{I_2})D_{k'_1}(x_{I_1^{'}},y_1)D_{k_2}(x_{I_2},y_2)\Big|\nonumber\\[4pt]
&&\leq
C\|f\|_{L^\infty(\widetilde{M})}\Big(\frac{d_1(x_1,x_1^{'})}{d_1(x_1,y_1)}\Big)^{\varepsilon}
{1\over V(x_1,y_1)} \Big(\frac{d_2(x_2,x_2^{'})}{d_2(x_2,y_2)}\Big)^{\varepsilon}{1\over V(x_2,y_2)},\label{Vf e7}
\end{eqnarray}
where in the last inequality we use the smoothness property of ${\widetilde{\widetilde{D}}}_{k'_1}$ and (\ref{smooth of Sk}).
Similarly, (\ref{Vf e7}) holds with interchanging $x_2$ and $y_2$ or $x_1$ and $y_1.$

All the estimates of (\ref{Vf e6}) and (\ref{Vf e7}) give
\begin{eqnarray}\label{Vf e8}
  &&  \big|\big[(V_f)_1(x_1,y_1)(x_2,y_2)-(V_f)_1(x_1^{'},y_1)(x_2,y_2)\big]\big|_{CZ}\\[4pt]
  &&\hskip.5cm\leq C\|f\|_{L^\infty(\widetilde{M})}\Big(\frac{d_1(x_1,x_1^{'})}{d_1(x_1,y_1)}\Big)^{\varepsilon}{1\over V(x_1,y_1)}.\nonumber
\end{eqnarray}
Similarly, (\ref{Vf e8}) holds interchanging $x_1$ and $y_1.$

As a consequence, (\ref{Vf e5}) and (\ref{Vf e8}) yield that  $(V_f)_1(x_1,y_1)$ satisfies the properties $(i)$, $(ii)$ and $(iii)$ in Subsection 3.1.
It remains to show the claim, that is, the estimate in (\ref{smooth of Sk}). Indeed, when $d_2(x_2,x_{I_2})< 2^{-k_2}$ and $d_1(x_2,x'_2)<(2^{-k_2}+d_2(x_2,x_{I_2}))/2$, we
have
\begin{eqnarray*}
&&|{S}_{k_2}(x_2,x_{I_2})-{S}_{k_2}(x_2^{'},x_{I_2})|\\[4pt]
&&=\Big|\sum_{k_2^{'}\leq k_2,\ d_2(x_2,x_{I_2})< 2^{-k_2} }\sum_{I_2^{'}}
\mu(I_{2}^{'})D_{k_2^{'}}(x_2,x_{I_{2}^{'}})
 {\widetilde{\widetilde{D}}}_{k_2^{'}}(x_{I_{2}^{'}},x_{I_2})\nonumber\\[4pt]
&&\hskip1cm  -\sum_{ k_2^{'}\leq k_2,\ d_2(x_2,x_{I_2})< 2^{-k_1^{'}} } \sum_{I_2^{'}}
\mu(I_{2}^{'})D_{k_2^{'}}(x'_2,x_{I_2^{'}})
 {\widetilde{\widetilde{D}}}_{k_1}(x_{I_{2}^{'}},x_{I_2}) \Big| \label{key
claim e3}\\[4pt]
 &&\leq C\sum_{k_2^{'}\leq k_2,\ d_2(x_2,x_{I_2})< 2^{-k_2} } \Big({d_2(x_2,x'_2)\over 2^{-k_2^{'}}+d_2(x_2,x_{I_2})}\Big)^{\varepsilon}
 {1\over V_{2^{-k_2^{'}}}(x_2)+V(x_2,x_{I_2})} \Big({2^{-k_2^{'}}\over 2^{-k_2^{'}}+d_2(x_2,x_{I_2})}\Big)^{\varepsilon}\nonumber\\[4pt]
&&\leq C\Big({d_2(x_2,x^{'}_2)\over 2^{-k_2}+d_2(x_2,x_{I_2})}\Big)^{\varepsilon}{1\over V_{2^{-k_2}}(x_2)+V(x_2,x_{I_2})} \Big({2^{-k_2}\over
 2^{-k_2}+d_2(x_2,x_{I_2})}\Big)^{\varepsilon}.\nonumber
\end{eqnarray*}
Next, we consider the case when $d_2(x_2,x_{I_2})\geq 2^{-k_2}$ and
$d_2(x_2,x'_2)<(2^{-k_2}+d_2(x_2,x_{I_2}))/2$. In this case, using the identity (\ref{dirac function}), we obtain
\begin{eqnarray*}
&&\Big|\sum_{k_2^{'}\leq k_2, d_2(x_2,x_{I_2})\geq 2^{-k_2} }\sum_{I_2^{'}}
\mu(I_{2}^{'})D_{k_2^{'}}(x_2,x_{I_{2}^{'}})
 {\widetilde{\widetilde{D}}}_{k_2^{'}}(x_{I_{2}^{'}},x_{I_2})\nonumber\\[4pt]
&&\hskip3cm -\sum_{k_2^{'}\leq k_2, d_2(x_2,x_{I_2})\geq 2^{-k_2} }\sum_{I_2^{'}}
\mu(I_{2}^{'})D_{k_2^{'}}(x'_2,x_{I_{2}^{'}})
 {\widetilde{\widetilde{D}}}_{k_2^{'}}(x_{I_{2}^{'}},x_{I_2})\Big| \nonumber\\[4pt]
&&\leq  \Big|\sum_{k_2^{'}> k_2, d_2(x_2,x_{I_2})\geq 2^{-k_2} }\sum_{I_2^{'}}
\mu(I_{2}^{'})D_{k_2^{'}}(x_2,x_{I_{2}^{'}})
 {\widetilde{\widetilde{D}}}_{k_2^{'}}(h)(x_{I_{2}^{'}},x_{I_2})\nonumber\\[4pt]
 &&\hskip3cm -\sum_{k_2^{'}> k_2, d_2(x_2,x_{I_2})\geq 2^{-k_2} }\sum_{I_2^{'}}
\mu(I_{2}^{'})D_{k_2^{'}}(x'_2,x_{I_{2}^{'}})
 {\widetilde{\widetilde{D}}}_{k_2^{'}}(h)(x_{I_{2}^{'}},x_{I_2})\Big|\nonumber\\[4pt]
&&\leq C\Big({d_2(x_2,x^{'}_2)\over 2^{-k_2}+d_2(x_2,x_{I_2})}\Big)^{\varepsilon}{1\over V_{2^{-k_2}}(x_2)+V(x_2,x_{I_2})} \Big({2^{-k_2}\over
 2^{-k_2}+d_2(x_2,x_{I_2})}\Big)^{\varepsilon},\nonumber
\end{eqnarray*}
which implies the claim.

Now we have proved that $V_f$ is a product Calder\'on--Zygmund operator with $\|V_f\|_{L^2\rightarrow L^2}\leq C\|f\|_{L^\infty}.$ In order to apply Theorem C given in next section to show that $V_f$ is bounded on $BMO(\widetilde M),$ we only need to verify that $(V_f)_1(1)=(V_f)_2(1)=0.$ To do this, we would like to recall the definition of
$T_1(1)=T_2(1)=0$ and $(T^*)_1(1)=(T^*)_2(1)=0$ as defined in Subsection 3.1. $T_1(1)=0$ is equivalent to $\langle g_1, \langle g_2, T_2f_2\rangle 1\rangle=0$ for all $g_1\in C^\eta_{00}(M_1)$ and $f_2, g_2\in C^\eta_0(M_2),$ that is, for $g_1\in C^\eta_{00}(M_1), g_2\in C^\eta_{00}(M_2)$ and almost everywhere $y_2\in M_2,$
$$\iint g(x_1)g(x_2)K(x_1,x_2,y_1,y_2)dx_1dx_2dy_1=0.$$
While ${T_1}^*(1)=0$ means ${\langle g_2, T_2f_2\rangle}^*1=0$ in the same conditions, that is, for $g_1\in C^\eta_{00}(M_1), g_2\in C^\eta_{00}(M_2)$ and almost everywhere $x_2\in M_2,$
$$\iint g(y_1)g(y_2)K(x_1,x_2,y_1,y_2)dx_1dy_1dy_2=0.$$
To verify $(V_f)_1(1)=0,$ for $g_1\in C^\eta_{00}(M_1), g_2\in C^\eta_{00}(M_2)$ and almost everywhere $y_2\in M_2$ we have
\begin{eqnarray*}
&&\iint g(x_1)g(x_2)V_f(x_1,x_2,y_1,y_2)dx_1dx_2dy_1\\[4pt]
&&=\iint g(x_1)g(x_2)\sum_{ k_1^{'} }\sum_{I_1^{'}}\sum_{k_2}
\sum_{I_2}\mu_1(I_1^{'})\mu_2(I_2){\widetilde{\widetilde{D}}}_{k'_1}(x_1,x_{I_1^{'}})S_{k_2}(x_2,x_{I_2})\\[4pt]
&&\hskip1cm\times S_{k'_1}{\widetilde{\widetilde{D}}}_{k_2}(f)(x_{I'_1},x_{I_2})D_{k'_1}(x_{I_1^{'}},y_1)D_{k_2}(x_{I_2}, y_2)dx_1dx_2dy_1=0,
\end{eqnarray*}
where the last equality follows from the fact that $\int D_{k'_1}(x_{I_1^{'}},y_1)dy_1=0.$ Similarly for $(V_f)_2(1)=0.$
As mentioned, we conclude that $|\langle g, Tf\rangle_{Case 2.4}|\leq C\|f\|_2\|g\|_2.$

The proof of the sufficient conditions for Theorem A is complete and hence the proof of Theorem A is concluded.

\section{$T1$-type theorems on $H^p$ and $CMO^p$}
\setcounter{equation}{0}

In this section we prove the $T1$ -type theorems on $H^p$ and $CMO^p,$ namely the following

\vskip.3cm \noindent{\bf Theorem B}\ \ Let $T$ be the $L^2$ bounded product
Calder\'on--Zygmund singular integral operator on $\widetilde M$ with a pair kernel
$(K_1, K_2)$ satisfying the conditions $(i),$ $(ii)$ and $(iii)$ in Subsection 3.1. Then $T$ extends to a bounded operator from
$H^p(\widetilde{M}), \max\big(\frac{ Q_1}{ Q_1+ \vartheta_1 },\frac{
Q_2}{ Q_2+ \vartheta_2 }\big)<p\leq1,$ to itself if and only if
$(T^*)_1(1)=(T^*)_2(1)=0.$

\vskip.3cm \noindent{\bf Theorem C}\ \ Let $T$ be the $L^2$ bounded product
Calder\'on--Zygmund operator on $\widetilde M$ with a pair kernel
$(K_1, K_2)$ satisfying the conditions $(i),$ $(ii)$ and $(ii)$ in Subsection 3.1. Then $T$ extends to a bounded operator from
$CMO^p(\widetilde{M}), \max\big(\frac{ 2Q_1}{ 2Q_1+ \vartheta_1
},\frac{ 2Q_2}{ 2Q_2+ \vartheta_2 }\big)<p\leq1,$ to itself,
particularly from $BMO(\widetilde{M})$ to itself, if and only if
$T_1(1)=T_2(1)=0.$

\vskip.3cm

We first remark that the range of $p$ in Theorems B and C could be smaller if the smoothness of a pair kernel $(K_1, K_2)$ and the cancellation conditions of $T$ both are required to be higher. We leave these details to the reader.

As mentioned in Section 1, we will first prove the ``if'' part of Theorem B. This will be achieved by applying the almost orthogonal argument, the Plancherel--P\^olya inequality and atomic decomposition of $H^p(\widetilde M)$ for the vector-valued product Calder\'on--Zygmund operators. The ``if'' part of Theorem C then follows from the ``if'' part of Theorem B by the duality argument. We emphasize that Lemma 4.1 below plays a crucial role in this proof. To show the converse, we will prove the ``only if'' part of Theorem C first and the ``only if'' part of Theorem B then follows from the duality argument directly.

\subsection{``If'' part of $T1$  theorem on $H^p$ }

To show the  ``if'' part of Theorem B, note first that $L^2\cap H^p(\widetilde M)$ is dense in $H^p(\widetilde
M),$ see [HLL2] for this result, and thus it suffices to prove that if $T$ is the $L^2$ bounded product
Calder\'on--Zygmund operator on $\widetilde M$ with a pair kernel
$(K_1, K_2)$ satisfying the conditions $(i)$ - $(iii)$ and $(T^*)_1(1)=(T^*)_2(1)=0$ then there exists a constant $C$ independent of $f$ such that
$$\|Tf\|_{H^p}\leq C\|f\|_{H^p}$$
for all $f\in L^2\cap H^p(\widetilde M).$

by Proposition 2.14 this is equivalent to show
\begin{eqnarray}\label{STf Lp bounded by f Hp}
\|\widetilde S(Tf)\|_{p}\leq C\|f\|_{H^p},
\end{eqnarray}
where, as in Definition 2.11, $\widetilde S(f)$ is the Littlewood--Paley square
function of $f$ given by
\begin{eqnarray}
\widetilde S(Tf)(x_1,x_2)=
\Big\{\sum_{k_1^{'}=-\infty}^\infty\sum_{k_2^{'}=-\infty}^\infty
|D_{k_1^{'}}D_{k_2^{'}}(Tf)(x_1,x_2)|^2
\Big\}^{1/2}.
\end{eqnarray}
To show the estimate in (\ref{STf Lp bounded by f Hp}), as in the classical case, we introduce the Hilbert space $\mathcal{H}$ by
\begin{eqnarray*}
&&\mathcal{H}=\bigg\{
\{h_{k_1^{'},k_2^{'}}\}_{k_1^{'},k_2^{'}\in\mathbb{Z}}: \|h_{k_1^{'},k_2^{'}}\|_{\mathcal{H}}:=
\Big(\sum_{k_1^{'}=-\infty}^\infty\sum_{k_2^{'}=-\infty}^\infty
|h_{k_1^{'},k_2^{'}}|^2
\Big)^{1/2} <\infty\bigg\}.
\end{eqnarray*}
Then we can write the estimate in (\ref{STf Lp bounded by f Hp}) by
\begin{eqnarray*}
\|D_{k_1^{'}}D_{k_2^{'}}(Tf)(x_1,x_2)\|_{L^p_{\mathcal H}}\leq C\|f\|_{H^p}.
\end{eqnarray*}
The crucial idea is that for $f\in L^2,$ by the discrete Calder\'on identity
\begin{eqnarray*}
f(x_1,x_2)&=&\sum_{k_1=-\infty}^\infty\sum_{k_2=-\infty}^\infty\sum_{I_1}\sum_{I_2 }\mu_1(I_1)\mu_2(I_2) D_{k_1}(x_1,x_{I_1})D_{k_2}(x_2,x_{I_2}) {\widetilde{\widetilde{D}}}_{k_1}{\widetilde{\widetilde{D}}}_{k_2}(f)(x_{I_1},x_{I_2}),\nonumber
\end{eqnarray*}
we can write
\begin{eqnarray*}
D_{k'_1}D_{k'_2}(Tf)(x_1,x_2)&=&\sum_{k_1=-\infty}^\infty\sum_{k_2=-\infty}^\infty\sum_{I_1}\sum_{I_2 }\mu_1(I_1)\mu_2(I_2)\\[4pt]
&&\hskip-1cm\times
D_{k'_1}D_{k'_2}TD_{k_1}(\cdot,x_{I_1})D_{k_2}(\cdot,x_{I_2})(x_1,x_2)
{\widetilde{\widetilde{D}}}_{k_1}{\widetilde{\widetilde{D}}}_{k_2}(f)(x_{I_1},x_{I_2}),
\end{eqnarray*}
where the fact that $T$ is bounded on $L^2$ is used.
Therefore, the estimate in (\ref{STf Lp bounded by f Hp}) is equivalent to
\begin{eqnarray}\label{L bd Hp to vectorLp}
\|\mathcal{L}_{k_1^{'},k_2^{'}}(f)\|_{L^p_{\mathcal{H}}}\leq C\|f\|_{H^p},
\end{eqnarray}
where
\begin{eqnarray}
\mathcal {L}_{k_1^{'},k_2^{'}}(f)(x_1,x_2)&=&
\sum_{k_1=-\infty}^{\infty}\sum_{k_2=-\infty}^{\infty}
\sum_{I_1}\sum_{I_2
}\mu_1(I_1)\mu_2(I_2)D_{k'_1}D_{k'_2}TD_{k_1}D_{k_2}(x_1,x_2,x_{I_1},x_{I_2})
\nonumber\\[4pt]
&&\hskip1cm\times
{\widetilde{\widetilde{D}}}_{k_1}{\widetilde{\widetilde{D}}}_{k_2}(f)(x_{I_1},x_{I_2}).
\nonumber
\end{eqnarray}
The estimate of (\ref{L bd Hp to vectorLp}), however, means that the $\mathcal {H}$-valued operator $\mathcal
{L}_{k_1^{'},k_2^{'}}$ is bounded from $H^p$ to the $L^p$ and hence, as in the proof of Theorem 3.6, we can apply atomic decomposition. For this purpose, we need to show that $\mathcal {L}_{k_1^{'},k_2^{'}}$ is a $L^2$ bounded $\mathcal {H}$-valued
product Calder\'on--Zygmund singular integral operator whose pair kernel $\big((\mathcal
{L}_{k_1^{'},k_2^{'}})_1, (\mathcal{L}_{k_1^{'},k_2^{'}})_2\big)$ satisfies the condition $(i)$ - $(iii)$ in Subsection 3.1 with the absolute value replaced by $\mathcal {H}$ valued. The estimate in (\ref{L bd Hp to vectorLp}) then follows from the same proof of Theorem 3.6 with replacing the absolute value, $L^2$ norm and Calder\'on--Zygmund norm by $\|\cdot\|_{\mathcal H }, \|\cdot\|_{L^2_{\mathcal H}}$ and $\|\cdot\|_{CZ(\mathcal H)},$ respectively. This implies that (\ref{STf Lp bounded by f Hp}) holds and hence the proof of the ``if'' part
of Theorem B is concluded.

The $L^2$ boundedness of the $\mathcal H$-valued operator $\mathcal
{L}_{k_1^{'},k_2^{'}}$ follows directly from  the product Littlewood--Paley estimate (see Proposition
\ref{prop discrete Littlewood--Paley} and Theorem \ref{theorem Lp bd
of suqa func}) and the $L^2$ boundedness of the operator $T.$ Indeed,
\begin{eqnarray*}\label{vector operator L Lp bounded by f Hp}
\big\|\mathcal
{L}_{k_1^{'},k_2^{'}}(f)\big\|_{L^2_{\mathcal H}}=\|\widetilde S(Tf)\|_2\leq C\|Tf\|_2\leq C\|f\|_{2}.
\end{eqnarray*}

To show that $\mathcal
{L}_{k_1^{'},k_2^{'}}$ is a $\mathcal{H}$-valued product Carlder\'on-Zygmund singular integral operator as defined in Subsection 3.1,
we can consider, as mentioned, $\mathcal
{L}_{k_1^{'},k_2^{'}}$ as a pair $\big((\mathcal
{L}_{k_1^{'},k_2^{'}})_1,(\mathcal
{L}_{k_1^{'},k_2^{'}})_2\big)$ of operators on $M_2$ and $M_1$, respectively.
It suffices to show that $(\mathcal
{L}_{k_1^{'},k_2^{'}})_i(x_i,y_i), i=1,2,$ satisfies the properties $(i)$ - $(ii)$ in Subsection 3.1.
We need only to verify $(\mathcal
{L}_{k_1^{'},k_2^{'}})_1(x_1,y_1)$ since the proof for $(\mathcal
{L}_{k_1^{'},k_2^{'}})_2(x_2,y_2)$ is similar.

Note that
\begin{eqnarray*}
&&\big\| (\mathcal
{L}_{k_1^{'},k_2^{'}})_1(x_1,y_1)\big\|_{CZ(\mathcal H)}\\[4pt]
&&\hskip.6cm=\big\| (\mathcal
{L}_{k_1^{'},k_2^{'}})_1(x_1,y_1)\big\|_{L^2_{\mathcal H}(M_2)\rightarrow L^2_{\mathcal H}(M_2)}+ \big| (\mathcal
{L}_{k_1^{'},k_2^{'}})_1(x_1,y_1)\big|_{CZ(\mathcal H)(M_2)},
\end{eqnarray*}
where $\big| (\mathcal
{L}_{k_1^{'},k_2^{'}})_1(x_1,y_1)\big|_{CZ(\mathcal H)(M_2)}$ is the smallest constant that the inequalities $(a)$, $(b)$ and $(c)$ in Subsection 3.1 holds in the sense that the absolute value is replaced by $\mathcal{H}$-value for the kernel $(\mathcal
{L}_{k_1^{'},k_2^{'}})_1(x_1,y_1)(x_2,y_2)$ whenever $x_1,y_1$ are fixed and $x_2,y_2\in M_2.$ Therefore, to verify that
$(\mathcal
{L}_{k_1^{'},k_2^{'}})_1(x_1,y_1)$ satisfies the properties $(i)$ - $(iii)$ in Subsection 3.1, all we need to do is to show that for
$0<\varepsilon^{'}<\varepsilon$ there exists a positive constant $C=C(\varepsilon^{'})>0$ such that:
\begin{itemize}
\item[(I)] $\displaystyle\big\|(\mathcal {L}_{k_1^{'},k_2^{'}})_1(x_1,y_1)\big\|_{L^2_{\mathcal H}(M_2)\rightarrow L^2_{\mathcal H}(M_2)}\leq C {1\over V(x_1,y_1)}$;
\item[(II)] $\big\| (\mathcal
{L}_{k_1^{'},k_2^{'}})_1(x_1,y_1)-(\mathcal
{L}_{k_1^{'},k_2^{'}})_1(x_1,y_1^{'})\big\|_{L^2_{\mathcal H}(M_2)\rightarrow L^2_{\mathcal H}(M_2)}$
\item[]\qquad $\displaystyle \leq C\Big({d_1(y_1,y_1^{'})\over d_1(x_1,y_1)}\Big)^{\varepsilon^{'}} {1\over V(x_1,y_1)}$
    \qquad if\ \ $d_1(y_1,y_1')\le d_1(x_1,y_1)/2A$.
\item[] Similarly for interchanging $x_1$ and $y_1$;
\item[(III)] $\displaystyle |(\mathcal {L}_{k_1^{'},k_2^{'}})_1(x_1,y_1)(x_2,y_2)|_{\mathcal{H}}\leq C{1\over V(x_1,y_1)}{1\over V(x_2,y_2)}$;
\item[(IV)] $|(\mathcal
{L}_{k_1^{'},k_2^{'}})_1(x_1,y_1)(x_2,y_2)-(\mathcal
{L}_{k_1^{'},k_2^{'}})_1(x_1,y_1)(x_2,y_2^{'})|_{\mathcal{H}}$
\item[]\qquad $\displaystyle \leq C{1\over V(x_1,y_1)} \Big(\frac{d_2(y_2,y_2^{'})}{d_2(x_2,y_2)}\Big)^{\varepsilon^{'}}  {1\over V(x_2,y_2)}$
    \qquad if\ \ $d_2(y_2,y_2')\le d_2(x_2,y_2)/2A$.
\item[] Similarly for interchanging $x_2$ and $y_2$;
\item[(V)] $|(\mathcal {L}_{k_1^{'},k_2^{'}})_1(x_1,y_1)(x_2,y_2)-(\mathcal
{L}_{k_1^{'},k_2^{'}})_1(x_1,y_1^{'})(x_2,y_2)|_{\mathcal{H}}$
\item[]\qquad $\displaystyle \leq C\Big(\frac{d_1(y_1,y_1^{'})}{d_1(x_1,y_1)}\Big)^{\varepsilon^{'}}
{1\over V(x_1,y_1)}{1\over V(x_2,y_2)}$\qquad if\ \ $d_1(y_1,y_1')\le d_1(x_1,y_1)/2A$.
\item[] Similarly for interchanging $x_1$ and $y_1$;
\item[(VI)] $\Big|\big[(\mathcal {L}_{k_1^{'},k_2^{'}})_1(x_1,y_1)(x_2,y_2)-(\mathcal
{L}_{k_1^{'},k_2^{'}})_1(x_1,y_1^{'})(x_2,y_2)\big]$
\item[] $\hskip 3cm \big[(\mathcal {L}_{k_1^{'},k_2^{'}})_1(x_1,y_1)(x_2,y_2^{'})-(\mathcal
{L}_{k_1^{'},k_2^{'}})_1(x_1,y_1^{'})(x_2,y_2^{'})\big]\Big|_{\mathcal{H}}$
\item[]\qquad $\displaystyle \leq C\Big(\frac{d_1(y_1,y_1^{'})}{d_1(x_1,y_1)}\Big)^{\varepsilon^{'}}
{1\over V(x_1,y_1)} \Big(\frac{d_2(y_2,y_2^{'})}{d_2(x_2,y_2)}\Big)^{\varepsilon^{'}}{1\over V(x_2,y_2)}$
\item[]\hskip 3cm if $d_1(y_1,y_1^{'})\le d_1(x_1,y_1)/2A$ and $d_2(y_2,y_2^{'})\le d_2(x_2,y_2)/2A$.
\item[] Similarly for interchanging $x_1, y_1$ and $x_2, y_2$, respectively.
\end{itemize}
Note that for any fixed $x_1,y_1$ on $M_1$, $(\mathcal
{L}_{k_1^{'},k_2^{'}})_1(x_1,y_1)$ is an operator on $M_2$ associated with the kernel $(\mathcal
{L}_{k_1^{'},k_2^{'}})_1(x_1,y_1)(x_2,y_2) $ which is equal to
${\mathcal L}_{k_1^{'},k_2^{'}}(x_1,x_2,y_1,y_2),$ the kernel of the vector-valued operator $\mathcal
{L}_{k_1^{'},k_2^{'}},$ given by
\begin{eqnarray}
&&\mathcal {L}_{k_1^{'},k_2^{'}}(x_1,x_2,y_1,y_2)\label{H valued operator L}\\[4pt]
&=&  \sum_{k_1=-\infty}^{\infty}\sum_{k_2=-\infty}^{\infty}
\sum_{I_1}\sum_{I_2
}\mu_1(I_1)\mu_2(I_2)
D_{k'_1}D_{k'_2}TD_{k_1}D_{k_2}(x_1,x_2,x_{I_1},x_{I_2})
{\widetilde{\widetilde{D}}}_{k_1}(x_{I_1},y_1){\widetilde{\widetilde{D}}}_{k_2}(x_{I_2},y_2).\nonumber
\end{eqnarray}
We now first prove $(II).$ The proof for $(I)$ then follows similarly.
Note that
\begin{eqnarray*}
&&\big\| (\mathcal
{L}_{k_1^{'},k_2^{'}})_1(x_1,y_1)-(\mathcal
{L}_{k_1^{'},k_2^{'}})_1(x_1,y_1^{'})\big\|_{L^2_{\mathcal H}(M_2)\rightarrow L^2_{\mathcal H}(M_2)}\\[5pt]
&&=\sup_{f:\ \|f\|_{L^2(M_2)}\leq1}\Big(\int_{M_2} \Big\|\int_{M_2}[\mathcal
{L}_{k_1^{'},k_2^{'}}(x_1,x_2,y_1,y_2)-\mathcal
{L}_{k_1^{'},k_2^{'}}(x_1,x_2,y_{1}^{'},y_2)]f(y_2)dy_2\Big\|_{\mathcal{H}}^2dx_2\Big)^{1/2}.
\end{eqnarray*}
By the definition of the operator $\mathcal
{L}_{k_1^{'},k_2^{'}}$ as in (\ref{H valued operator L}),  we write
\begin{eqnarray*}
&&\int_{M_2}[\mathcal
{L}_{k_1^{'},k_2^{'}}(x_1,x_2,y_1,y_2)-\mathcal
{L}_{k_1^{'},k_2^{'}}(x_1,x_2,y_{1}^{'},y_2)]f(y_2)dy_2\\[4pt]
&&=\int \sum_{k_1=-\infty}^{\infty}
\sum_{I_1}\mu_1(I_{1})  D_{k'_1}(x_1,u_1)D_{k'_2}(x_2,u_2)K(u_1,u_2,v_1,v_2)D_{k_1}(v_1,x_{I_1}) \\[5pt]
&&\hskip.5cm\times \big[{\widetilde{\widetilde{D}}}_{k_1}(x_{I_1},y_1)-{\widetilde{\widetilde{D}}}_{k_1}(x_{I_1},y_{1}^{'})\big]f(v_2)
du_1du_2dv_1dv_2,
\end{eqnarray*}
where we use the discrete Calder\'on's identity on $M_2$ for the function $f$ in the above equality.
Applying the Littlewood--Paley estimate on $M_2$ yields
\begin{eqnarray}
&&\Big(\int_{M_2} \Big\|\int_{M_2}[\mathcal
{L}_{k_1^{'},k_2^{'}}(x_1,x_2,y_1,y_2)-\mathcal
{L}_{k_1^{'},k_2^{'}}(x_1,x_2,y_{1}^{'},y_2)]f(y_2)dy_2\Big\|_{\mathcal{H}}^2dx_2\Big)^{1/2}\nonumber\\[4pt]
&&=\Big(\int_{M_2} \sum_{k_1^{'}=-\infty}^{\infty}\sum_{k_2^{'}=-\infty}^{\infty} \bigg|D_{k'_2}\bigg(\int \sum_{k_1=-\infty}^{\infty}
\sum_{I_1}\mu_1(I_{1})  D_{k'_1}(x_1,u_1)K(u_1,\cdot,v_1,v_2) D_{k_1}(v_1,x_{I_1})\nonumber\\[4pt]
&&\hskip.5cm\times \big[{\widetilde{\widetilde{D}}}_{k_1}(x_{I_1},y_1)-{\widetilde{\widetilde{D}}}_{k_1}(x_{I_1},y_{1}^{'})\big]f(v_2)
du_1dv_1dv_2\bigg)(x_2)\bigg|^2   \ dx_2\Big)^{1/2}\nonumber\\[4pt]
&&\leq C \Big(\sum_{k_1^{'}=-\infty}^{\infty}\int_{M_2} \bigg|\int \sum_{k_1=-\infty}^{\infty}
\sum_{I_1}\mu_1(I_{1})  D_{k'_1}(x_1,u_1)K(u_1,x_2,v_1,v_2) D_{k_1}(v_1,x_{I_1}) \nonumber\\[4pt]
&&\hskip.5cm\times\big[{\widetilde{\widetilde{D}}}_{k_1}(x_{I_1},y_1)-{\widetilde{\widetilde{D}}}_{k_1}(x_{I_1},y_{1}^{'})\big]f(v_2)
du_1dv_1dv_2\bigg|^2dx_2\Big)^{1/2}.\label{Hp bd e5}
\end{eqnarray}

Now we claim that for any fixed $k_1^{'}$ and $\varepsilon'$ with $\varepsilon'<\varepsilon$ there exists positive constant $C$
such that for $d_1(y_1,y_1')\le d_1(x_1,y_1)/2A$ and $\|f\|_2\leq 1,$
\begin{eqnarray}
&&\Big(\int_{M_2}\bigg|\int \sum_{k_1=-\infty}^{\infty}
\sum_{I_1}\mu_1(I_{1})  D_{k'_1}(x_1,u_1)K(u_1,x_2,v_1,v_2) D_{k_1}(v_1,x_{I_1}) \nonumber\\[4pt]
&&\hskip1cm\times \big[{\widetilde{\widetilde{D}}}_{k_1}(x_{I_1},y_1)-{\widetilde{\widetilde{D}}}_{k_1}(x_{I_1},y_{1}^{'})\big]f(v_2)
du_1dv_1dv_2\bigg|^2dx_2\Big)^{1/2}\nonumber\\[4pt]
&&\leq\ C \Big({d_1(y_1,y_{1}^{'})\over 2^{-k_1^{'}}}\Big)^{\varepsilon'} {1\over V_{2^{-k_1^{'}}}(x_1)+V(x_1,y_{1})} \Big({2^{-k_1^{'}}\over 2^{-k_1^{'}}+
d_1(x_1,y_{1})}\Big)^{\varepsilon'}.\ \ \ \ \ \ \ \label{Hp bd e7}\ \ \ \
\end{eqnarray}

Assume that (\ref{Hp bd e7}) holds. Inserting (\ref{Hp bd e7}) into (\ref{Hp bd e5}) together with the following standard estimate
\begin{eqnarray*}
&&\sum_{k_1^{'}}\Big({d_1(y_1,y_{1}^{'})\over 2^{-k_1^{'}}}\Big)^{2\varepsilon'}\Big( {1\over V_{2^{-k_1^{'}}}(x_1)+V(x_1,y_{1})}\Big)^2 \Big({2^{-k_1^{'}}\over 2^{-k_1^{'}}+
d_1(x_1,y_{1})}\Big)^{2\varepsilon'}\\[4pt]
&&\leq C\Big({d_1(y_1,y_{1}^{'})\over
d_1(x_1,y_{1})}\Big)^{2\varepsilon'}{1\over V^2(x_1,y_{1})}
\end{eqnarray*}
yields that for $d_1(y_1,y_1')\le d_1(x_1,y_1)/2A$ and $\|f\|_2\leq 1,$
\begin{eqnarray*}
&&\Big(\int_{M_2} \Big\|\int_{M_2}[\mathcal
{L}_{k_1^{'},k_2^{'}}(x_1,x_2,y_1,y_2)-\mathcal
{L}_{k_1^{'},k_2^{'}}(x_1,x_2,y_{1}^{'},y_2)]f(y_2)dy_2\Big\|_{\mathcal{H}}^2dx_2\Big)^{1/2}\nonumber\\[4pt]
&&\leq C \Big({d_1(y_1,y_{1}^{'})\over
d_1(x_1,y_{1})}\Big)^{\varepsilon'}{1\over V(x_1,y_{1})},
\end{eqnarray*}
which implies $(II).$

In order to show the estimate in (\ref{Hp bd e7}), we will apply the almost orthogonal argument. For this purpose, we first write
\begin{eqnarray*}
&&\Big(\int_{M_2}\Big| \int \sum_{k_1=-\infty}^{\infty}
\sum_{I_1}\mu_1(I_{1})  D_{k'_1}(x_1,u_1)K(u_1,x_2,v_1,v_2) D_{k_1}(v_1,x_{I_1})\nonumber\\[4pt]
&&\hskip.5cm\times \big[\widetilde{\widetilde{D}}_{k_1}(x_{I_1},y_1)-\widetilde{\widetilde{D}}_{k_1}(x_{I_1},y_{1}^{'})\big]f(v_2)
du_1dv_1dv_2\Big|^2dx_2\Big)^{1/2}\\[4pt]
&&=\sup_{h:\ \|h\|_{L^2(M_2)}\leq 1}\Big| \int \sum_{k_1=-\infty}^{\infty}
\sum_{I_1}\mu_1(I_{1})  D_{k'_1}(x_1,u_1)\langle h,K_1(u_1,v_1)f\rangle D_{k_1}(v_1,x_{I_1})\nonumber\\[4pt]
&&\hskip.5cm\times \big[\widetilde{\widetilde{D}}_{k_1}(x_{I_1},y_1)-\widetilde{\widetilde{D}}_{k_1}(x_{I_1},y_{1}^{'})\big]
du_1dv_1 \Big|.
\end{eqnarray*}
Note that, as in Subsection 3.3.1, the condition that $(T)^*_1(1)=0$ implies that for $k_1>k_1^{'},$ we have the following almost orthogonal argument
that for $\|f\|_2\leq 1$ and $\|g\|_2\leq 1,$
\begin{eqnarray*}
&&\Big|\int D_{k'_1}(x_1,u_1)\langle h,K_1(u_1,v_1)f\rangle D_{k_1}(v_1,x_{I_1})du_1dv_1\Big|\\[4pt]
&&\leq C 2^{-(k_1-k'_1)\varepsilon^{'}}\frac{1}{V_{2^{-k'_1}}(x_1)+V_{2^{-k'_1}}(x_{I_1})+V(x_1,x_{I_1})}\frac{2^{-k'_1\varepsilon^{'}}}
{(2^{-k_1^{'}}+d_1(x_1,x_{I_1}))^{\varepsilon^{'}}}.
\end{eqnarray*}
This, as in Subsection 3.3.1, leads to the following decomposition
\begin{eqnarray}
&&\int \sum_{k_1=-\infty}^{\infty}
\sum_{I_1}\mu_1(I_{1})  D_{k'_1}(x_1,u_1)\langle h,K_1(u_1,v_1)f\rangle D_{k_1}(v_1,x_{I_1})\nonumber\\[4pt]
&&\hskip1cm\times \big[\widetilde{\widetilde{D}}_{k_1}(x_{I_1},y_1)-\widetilde{\widetilde{D}}_{k_1}(x_{I_1},y_{1}^{'})\big]
du_1dv_1\nonumber\\
&&=:E+F,\label{Hp bd E and F}
\end{eqnarray}
where for fixed $k_1^{'},$ $E$ corresponds to the summation over $k_1>k_1^{'}$ and $F$ for $k_1\leq k_1^{'}.$

It suffices to show that $|E|$ and $|F|$ both are bounded by the right-hand sides of (\ref{Hp bd e7}). To do this,
for $2d_1(y_1,y_1^{'})\geq 2^{-k_1^{'}}$ we write
\begin{eqnarray*}
|E|&=&\Big|\int \sum_{k_1>k'_1}
\sum_{I_1}\mu_1(I_{1})  D_{k'_1}(x_1,u_1)\langle h,K_1(u_1,v_1)f\rangle D_{k_1}(v_1,x_{I_1})du_1dv_1\Big |\nonumber\\
&&\times \Big |\big[\widetilde{\widetilde{D}}_{k_1}(x_{I_1},y_1)-\widetilde{\widetilde{D}}_{k_1}(x_{I_1},y_{1}^{'})\big]\Big|.
\end{eqnarray*}
Applying the almost orthogonal estimate as mentioned above and the size properties of\break $\widetilde{\widetilde{D}}_{k_1}(x_{I_1},y_1)$ and $\widetilde{\widetilde{D}}_{k_1}(x_{I_1},y_1^{'})$, we obtain that for $\|f\|_2\leq 1$ and $\|g\|_2\leq 1$ the last term above is bounded by
\begin{eqnarray*}
&& C\sum_{k_1>k_1^{'}}
\sum_{I_1}\mu_1(I_{1})2^{-(k_1-k_1^{'})\varepsilon^{'}}{1\over V_{2^{-k_1^{'}}}(x_{1})+V(x_{I_1},x_{1})}
\Big({2^{-k_1^{'}}\over 2^{-k_1^{'}}+
d_1(x_1,x_{I_1})}\Big)^{\varepsilon^{'}} \nonumber\\
&&\hskip 2cm\times \Big [{1\over V_{2^{-k_1}}(y_1)+V(x_{I_1},y_{1})}\Big({2^{-k_1}\over 2^{-k_1}+
d_1(x_{I_1},y_{1})}\Big)^{\varepsilon'}\\
&&\hskip 2.5cm+ {1\over V_{2^{-k_1}}(y_1^{'})+V(x_{I_1},y_{1}^{'})}\Big({2^{-k_1}\over 2^{-k_1}+
d_1(x_{I_1},y_{1}^{'})}\Big)^{\varepsilon'}\Big ].
\end{eqnarray*}
Note that
\begin{eqnarray*}
&&\sum_{k_1>k_1^{'}}
\sum_{I_1}\mu_1(I_{1})2^{-(k_1-k_1^{'})\varepsilon^{'}}{1\over V_{2^{-k_1^{'}}}(x_{1})+V(x_{I_1},x_{1})}
\Big({2^{-k_1^{'}}\over 2^{-k_1^{'}}+
d_1(x_1,x_{I_1})}\Big)^{\varepsilon^{'}} \nonumber\\
&&\hskip1cm\times {1\over V_{2^{-k_1}}(y_1)+V(x_{I_1},y_{1})}\Big({2^{-k_1}\over 2^{-k_1}+
d_1(x_{I_1},y_{1})}\Big)^{\varepsilon'}\\
&&\leq C\sum_{k_1>k_1^{'}}2^{-(k_1-k_1^{'})\varepsilon^{'}}\int_{M_1}{1\over V_{2^{-k_1^{'}}}(x_{1})+V(z_{1},x_{1})}
\Big({2^{-k_1^{'}}\over 2^{-k_1^{'}}+
d_1(x_1,z_{1})}\Big)^{\varepsilon^{'}} \nonumber\\
&&\hskip1cm\times {1\over V_{2^{-k_1}}(y_1)+V(z_{1},y_{1})}\Big({2^{-k_1}\over 2^{-k_1}+
d_1(z_{1},y_{1})}\Big)^{\varepsilon'}dz_1\\
&&\leq C {1\over V_{2^{-k_1^{'}}}(x_1)+V(x_1,y_{1})} \Big({2^{-k_1^{'}}\over 2^{-k_1^{'}}+
d_1(x_1,y_{1})}\Big)^{\varepsilon'}.
\end{eqnarray*}
Thus, for $2d_1(y_1,y_1^{'})\geq 2^{-k_1^{'}},$
\begin{eqnarray*}
|E|\leq C \Big({d_1(y_1,y_{1}^{'})\over 2^{-k_1^{'}}}\Big)^{\varepsilon'}{1\over V_{2^{-k_1^{'}}}(x_1)+V(x_1,y_{1})} \Big({2^{-k_1^{'}}\over 2^{-k_1^{'}}+
d_1(x_1,y_{1})}\Big)^{\varepsilon'},
\end{eqnarray*}
where we use the facts that $2d_1(y_1,y_1^{'})\geq 2^{-k_1^{'}}$ and if $d(y_1,y_1^{'})\leq d_1(x_1,y_1^{'})/2A$ then there exists a positive constant $C$ such that $C^{-1}d_1(x_1,y_1)\leq d_1(x_1,y_1^{'})\leq Cd_1(x_1,y_1)$.

Whenever $2d_1(y_1,y_1^{'})<2^{-k_1^{'}}$ and if $d_1(y_1,y_1^{'})\leq {1\over 2A}(2^{-k_1}+d_1(x_{I_1},y_1))$ or $d_1(y_1,y_1^{'})\leq {1\over 2A}(2^{-k_1}+d_1(x_{I_1},y_1^{'})),$ then applying the almost orthogonal estimate as mentioned above and the smoothness condition for $\big[\widetilde{\widetilde{D}}_{k_1}(x_{I_1},y_1)-\widetilde{\widetilde{D}}_{k_1}(x_{I_1},y_{1}^{'})\big]$ yields that for $\|f\|_2\leq 1$ and $\|g\|_2\leq 1,$
\begin{eqnarray*}
|E|&\leq& C\sum_{k_1>k_1^{'}}
\sum_{I_1}\mu_1(I_{1})2^{-(k_1-k_1^{'})\varepsilon^{'}}{1\over V_{2^{-k_1^{'}}}(x_{1})+V_{2^{-k_1^{'}}}(x_{I_1})+V(x_{I_1},x_{1})}
\Big({2^{-k_1^{'}}\over 2^{-k_1^{'}}+
d_1(x_1,x_{I_1})}\Big)^{\varepsilon^{'}} \nonumber\\[4pt]
&&\hskip.5cm\times \Big [\Big({d_1(y_1,y_1^{'}) \over 2^{-k_1}+d_1(x_{I_1},y_1)}\Big)^{\varepsilon'}{1\over V_{2^{-k_1}}(x_{I_1})+V(x_{I_1},y_{1})}\Big({2^{-k_1}\over 2^{-k_1}+
d_1(x_{I_1},y_{1})}\Big)^{\varepsilon'}\nonumber\\[4pt]
&&\hskip1.2cm+\Big({d_1(y_1,y_1^{'}) \over 2^{-k_1}+d_1(x_{I_1},y_1^{'})}\Big)^{\varepsilon'}{1\over V_{2^{-k_1}}(x_{I_1})+V(x_{I_1},y_{1}^{'})}\Big({2^{-k_1}\over 2^{-k_1}+ d_1(x_{I_1},y_{1}^{'})}\Big)^{\varepsilon'}\Big ]\nonumber\\[4pt]
&\leq&C\Big({d_1(y_1,y_{1}^{'})\over 2^{-k_1^{'}}}\Big)^{\varepsilon'} {1\over V_{2^{-k_1^{'}}}(x_1)+V(x_1,y_{1})} \Big({2^{-k_1^{'}}\over 2^{-k_1^{'}}+
d_1(x_1,y_{1})}\Big)^{\varepsilon'},
\end{eqnarray*}
where the fact that $C^{-1}d_1(x_1,y_1)\leq d_1(x_1,y_1^{'})\leq Cd_1(x_1,y_1)$ is also used.

The proof for $2d_1(y_1,y_1^{'})< 2^{-k_1^{'}},$  $d_1(y_1,y_1^{'})>{1\over 2A}(2^{-k_1}+d_1(x_{I_1},y_1))$ and $d_1(y_1,y_1^{'})>{1\over 2A}(2^{-k_1}+d_1(x_{I_1},y_1^{'}))$ is same as for $2d_1(y_1,y_1^{'})\geq 2^{-k_1^{'}}.$
This implies that $|E|$ is bounded by the right-hand side of (\ref{Hp bd e7}).

We now show that $|F|$ satisfies the same estimates as $|E|$ does.
To this end, again as in Subsection 3.3.1, we decompose $F$ as
\begin{eqnarray*}
F&=&  \int \sum_{k_1\leq k_1^{'}}
\sum_{I_1}\mu_1(I_{1})  D_{k'_1}(x_1,u_1)\langle h,K_1(u_1,v_1)f\rangle [D_{k_1}(v_1,x_{I_1})-D_{k_1}(x_1,x_{I_1})]\nonumber\\[4pt]
&&\hskip1cm\times \big[\widetilde{\widetilde{D}}_{k_1}(x_{I_1},y_1)-\widetilde{\widetilde{D}}_{k_1}(x_{I_1},y_{1}^{'})\big]
du_1dv_1\\[4pt]
&&+  \int \sum_{k_1\leq k_1^{'}}
\sum_{I_1}\mu_1(I_{1})  D_{k'_1}(x_1,u_1)\langle h,K_1(u_1,v_1)f\rangle D_{k_1}(x_1,x_{I_1})\nonumber\\[4pt]
&&\hskip1cm\times \big[\widetilde{\widetilde{D}}_{k_1}(x_{I_1},y_1)-\widetilde{\widetilde{D}}_{k_1}(x_{I_1},y_{1}^{'})\big]
du_1dv_1\\[4pt]
&=&F_1+F_2.
\end{eqnarray*}
Note that when $k_1\leq k_1^{'}$ we have the following almost orthogonal estimate that for $\|f\|_2\leq 1$ and $\|g\|_2\leq 1,$
\begin{eqnarray*}
&&\Big |\int D_{k'_1}(x_1,u_1)\langle h,K_1(u_1,v_1)f\rangle [D_{k_1}(v_1,x_{I_1})-D_{k_1}(x_1,x_{I_1})]du_1dv_1\Big |\\[4pt]
&&\leq C 2^{-(k'_1-k_1)\varepsilon^{'}}\frac{1}{V_{2^{-k_1}}(x_1)+V_{2^{-k_1}}(x_{I_1})+V(x_1,x_{I_1})}
\frac{2^{-k_1\varepsilon^{'}}}{(2^{-k_1}+d_1(x_1,x_{I_1}))^{\varepsilon^{'}}}.
\end{eqnarray*}
Therefore, $F_1$ satisfies the same estimate as $E.$ To estimate $F_2,$ we rewrite it as
\begin{eqnarray*}
F_2&=& \Big|  \sum_{k_1\leq k_1^{'}}
\sum_{I_1}\mu_1(I_{1})   D_{k_1}(x_1,x_{I_1})\big[\widetilde{\widetilde{D}}_{k_1}(x_{I_1},y_1)-\widetilde{\widetilde{D}}_{k_1}(x_{I_1},y_{1}^{'})\big]\nonumber\\[4pt]
&&\hskip1cm\times
\int D_{k'_1}(x_1,u_1)\langle h,K_1(u_1,\cdot)f\rangle(1) du_1\Big|
\\[4pt]
&=&  \big|S_{k_1^{'}}(x_1,y_1)-S_{k_1^{'}}(x_1,y_1^{'})\big|\Big|\int D_{k'_1}(x_1,u_1)\langle h,K_1(u_1,\cdot)f\rangle(1) du_1\Big|,
\end{eqnarray*}
where for $x_1,y_1\in M_1$, $S_{k'_1}(x_1,y_1)=\sum_{k_1\leq k_1^{'}}
\sum_{I_1}\mu_1(I_{1})   D_{k_1}(x_1,x_{I_1})\widetilde{\widetilde{D}}_{k_1}(x_{I_1},y_1)$ and similarly for $S_{k'_1}(x_1,y_1^{'}).$ Note that $S_{k'_1}(x_1,y_1)$ and $S_{k'_1}(x_1,y_1^{'})$ satisfy the size and smoothness properties as proved in Subsections 3.3.2 and 3.3.4, respectively. Similar to the argument in Subsection 3.3.3,
$\langle h,K_1(u_1,\cdot)f\rangle(1),$ as a function of $u_1$, lies in $ BMO(M_1)$ with $\|\langle h,K_1(u_1,\cdot)f\rangle(1)\|_{BMO(M_1)}\break \leq C\|f\|_{L^2(M_2)}\|h\|_{L^2(M_2)}$. Hence $\Big|\int D_{k'_1}(x_1,u_1)\langle h,K_1(u_1,\cdot)f\rangle(1) du_1\Big|\leq C\|f\|_{L^2(M_2)}\|h\|_{L^2(M_2)}$, where the constant $C$ is independent of $k_1^{'}$ and $x_1$ since for any $k_1^{'}$ and $x_1$, $D_{k'_1}(x_1,u_1)$ is in $H^1(M_1)$ with $\|D_{k'_1}(x_1,\cdot)\|_{H^1(M_1)}$  uniformly bounded.
As a consequence, we have
\begin{eqnarray*}
|F_2|
&\leq&C \big|S_{k'_1}(x_1,y_1)-S_{k'_1}(x_1,y_1^{'})\big| \|f\|_{L^2(M_2)}\|h\|_{L^2(M_2)} .
\end{eqnarray*}
Thus, applying the size properties of $S_{k'_1}(x_1,y_1)$ and $S_{k'_1}(x_1,y_1^{'})$ for the case
$k_1^{'}: 2^{-k_1^{'}}\leq 2Ad_1(y_1,y_1^{'})$ and the smoothness properties of $S_{k'_1}(x_1,y_1)$ for the case
$k_1^{'}: 2^{-k_1^{'}}> 2Ad_1(y_1,y_1^{'})$, we obtain that $F_2$ satisfies the same estimate as $F_1$ and then $F$ satisfies the same estimate as $E$ and hence, the proof for $(II)$ is concluded. Applying the same proof implies that $(II)$ holds with interchanging $x_1$ and $y_1.$

As mentioned, the proof for $(I)$ is similar and easier. Indeed, following the same steps in the proof of $(II)$, we have
\begin{eqnarray}
&&\big\| (\mathcal
{L}_{k_1^{'},k_2^{'}})_1(x_1,y_1)\big\|_{L^2_{\mathcal H}(M_2)\rightarrow L^2_{\mathcal H}(M_2)}\nonumber\\[4pt]
&&=\sup_{f:\ \|f\|_{L^2(M_2)}\leq1}\Big(\int_{M_2} \Big\|\int_{M_2}\mathcal
{L}_{k_1^{'},k_2^{'}}(x_1,x_2,y_1,y_2)f(y_2)dy_2\Big\|_{\mathcal{H}}^2dx_2\Big)^{1/2}\nonumber\\[4pt]
&&\leq C \Big(\sum_{k_1^{'}=-\infty}^{\infty}\int_{M_2} \bigg|\int \sum_{k_1=-\infty}^{\infty}
\sum_{I_1}\mu_1(I_{1})  D_{k'_1}(x_1,u_1)K(u_1,x_2,v_1,v_2) D_{k_1}(v_1,x_{I_1}) \nonumber\\[4pt]
&&\hskip.5cm\times {\widetilde{\widetilde{D}}}_{k_1}(x_{I_1},y_1)f(v_2)
du_1dv_1dv_2\bigg|^2dx_2\Big)^{1/2}.\label{Hp bd e8}
\end{eqnarray}
Then, define $E$ and $F$ similarly as in (\ref{Hp bd E and F}) with
$\widetilde{\widetilde{D}}_{k_1}(x_{I_1},y_1)-\widetilde{\widetilde{D}}_{k_1}(x_{I_1},y_{1}^{'})$ replaced by\break $\widetilde{\widetilde{D}}_{k_1}(x_{I_1},y_1)$. Then applying the same proof, we obtain that $E$ and $F$ satisfy the following estimate
\begin{eqnarray*}
|E|+|F|\leq\ C {1\over V_{2^{-k_1^{'}}}(x_1)+V(x_1,y_{1})} \Big({2^{-k_1^{'}}\over 2^{-k_1^{'}}+
d_1(x_1,y_{1})}\Big)^{\varepsilon'}.
\end{eqnarray*}
Inserting the above estimate into (\ref{Hp bd e8}) implies $(I).$ We leave the details to the reader.

We now turn to the proofs of $(III)$ - $(VI)$.

To verify $(III)$--$(VI)$, it suffices to show that there exist
positive constants $C$, $\varepsilon$ and $\varepsilon'$ with
$\varepsilon'<\varepsilon$, such that $\mathcal
{L}_{k_1^{'},k_2^{'}}(x_1,x_2,y_1,y_2),$ the kernel of $\mathcal {L}_{k_1^{'},k_2^{'}},$ satisfies the following estimates   $(D_1)$--$(D_4)$:
\begin{eqnarray*}
(D_1)\ \ |\mathcal {L}_{k_1^{'},k_2^{'}}(x_1,x_2,y_1,y_2)|&\leq&
C\frac{1}{V_{2^{-k_1^{'}}}(x_1)+V_{2^{-k_1^{'}}}(y_1)+V(x_1,y_1)}
\frac{2^{-k_1^{'}\varepsilon^{'}}}{(2^{-k_1^{'}}+d_1(x_1,y_1))^{\varepsilon^{'}}}\hskip1.4cm\\[4pt]
&&\times\frac{1}{V_{2^{-k_2^{'}}}(x_2)+V_{2^{-k_2^{'}}}(y_2)+V(x_2,y_2)}
\frac{2^{-k_2^{'}\varepsilon^{'}}}{(2^{-k_2^{'}}+d_2(x_2,y_2))^{\varepsilon^{'}}};\nonumber
\end{eqnarray*}
\begin{eqnarray*}
(D_2)\ \ \lefteqn{|\mathcal
{L}_{k_1^{'},k_2^{'}}(x_1,x_2,y_1,y_2)-\mathcal
{L}_{k_1^{'},k_2^{'}}(x_1,x_2,y_1,y_2^{'})|}\hskip1.30cm\\[4pt]
 &\leq& C \Big(\frac{d_2(y_2,y_2^{'})}{2^{-k_1^{'}}+d_2(x_2,y_2)}\Big)^{\varepsilon'} \frac{1}{V_{2^{-k_1^{'}}}(x_1)+V_{2^{-k_1^{'}}}(y_1)+V(x_1,y_1)}\frac{2^{-k_1^{'}\varepsilon^{'}}}{(2^{-k_1^{'}}+d_1(x_1,y_1))^{\varepsilon^{'}}}
\nonumber\\[4pt]
&&\times
\frac{1}{V_{2^{-k_2^{'}}}(x_2)+V_{2^{-k_2^{'}}}(y_2)+V(x_2,y_2)}
\frac{2^{-k_2^{'}\varepsilon^{'}}}{(2^{-k_2^{'}}+d_2(x_2,y_2))^{\varepsilon^{'}}}\hskip.2cm\nonumber
\end{eqnarray*}
\hskip1.5cm for  $d_2(y_2,y_2^{'})\leq {1\over{2A}}( 2^{-k_1^{'}}+
d_2(x_2,y_2) ) $;
\begin{eqnarray*}
(D_3)\ \ \lefteqn{|\mathcal
{L}_{k_1^{'},k_2^{'}}(x_1,x_2,y_1,y_2)-\mathcal
{L}_{k_1^{'},k_2^{'}}(x_1,x_2,y_1^{'},y_2)|}\hskip1.30cm\\[4pt]
 &\leq& C \Big(\frac{d_1(y_1,y_1^{'})}{2^{-k_1^{'}}+d_1(x_1,y_1)}\Big)^{\varepsilon'} \frac{1}{V_{2^{-k_1^{'}}}(x_1)+V_{2^{-k_1^{'}}}(y_1)+V(x_1,y_1)}\frac{2^{-k_1^{'}\varepsilon^{'}}}{(2^{-k_1^{'}}+d_1(x_1,y_1))^{\varepsilon^{'}}}
\nonumber\\[4pt]
&&\times
\frac{1}{V_{2^{-k_2^{'}}}(x_2)+V_{2^{-k_2^{'}}}(y_2)+V(x_2,y_2)}
\frac{2^{-k_2^{'}\varepsilon^{'}}}{(2^{-k_2^{'}}+d_2(x_2,y_2))^{\varepsilon^{'}}}\hskip.2cm\nonumber
\end{eqnarray*}
\hskip1.5cm for  $d_1(y_1,y_1^{'})\leq {1\over{2A}}( 2^{-k_1^{'}}+
d_1(x_1,y_1) ) $;
\begin{eqnarray*}
(D_4)\ \ &&|\mathcal {L}_{k_1^{'},k_2^{'}}(x_1,x_2,y_1,y_2)-\mathcal
{L}_{k_1^{'},k_2^{'}}(x_1,x_2,y_1^{'},y_2) -\mathcal
{L}_{k_1^{'},k_2^{'}}(x_1,x_2,y_1,y_2^{'})+\mathcal
{L}_{k_1^{'},k_2^{'}}(x_1,x_2,y_1^{'},y_2^{'})|\\[4pt]
 && \leq C \Big(\frac{d_1(y_1,y_1^{'})}{2^{-k_1^{'}}+d_1(x_1,y_1)}\Big)^{\varepsilon'} \frac{1}{V_{2^{-k_1^{'}}}(x_1)+V_{2^{-k_1^{'}}}(y_1)+V(x_1,y_1)}
 \frac{2^{-k_1^{'}\varepsilon^{'}}}{(2^{-k_1^{'}}+d_1(x_1,y_1))^{\varepsilon^{'}}}
\\[4pt]
&&\hskip.5cm\times
\Big(\frac{d_2(y_2,y_2^{'})}{2^{-k_2^{'}}+d_2(x_2,y_2)}\Big)^{\varepsilon'}
\frac{1}{V_{2^{-k_2^{'}}}(x_2)+V_{2^{-k_2^{'}}}(y_2)+V(x_2,y_2)}\frac{2^{-k_2^{'}\varepsilon^{'}}}{(2^{-k_2^{'}}+d_2(x_2,y_2))^{\varepsilon^{'}}}
\end{eqnarray*}
\hskip1cm for $d_1(y_1,y_1^{'})\leq {1\over2A}( 2^{-k_1^{'}}+
d_1(x_1,y_1) ) $ and $d_2(y_2,y_2^{'})\leq {1\over{2A}}(
2^{-k_2^{'}}+ d_2(x_2,y_2) ).$

\medskip
To show $(D_1)$, as in Subsection 3.3, we will decompose $\mathcal
{L}_{k_1^{'},k_2^{'}}(x_1,x_2,y_1,y_2).$ To be precise, for any fixed integers $k_1^{'}$ and $k_2^{'}$ we consider the following cases.

\smallskip
Case 1. $k_1^{'}\geq k_1 $ and $k_2^{'}\geq k_2 $;

\smallskip
Case 2. $k_1^{'}\geq k_1 $ and $k_2^{'}< k_2 $;

\smallskip
Case 3. $k_1^{'}< k_1 $ and $k_2^{'}\geq k_2 $;

\smallskip
Case 4. $k_1^{'}< k_1 $ and $k_2^{'}< k_2 $.
\smallskip

We write
\begin{eqnarray*}
&&\mathcal {L}_{k_1^{'},k_2^{'}}(x_1,x_2,y_1,y_2)\\[4pt]
&&=\mathcal {L}_{k_1^{'},k_2^{'}}^1(x_1,x_2,y_1,y_2)+\mathcal
{L}_{k_1^{'},k_2^{'}}^2(x_1,x_2,y_1,y_2)+\mathcal {L}_{k_1^{'},k_2^{'}}^3(x_1,x_2,y_1,y_2)+\mathcal
{L}_{k_1^{'},k_2^{'}}^4(x_1,x_2,y_1,y_2),
\end{eqnarray*}
where
\begin{eqnarray*}
&&\mathcal {L}_{k_1^{'},k_2^{'}}^1(x_1,x_2,y_1,y_2)\\[4pt]
&=&  \sum_{k_1\leq k_1^{'}}\sum_{k_2\leq k_2^{'}}
\sum_{I_1}\sum_{I_2
}\mu_1(I_1)\mu_2(I_2)
D_{k'_1}D_{k'_2}TD_{k_1}D_{k_2}(x_1,x_2,x_{I_1},x_{I_2})
\widetilde{\widetilde{D}}_{k_1}(x_{I_1},y_1)\widetilde{\widetilde{D}}_{k_2}(x_{I_2},y_2)
\end{eqnarray*}
and similarly for the other three terms.

We first consider $\mathcal {L}_{k_1^{'},k_2^{'}}^1(x_1,x_2,y_1,y_2)$. Following the Case 1 in Subsection 3.3, we decompose
\begin{eqnarray*}
&&D_{k'_1}D_{k'_2}TD_{k_1}D_{k_2}(x_1,x_2,x_{I_1},x_{I_2})\\[5pt]
&&=: I(x_{1},x_{2},x_{I_1},x_{I_2})+II(x_{1},x_{2},x_{I_1},x_{I_2})+III(x_{1},x_{2},x_{I_1},x_{I_2})
+IV(x_{1},x_{2},x_{I_1},x_{I_2})
\end{eqnarray*}
and then write
\begin{eqnarray*}
&&\mathcal {L}_{k_1^{'},k_2^{'}}^1(x_1,x_2,y_1,y_2)\\[4pt]
&&= \mathcal {L}_{k_1^{'},k_2^{'}}^{1.1}(x_1,x_2,y_1,y_2)+\mathcal {L}_{k_1^{'},k_2^{'}}^{1.2}(x_1,x_2,y_1,y_2)+\mathcal {L}_{k_1^{'},k_2^{'}}^{1.3}(x_1,x_2,y_1,y_2)+\mathcal {L}_{k_1^{'},k_2^{'}}^{1.4}(x_1,x_2,y_1,y_2),
\end{eqnarray*}
where
\begin{eqnarray*}
&&\mathcal {L}_{k_1^{'},k_2^{'}}^{1.1}(x_1,x_2,y_1,y_2)=  \sum_{k_1\leq k_1^{'}}\sum_{k_2\leq k_2^{'}}
\sum_{I_1}\sum_{I_2
}\mu_1(I_1)\mu_2(I_2)
I(x_1,x_2,x_{I_1},x_{I_2})
\widetilde{\widetilde{D}}_{k_1}(x_{I_1},y_1)\widetilde{\widetilde{D}}_{k_2}(x_{I_2},y_2)
\end{eqnarray*}
and similar for the other three cases.

As in Subsection 3.3.1, applying the almost orthogonality estimate and the size properties of $\widetilde{\widetilde{D}}_{k_1}(x_{I_1},y_1)$ and $\widetilde{\widetilde{D}}_{k_2}(x_{I_2},y_2)$ and following the same proof as in Case 1.1 in Subsection 3.3.1, yield
\begin{eqnarray}
&&|\mathcal {L}_{k_1^{'},k_2^{'}}^{1.1}(x_1,x_2,y_1,y_2)|\label{Hp bd e0}\\[4pt]
&\leq&  \sum_{k_1\leq k_1^{'}}\sum_{k_2\leq k_2^{'}}
\sum_{I_1}\sum_{I_2
}\mu_1(I_1)\mu_2(I_2)
|I(x_1,x_2,x_{I_1},x_{I_2})|
|\widetilde{\widetilde{D}}_{k_1}(x_{I_1},y_1)||\widetilde{\widetilde{D}}_{k_2}(x_{I_2},y_2)|\nonumber\\[4pt]
&\leq&C\frac{1}{V_{2^{-k_1^{'}}}(x_1)+V_{2^{-k_1^{'}}}(y_1)+V(x_1,y_1)}
\frac{2^{-k_1^{'}\varepsilon}}{(2^{-k_1^{'}}+d_1(x_1,y_1))^{\varepsilon}}\nonumber\\[4pt]
&&\hskip1cm\times\frac{1}{V_{2^{-k_2^{'}}}(x_2)+V_{2^{-k_2^{'}}}(y_2)+V(x_2,y_2)}
\frac{2^{-k_2^{'}\varepsilon}}{(2^{-k_2^{'}}+d_2(x_2,y_2))^{\varepsilon}},\nonumber
\end{eqnarray}
which implies that $\mathcal {L}_{k_1^{'},k_2^{'}}^{1.1}(x_1,x_2,y_1,y_2)$ satisfies
$(D_1)$.

To deal with $\mathcal {L}_{k_1^{'},k_2^{'}}^{1.4}(x_1,x_2,y_1,y_2),$ as in Subsection 3.3.2, we write
$$ IV(x_1,x_2,x_{I_1},x_{I_2})=D_{k_1^{'}}D_{k_2^{'}}(T1)(x_1,x_2)D_{k_1}(x_1,x_{I_1})D_{k_2}(x_2,x_{I_2}). $$
And then we rewrite
\begin{eqnarray*}
&&\mathcal {L}_{k_1^{'},k_2^{'}}^{1.4}(x_1,x_2,y_1,y_2)\\[4pt]
&=&  \sum_{k_1\leq k_1^{'}}\sum_{k_2\leq k_2^{'}}
\sum_{I_1}\sum_{I_2
}\mu_1(I_1)\mu_2(I_2)
D_{k_1^{'}}D_{k_2^{'}}(T1)(x_1,x_2)D_{k_1}(x_1,x_{I_1})D_{k_2}(x_2,x_{I_2})\\[4pt]
&&\hskip1cm\times\widetilde{\widetilde{D}}_{k_1}(x_{I_1},y_1)\widetilde{\widetilde{D}}_{k_2}(x_{I_2},y_2)\\[4pt]
&=& S_{k_1^{'}}(x_1,y_1)S_{k_2^{'}}(x_2,y_2)D_{k_1^{'}}D_{k_2^{'}}(T1)(x_1,x_2),
\end{eqnarray*}
where for $x_1, y_1\in M_1,$
$${S}_{k_1^{'}}(x_1,y_1)=\sum_{k_1\leq k_1^{'} }\sum_{I_1}\mu_1(I_{1})
D_{k_1}(x_1,x_{I_{1}})\widetilde{\widetilde{D}}_{k_1}(x_{I_{1}},y_1)$$
and similarly for ${S}_{k_2^{'}}(x_2,y_2)$ on $M_2.$   Moreover, as in Subsection 3.3.2, ${S}_{k_1^{'}}(x_1,y_1)$ and ${S}_{k_2^{'}}(x_2,y_2)$ satisfy similar size properties as ${D}_{k_1^{'}}(x_1,y_1)$ and ${D}_{k_2^{'}}(x_2,y_2)$ do, which implies
$$\mathcal {L}_{k_1^{'},k_2^{'}}^{1.4}(x_1,x_2,y_1,y_2)\leq |S_{k_1^{'}}(x_1,y_1)S_{k_2^{'}}(x_2,y_2)|$$
since $(T1)(x_1,x_2)\in BMO(\widetilde M)$ and hence $|D_{k_1^{'}}D_{k_2^{'}}(T1)(x_1,x_2)|$ is bounded uniformly for $k_1^{'}, k_2^{'},x_1$ and $x_2.$ This implies that $\mathcal {L}_{k_1^{'},k_2^{'}}^{1.4}(x_1,x_2,y_1,y_2)$ satisfies
$(D_1)$.

Similarly, we write, as in the Case 1.2 in Subsection 3.3.3,
\begin{eqnarray*}
&&II(x_{1},x_{2},x_{I_1},x_{I_2})\\[4pt]
&&=\int  D_{k'_1}(x_{1},u_1)D_{k'_2}(x_{2},u_2)K(u_1,u_2,v_1,v_2)\\[4pt]
&&\hskip1cm\times[D_{k_2}(v_2,x_{I_2})-D_{k_2}(x_{I_2^{'}},x_{I_2})] du_1du_2dv_1dv_2\ D_{k_1}(x_{1},x_{I_1}) \ +IV(x_{1},x_{2},x_{I_1},x_{I_2})\\[4pt]
&&=\langle D_{k'_2}(x_{2},u_2),\langle D_{k'_1}(x_{1},\cdot), K_2(u_2,v_2)(1)\rangle[D_{k_2}(v_2,x_{I_2})-D_{k_2}(x_{2},x_{I_2})]\rangle D_{k_1}(x_{1},x_{I_1})\\[4pt]
&&\hskip.5cm+IV(x_{1},x_{2},x_{I_1},x_{I_2}).
\end{eqnarray*}
Then, we have
\begin{eqnarray*}
\mathcal {L}_{k_1^{'},k_2^{'}}^{1.2}(x_1,x_2,y_1,y_2)&= & \sum_{k_1\leq k_1^{'}}\sum_{k_2\leq k_2^{'}}
\sum_{I_1}\sum_{I_2
}\mu_1(I_1)\mu_2(I_2)
\langle D_{k'_2}(x_{2},u_2),\langle D_{k'_1}(x_{1},\cdot), K_2(u_2,v_2)(1)\rangle\\[4pt]
&&\hskip.5cm\times[D_{k_2}(v_2,x_{I_2})-D_{k_2}(x_{2},x_{I_2})]\rangle D_{k_1}(x_{1},x_{I_1})
\widetilde{\widetilde{D}}_{k_1}(x_{I_1},y_1)\widetilde{\widetilde{D}}_{k_2}(x_{I_2},y_2) \\[4pt]
&&+\mathcal {L}_{k_1^{'},k_2^{'}}^{1.4}(x_1,x_2,y_1,y_2).
\end{eqnarray*}
Thus, it suffices to verify that the series above satisfies $(D_1).$ To do this, we write the series above as
\begin{eqnarray}
&& \sum_{k_1\leq k_1^{'}}\sum_{k_2\leq k_2^{'}}
\sum_{I_1}\sum_{I_2
}\mu_1(I_1)\mu_2(I_2)
\langle D_{k'_2}(x_{2},u_2),\langle D_{k'_1}(x_{1},\cdot), K_2(u_2,v_2)(1)\rangle\nonumber\\[4pt]
&&\hskip.5cm\times[D_{k_2}(v_2,x_{I_2})-D_{k_2}(x_{2},x_{I_2})]\rangle D_{k_1}(x_{1},x_{I_1})
\widetilde{\widetilde{D}}_{k_1}(x_{I_1},y_1)\widetilde{\widetilde{D}}_{k_2}(x_{I_2},y_2) \nonumber\\[4pt]
&&=\sum_{k_2\leq k_2^{'}}
\sum_{I_2
}\mu_2(I_2)
\langle D_{k'_2}(x_{2},u_2),\langle D_{k'_1}(x_{1},\cdot), K_2(u_2,v_2)(1)\rangle[D_{k_2}(v_2,x_{I_2})-D_{k_2}(x_{2},x_{I_2})]\rangle\nonumber\\[4pt]
&&\hskip.5cm\times \widetilde{\widetilde{D}}_{k_2}(x_{I_2},y_2)S_{k_1^{'}}(x_{1},y_1)\label{Hp bd e3}
\end{eqnarray}
Note that $K_2(u_2,v_2)(1)$ as a function of $u_1$ is in $BMO(M_1)$ with
$\|K_2(u_2,v_2)(1)\|_{BMO(M_1)}$ bounded by
$CV(u_2,v_2)^{-1}$, and that $D_{k'_1}(x_1,u_1)$ as a function of
$u_1$ lies in $H^1(M_1)$. Moreover, $K_2(u_2,v_2)$ is a
Calder\'on--Zygmund kernel on $M_2$ with $|K_2|_{CZ}\leq C$ and, by the fact that $(T^*)_2(1)=0,$
$\int K_2(u_2,v_2) du_2=0.$ As a consequence, we have the following almost orthogonality estimate that for $k_2^{'}\geq k_2$
\begin{eqnarray*}
&&\big|\langle D_{k'_2}(x_{2},u_2),\langle D_{k'_1}(x_{1},\cdot), K_2(u_2,v_2)(1)\rangle[D_{k_2}(v_2,x_{I_2})-D_{k_2}(x_{2},x_{I_2})]\rangle\big|\\[4pt]
&&\leq C |K_2|_{CZ} 2^{-(k_2^{'}-k_2)\varepsilon'}{1\over
V_{2^{-k_2}}(x_2)+
V_{2^{-k_2}}(x_{I_2})+V(x_2,x_{I_2})}\Big(
{ 2^{-k_2}\over 2^{-k_2}+d_2(x_2,x_{I_2})
}\Big)^{\varepsilon},
\end{eqnarray*}
which together with the side condition of $\widetilde{\widetilde{D}}_{k_2}(x_{I_2},y_2)$ implies that
the right-hand side of the equality (\ref{Hp bd e3}) is bounded by
$$C |K_2|_{CZ} {1\over
V_{2^{-k_2^{'}}}(x_2)+ V_{2^{-k_2^{'}}}(y_2)+V(x_2,y_2)}\Big( {
2^{-k_2^{'}}\over 2^{-k_2^{'}}+d_2(x_2,y_2) }\Big)^{\varepsilon}\ \ |S_{k_1^{'}}(x_{1},y_1)|.  $$
This together with the side condition of $S_{k_1^{'}}(x_{1},y_1)$ implies that the right-hand side of the equality (\ref{Hp bd e3}) is bounded by the right-hand side in $(D_1)$ and hence $\mathcal {L}_{k_1^{'},k_2^{'}}^{1.2}(x_1,x_2,y_1,y_2)$ satisfies $(D_1)$. Similarly, $\mathcal {L}_{k_1^{'},k_2^{'}}^{1.3}(x_1,x_2,y_1,y_2)$ satisfies  $(D_1)$. We conclude that $\mathcal {L}_{k_1^{'},k_2^{'}}^{1}(x_1,x_2,y_1,y_2)$ satisfies $(D_1)$.

Now we turn to $\mathcal{L}_{k_1^{'},k_2^{'}}^2(x_1,x_2,y_1,y_2)$. Note that $(T^*)_2(1)=0.$ Similar to the Case 2 in Subsection 3.3, we write
\begin{eqnarray*}
&& D_{k_1^{'}}D_{k_2^{'}} TD_{k_1}D_{k_2}(x_{1},x_{2},x_{I_1},x_{I_2})\\[4pt]
&&=\int  D_{k'_1}(x_{1},u_1)D_{k'_2}(x_{2},u_2)K(u_1,u_2,v_1,v_2)[D_{k_1}(v_1,x_{I_1})-D_{k_1}(x_{1},x_{I_1})]\\[4pt]
&&\hskip1cm\times D_{k_2}(v_2,x_{I_2})du_1du_2dv_1dv_2\\[4pt]
&&\hskip.5cm+ \int  D_{k'_1}(x_{1},u_1)D_{k'_2}(x_{2},u_2)K(u_1,u_2,v_1,v_2)D_{k_1}(x_{1},x_{I_1}) D_{k_2}(v_2,x_{I_2}) du_1du_2dv_1dv_2\\[4pt]
&&=: V(x_{1},x_{2},x_{I_1},x_{I_2})+VI(x_{1},x_{2},x_{I_1},x_{I_2}).
\end{eqnarray*}
Then we rewrite
$$
\mathcal {L}_{k_1^{'},k_2^{'}}^2(x_1,x_2,y_1,y_2)= \mathcal {L}_{k_1^{'},k_2^{'}}^{2.1}(x_1,x_2,y_1,y_2)+\mathcal {L}_{k_1^{'},k_2^{'}}^{2.2}(x_1,x_2,y_1,y_2)
$$
where
\begin{eqnarray*}
&&\mathcal {L}_{k_1^{'},k_2^{'}}^{2.1}(x_1,x_2,y_1,y_2)=  \sum_{k_1\leq k_1^{'}}\sum_{k_2> k_2^{'}}
\sum_{I_1}\sum_{\tau_2
}\mu_1(I_1)\mu_2(I_2)
V(x_1,x_2,x_{I_1},x_{I_2})
\widetilde{\widetilde{D}}_{k_1}(x_{I_1},y_1)\widetilde{\widetilde{D}}_{k_2}(x_{I_2},y_2)
\end{eqnarray*}
and similarly for $\mathcal {L}_{k_1^{'},k_2^{'}}^{2.2}(x_1,x_2,y_1,y_2)$.

By the fact that $(T^*)_2(1)=0,$ $V(x_1,x_2,x_{I_1},x_{I_2})$
satisfies the almost orthogonality estimate in (\ref{almost orth case 1.1}) as for $I(x_1,x_2,x_{I_1},x_{I_2})$ with $k_2$ and $k_2^{'}$ interchanged. Hence, applying the almost orthogonality estimate and
the size properties of $\widetilde{\widetilde{D}}_{k_1}(x_{I_1},y_1)$ and $\widetilde{\widetilde{D}}_{k_2}(x_{I_2},y_2)$ gives
\begin{eqnarray*}
|\mathcal {L}_{k_1^{'},k_2^{'}}^{2.1}(x_1,x_2,y_1,y_2)|\label{Hp bd e4}
&\leq& C\frac{1}{V_{2^{-k_1^{'}}}(x_1)+V_{2^{-k_1^{'}}}(y_1)+V(x_1,y_1)}
\frac{2^{-k_1^{'}\varepsilon}}{(2^{-k_1^{'}}+d_1(x_1,y_1))^{\varepsilon}}\\
&&\quad\times\frac{1}{V_{2^{-k_2^{'}}}(x_2)+V_{2^{-k_2^{'}}}(y_2)+V(x_2,y_2)}
\frac{2^{-k_2^{'}\varepsilon}}{(2^{-k_2^{'}}+d_2(x_2,y_2))^{\varepsilon}},
\end{eqnarray*}
which implies that $\mathcal {L}_{k_1^{'},k_2^{'}}^{2.1}(x_1,x_2,y_1,y_2)$ satisfies
$(D_1)$.

The proof of term $\mathcal {L}_{k_1^{'},k_2^{'}}^{2.2}(x_1,x_2,y_1,y_2)$ is similar to that of $\mathcal {L}_{k_1^{'},k_2^{'}}^{1.4}(x_1,x_2,y_1,y_2)$. Thus $\mathcal {L}_{k_1^{'},k_2^{'}}^{2.2}(x_1,x_2,y_1,y_2)$ satisfies
$(D_1)$. As a result, $\mathcal {L}_{k_1^{'},k_2^{'}}^{2}(x_1,x_2,y_1,y_2)$ satisfies
$(D_1)$. Following the same proof of $\mathcal {L}_{k_1^{'},k_2^{'}}^{2}(x_1,x_2,y_1,y_2)$, $\mathcal {L}_{k_1^{'},k_2^{'}}^{3}(x_1,x_2,y_1,y_2)$ satisfies $(D_1)$.

Finally, note that $(T^*)_1(1)=(T^*)_2(1)=0,$ So $D_{k'_1}D_{k'_2}TD_{k_1}D_{k_2}(x_1,x_2,x_{I_1},x_{I_2})$ satisfies the almost orthogonality estimate in (\ref{almost orth case 1.1}) with $k_1$ and $k_1^{'}$, $k_2$ and $k_2^{'}$ interchanged, respectively, and from this together with the size properties of $\widetilde{\widetilde{D}}_{k_1}(x_{I_1},y_1)$ and $\widetilde{\widetilde{D}}_{k_2}(x_{I_2},y_2)$ yields
that $\mathcal {L}_{k_1^{'},k_2^{'}}^{4}(x_1,x_2,y_1,y_2)$ satisfies
$(D_1)$.

Combing all the estimates of $\mathcal {L}_{k_1^{'},k_2^{'}}^{1}(x_1,x_2,y_1,y_2)$--$\mathcal {L}_{k_1^{'},k_2^{'}}^{4}(x_1,x_2,y_1,y_2)$ we can obtain  that  $\mathcal {L}_{k'_1,k'_2}(x_1,x_2,y_1,y_2)$ satisfies
$(D_1)$.

Replacing $\widetilde{\widetilde{D}}_{k_2}(x_{I_2},y_2)$, $\widetilde{\widetilde{D}}_{k_1}(x_{I_1},y_1)$ and $\widetilde{\widetilde{D}}_{k_1}(x_{I_1},y_1)\widetilde{\widetilde{D}}_{k_2}(x_{I_2},y_2)$ by  $\widetilde{\widetilde{D}}_{k_2}(x_{I_2},y_2)- \widetilde{\widetilde{D}}_{k_2}(x_{I_2},\break y_2^{'})$, $\widetilde{\widetilde{D}}_{k_1}(x_{I_1},y_1)-\widetilde{\widetilde{D}}_{k_1}(x_{I_1},y_1^{'})$ and $[\widetilde{\widetilde{D}}_{k_1}(x_{I_1},y_1)-\widetilde{\widetilde{D}}_{k_1}(x_{I_1},y_1^{'})]
[\widetilde{\widetilde{D}}_{k_2}(x_{I_2},y_2)-\widetilde{\widetilde{D}}_{k_2}(x_{I_2},y_2^{'})],$ respectively, and then applying the same proof as for $(D_1)$ will give the proofs of $(D_2)$ -- $(D_4).$ We leave these details to the reader.

We conclude that $\mathcal {L}_{k_1^{'},k_2^{'}}(x_1,x_2,y_1,y_2)$ satisfies
$(III)$--$(VI)$.

\subsection{``If'' part of $T1$  theorem on $CMO^p$ }

Note that if $f\in {\rm CMO}^p(\widetilde{M}),$ in general, $T(f)$ may not be well defined because $f$ is a distribution in $\big(\GGp(\beta_1,\beta_2;\gamma_1,\gamma_2)\big)'$. The same problem appears in the proof of Theorem 3.6.
The key fact used in the proof of Theorem 3.6 is that
$L^2(\widetilde{M})\cap H^p(\widetilde{M})$ is dense in
$H^p(\widetilde{M})$. It turns out that to establish the boundedness
of $T$ on $H^p(\widetilde{M})$, it suffices to show the $H^p$ boundedness of $T$ for $f\in L^2(\widetilde{M})\cap H^p(\widetilde{M})$. This
method does not work for the present proof of the ``If'' part of Theorem C because
$L^2(\widetilde{M})\cap {\rm CMO}^p(\widetilde{M})$ is not dense in
${\rm CMO}^p(\widetilde{M})$. However, as a
substitution, we have the following

\begin{lemma}\label{main lemma}
For $\max\big(\frac{ 2Q_1}{ 2Q_1+\vartheta_1},\frac{ 2Q_2}{
2Q_2+\vartheta_2} \big) <p\leq1$, $L^2(\widetilde{M})\cap {\rm CMO}^p(\widetilde{M})$ is dense in ${\rm CMO}^p(\widetilde{M})$ in the weak topology $(H^p(\widetilde{M}),{\rm CMO}^p(\widetilde{M}))$. More precisely, for each $f\in {\rm CMO}^p(\widetilde{M})$, there exists a sequence $\{f_n\}\subset L^2(\widetilde{M})\cap {\rm CMO}^p(\widetilde{M})$ such that $\|f_n\|_{{\rm CMO}^p(\widetilde{M})}\leq C\|f\|_{{\rm CMO}^p(\widetilde{M})}$, where $C$ is a positive constant independent of $n$ and $f$, and moreover, for each $g\in H^p(\widetilde{M})$, $\langle f_n,g\rangle\rightarrow \langle f,g\rangle$ as $n\rightarrow \infty.$
\end{lemma}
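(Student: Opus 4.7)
The plan is to construct the approximating sequence $\{f_n\}$ explicitly by truncating the discrete Calder\'on reproducing formula in both scale and space, then to verify the three required properties in the order: (i) $f_n\in L^2(\widetilde M)$; (ii) a uniform $CMO^p$ bound $\|f_n\|_{CMO^p}\leq C\|f\|_{CMO^p}$; (iii) weak convergence $\langle f_n,g\rangle\to\langle f,g\rangle$ for every $g\in H^p(\widetilde M)$. Fix base points $x_0\in M_1$, $y_0\in M_2$, and for each $n\in\mathbb{N}$ define
\begin{eqnarray*}
f_n(x_1,x_2)&=&\sum_{|k_1|,|k_2|\leq n}\ \sum_{\substack{I_1,I_2\\ x_{I_1}\in B_1(x_0,n),\,x_{I_2}\in B_2(y_0,n)}} \mu_1(I_1)\mu_2(I_2)\\
&&\times D_{k_1}(x_1,x_{I_1})D_{k_2}(x_2,x_{I_2})\,\widetilde{\widetilde D}_{k_1}\widetilde{\widetilde D}_{k_2}(f)(x_{I_1},x_{I_2}),
\end{eqnarray*}
where the action of $\widetilde{\widetilde D}_{k_1}\widetilde{\widetilde D}_{k_2}$ on $f\in\big(\GGp(\beta_1,\beta_2;\gamma_1,\gamma_2)\big)'$ is well defined because the kernel $\widetilde{\widetilde D}_{k_1}(\cdot,x_{I_1})\widetilde{\widetilde D}_{k_2}(\cdot,x_{I_2})$ lies in $\GGp(\beta_1,\beta_2;\gamma_1,\gamma_2)$.

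Step (i) is routine: $f_n$ is a \emph{finite} sum of products $D_{k_1}(\cdot,x_{I_1})D_{k_2}(\cdot,x_{I_2})$, each of which is bounded with compact support, so $f_n\in L^2(\widetilde M)$. For Step (ii), I would compute $D_{k_1'}D_{k_2'}(f_n)(x_1,x_2)$ and use the one-parameter almost orthogonality estimate (an analogue of the estimate used in Section~3.3.1) applied in each variable separately to obtain
\begin{eqnarray*}
&&|D_{k_1'}D_{k_2'}(f_n)(x_1,x_2)|\\
&&\leq C\sum_{|k_i|\leq n}\sum_{\text{localized }I_i} 2^{-|k_1-k_1'|\varepsilon}2^{-|k_2-k_2'|\varepsilon}\mu_1(I_1)\mu_2(I_2)\\
&&\hskip.5cm\times \frac{2^{-(k_1\wedge k_1')\varepsilon}}{[V_{2^{-(k_1\wedge k_1')}}(x_1)+V(x_1,x_{I_1})](2^{-(k_1\wedge k_1')}+d_1(x_1,x_{I_1}))^{\varepsilon}}\\
&&\hskip.5cm\times \frac{2^{-(k_2\wedge k_2')\varepsilon}}{[V_{2^{-(k_2\wedge k_2')}}(x_2)+V(x_2,x_{I_2})](2^{-(k_2\wedge k_2')}+d_2(x_2,x_{I_2}))^{\varepsilon}}\,|\widetilde{\widetilde D}_{k_1}\widetilde{\widetilde D}_{k_2}(f)(x_{I_1},x_{I_2})|.
\end{eqnarray*}
Substituting this bound into the Carleson sum in the definition of $\|f_n\|_{CMO^p}$ and summing the geometric series in $k_1-k_1'$ and $k_2-k_2'$ by Cauchy--Schwarz, together with the Plancherel--P\^olya inequality of [HLL2] (which controls $\sum\sum|\widetilde{\widetilde D}_{k_1}\widetilde{\widetilde D}_{k_2}(f)(x_{I_1},x_{I_2})|^2\chi_{I_1\times I_2}$ uniformly over open sets by $\|f\|_{CMO^p}^2$), yields a bound independent of $n$. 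The only subtlety here is that the outer supremum over open sets $\Omega$ in the $CMO^p$ norm of $f_n$ has to be dominated by a supremum of the same shape for $f$; this is handled by enlarging the associated collections of dyadic rectangles by a fixed dilation---an argument already used in [HLL2] in the proof of Plancherel--P\^olya.

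For Step (iii), I would first test against $g\in\GGp(\beta_1,\beta_2;\gamma_1,\gamma_2)$, which is dense in $H^p(\widetilde M)$ (see [HLL2]). Using the duality pairing,
\begin{eqnarray*}
\langle f_n,g\rangle
&=&\sum_{|k_1|,|k_2|\leq n}\sum_{\text{localized }I_1,I_2}\mu_1(I_1)\mu_2(I_2)\\
&&\times \widetilde{\widetilde D}_{k_1}\widetilde{\widetilde D}_{k_2}(f)(x_{I_1},x_{I_2})\,D_{k_1}D_{k_2}(g)(x_{I_1},x_{I_2}),
\end{eqnarray*}
and by the discrete Calder\'on identity and the Plancherel--P\^olya estimate for $g$, this double series is absolutely convergent with the sum equal to $\langle f,g\rangle$; hence Lebesgue dominated convergence with respect to counting measure on the index set gives $\langle f_n,g\rangle\to\langle f,g\rangle$. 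The uniform bound in (ii) together with a standard $\varepsilon/3$-argument then upgrades convergence to all $g\in H^p(\widetilde M)$. The main obstacle I anticipate is Step (ii): carrying out the two-parameter Carleson-norm estimate with the truncated approximation requires delicate use of the almost orthogonality together with the Plancherel--P\^olya inequality in order not to lose control of the spatial tails produced by the truncation $x_{I_i}\in B_i(\cdot,n)$, and the geometry of the open sets $\Omega$ in the definition of $CMO^p$ has to be handled via Journ\'e's covering Lemma \ref{theorem-cover lemma}.
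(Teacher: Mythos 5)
Your proposal is correct and follows essentially the same route as the paper: the same truncation of the discrete Calder\'on identity in both scale ($|k_i|\le n$) and space (rectangles inside a ball $B_n$), the same uniform $CMO^p$ bound via the two-parameter almost orthogonality estimate combined with the Plancherel--P\^olya inequality of [HLL2], and the same density-plus-uniform-bound argument to pass from test functions $g\in\GGp(\beta_1,\beta_2;\gamma_1,\gamma_2)$ to all of $H^p(\widetilde M)$. The only cosmetic difference is that you justify the convergence of $\langle f_n,g\rangle$ by absolute convergence of the full discrete pairing, whereas the paper argues that the tail of the reproducing formula for the test function vanishes in $H^p$ norm; both reduce to the same sequence-space duality from [HLL2].
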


\begin{proof}[Proof of Lemma \ref{main lemma}]


We first recall the discrete Calder\'on identity, namely,
\begin{eqnarray}\label{special-repro-identity}
f(x_1,x_2)&=&\sum_{k_1,k_2}\sum_{I_1,I_2}|I_1||I_2|
D_{k_1}(x_1,x_{I_1})D_{k_2}(x_2,x_{I_2})
{\widetilde{\widetilde D}}_{k_1}{\widetilde{\widetilde D}}_{k_2}(f)(x_{I_1},x_{I_2}),
\end{eqnarray}
where, for the simplicity, we denote $|I_1|$ for $\mu_1(I_1)$ and similarly $|I_2|$ for $\mu_2(I_2)$, and for each $k_1$ and $k_2$, $I_1,I_2$ range over all the dyadic cubes in $M_1$ and $M_2$ with the diameter $\ell(I_1)=2^{-k_1-N_1}$ and $\ell(I_2)=2^{-k_2-N_2}$. Moreover, the series converges in the both norms in $\GGp(\beta'_1,\beta'_2,\gamma'_1,\gamma'_2)$ with $0<\beta'_i<\beta_i<\vartheta_i, 0<\gamma'_i<\gamma_i<\vartheta_i,\ i=1,2,$ and $L^p(M_1\times M_2),\ 1<p<\infty$. Note that
$D_{k_1}(x_1,x_{I_1})$ and $D_{k_2}(x_2,x_{I_2})$ as functions of $x_1$ and $x_2,$ respectively, have compact supports.

Suppose that $f\in {\rm CMO}^p(\widetilde{M}).$  Set
\begin{eqnarray*}
f_n(x_1,x_2)&:=&\sum_{|k_1|\leq n,|k_2|\leq n}\ \ \sum_{I_1,I_2:I_1\times I_2\subset B_n}|I_1||I_2|
D_{k_1}(x_1,x_{I_1})D_{k_2}(x_2,x_{I_2})
{\widetilde{\widetilde D}}_{k_1}{\widetilde{\widetilde D}}_{k_2}(f)(x_{I_1},x_{I_2}),
\end{eqnarray*}
where $B_n=\lbrace (x_1,x_2): d(x_1,x_1^0)\leq n, d(x_2,x_2^0)\leq n\rbrace.$

It is easy to see that $f_n\in L^2(\widetilde{M}).$ We will show that $f_n\in {\rm CMO}^p(\widetilde{M})$ and moreover, there exists a constant $C$ independent of $n$ and $f$ such that for any open set $\Omega\subset \widetilde{M}$ with finite measure,
\begin{eqnarray}\label{v1}
\frac{\displaystyle 1}{\displaystyle |\Omega|^{{2\over
p}-1}}\int_{\Omega} \sum_{k_1^{'},k_2^{'}} \sum_{I'\times J'\subseteq \Omega}
\big|D_{k_1^{'}}D_{k_2^{'}}(f_n)(x,y)\big|^2 \chi_{I_1^{'}}(x_1)\chi_{I_2^{'}}(x_2)dx_1dx_2\leq C\|f\|_{{\rm CMO}^p(\widetilde{M})}^2.
\end{eqnarray}
To show the above estimate, we need the following almost orthogonal estimate of Lemma 2.11 in \cite{HLL2}. Here and in the rest of the paper, for $a,b\in\mathbb{R}$ we use $a\wedge b$, $a\vee b$ to denote $\min(a,b)$, $\max(a,b)$, respectively.
\begin{lemma}[Lemma 2.11,\cite{HLL2}]\label{lemma-orth product}
Let $\{S_{k_i}\}_{k_i\in\mathbb{Z}}$ and $\{P_{k_i}\}_{k_i\in\mathbb{Z}}$ be two approximations to the identity with regularity exponent $\vartheta_i$ and $D_{k_i}=S_{k_i}-S_{k_i-1},\ E_{k_i}=P_{k_i}-P_{k_i-1},\ i=1,2.$ Then for each $\varepsilon\in(0,\vartheta_1\wedge\vartheta_2)$, there exist positive constants $C$ depending only on $\varepsilon$ such that $D_{l_1}D_{l_2}E_{k_1}E_{k_2}(x_1,x_2,y_1,y_2),$ the kernel of $D_{l_1}D_{l_2}E_{k_1}E_{k_2},$ satisfies the following estimate:
\begin{eqnarray}
&&|D_{l_1}D_{l_2}E_{k_1}E_{k_2}(x_1,x_2,y_1,y_2)|\leq C2^{-|k_1-l_1|\varepsilon}2^{-|k_2-l_2|\varepsilon}\\[4pt]
&&\hskip.7cm\times\frac{1}{V_{2^{-(k_1\wedge l_1)}}(x_1)+V_{2^{-(k_1\wedge l_1)}}(y_1)+V(x_1,y_1)}\frac{2^{-(k_1\wedge l_1)\varepsilon}}{(2^{-(k_1\wedge l_1)}+d(x_1,y_1))^{\varepsilon}}\nonumber\\[4pt]
&&\hskip1cm\times \frac{1}{V_{2^{-(k_2\wedge l_2)}}(x_2)+V_{2^{-(k_2\wedge l_2)}}(y_2)+V(x_2,y_2)}\frac{2^{-(k_2\wedge l_2)\varepsilon}}{(2^{-(k_2\wedge l_2)}+d(x_2,y_2))^{\varepsilon}}.\nonumber
\end{eqnarray}

\end{lemma}

\medskip
We turn to the proof of Lemma \ref{main lemma}. Note that from the definition of $f_n$, we have
\begin{eqnarray*}
&&
D_{k_1^{'}}D_{k_2^{'}}(f_n)(x_1,x_2) \\[5pt]
&=& \sum_{|k_1|\leq n,|k_2|\leq n} \sum_{I_1,I_2:I_1\times I_2\subset B_n}|I_1||I_2|
D_{k_1^{'}}D_{k_1}D_{k_2^{'}}D_{k_2}(x_1, x_2,x_{I_1},x_{I_2})
{\widetilde{\widetilde D}}_{k_1}{\widetilde{\widetilde D}}_{k_2}(f)(x_{I_1},x_{I_2}).
\end{eqnarray*}

Applying Lemma \ref{lemma-orth product}  for the term $D_{k_1^{'}}D_{k_1}D_{k_2^{'}}D_{k_2}(x_1, x_2,x_{I_1},x_{I_2})$ first and then using the H\"older's inequality, we obtain
\begin{eqnarray*}
&&\sup_{x_1\in I_1^{'},x_2\in I_2^{'}}
\big|D_{k_1^{'}}D_{k_2^{'}}(f_n)(x_1,x_2)\big|^2 \\[5pt]
&&\lesssim \sum_{k_1,k_2}
2^{-|k_1-k_1^{'}|\varepsilon_1}2^{-|k_2-k_2^{'}|\varepsilon_2}
\sum_{I_1,I_2}|I_1||I_2|  \frac{\displaystyle 1 }{\displaystyle
V(x_I,x_{I'}) + V_{2^{-(k_1\wedge k_1^{'})}}(x_{I})+ V_{2^{-(k_1\wedge k_1^{'})}}(x_{I'})}\\[5pt]
&&\qquad\times \bigg( \frac{\displaystyle 2^{-(k_1\wedge k_1^{'})}
}{\displaystyle 2^{-(k_1\wedge k_1^{'})}+ d(x_I,x_{I'})  }
\bigg)^{\varepsilon_1} \frac{\displaystyle 1 }{\displaystyle
V(x_{I_2},x_{I_2^{'}}) + V_{2^{-(k_2\wedge
k_2^{'})}}(x_{I_2})+V_{2^{-(k_2\wedge k_2^{'})}}(x_{I_2^{'}})   }\\[5pt]
&&\qquad\times \bigg( \frac{\displaystyle 2^{-(k_2\wedge k_2^{'})}
}{\displaystyle 2^{-(k_2\wedge k_2^{'})}+ d(x_{I_2},x_{I_2^{'}})  }
\bigg)^{\varepsilon_2} \big| \widetilde{\widetilde{D}}_{k_1}\widetilde{\widetilde{D}}_{k_2}[f](x_{I_1},x_{I_2})
\big|^2.
\end{eqnarray*}
As a consequence, we have
\begin{eqnarray}\label{estimate-of-sumR SR 1}
\lefteqn{ {1\over |\Omega|^{{2\over
p}-1}}\sum_{k_1^{'},k_2^{'}}\sum_{I_1^{'}\times I_2^{'}\subset \Omega} |I_1^{'}||I_2^{'}| \sup_{x_1\in
I_1^{'},x_2\in I_2^{'}}\big|D_{k_1^{'}}D_{k_2^{'}}[f_n](x_1,x_2)\big|^2             }
\\[5pt]
&\lesssim& {1\over |\Omega|^{{2\over
p}-1}}\sum_{k_1^{'},k_2^{'}}\sum_{I_1^{'}\times I_2^{'}\subset
\Omega}\sum_{k_1,k_2} \sum_{I_1,I_2}2^{-|k_1-k_1^{'}|\varepsilon_1}2^{-|k_2-k_2^{'}|\varepsilon_2}|I_1||I_2| |I_1^{'}||I_2^{'}|\nonumber\\[5pt]
&&\times \bigg( \frac{\displaystyle 2^{-(k_1\wedge k_1^{'})}
}{\displaystyle 2^{-(k_1\wedge k_1^{'})}+ d(x_{I_1},x_{I_1^{'}})  }
\bigg)^{\varepsilon_1}\frac{\displaystyle 1 }{\displaystyle
V(x_{I_1},x_{I_1^{'}}) + V_{2^{-(k_1\wedge k_1^{'})}}(x_{I_1})+ V_{2^{-(k_1\wedge k_1^{'})}}(x_{I_1^{'}})}\nonumber\\[5pt]
&&\times \bigg( \frac{\displaystyle 2^{-(k_2\wedge k_2^{'})}
}{\displaystyle 2^{-(k_2\wedge k_2^{'})}+ d(x_{I_2},x_{I_2^{'}})  }
\bigg)^{\varepsilon_2}\frac{\displaystyle 1 }{\displaystyle
V(x_{I_2},x_{I_2^{'}}) + V_{2^{-(k_2\wedge
k_2^{'})}}(x_{I_2})+V_{2^{-(k_2\wedge k_2^{'})}}(x_{I_2^{'}})   }\nonumber\\[5pt]
&&\times\big| \widetilde{\widetilde{D}}_{k_1}\widetilde{\widetilde{D}}_{k_2}[f](x_{I_1},x_{I_2})
\big|^2.\nonumber
\end{eqnarray}
Note that  $2^{-|k_1-k_1^{'}|}\approx \frac{\displaystyle {\rm
diam}(I_1) }{\displaystyle {\rm diam}(I_1^{'})} \wedge \frac{\displaystyle
{\rm diam}(I_1^{'}) }{\displaystyle {\rm diam}(I_1)}$, $2^{-(k_1\wedge
k_1^{'})}\approx {\rm diam}(I_1) \vee {\rm diam}(I_1^{'})$ and
$d(x_{I_1},x_{I'})\geq {\rm dist}(I_1,I_1^{'})$. Similar results hold for
$k_2,k_2^{'}$ and $I_2,I_2^{'}$. Applying the above estimate with any arbitrary points $x_{I_1^{'}}$ and $x_{I_2^{'}}$
in $I_1^{'}$ and $I_2^{'},$ respectively, and the fact that $ab=(a\vee b)^2\big({a\over b}\wedge{b\over a}\big)$
for all $a,b>0$, we obtain that the right-hand in (4.43) is dominated by a constant times
\begin{eqnarray}\label{estimate-of-sumR SR}
&&{1\over |\Omega|^{{2\over
p}-1}}\sum_{k_1^{'},k_2^{'}}\sum_{I_1^{'}\times I_2^{'}\subset
\Omega}\sum_{k_1,k_2} \sum_{I_1,I_2}\bigg[{{|I_1|}\over
{|I_1^{'}|}}\wedge{{|I_1^{'}|}\over {|I_1|}}\bigg]\bigg[{{|I_2|}\over
{|I_2^{'}|}}\wedge{{|I_2^{'}|}\over {|J|}}\bigg] \bigg[\frac{\displaystyle
{\rm diam}(I_1) }{\displaystyle {\rm diam}(I_1^{'})} \wedge
\frac{\displaystyle {\rm diam}(I_1^{'}) }{\displaystyle {\rm
diam}(I_1)}\bigg]^{\varepsilon_1}\nonumber\\[5pt]
&&\times\bigg[\frac{\displaystyle {\rm diam}(I_2) }{\displaystyle {\rm
diam}(I_2^{'})} \wedge \frac{\displaystyle {\rm diam}(I_2^{'}) }{\displaystyle
{\rm diam}(I_2)}\bigg]^{\varepsilon_2}\cdot
\big(|I_1|\vee|I_1^{'}|\big)\big(|I_2|\vee|I_2^{'}|\big)
\nonumber\\[5pt]
&&\times \frac{\displaystyle |I_1|\vee|I_1^{'}| }{\displaystyle V_{{\rm
dist}(I_1,I_1^{'})}(x_{I_1}) + |I_1|\vee|I_1^{'}| } \bigg( \frac{\displaystyle {\rm
diam}(I_1) \vee {\rm diam}(I_1^{'}) }{\displaystyle
{\rm diam}(I_1) \vee {\rm diam}(I_1^{'})+ {\rm dist}(I_1,I_1^{'})  } \bigg)^{\varepsilon_1}\nonumber\\[5pt]
&&\times \frac{\displaystyle |I_2|\vee|I_2^{'}| }{\displaystyle V_{{\rm
dist}(I_2,I_2^{'})}(x_{I_2}) + |I_2|\vee|I_2^{'}| } \bigg( \frac{\displaystyle {\rm
diam}(I_2) \vee {\rm diam}(I_2^{'}) }{\displaystyle {\rm
diam}(I_2) \vee {\rm diam}(I_2^{'})+ {\rm dist}(I_2,I_2^{'})  } \bigg)^{\varepsilon_2}\nonumber\\[5pt]
&&\times \inf_{x_1\in I_1^{'},x_2\in I_2^{'}} \big| \widetilde{\widetilde D}_{k_1}\widetilde{\widetilde D}_{k_2}[f](x_1,x_2)
\big|^2.
\end{eqnarray}
Following the same steps as in the proof of Theorem 3.2 in \cite{HLL2} gives
\begin{eqnarray}\label{v1}
&&\frac{\displaystyle 1}{\displaystyle |\Omega|^{{2\over
p}-1}}\int_{\Omega} \sum_{k_1^{'},k_2^{'}} \sum_{I_1^{'}\times I_2^{'}\subseteq \Omega}
\big|D_{k_1^{'}}D_{k_2^{'}}(f_n)(x_1,x_2)\big|^2 \chi_{I_1^{'}}(x_1)\chi_{I_2^{'}}(x_2)dx_1dx_2\\[5pt]
&&\hskip.5cm\leq C\frac{\displaystyle 1}{\displaystyle |\Omega|^{{2\over
p}-1}}\int_{\Omega} \sum_{k_1,k_2} \sum_{I_1\times I_2\subseteq \Omega}
\big|\widetilde{\widetilde D}_{k_1}\widetilde{\widetilde D}_{k_2}(f)(x_1,x_2)\big|^2 \chi_{I_1}(x_1)\chi_{I_2}(x_2)dx_1dx_2.\nonumber
\end{eqnarray}
Taking supremum over all open sets $\Omega$ with finite measures, we obtain
\begin{eqnarray*}
\|f_n\|^2_{CMO^p}\leq C\sup_{\Omega}\frac{\displaystyle 1}{\displaystyle |\Omega|^{{2\over
p}-1}}\int_{\Omega} \sum_{k_1,k_2} \sum_{I_1\times I_2\subseteq \Omega}
\big|\widetilde{\widetilde D}_{k_1}\widetilde{\widetilde D}_{k_2}(f)(x_1,x_2)\big|^2 \chi_{I_1}(x_1)\chi_{I_2}(x_2)dx_1dx_2.
\end{eqnarray*}
The last term above, however, by the Plancherel--P\^olya inequality
for the space $CMO^p(\widetilde M)$ in [HLL2], is dominated by
\begin{eqnarray*}
C\sup_{\Omega}\frac{\displaystyle 1}{\displaystyle |\Omega|^{{2\over
p}-1}}\int_{\Omega} \sum_{k_1,k_2} \sum_{I_1\times I_2\subseteq \Omega}
\big| D_{k_1} D_{k_2}(f)(x_1,x_2)\big|^2 \chi_{I_1}(x_1)\chi_{I_2}(x_2)dx_1dx_2.
\end{eqnarray*}
This implies that $\|f_n\|^2_{CMO^p}\leq C \|f\|^2_{CMO^p}.$

We verify that $f_n$ converges to $f$ in the week topology $(H^p, CMO^p)$.
To do this, for any $h\in \GGp(\beta_1,\beta_2;\gamma_1,\gamma_2)$, by the discrete Calder\'on's identity,
\begin{eqnarray*}
\langle f-f_n, h\rangle &= & \langle \sum \limits_{|k_1|> n,\ \text{or}\ |k_2|>n,\ \text{or}\ I_1\times I_2\nsubseteq B_n }  |I_1||I_2|
D_{k_1}(\cdot,x_{I_1})D_{k_2}(\cdot,x_{I_2})
{\widetilde{\widetilde D}}_{k_1}{\widetilde{\widetilde D}}_{k_2}(f)(x_{I_1},x_{I_2}), h\rangle \\[5pt]
&= & \sum \limits_{|k_1|> n,\ \text{or}\ |k_2|>n,\ \text{or}\ I\times
J\nsubseteq B_n}  |I_1||I_2|
D_{k_1}D_{k_2}(h)(x_{I_1},x_{I_2})
{\widetilde{\widetilde D}}_{k_1}{\widetilde{\widetilde D}}_{k_2}(f)(x_{I_1},x_{I_2}).
\end{eqnarray*}
To see that the last term above tends to zero as $n$ tends to infinity, we write
\begin{eqnarray*}
&&\sum \limits_{|k_1|> n,\ \text{or}\ |k_2|>n,\ \text{or}\ I_1\times
I_2\nsubseteq B_n}  |I_1||I_2|D_{k_1}D_{k_2}(h)(x_{I_1},x_{I_2})
{\widetilde{\widetilde D}}_{k_1}{\widetilde{\widetilde D}}_{k_2}(f)(x_{I_1},x_{I_2})\\[5pt]
&&=\big\langle \sum \limits_{|k_1|> n,\ \text{or}\ |k_2|>n,\ \text{or}\ I_1\times
I_2\nsubseteq B_n}  |I_1||I_2|D_{k_1}(\cdot, x_{I_1})D_{k_2}(\cdot, x_{I_2})
{\widetilde{\widetilde D}}_{k_1}{\widetilde{\widetilde D}}_{k_2}(f)(x_{I_1},x_{I_2}), h\big\rangle.
\end{eqnarray*}
Following the proof of Proposition 2.14, the Plancherel-P\^olya inequality,
$$\sum \limits_{|k_1|> n,\ \text{or}\ |k_2|>n,\ \text{or}\ I_1\times
I_2\nsubseteq B_n}  |I_1||I_2|D_{k_1}(x_1, x_{I_1})D_{k_2}(x_2, x_{I_2})
{\widetilde{\widetilde D}}_{k_1}{\widetilde{\widetilde D}}_{k_2}(f)(x_{I_1},x_{I_2})$$
tends to zero in the $H^p$ norm as $n$ tends to infinity and
hence, by the duality argument, $\langle f-f_n, h\rangle $ tends to 0 as $n$ tends to infinity. Note that $ \GGp(\beta_1,\beta_2;\gamma_1,\gamma_2)$ is dense in $H^p(\widetilde{M}).$ Then for any $g\in H^p(\widetilde M),$ $\langle f-f_n, g\rangle $ still tends to 0 as $n$ tends to infinity. Indeed, if $g\in H^p(\widetilde{M})$ and for any $\varepsilon>0$, there exists a function $h\in  \GGp(\beta_1,\beta_2;\gamma_1,\gamma_2)$ such that $\|g-h\|_{H^p(\widetilde{M})}<\varepsilon$. Now by the duality and the fact that $\|f_n\|_{{\rm CMO}^p(\widetilde{M})}\leq C\|f\|_{{\rm CMO}^p(\widetilde{M})}$, we have
\begin{eqnarray*}
\big|\langle f-f_n, g\rangle\big| &\leq & \big|\langle f-f_n, g-h\rangle\big|+ \big|\langle f-f_n, h\rangle\big|  \\[5pt]
&\leq&  \|f-f_n\|_{{\rm CMO}^p(\widetilde{M})}\|g-h\|_{H^p(\widetilde{M})}+ \big|\langle f-f_n, h\rangle\big|  \\[5pt]
&\leq&  C\varepsilon\|f\|_{{\rm CMO}^p(\widetilde{M})}+ \big|\langle f-f_n, h\rangle\big|,
\end{eqnarray*}
which implies that $\lim\limits_{n\rightarrow\infty}\langle f-f_n, g\rangle=0$. The proof of Lemma \ref{main lemma} is completed.

\end{proof}

We are ready to show ``if'' part of Theorem C.

We first define $T$ on ${\rm CMO}^p(\widetilde{M})$ as follows.  Given $f\in {\rm CMO}^p(\widetilde{M}),$ by Lemma \ref{main lemma}, there is a sequence $\lbrace f_n\rbrace\subset L^2(\widetilde{M})\cap {\rm CMO}^p(\widetilde{M})$ such that $ \Vert f_n\Vert_{{\rm CMO}^p(\widetilde{M})}\leq C\Vert f\Vert_{{\rm CMO}^p(\widetilde{M})},$ and  for each $g\in L^2(\widetilde{M})\cap H^p(\widetilde{M}), \langle f_n, g\rangle\rightarrow \langle f, g\rangle$ as $n\rightarrow \infty.$ Thus, for  $f\in {\rm CMO}^p(\widetilde{M})$, we  define
$$ \langle T(f), g\rangle:=\lim \limits_{n \rightarrow \infty} \langle T(f_n), g\rangle$$
for each $g\in L^2(\widetilde{M})\cap H^p(\widetilde{M}).$

To see that this limit exists, we note that $\langle T(f_j-f_k), g\rangle=\langle f_j-f_k, T^*(g)\rangle$
 since both $f_j-f_k$ and $g $ belong to $L^2$ and $T$ is bounded on $L^2$. $T^*$ is bounded on $L^2$ and the kernel of $T^*$ satisfies the conditions in Theorem B. Moreover, $((T^*)_1)^*(1)=T_1(1)=0$ and $((T^*)_2)^*(1)=T_2(1)=0.$ Therefore, by the ``if" part of Theorem B which has been proved in Subsection 4.1, $T^*(g) \in L^2(\widetilde{M})\cap  H^p(\widetilde{M}).$ Thus, by Lemma \ref{main lemma}, $\langle f_j-f_k, T^*(g)\rangle$ tends to zero as $j,k\rightarrow \infty.$ It is also easy to see that this limit is independent of the choice of the sequence $f_n$ that satisfies the conditions in Lemma~\ref{main lemma}.

To finish the proof of ``if'' part of Theorem C, we claim that for each $f\in L^2(\widetilde{M})\cap {\rm CMO}^p(\widetilde{M})$,
\begin{eqnarray}\label{T bd on L2 cap CMO}
\|T(f)\|_{{\rm CMO}^p(\widetilde{M})}\leq C\|f\|_{{\rm CMO}^p(\widetilde{M})},
\end{eqnarray}
where the constant $C$ is independent of $f$.

To see the above claim implies the ``if'' part of Theorem C, by the definition of $T$ on ${\rm CMO}^p(\widetilde{M})$, for each $g\in L^2(\widetilde{M})\cap H^p(\widetilde{M}),
\langle T(f),g\rangle= \lim_{n\rightarrow\infty} \langle T(f_n), g\rangle,$ where $f_n$ satisfies the conditions in Lemma~\ref{main lemma}.
Particularly, taking $g(x,y)=D_{k_2}D_{k_1}(x,y)\in  \GGp(\beta_1,\beta_2;\gamma_1,\gamma_2)$ and applying the claim yield
\begin{eqnarray*}
\|T(f)\|_{{CMO}^p(\widetilde{M})}&=&\| \lim_{n\rightarrow\infty}T(f_n)\|_{{CMO}^p(\widetilde{M})}\\[5pt]
&\leq&  \liminf_{n\rightarrow\infty} \|T(f_n)\|_{{\rm CMO}^p(\widetilde{M})}
\leq C \|f_n\|_{{CMO}^p(\widetilde{M})}\\[5pt]
&\leq&  C \|f\|_{{CMO}^p(\widetilde{M})}.
\end{eqnarray*}
Thus, it remains to show the claim.
The proof of the claim follows from Theorem 2.18, the duality between $H^p(\widetilde M)$ and $CMO^p(\widetilde M)$, and the ``if" part of Theorem B. To be more precisely, let $f \in L^2\cap CMO^p(\widetilde M)$ and $g \in L^2\cap H^p(\widetilde M)$. By the duality first and then the ``if" part of Theorem B, we have
$$
|\langle T(f),g\rangle |= |\langle f, {T}^\ast(g)\rangle | \le \|f\|_{CMO^p(\widetilde{M})} \|{T}^\ast(g)\|_{H^p(\widetilde{M})}\le C \|f\|_{CMO^p(\widetilde{M})}\|g\|_{H^p(\widetilde{M})}.
$$
This implies that for each $f \in L^2(\widetilde{M})\cap CMO^p(\widetilde M), \ell_f(g)=\langle T(f),g\rangle$ defines a continuous linear functional on $L^2(\widetilde{M})
\cap H^p(\widetilde M)$. Note that $L^2(\widetilde{M})\cap H^p(\widetilde M)$ is dense in $H^p(\widetilde M).$ Thus, $\ell_f(g)=\langle T(f), g\rangle$ belongs to the dual of $H^p(\widetilde M)$ and the norm of this linear functional is dominated by $C\|f\|_{CMO^p}.$ By the duality, that is Theorem 2.18, again, there exists $h \in CMO^p(\widetilde M)$ such that
$\langle T(f),g\rangle =\langle h, g\rangle$ for each $g\in \GGp(\beta_1,\beta_2;\gamma_1,\gamma_2)$ and $\|h\|_{CMO^p}\le C\|\ell_f\|\le C \|f\|_{CMO^p(\widetilde{M})}.$ The crucial fact we will use is that, taking $g(x,y)=D_{k_2}D_{k_1}(x,y),$ we obtain that $\langle T(f), D_{k_2}D_{k_1}\rangle =\langle h, D_{k_2}D_{k_1}\rangle$. Therefore, by the definition of space $CMO^p(\widetilde{M}),$ we have
\begin{eqnarray*}
\|T(f)\|_{CMO^p(\widetilde{M})}&=& \sup_{\Omega}\bigg\{ \frac {1}{|\Omega|^{\frac{2}{p}-1}}
  \sum_{k_1, k_2\in \Bbb Z} \sum_{I_1,I_2: I_1 \times I_2\subset \Omega}
  |D_{k_2}D_{k_1}(T(f))(x_{I_1}, x_{I_2}) | ^2 |I_1| |I_2| \bigg\}^{1/2} \\[5pt]
  &=& \sup_{\Omega}\bigg\{ \frac {1}{|\Omega|^{\frac{2}{p}-1}}
  \sum_{k_1, k_2\in \Bbb Z} \sum_{I_1,I_2:  I_1 \times I_2\subset \Omega}
  |D_{k_2}D_{k_1}(h)(x_{I_1}, x_{I_2}) | ^2 |I_1| |I_2| \bigg\}^{1/2}\\[5pt]
  &=& \|h\|_{CMO^p(\widetilde{M})}\\[5pt]
  & \leq& C \|f\|_{CMO^p(\widetilde{M})}.
\end{eqnarray*}

The proof of the claim is concluded and hence the proof of ``if '' part of Theorem C is complete.


\subsection{``Only if'' part of $T1$  theorems on $H^p$ and $CMO^p$}

We first show the ``only if'' part of Theorem C. Suppose that $T$ is a
Calder\'on--Zygmund operator defined in Subsection 3.1 and bounded on
$CMO^p(\widetilde M).$  For each $f_2(x_2)\in C^\eta _0(M_2)$, we
define the function $f(x_1,x_2)$ on $\widetilde M$ by $f(x_1,x_2):=
\chi_1(x_1) f_2(x_2)$, where $\chi_1(x_1)=1$ on $M_1$. It is clear
that $f$ is in $CMO^p(\widetilde M)$ with $\|f\|_{CMO^p(\widetilde{M})}=0.$
Consequently, we have $Tf \in CMO^p(\widetilde M)$ and
$\|Tf\|_{CMO^p(\widetilde{M})}=0.$ Therefore,
$$\int_{M_2} \int_{M_1}\int_{M_2}\int_{M_1} g_1(x_1)g_2(x_2) K(x_1,y_1,x_2,y_2)f_2(y_2)dx_1dx_2dy_1dy_2=0$$
for all $g_1\in C^\eta_0(M_1)$ with $\int g_1(x_1)dx_1=0, g_2\in C^\eta _0(M_2)$ with $\int g_2(x_2)dx_2=0$ and all $f_2\in C^\eta_0(M_2).$
Note that the above equality is equivalent to
$$\int_{M_2} \int_{M_1} T^*(g_1\otimes g_2)(y_1, y_2) f_2(y_2)dy_1dy_2=0.$$
Since $T$ is bounded on $L^2(\widetilde{M})$, so $T^*$ is also bounded on $L^2(\widetilde{M}).$ Therefore, $T^*(g_1\otimes g_2)\in L^1(\widetilde{M})\cap L^2(\widetilde{M})$ since $(g_1\otimes g_2)\in H^1(\widetilde M).$ Note that $C^\eta_0(M_2)$ is dense in $L^2(M_2).$
This implies
$$\int_{M_1} T^*(g_1\otimes g_2)(y_1, y_2) dy_1=0=\int_{M_1} \int_{M_2}\int_{M_1}g_1(x_1)g_2(x_2) K(x_1,y_1,x_2,y_2)dx_1dx_2dy_1$$
for all $g_1\in C^\eta_0(M_1)$ with $\int g_1(x_1)dx_1=0, g_2\in C^\eta_0(M_2)$ with $\int g_2(x_2)dx_2=0$ and for $y_2\in M_2$ almost everywhere. Thus, $T_1(1)=0.$ Similarly we can prove that $T_2(1)=0$.

We now prove the ``only if'' part of Theorem B. We claim that if $T$ is bounded on $L^2$ and $H^p(\widetilde M),$ then the adjoint operator $T^*$ extends to a bounded operator from $CMO^p(\widetilde M)$ to itself, where $T^*$ is defined originally by
$$\langle Tf, g\rangle=\langle f, T^*g\rangle $$
for all $f, g\in L^2(\widetilde{M}).$

To see this, let $f\in L^2(\widetilde{M})\cap H^p(\widetilde M)$ and $g\in L^2(\widetilde{M})\cap CMO^p(\widetilde M),$ then, by the duality between $H^p(\widetilde{M})-CMO^p(\widetilde{M}),$
$$|\langle T^*g, f\rangle |=|\langle g,Tf\rangle |\leq C\|f\|_{H^p(\widetilde{M})}\|g\|_{CMO^p(\widetilde{M})}.$$
This implies that $\langle T^*g, f\rangle $ defines a continuous linear functional on $H^p(\widetilde M)$ because $L^2(\widetilde{M})\cap H^p(\widetilde M)$ is dense in
$H^p(\widetilde M).$ Moreover, applying the same proof given in Subsection 4.2 yields
$$\|T^*g\|_{CMO^p(\widetilde{M})}\leq C\|g\|_{CMO^p(\widetilde{M})}.$$
Then, applying the ``only if'' part of Theorem C for the operator $T^*$ implies that $(T^*)_1(1)=(T^*)_2(1)=0.$

\section{The $T1$  theorem of $n$ factors}

In this section we consider the $T1$  theorem on $\widetilde{M}=M_1\times \cdots\times M_n$. To do this, we first consider the case $n=3$, i.e., $\widetilde{M}=M_1\times M_2\times M_3.$ The general case with $n$ factors will follow by induction.

We first recall the definition of the Littlewood--Paley square function on $\widetilde M.$
\begin{definition}\label{def-of-squa-func-n-factor}
Let $\{S_{k_i}\}_{k_i\in\mathbb{Z}}$ be approximations to the
identity on $M_i$ and $D_{k_i}=S_{k_i}-S_{k_i-1}, i=1,2,3.$ For $f\in
\big(\GGp(\beta_1,\beta_2,\beta_3;\gamma_1,\gamma_2,\gamma_3)\big)'$ with
$0<\beta_i,\gamma_i<\vartheta_i, i=1,2,3$, $\widetilde S_d(f),$ the
discrete Littlewood--Paley square function of $f,$ is defined by
\begin{eqnarray*}
&&\widetilde S_d(f)(x_1,x_2,x_3)\\[5pt]
&&=\Big\{\sum_{k_1=-\infty}^\infty\sum_{k_2=-\infty}^\infty \sum_{k_3=-\infty}^\infty\sum_{I_1}\sum_{I_2} \sum_{I_3}
|D_{k_1}D_{k_2} D_{k_3}(f)(x_1,x_2,x_3)|^2\chi_{I_{1}}(x_1)\chi_{I_{2}}(x_2)\chi_{I_{3}}(x_3) \Big\}^{1/2},
\end{eqnarray*}
where for each $k_i$, $I_{i}$ ranges over all
the dyadic cubes in $M_i$ with side-length
$\ell(I_i)=2^{-k_i-N_i}$, and $N_i$ is a large fixed positive integers, for $i=1,2,3$.
\end{definition}
We recall the Hardy spaces $H^p$ and generalized Carleson measure spaces $CMO^p$ on $\widetilde M$ as follows.
\begin{definition}[\cite{HLL2}]\label{def-of-product-Hardy-space}
Let $\max\big(\frac{ Q_1}{ Q_1+\vartheta_1},\frac{ Q_2}{ Q_2+\vartheta_2},\frac{ Q_3}{
Q_3+\vartheta_3} \big) <p\leq1$ and
$0<\beta_i,\gamma_i<\vartheta_i$ for $i=1,2,3$.
$$H^p(\widetilde M):=\big\lbrace f \in
\big(\GGp(\beta_1,\beta_2,\beta_3;\gamma_1,\gamma_2,\gamma_3)\big)':\
\widetilde S_d(f)\in L^p(\widetilde M)\big\rbrace$$
and if $f\in H^p(\widetilde M),$ the norm of $f$ is defined by $\Vert f\Vert_{H^p(\widetilde M)}=\Vert \widetilde S_d(f)\Vert_p.$
\end{definition}

\begin{definition}[\cite{HLL2}]\label{def-of-product-CMO-space}
Let $\max\big(\frac{ 2Q_1}{ 2Q_1+\vartheta_1},\frac{ 2Q_2}{ 2Q_2+\vartheta_2},\frac{ 2Q_3}{
2Q_3+\vartheta_3} \big) <p\leq1$ and
$0<\beta_i,\gamma_i<\vartheta_i$ for $i=1,2,3$. Let
$\{S_{k_i}\}_{k_i\in\mathbb{Z}}$ be approximations to the identity
on $M_i$ and for $k_i\in\mathbb{Z}$, set
$D_{k_i}=S_{k_i}-S_{k_i-1}$, $i=1,2,3$. The generalized Carleson
measure space $CMO^p(\widetilde{M})$ is defined, for
$f\in\big(\GGp(\beta_1,\beta_2,\beta_3;\gamma_1,\gamma_2,\gamma_3)\big)',$ by
\begin{eqnarray}\label{product-CMOp-norm}
\|f\|_{CMO^p(\widetilde{M})}
&=&\sup_{\Omega} \bigg\{
\frac{\displaystyle 1}{\displaystyle \mu(\Omega)^{{2\over
p}-1}}\int_{\Omega} \sum_{k_1,k_2,k_3} \sum_{I_1\times I_2 \times I_n\subseteq \Omega}
\big|D_{k_1}D_{k_2} D_{k_3}(f)(x_1,x_2,x_3)\big|^2\\
\hskip1cm&&\times\chi_{I_1}(x_1)\chi_{I_2}(x_2)\chi_{I_3}(x_3)dx_1dx_2 dx_3
\bigg\}^{1\over 2}<\infty,\nonumber
\end{eqnarray}
where $\Omega$ are taken over all open sets in $\widetilde{M}$ with
finite measures and for each $k_i$, $I_i$ ranges over all
the dyadic cubes in $M_i$ with length
$\ell(I_i)=2^{-k_i-N_i}$, $i=1,2,3$.
\end{definition}

To consider singular integral operators on $\widetilde{M},$ we first introduce the space $C_0^\eta(\widetilde{M})$ by induction. Note that we have introduced $C_0^\eta(M_1\times M_2)$ in Subsection 3.1. A function $f(x_1,x_2,x_3)$ is said to be in $C_0^\eta(\widetilde{M})$ if $f$ has compact support and
$$ \|f(x_1,x_2,\cdot)\|_{C_0^\eta(M_1\times M_2)} \in C_0^\eta(M_3). $$

Now we introduce a class of {\it product Calder\'on--Zygmund singular integral
operators} on $\widetilde{M}$.

Let $T: C_0^\eta(\widetilde{M})\rightarrow \big(C_0^\eta(\widetilde{M})\big)'$ be a linear operator with an associated distribution kernel $K(x_1,y_1,x_2,y_2,x_3,y_3)$,
which is a continuous function on
$\widetilde{M}\backslash \{(x_1,y_1,x_2,y_2,x_3,y_3):\ x_i=y_i,\ {\rm for\ some\
} i,\ 1\leq i\leq 3\}$. Moreover,
\begin{itemize}
\item[\rm (i)] $\langle T(\varphi_1\otimes\varphi_2\otimes\varphi_3),
\psi_1\otimes\psi_2\otimes\psi_3\rangle$
\item[] $=\int K(x_1,y_1,x_2,y_2,x_3,y_3)
\prod_1^3\varphi_i(x_i)\psi_i(y_i)dx_1dy_1dx_2dy_2 dx_3dy_3$
\item[] whenever $\varphi_i$ and $\psi_i$ are in $C_0^\eta(M_i)$ with disjoint supports,
for $1\leq i\leq 3$.\\
\item[\rm (ii)] There exists a Calder\'on--Zygmund valued operator $K_3(x_3,y_3)$ on $M_1\times M_2$ such that
\begin{eqnarray*}
&&\langle T(\varphi_1\otimes\varphi_2\otimes\varphi_3),
\psi_1\otimes\psi_2\otimes\psi_3\rangle\\[5pt]
&&\hskip.6cm =\int \langle K_3(x_3,y_3)(\varphi_1\otimes\varphi_{2}), \psi_1\otimes\psi_{2}\rangle
\varphi_3(x_3)\psi_3(y_3) dx_3dy_3
\end{eqnarray*}
whenever $\varphi_i$ and $\psi_i$ are in $C_0^\eta(M_i)$
for $1\leq i\leq 3$ and supp$\varphi_3\cap$supp$\psi_3=\emptyset$. Moreover, $\|K_3(x_3,y_3)\|_{CZ(M_1\times M_2)}$ as a function of $x_3, y_3\in M_3,$ satisfies the following conditions:
\begin{itemize}
\item[(ii-a)] $\|K_3(x_3,y_3)\|_{CZ,1,2}\leq C V(x_3,y_3)^{-1}$;
\item[(ii-b)] $\displaystyle \|K_3(x_3,y_3)-K_3(x_3,y_3^{'})\|_{CZ,1,2}$
\item[] $\displaystyle \leq C \Big(\frac{d_3(y_3,y_3^{'})}{d_3(x_3,y_3)}\Big)^{\varepsilon}V(x_3,y_3)^{-1}$\qquad if\ \ $\displaystyle d_3(y_3,y_3^{'})\le {d_3(x_3,y_3)\over 2A}$;
\item[(ii-c)] $\displaystyle \|K_3(x_3,y_3)-K_3(x_3^{'},y_3)\|_{CZ,1,2}$
\item[] $\displaystyle \leq C \Big(\frac{d_3(x_3,x_3^{'})}{d_3(x_3,y_3)}\Big)^{\varepsilon}V(x_3,y_3)^{-1}$\qquad if\ \ $\displaystyle d_3(x_3,x_3^{'})\le {d_3(x_3,y_3)\over 2A}$.
\end{itemize}
Here we use $\|\cdot\|_{CZ(M_1\times M_2)}$ to denote the Calder\'on--Zygmund norm of the product Calder\'on--Zygmund operators on $M_1\times M_2.$ More precisely, $\|T\|_{CZ(M_1\times M_2)}=\|T\|_{L^2\rightarrow L^2}+|K|_{CZ(M_1\times M_2)}$, where $|K|_{CZ,1,2}=\min(|K_1|_{CZ},|K_2|_{CZ})$ by considering $K$ as a pair $(K_1,K_2)$ as in Subsection 3.1\\
\item[\rm (iii)] There exists a Calder\'on--Zygmund valued operator $K_{1,2}(x_1,y_1,x_{2},y_{2})$ on $M_3$ such that
\begin{eqnarray*}
&&\langle T(\varphi_1\otimes\varphi_2\otimes\varphi_3),
\psi_1\otimes\psi_2\otimes\psi_3\rangle\\[5pt]
&&\hskip.6cm =\int \langle K_{1,2}(x_1,y_1,x_{2},y_{2})(\varphi_{3}), \psi_{3}\rangle
\prod_{i=1}^{2}\varphi_i(x_i)\psi_i(y_i) dx_1dy_1 dx_{2}dy_{2}
\end{eqnarray*}
whenever $\varphi_i$ and $\psi_i$ are in $C_0^\eta(M_i)$
for $1\leq i\leq 3$, and $\varphi_i$ and $\psi_i$ have disjoint supports for $i=1,2.$ Moreover, as a function of $(x_1,y_1,x_{2},y_{2})$, $K_{1,2}(x_1,y_1,x_{2},y_{2})$ satisfies the following conditions:
\begin{itemize}
\item[(iii-a)] $\|K_{1,2}(x_1,y_1,x_{2},y_{2})\|_{CZ}\leq C V(x_1,y_1)^{-1}V(x_2,y_2)^{-1}$;
\item[(iii-b)] $\|K_{1,2}(x_1,y_1,x_{2},y_{2})-K_{1,2}(x_1^{'},y_1,x_{2},y_{2})\|_{CZ}$
\item[] $\displaystyle\leq C \Big(\frac{d_1(x_1,x_1^{'})}{d_1(x_1,y_1)}\Big)^{\varepsilon}V(x_1,y_1)^{-1}V(x_2,y_2)^{-1}$\qquad  if\ \ $\displaystyle d_1(x_1,x_1^{'})\le {d_1(x_1,y_1)\over 2A}$;
\item[(iii-c)] above (iii-b) holds for interchanging $x_1, x_2$ with $y_1, y_2$;
\item[(iii-d)] $\|K_{1,2}(x_1,y_1,x_{2},y_{2})-K_{1,2}(x_1^{'},y_1,x_{2},y_{2})$
\item[]\hskip 3cm $-K_{1,2}(x_1,y_1,x_{2}^{'},y_{2})+K_{1,2}(x_1^{'},y_1,x_{2}^{'},y_{2})\|_{CZ}$
\item[] $\displaystyle\leq C \Big(\frac{d_1(x_1,x_1^{'})}{d_1(x_1,y_1)}\Big)^{\varepsilon}V(x_1,y_1)^{-1}
    \Big(\frac{d_2(x_2,x_2^{'})}{d_2(x_2,y_2)}\Big)^{\varepsilon}V(x_2,y_2)^{-1}$
\item[]\qquad if\ \ $\displaystyle d_1(x_1,x_1^{'})\le {d_1(x_1,y_1)\over 2A}$\ \ and\ \ $\displaystyle d_2(x_2,x_2^{'})\le {d_2(x_2,y_2)\over 2A}$
\item[(iii-e)] above (iii-d) holds for interchanging $x_1, x_2$ with $y_1, y_2$.
\end{itemize}
\item[\rm (iv)] The same conditions (ii) and (iii)
hold for any permutation of the indices $1,2,3$. That is, we can consider $T$ as a pair of $(K_{1,3},K_2)$, as well as a pair of $(K_{1},K_{2,3})$.
Both $K_1$ and $K_2$ satisfy (ii). Similarly, both $K_{1,3}$ and $K_{2,3}$ satisfy (iii).\\
\end{itemize}

To state the $T1$ theorem on $\widetilde M,$ we need to deal with the partial adjoint operators $\widetilde{T}$.
We have the following two classes of partial adjoint operators. For the first class,
$\widetilde{T}_{1},$ the partial adjoint operator of $T,$ is defined as
\begin{eqnarray*}
\langle \widetilde{T}_{1}(\varphi_1\otimes\varphi_2\otimes\varphi_3),
\psi_1\otimes\psi_2\otimes\psi_3\rangle =\langle T(\psi_1\otimes\varphi_2\otimes\varphi_3),
\varphi_1\otimes\psi_2\otimes\psi_3\rangle,
\end{eqnarray*}
and similarly for ${\widetilde T}_{2}$ and ${\widetilde T}_{3}$. For the second class, $\widetilde{T}_{1,2},$ the partial adjoint operator of $T,$ is defined as
\begin{eqnarray*}
\langle \widetilde{T}_{1,2}(\varphi_1\otimes\varphi_2\otimes\varphi_3),
\psi_1\otimes\psi_2\otimes\psi_3\rangle =\langle T(\psi_1\otimes\psi_2\otimes\varphi_3),
\varphi_1\otimes\varphi_2\otimes\psi_3\rangle,
\end{eqnarray*}
and similarly $\widetilde{T}_{1,2}$ and $\widetilde{T}_{2,3}$. Thus, there are totally $C_3^1+C_3^2=6$ partial adjoint operators.

We also define the weak boundedness property. Let $T$ be a product Calder\'on--Zygmund singular integral operator
on $\widetilde M.$ We say that  $T$ has the WBP if
\begin{eqnarray*}
&&\|\langle K_{1}(\varphi_2\otimes\varphi_3), \psi_2\otimes\psi_3\rangle \|_{CZ(M_1)}\leq C V_{r_2}(x_2^0)V_{r_3}(x_3^0)\\[5pt]
&&\hskip2cm {\rm
for\ all\ } \varphi_2, \psi_2\in A_{M_2}(\delta, x_2^0, r_2),\ \varphi_3, \psi_3\in A_{M_3}(\delta, x_3^0, r_3) \ {\rm and},\\[5pt]
&&\|\langle K_{1,2}(\varphi_3), \psi_3\rangle \|_{CZ(M_1\times M_2)}\leq C V_{r_3}(x_3^0)\hskip1cm {\rm
for\ all\ } \varphi_3, \psi_3\in A_{M_3}(\delta, x_3^0, r_3),
\end{eqnarray*}
and the same conditions
hold for $K_{1}$, $K_{2}$ and $K_{1,3}$, $K_{2,3}$, respectively.

Now we can state the $T1$ theorem on $\widetilde M.$

\vskip.3cm \noindent{\bf Theorem A$^{'}$}\ \ Let $T$ be a product
Calder\'on--Zygmund singular integral operator on $\widetilde M$.
Then $T$ is bounded on $L^2(\widetilde M)$
if and only if $T1$, $T^*1$, $\widetilde{T}_11$, $\widetilde{T}_21$, $\widetilde{T}_31$, $\widetilde{T}_{1,2}1$, $\widetilde{T}_{1,3}1$ and $\widetilde{T}_{2,3}1$.
lie on $BMO(\widetilde M)$ and $T$ has the weak boundedness
property.

\vskip.3cm \noindent{\bf Theorem B$^{'}$}\ \ Let $T$ be the $L^2$ bounded product
Calder\'on--Zygmund singular integral operator on $\widetilde M$. Then $T$ extends to a bounded operator from
$H^p(\widetilde{M}), \max\big(\frac{ Q_1}{ Q_1+ \vartheta_1 },\frac{ Q_2}{ Q_2+ \vartheta_2 },\frac{
Q_3}{ Q_3+ \vartheta_3 }\big)<p\leq1,$ to itself if and only if
$(T^*)_1(1)=(T^*)_2(1)=(T^*)_3(1)=0.$

\vskip.3cm \noindent{\bf Theorem C$^{'}$}\ \ Let $T$ be the $L^2$ bounded product
Calder\'on--Zygmund operator on $\widetilde M$. Then $T$ extends to a bounded operator from
$CMO^p(\widetilde{M}), \max\big(\frac{ 2Q_1}{ 2Q_1+ \vartheta_1
},\frac{ 2Q_2}{ 2Q_2+ \vartheta_2
},\frac{ 2Q_3}{ 2Q_3+ \vartheta_3 }\big)<p\leq1,$ to itself,
particularly from $BMO(\widetilde{M})$ to itself, if and only if
$T_1(1)=T_2(1)=T_3(1)=0.$

\smallskip

\begin{flushleft}
Department of Mathematics, Auburn University, Auburn, AL 36849-5310, U.S.A.\\
E-mail address: hanyong@auburn.edu
\end{flushleft}

\smallskip

\begin{flushleft}
Department of Mathematics, Sun Yat-Sen University, Guangzhou 510275, China\\
E-mail address: liji6@mail.sysu.edu.cn
\end{flushleft}

\smallskip

\begin{flushleft}
Department of Mathematics, National Central University, Chung-Li 320, Taiwan\\
E-mail address: clin@math.ncu.edu.tw
\end{flushleft}

\end{document}